\documentclass[a4paper,reqno,11pt]{amsart}

\usepackage{fullpage}

\usepackage{amssymb}
\usepackage{dsfont}

\usepackage{tikz}
\usetikzlibrary{arrows.meta,calc,decorations.markings}
\tikzset{
    >=Stealth,
    every path/.style={shorten >=1.2pt, shorten <=1.2pt},
    midarrow/.style={
        postaction={decorate},
        decoration={
            markings,
            mark=at position 0.5 with {\arrow{Stealth}}
        }
    }
}

\usepackage[utf8]{inputenc} 
\usepackage[T1]{fontenc} 

\usepackage{hyperref}

\usepackage[nobysame,alphabetic,initials,msc-links]{amsrefs}

\DefineSimpleKey{bib}{how}

\renewcommand{\eprint}[1]{#1}
\BibSpec{misc}{%
  +{}{\PrintAuthors}  {author}
  +{,}{ \textit}      {title}
  +{,}{ }             {how}
  +{}{ \parenthesize} {date}
  +{,} { available at \eprint}        {eprint}
  +{,}{ available at \url}{url}
  +{,}{ }             {note}
  +{.}{}              {transition}
}

\numberwithin{equation}{section}

\theoremstyle{plain}
\newtheorem{thm}{Theorem}[section]
\newtheorem{prop}[thm]{Proposition}
\newtheorem{lemma}[thm]{Lemma}

\newtheorem{cor}[thm]{Corollary}
\theoremstyle{definition}

\newtheorem{defn}[thm]{Definition}
\theoremstyle{remark}

\newtheorem{question}[thm]{Question}
\newtheorem{remark}[thm]{Remark}
\newtheorem{example}[thm]{Example}

\theoremstyle{plain}

\newcommand\bp{\begin{proof}}
\newcommand\ep{\end{proof}}

\newcommand{\un}{\mathds{1}}
\newcommand\dach{{\!\widehat{\ \ }}}

\newcommand\C{\mathbb{C}}
\newcommand\N{\mathbb{N}}

\newcommand\R{\mathbb{R}}
\newcommand\T{\mathbb{T}}
\newcommand\Z{\mathbb{Z}}

\newcommand{\BV}{\mathcal{BV}}

\newcommand{\G}{\mathcal{G}}
\newcommand{\HH}{\mathcal{H}}
\newcommand{\LL}{\mathcal{L}}
\newcommand{\M}{\mathcal{M}}
\newcommand{\OO}{\mathcal{O}}
\newcommand{\QQ}{\mathcal{Q}}

\newcommand{\TT}{\mathcal{T}}

\newcommand\Ad{\operatorname{Ad}}
\newcommand\Aut{\operatorname{Aut}}

\newcommand{\Gu}{{\mathcal{G}^{(0)}}}
\newcommand{\Gxx}{\mathcal{G}^x_x}

\newcommand\hull{\operatorname{hull}}

\newcommand\Ind{\operatorname{Ind}}

\newcommand\MT{\operatorname{M}}
\newcommand\Stab{\operatorname{Stab}}

\newcommand\PerG{\operatorname{Per}_{G}}
\newcommand\prim{\mathrm{prim}}
\newcommand\Prim{\operatorname{Prim}}

\newcommand\sing{\mathrm{sing}}

\newcommand\eps{\varepsilon}
\newcommand\Iso{\operatorname{Iso}(\mathcal G)^{\circ}}
\newcommand\IsoG{\operatorname{Iso}(\mathcal G)}
\newcommand\IsoGx[1]{\operatorname{Iso}(\mathcal G_{\overline{[#1]}})}

\newcommand\ee{\nopagebreak\mbox{\ }\hfill$\diamond$}

\begin{document}

\title{The ideal structure of Exel--Pardo algebras and their higher rank analogues}

\date{May 22, 2026}

\author{Johannes Christensen}
\address{Department of Mathematics, Aarhus University, Denmark}
\email{johannes@math.au.dk}

\author{Sergey Neshveyev}
\address{Department of Mathematics, University of Oslo, Norway}
\email{sergeyn@math.uio.no}

\thanks{The authors would like to thank the Isaac Newton Institute for Mathematical Sciences, Cambridge, and the School of Mathematics and Physics of Queen's University Belfast  for support and hospitality during the programme \emph{Topological groupoids and their C$^*$-algebras}, where a part of the work on this paper was undertaken. This work was supported by EPSRC grant EP/V521929/1.}

\thanks{J.C. is supported by the research grant (VIL72080) from Villum Fonden.}

\begin{abstract}
Given a pseudo-free self-similar action of a countable group $G$ on a countable directed graph~$E$ with amenable stabilizers of the vertices, we identify the exact conditions under which these stabilizers do not contribute to the ideal structure of the corresponding Exel--Pardo algebra~$\OO_{G,E}$. Under these conditions, we give a complete description of the primitive ideal space of $\OO_{G,E}$ in graph-theoretic terms. Our results apply in particular to certain crossed products $\OO_E\rtimes G$, where~$G$ acts on~$E$ by graph automorphisms. When $G$ is trivial, this recovers Hong--Szyma\'nski's description of the ideal structure of the Cuntz--Krieger algebras~$\OO_E$. Similar results are then obtained for self-similar actions of groups on row-finite higher rank graphs without sources.

In order to obtain these results we formalize the notion of a graded groupoid with essentially central isotropy, which generalizes essentially principal groupoids and groupoids injectively graded by abelian groups. Under the amenability and second countability assumptions, we describe the primitive ideal spaces of the corresponding C$^*$-algebras as topological spaces.
\end{abstract}

\maketitle

\section*{Introduction}
Understanding the ideal structure is a fundamental problem in the analysis of nonsimple C$^*$-algebras. The research on this problem has a long history. In the context of C$^*$-algebras of \'etale groupoids, which this paper deals with, the first major result was obtained by Effros and Hahn~\cite{EH}, who showed that if $\Gamma$ is a countable amenable group acting freely on a second countable locally compact space $X$, then the primitive ideal space of $C_0(X)\rtimes\Gamma$ is homeomorphic to the quasi-orbit space $(\Gamma\backslash X)^\sim$. This implies that the lattice of ideals in $C_0(X)\rtimes\Gamma$ is isomorphic to the lattice of $\Gamma$-invariant open subsets of $X$. Using different methods, Renault showed in his thesis~\cite{Rbook} that  a similar isomorphism holds for all amenable second countable \'etale groupoids~$\G$ and, moreover, instead of freeness of the action $\G\curvearrowright\Gu$, that is, principality of~$\G$, it suffices to assume that~$\G$ is  \emph{essentially principal}, meaning that for every invariant closed subset $X\subset\Gu$ the set of units $x\in X$ with trivial isotropy groups $\Gxx$ is dense in $X$. This result was essentially rediscovered by Kawamura and Tomiyama~\cite{MR1088230} for transformation groupoids. Their paper implies that a version of Renault's theorem for such groupoids is true beyond the second countable case and, arguably more importantly, the assumption of essential principality is not only sufficient but is also necessary for the result to be true. For \'etale groupoid C$^*$-algebras this necessity was later also proved by Brown, Clark, Farthing and Sims~\cite{MR3189105}. A short elegant proof of these results for transformation groupoids has been given by Archbold and Spielberg~\cite{MR1258035}, their arguments readily generalize to \'etale groupoids~\cite{CN4}. Let us also mention that in the recent years the progress in noncommutative boundary theory has led to a similar description of the ideal structure of $C^*_r(\G)$ for some nonamenable groupoids that are not essentially principal~\cite{KKLRU}, but in this paper we are primarily interested in the amenable case.

Over the last almost fifty years Renault's theorem has been the main tool 
to understand the ideal structure, and in particular simplicity, of~$C^*(\G)$ for amenable second countable \'etale groupoids~$\G$. In the same period there have emerged a number of important examples of amenable groupoids that are not always essentially principal, yet their C$^*$-algebras have a tractable ideal structure, such as groupoids underlying Cuntz--Krieger algebras and their various generalizations. In the absence of a clear strategy how to deal with the non-essentially-principal case, the corresponding C$^*$-algebras have been often analyzed by ad hoc methods. In particular, a complete description of the ideal structure of C$^*$-algebras of countable directed graphs was obtained without using groupoids by Hong and Szyma\'nski~\cite{HS}, culminating around twenty years of research on the topic.

The lack of a strategy does not mean that there have been no general results on the ideal structure of $C^*(\G)$. In fact, the origins of one of the most fundamental results on this topic predate Renault's theorem. Specifically, it was conjectured by Effros and Hahn~\cite{EH} that in the amenable second countable case every primitive ideal of $C_0(X)\rtimes\Gamma$ is induced from an isotropy group. This was proved by Sauvageot~\cite{Sau}, and later the result was extended to amenable groupoids by Ionescu and Williams~\cite{IW}. Therefore it has been known for quite some time that the space $\Prim C^*(\G)$ of primitive ideals is a quotient of the set $\Stab(\G)^\prim$ of pairs $(x,J)$ such that $x\in\Gu$ and $J\in\Prim C^*(\Gxx)$. What is, however, still missing is a description of the topology on $\Prim C^*(\G)$ in terms of $\Stab(\G)^\prim$. In general, the set $\Stab(\G)^\prim$ does not carry any obvious useful topology and the measure-theoretic methods of the proof of the Effros--Hahn conjecture do not give a clue how to describe the topology on $\Prim C^*(\G)$ in terms of this set.

Until recently the main two cases in which it was known how to describe the topology on $\Prim C^*(\G)$ in terms of $\Stab(\G)^\prim$ were amenable transformation groupoids $\G=\Gamma\ltimes X$ such that either all stabilizers of the action $\Gamma\curvearrowright X$ are contained in one abelian subgroup of~$\Gamma$~\cite{MR0617538}, or the action is proper~\cite{EE} (and so in particular the stabilizers are finite); see also the introduction to~\cite{CN3} for a discussion of other related results, in particular, on certain classes of Deaconu--Renault groupoids~\cites{Kat,MR4887755}. In \cites{CN3,CN4} we have shown how to describe the topology on $\Prim C^*(\G)$ for the amenable \'etale groupoids $\G$ such that every isotropy group~$\Gxx$ belongs to a certain class~$\M$ that includes all countable groups of local polynomial growth. Although Renault's theorem can be proved by similar methods (see the discussion at the end of~\cite{CN4}*{Section~4.5}), it is not formally covered by our results, since they require conditions on \emph{every} isotropy group~$\Gxx$. It is then natural to ask whether it is possible to relax our conditions and allow a certain amount of ``bad'' units $x$ where there are no restrictions on $\Gxx$. 

We do not know an answer to this question in a generality we would be satisfied with, but our goal in this paper is to give at least a partial answer and illustrate it with some nontrivial examples. For this we introduce a class of \emph{graded groupoids with essentially central isotropy}. For the purpose of this introduction, and since this will be enough for our main examples, let us give the definition of a more restricted class of groupoids that are graded by abelian groups such that the grading is \emph{essentially injective}. Recall that for a discrete group $\Gamma$ by a $\Gamma$-grading of~$\G$  one means a continuous homomorphism $\Phi\colon\G\to\Gamma$. Denote by $\Omega\subset\Gu$ the subset of units $x$ such that $\Phi$ is injective on $\Gxx$. We say that the grading $\Phi$ is essentially injective if $\Omega\cap X$ is dense in $X$ for every invariant closed subset $X\subset\Gu$. When~$\Gamma$ is trivial, this gives back the definition of essential principality of $\G$.

Assuming that $\Gamma$ is abelian, we can induce the representations $\chi\circ\Phi|_{\Gxx}$ for~$\chi\in\widehat\Gamma$ to irreducible representations of~$C^*(\G)$, so that we get a map
\begin{equation}\tag{$*$}\label{eq:i}
\Gu\times\widehat\Gamma\to\Prim C^*(\G).
\end{equation}
Strengthening a result from~\cite{CN2}, we show that if the grading is essentially injective and~$\G$~is amenable and second countable, then the restriction of  the map~\eqref{eq:i} to $\Omega\times\widehat\Gamma$ is surjective, equivalently, the restriction of the induction map $\Stab(\G)^\prim\to\Prim C^*(\G)$ to the subset of pairs $(x,J)$ with $x\in\Omega$ is surjective. Once this is proved, results of \cites{CN3,CN4} apply and we get a description of the topological space $\Prim C^*(\G)$ as a quotient of the space $(\G\backslash\Omega)\times\widehat\Gamma$ equipped with a topology that is in general weaker than the product-topology. Moreover, in the end we do not even have to distinguish between good and bad units and can also describe $\Prim C^*(\G)$ either as a quotient of $(\G\backslash\Gu)\times\widehat\Gamma$ or as a quotient of $\G\backslash \Stab(\G)^\prim$ with respect to suitable topologies on these spaces. When $\Gamma$ is trivial, this implies Renault's theorem. Furthermore, similarly to how essential principality is a necessary condition in Renault's theorem, we show that essential injectivity of the grading is necessary for surjectivity of the map~\eqref{eq:i} and hence for our description of $\Prim C^*(\G)$ as a quotient of $(\G\backslash\Gu)\times\widehat\Gamma$.

We were led to the essentially injectively graded groupoids 
by our attempt to understand the ideal structure of Exel--Pardo algebras $\OO_{G,E}$, which are defined from self-similar actions $G\curvearrowright E$ of groups on directed graphs~\cite{MR3581326}. Under suitable conditions these algebras admit Hausdorff groupoid models $\G_{G,E}$, which can be thought of as semidirect products of the groupoids~$\G_E$ underlying the graph C$^*$-algebras $\OO_E$ and transformation groupoids $G\ltimes\partial E$, where $\partial E=\G_E^{(0)}$ is the space of \emph{boundary paths} in $E$. An interesting feature of the construction is that purely algebraically each groupoid $\G_{G,E}$ is injectively graded by the so-called lag group, but this grading is in general discontinuous and what makes sense topologically is only a grading $\Phi\colon\G_{G,E}\to\Z$ that is trivial on $G\ltimes\partial E$, while on $\G_E$ it coincides with the standard grading defining the gauge action on $\OO_E$. More generally, Li and Yang~\cite{MR4294118} defined algebras $\OO_{G,\Lambda}$ and the corresponding groupoids~$\G_{G,\Lambda}$ from self-similar actions $G\curvearrowright\Lambda$ of groups on $k$-graphs. In this case the standard grading $\Phi$ on~$\G_{G,\Lambda}$ is $\Z^k$-valued.

Simplicity criteria for the C$^*$-algebras $\OO_{G,\Lambda}$ have been obtained in~\cites{MR3581326,MR4294118,MR4283280}, but apart from this not much is known about their ideal structure even when $G$ acts on $\Lambda$ by graph automorphisms, in which case $\OO_{G,\Lambda}\cong\OO_\Lambda\rtimes G$. A theorem of Kumjian and Pask~\cite{MR1738948} states that if a countable group $G$ acts freely by graph automorphisms on a locally finite graph $E$, then $\OO_E\rtimes G$ is strongly Morita equivalent to~$\OO_{E/G}$. As was argued by Marelli and Raeburn~\cite{MR2495260}, the structure of $\OO_E\rtimes G$ becomes much more mysterious when one allows even a small degree of nonfreeness of the action. Under some assumptions, the ideal structure of self-similar $k$-graph C$^*$-algebras $\OO_{G,\Lambda}$ has been described by Li and Yang~\cite{MR4283280}. Although the conditions in~\cite{MR4283280}, which include the requirement that $G$ acts trivially on the vertices of $\Lambda$, are rather restrictive, they still allow situations where the groupoid~$\G_{G,\Lambda}$ is not essentially principal and the $\Z^k$-grading is not injective, so that the results of~\cite{MR4283280} cannot be deduced from Renault's theorem or~\cites{CN3}.

By construction, the kernels of $\Phi$ restricted to the isotropy groups of $\G_{G,\Lambda}$ are built out of subgroups of the $G$-stabilizers of vertices of $\Lambda$. It follows that in principle our results in~\cite{CN4} should allow one to understand the ideal structure of  $\OO_{G,\Lambda}$ for a large class of self-similar $k$-graphs, namely, it suffices to assume that the $G$-stabilizer of every vertex has local polynomial growth. In practice this might quickly become a daunting task involving a careful analysis of the isotropy groups and their representation theory for all boundary paths. On the other hand, the results of~\cite{MR4283280} imply that such an analysis is not always necessary, and indeed we can conclude now that, under amenability and second countability assumptions on $\G_{G,\Lambda}$, the vertex stabilizers and their contributions to the isotropy groups can be ignored in the analysis of the primitive spectrum exactly when the grading $\Phi\colon\G_{G,\Lambda}\to\Z^k$ is essentially injective. In this paper we describe  when this happens in terms of the self-similar action $G\curvearrowright\Lambda$ and then give a description of the primitive spectrum in graph-theoretic terms.

\smallskip

The paper consists of five sections. Section~\ref{sec:prelim0} explains our conventions and collects various auxiliary results on groupoids. Among other things, here we clarify the connections between different classes of ideals that have appeared in the literature, such as dynamical, diagonal-invariant and gauge-invariant ones. In particular, we show that in the amenable second countable case all gauge-invariant ideals of a graded groupoid C$^*$-algebra are dynamical if and only if the grading is essentially injective, which can be viewed as a graded version of Renault's theorem.

In Section~\ref{sec:ess-isotropy} we introduce our class of graded groupoids with essentially central isotropy. 
After establishing some basic properties of such groupoids $\G$, we explain how to adjust the arguments in~\cite{CN2} to classify the primitive ideals of $C^*(\G)$ when $\G$ is amenable and second countable. We then apply results of~\cite{CN3} to ``good'' units of $\G$ to describe the topology on $\Prim C^*(\G)$.

In Section~\ref{sec:prelim} we discuss the Exel--Pardo algebras and their higher rank analogues. There are different ways of introducing these algebras, which in the higher rank case have been shown to be equivalent only under certain conditions. We discuss exclusively the groupoid approach but make no restrictions on the graphs (apart from the standard assumption of finite alignment in the higher rank case), leaving a comparison with other approaches to the interested reader or possibly another occasion. For a number of results in the subsequent sections, in order to deal with Hausdorff groupoids, we need to assume that the self-similar actions that we consider are \emph{pseudo-free}, as defined in~\cite{MR3581326}. This assumption is trivially satisfied for the actions by graph automorphisms, but it does exclude some interesting examples of self-similar actions.

The main new result of this section is a characterization of amenability of the groupoids $\G_{G,\Lambda}$ for pseudo-free self-similar actions $G\curvearrowright\Lambda$: we show that if $\Lambda$ is an ordinary directed graph or a row-finite higher rank graph, then the groupoid $\G_{G,\Lambda}$ is amenable if and only if the $G$-stabilizer of every vertex of $\Lambda$ is amenable.

In Section~\ref{sec:1-graphs} we study the groupoids $\G_{G,E}$ for directed graphs $E$. Overall the approach here is similar to our analysis of the groupoids $\G_E$ in~\cite{CN3}. The key result is a description of the quasi-orbit space of $\G_{G,E}$ in terms of $G$-equivariant versions of notions that have played an important role in the analysis of graph C$^*$-algebras~\cite{MR1988256}, such as maximal tails and breaking vertices. This result does not need $\G_{G,E}$ to be Hausdorff and is valid without pseudo-freeness. The action of $G$ does create additional difficulties, so that we had to rethink the strategy  in~\cite{CN3}. An outcome of this effort is that even when $G$ is trivial, our description of the quasi-orbit space improves on that given in~\cite{CN3}, which used Baire category arguments to avoid analyzing periodic paths with nondiscrete orbits.

Once the quasi-orbit space of $\G_{G,E}$ is described, it is not difficult to understand when the $\Z$-grading is essentially injective. Informally, this happens if and only if there are no substantial parts of $E$ on which some element $g\ne1_G$ acts trivially. In such cases we can apply results of Section~\ref{sec:ess-isotropy} and obtain a complete description of $\Prim C^*(\G_{G,E})$ for amenable $\G_{G,E}$. When~$G$ is trivial, we get the same result as in~\cite{CN3}, which is equivalent to Hong--Szyma\'nski's description of $\Prim \OO_E$.

In Section~\ref{sec:k-graphs} we extend results of the previous section to row-finite higher rank graphs without sources. The main new ingredient that we need here is a $G$-equivariant version of certain vertex sets introduced by Carlsen, Kang, Shotwell and Sims in~\cite{MR3150171}, which should be thought of as higher rank substitutes for \emph{$G$-circuits without an entry} analyzed in~\cite{MR3581326}.

\bigskip

\section{Preliminaries}\label{sec:prelim0}

\subsection{\'Etale groupoids, their \texorpdfstring{C$^*$}{C*}-algebras, and induction}\label{ssec:groupoids}
In the following we will give a very brief introduction to the results and terminology on \'etale groupoids and induction that we will need in the paper. For a more detailed account of \'etale groupoids we refer the reader to~\cite{Rbook}, and for more on induction to the preliminary sections of ~\cites{CN2,CN3}.

We use the same conventions and terminology as in \cite{CN3} and work with Hausdorff locally compact \'etale groupoids~$\G$, with range and source maps denoted by~$r$ and~$s$. \'Etaleness means that these maps are local homeomorphisms. Additional assumptions on $\G$ such as second countability are formulated explicitly when they are needed. The full groupoid C$^*$-algebra $C^*(\G)$ is by definition a universal C$^*$-completion of the $*$-algebra $C_c(\G)$, where the product is defined by
\begin{equation*} \label{eprod}
(f_{1}*f_{2})(g) := \sum_{h \in \G^{r(g)}} f_{1}(h) f_{2}(h^{-1}g)
\end{equation*}
and involution by $f^{*}(g):=\overline{f(g^{-1})}$.

\smallskip

Given a unit $x\in\Gu$, a subgroup $S\subset\Gxx$ and a unitary representation $\pi\colon S\to U(H)$ on a Hilbert space $H$, we define the induced representation $\Ind\pi=\Ind^\G_S\pi$ of $C^*(\G)$ as follows. The underlying Hilbert space $\Ind H$ of $\Ind\pi$ consists of the functions $\xi \colon  \G_{x} \to H$ such that
\begin{equation*}
\xi(gh)=\pi(h)^{*}\xi(g)\quad (g \in \G_{x},\ h\in S)\quad\text{and}\quad  \sum_{g\in \G_{x}/S}\lVert \xi(g)\rVert^{2}<\infty,
\end{equation*}
and the inner product on $\Ind H$ is given by
\begin{equation*}
(\xi_{1}, \xi_{2} ):=\sum_{g\in \G_{x}/S}( \xi_{1}(g), \xi_{2}(g)).
\end{equation*}
The induced representation is then defined by
\begin{equation}\label{eq:ind-rep}
\big((\Ind\pi)(f)\xi\big)(g) :=
\sum_{h\in \G^{r(g)}}f(h) \xi(h^{-1}g)\quad (g\in\G_x,\ \xi\in\Ind H,\ f\in C_c(\G)).
\end{equation}

Recall that given two representations $\pi$ and $\rho$ of a C$^*$-algebra, one says that $\pi$ is weakly contained in $\rho$, and writes $\pi\prec\rho$, if $\ker\rho\subset\ker\pi$. If $\pi$ is a unitary representation of a discrete group~$S$, then by $\ker\pi$ we mean the kernel of the extension of $\pi$ to $C^*(S)$. Induction respects weak containment of representations, which implies that the kernel of $\Ind\pi$ depends only on the kernel of $\pi$.

We thus get a well-defined procedure of inducing ideals of $C^*(S)$: if $\ker\pi=J\subset C^*(S)$, then $\Ind^{\G}_SJ:=\ker \Ind^\G_S\pi$ is an ideal in $C^*(\G)$. Note that by an ideal in a C$^{*}$-algebra we always mean a closed two-sided ideal.

If $\pi$ is an irreducible unitary representation of $\Gxx$, then  $\Ind^{\G}_{\Gxx}\pi$ is irreducible as well. Denote by $\Stab(\G)^\prim$ the set of pairs $(x,J)$ such that $x\in\Gu$ and $J$ is a primitive ideal in $C^*(\Gxx)$. We then get a well-defined map
\begin{equation}\label{eq:Ind}
\Ind\colon\Stab(\G)^\prim\to\Prim C^*(\G),\quad \Ind(x,J):=\Ind^\G_{\Gxx}J=\ker\Ind^\G_{\Gxx}\pi_J,
\end{equation}
where $\pi_J$ denotes any irreducible representation of $C^*(\Gxx)$ with kernel $J$. We have an action of~$\G$ on $\Stab(\G)^\prim$ defined by
$$
g(x,J):=(r(g),(\Ad g)(J))\quad (g\in\G_x).
$$
The induction map $\Ind$ is constant on every $\G$-orbit.

\smallskip

For every $x\in\Gu$, define a regular representation $\rho_x:=\Ind^\G_{\{x\}}\epsilon_x$, where $\epsilon_x$ denotes the trivial representation.
For subgroups $H\subset S \subset \Gxx$ and any unitary representation $\pi$ of $H$, there is a unitary equivalence between $\Ind_{S}^{\G}(\Ind_{H}^{S} \pi)$ and $\Ind_{H}^{\G} \pi$. Since $\Ind^S_{\{x\}}\epsilon_x$ is unitarily equivalent to the regular representation $\lambda_S$ of $S$, it follows that up to unitary equivalence the representation~$\rho_x$ can also be defined as $\Ind^\G_S\lambda_S$ for any subgroup $S\subset\Gxx$. The reduced C$^*$-algebra~$C^*_r(\G)$ is defined as the completion of $C_c(\G)$ with respect to the norm
$$
\|f\|_r:=\sup_{x\in\Gu}\|\rho_x(f)\|.
$$
The map $C_c(\G)\ni f\mapsto f|_{\Gu}$ extends to a faithful conditional expectation $E\colon C^*_r(\G)\to C_0(\Gu)$.

\subsection{Amenable groupoids}\label{ssec:amenable-groupoids}
A Hausdorff locally compact \'etale groupoid $\G$ is called \emph{amenable} if there is a net $(f_i)_i$ of positive continuous functions on $\G$ such that
$\sum_{g\in\G_x}f_i(g)=1$ for all $i$ and $x\in\Gu$ and
\begin{equation*}\label{eq:amenab-def}
\sum_{g\in\G_{r(h)}}|f_i(gh)-f_i(g)|\xrightarrow[i]{}0
\end{equation*}
for all $h\in\G$, uniformly on compact sets. In this case $(f_i)_i$ is called an \emph{approximate invariant density}.
An \'etale groupoid $\G$ is amenable if and only if $C^*_r(\G)$ is nuclear, in which case $C^*(\G)=C^*_r(\G)$~\cite{MR2391387}*{Section~5.6}.

\begin{remark}
There are several other equivalent ways to define amenability. In particular, the definition used in~\cite{MR2391387} requires existence of a net of positive functions $(g_{i})_{i}$ in $C_{c}(\G)$ such that $\sum_{h\in \G_{x}} g_{i}(h)\to1$ uniformly in $x$ on compact subsets of $\Gu$ and
$$
\sum_{\gamma\in \G_{r(h)}} |g_{i}(\gamma h) - g_{i}(\gamma )|\to0
$$
uniformly in $h$ on compact subsets of $\G$. Given an approximate invariant density $(f_i)_i$, as a net satisfying these properties we can take $g_{\theta,i}(h):=\theta(h) f_i(h)$ for $(\theta,i)\in\Lambda\times I$, where $\Lambda:=\{\theta\in C_c(\G):0\le\theta\le 1\}$ is considered with its standard order. Conversely, given a net $(g_i)_i$ as above, put $\theta_i(x):=\sum_{h\in\G_x}g_i(h)$ and
$$
f_{i}(h):=
\begin{cases}
\theta_i(s(h))^{-1}g_{i}(h), &\text{if}\ \ \theta_i(s(h))\ge 1,\\
g_{i}(h), &\text{if}\ \ \theta_i(s(h))<1\ \ \text{and}\ \ h\notin\Gu,\\
g_{i}(x)+1-\theta_i(x), &\text{if}\ \ \theta_i(s(h))<1\ \ \text{and}\ \ h=x \in \Gu.
\end{cases}
$$
It is not difficult to check that $(f_i)_i$ is an approximate invariant density. \ee
\end{remark}

An action $\Gamma\curvearrowright X$ of a discrete group $\Gamma$ on a locally compact space $X$ is called amenable if the transformation groupoid $\Gamma\ltimes X$ is amenable.
Recall that $\Gamma\ltimes X$ is defined as the topological space $\Gamma\times X$ with product $(g,hx)(h,x)=(gh,x)$.

If $\Gamma$ is amenable, then any action $\Gamma\curvearrowright X$ is amenable. In general amenability of an action is a much weaker property than amenability of the acting group, but there is the following well-known observation, cf.~\cite{MR1799683}*{Remark~5.3.29(2)}.

\begin{lemma}\label{lem:inv-measure-amenable}
If an action $\Gamma\curvearrowright X$ is amenable and there is a $\Gamma$-invariant regular Borel probability measure~$\mu$ on $X$, then the group $\Gamma$ is amenable.
\end{lemma}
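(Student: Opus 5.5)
We will produce an invariant mean on $\ell^\infty(\Gamma)$, or equivalently verify Reiter's condition for $\Gamma$, by averaging the approximate invariant density of $\Gamma\ltimes X$ over the invariant measure $\mu$.

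Let $(f_i)_i$ be an approximate invariant density for $\Gamma\ltimes X$, as in the definition of amenable groupoids above. Identify $(\Gamma\ltimes X)_x$ with $\Gamma$ via $g\mapsto(g,x)$. Then $f_i$ gives continuous functions $x\mapsto f_i(g,x)\ge0$ with $\sum_{g\in\Gamma}f_i(g,x)=1$ for all $x$. The invariance condition at $h=(k,x)$ reads
$$
\sum_{g\in\Gamma}|f_i(gk,x)-f_i(g,kx)|\to0,
$$
uniformly for $x$ in compact sets, for each fixed $k\in\Gamma$. Define
$$
\varphi_i(g):=\int_X f_i(g,x)\,d\mu(x).
$$
By monotone convergence, $\varphi_i\ge0$ and $\sum_g\varphi_i(g)=1$, so $\varphi_i$ is a probability on $\Gamma$.

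Next we estimate right invariance. For fixed $k$, the $\Gamma$-invariance of $\mu$ gives
$$
\varphi_i(g)=\int_X f_i(g,kx)\,d\mu(x),
$$
and therefore
$$
\|\varphi_i(\cdot\, k)-\varphi_i\|_1\le\int_X\sum_g|f_i(gk,x)-f_i(g,kx)|\,d\mu(x).
$$
The integrand is bounded by $2$. Given $\eps>0$, regularity of $\mu$ provides a compact $K\subset X$ with $\mu(X\setminus K)<\eps$. On $K$ the integrand tends to $0$ uniformly, so $\limsup_i\|\varphi_i(\cdot\,k)-\varphi_i\|_1\le2\eps$. Hence $\|\varphi_i(\cdot\,k)-\varphi_i\|_1\to0$ for every $k$. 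This is Reiter's condition for right translations. Replacing $\varphi_i$ by $\check\varphi_i(g)=\varphi_i(g^{-1})$ gives the left version, and $\Gamma$ is amenable.

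The only delicate point is that uniform convergence holds only on compact sets, and the integrand is merely dominated. This is why we need regularity (inner regularity, i.e. tightness) of $\mu$, or alternatively dominated convergence along a sequence. For nets, the compact-set argument above is the one to use. Measurability of $x\mapsto\sum_g|\cdots|$ is clear since it is a countable sum of continuous functions; if $\Gamma$ is uncountable, it is a supremum of finite sums of continuous functions and hence lower semicontinuous, which suffices.
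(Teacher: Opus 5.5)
Your proposal is correct and follows essentially the same route as the paper: you average the approximate invariant density against $\mu$, use $\Gamma$-invariance of $\mu$ to compare $\varphi_i(\cdot\,k)$ with $\varphi_i$ inside a single integral, and conclude Reiter's condition. The only difference is that you make explicit, via tightness of $\mu$, the interchange of limit and integral that the paper leaves implicit; for uncountable $\Gamma$, the normalization $\sum_g\varphi_i(g)=1$ should likewise be justified by Dini's theorem on a compact set of almost full measure rather than by monotone convergence.
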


\bp
If $(f_i)_i$ is an approximate invariant density on $\Gamma\ltimes X$, then the positive functions~$h_i$ on~$\Gamma$ defined by
$
h_i(\gamma):=\int_X f_i(\gamma,x)d\mu(x)
$
have the property $\|h_i\|_1=1$, since $\mu$ is a probability measure. By $\Gamma$-invariance of $\mu$ we also have
\begin{align*}
\|h_i(\cdot\,\gamma)-h_i\|_1 &=\sum_{g\in \Gamma} \Big|\int_{X}f_i(g \gamma,x)d\mu(x)-\int_{X}f_i(g ,\gamma x)d\mu(x)\Big|\\
&\le\int_{X}\sum_{g\in \Gamma} |f_i((g ,\gamma x)(\gamma, x))-f_i(g ,\gamma x)|d\mu(x) \to 0
\end{align*}
for all $\gamma\in\Gamma$. Therefore $\Gamma$ is amenable.
\ep

A fundamental result that we need in this paper is the following version of the Effros--Hahn conjecture.

\begin{thm}[\cite{IW}]\label{thm:EH}
If $\G$ is an amenable second countable Hausdorff locally compact \'etale groupoid, then the induction map $\Ind\colon\Stab(\G)^\prim\to\Prim C^*(\G)$ is surjective.
\end{thm}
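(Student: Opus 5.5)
This statement is the Ionescu--Williams theorem, which the paper cites rather than proves. A proof would follow the Sauvageot--Ionescu--Williams strategy, as outlined below.

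The first step is a reduction to induced representations from measures. Every primitive ideal $P$ of $C^*(\G)$ is the kernel of an irreducible representation $L$. Since $\G$ is second countable, $C^*(\G)$ is separable, so by Renault's disintegration theorem $L$ is the integrated form of a representation $(\mu,\Gu*\HH,\hat L)$ of $\G$ on a measurable Hilbert bundle over $\Gu$. Here $\mu$ is a quasi-invariant measure. Because $L$ is irreducible (indeed factorial), $\mu$ is ergodic for the $\G$-action.

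The second step uses amenability to locate the support of $\mu$. Let $Q$ be the quasi-orbit space $\G\backslash\Gu$ modulo closure equivalence. It is $T_0$, and its Borel structure is countably separated because $\Gu$ is second countable. Ergodicity then forces the pushforward of $\mu$ to be concentrated on a single quasi-orbit $\overline{[x_0]}$. Equivalently, $P$ contains the ideal of the open invariant set $\Gu\setminus\overline{[x_0]}$, and $L$ lives on the restriction $\G_{\overline{[x_0]}}$.

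The third step, which is the heart of the argument and the main obstacle, is to show
\[
P=\Ind^\G_{\G^{x}_{x}}J\quad\text{for some } x\in\overline{[x_0]} \text{ and } J\in\Prim C^*(\G^{x}_{x}).
\]
This has two halves.

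For one inclusion, I would follow Sauvageot's approach via the isotropy bundle. Consider the map $x\mapsto\Stab(\G)^\prim$ and the induced states. Amenability of $\G$ gives $C^*(\G)=C^*_r(\G)$. It also makes the isotropy groups amenable, being closed subgroupoids of an amenable groupoid. Together these let one approximate $L$ weakly by representations induced from the stabilizers $\G^x_x$ restricted along $\hat L$. Concretely, one disintegrates $\hat L$ over the isotropy bundle $\operatorname{Iso}(\G)$, obtaining for $\mu$-a.e.\ $x$ a representation $\pi_x$ of $\G^x_x$. Using approximate invariant densities as in \S\ref{ssec:amenable-groupoids}, one shows $L\prec\Ind\pi_x$ for a.e.\ $x$, and also that $\ker\Ind\pi_x$ is essentially constant by ergodicity.

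For the reverse inclusion, one shows $\Ind\pi_x\prec L$ using quasi-invariance of $\mu$ and regularity of induction. This gives $P=\ker\Ind\pi_x$.

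The final step passes to a primitive ideal of $C^*(\G^x_x)$. The ideal $\ker\Ind\pi_x$ is prime, so I choose a primitive $J\supset\ker\pi_x$ whose induced ideal still equals $P$. For this I would decompose $\pi_x$ into irreducibles over its primitive spectrum. By separability, $\ker\Ind\pi_x$ is the intersection of the ideals $\Ind J$ over that spectrum. Primeness of $P$ then gives one $J$ with $\Ind J=P$, once one checks that such an intersection inside a prime ideal reduces to a single term. That check uses second countability of $\Prim$ to reduce to countable intersections, together with a Baire/Effros argument.

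The measurable selection of $\pi_x$ and the weak containment $L\prec\Ind\pi_x$ are where the real difficulty lies. For both I would rely on Ionescu--Williams' equivalence-theorem technique, reducing via Morita equivalence to the case of transitive groupoids restricted to orbits.
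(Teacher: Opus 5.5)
The paper gives no proof of this statement. It imports it verbatim from Ionescu--Williams~\cite{IW}, so citing that result, as you do in your first sentence, is exactly what the paper does. Your Steps~1 and~2 are standard and correct: disintegration, ergodicity of $\mu$, and concentration of $\mu$ on a single quasi-orbit. The rest would not stand as a proof, though. Step~3 consists of the two weak containments that \emph{are} the Effros--Hahn theorem, and both are only asserted. In particular, ``quasi-invariance of $\mu$ and regularity of induction'' gives no mechanism for $\Ind\pi_x\prec L$, since regularity only says that $\ker\Ind\pi$ depends on $\ker\pi$. To prove $\ker L\subset\ker\Ind\pi_x$ you must approximate the states $f\mapsto\sum_{g\in\G^x_x}f(g)\langle\pi_x(g)\xi,\xi\rangle$ by states of $L$. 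Two things obstruct this: $\hat L$ is only determined up to null sets, and an element of $\G^x_x$ need not fix any neighbourhood of $x$, so vectors localized near $x$ also see nearby points that are moved. This is the genuinely hard half of the theorem. Your suggested Morita reduction to transitive groupoids over single orbits does not help. An ergodic $\mu$ either lives on one countable orbit, which is the easy Mackey-type case, or gives measure zero to every orbit, in which case $L$ lives on no single orbit.

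The final step is wrong as justified. Primeness of $P$ lets you extract one term only from a \emph{finite} intersection. No combination of second countability and Baire category can do this for $P=\bigcap_{J\in\hull(\ker\pi_x)}\Ind J$, because a primitive ideal of a separable C$^*$-algebra can be the intersection of a strictly decreasing sequence of strictly larger primitive ideals. For example, take the graph $E$ with vertices $v_1,v_2,\dots$, two loops at each $v_n$, and one edge from $v_{n+1}$ to $v_n$. The zero ideal of $\OO_E$ is primitive and equals the intersection of the strictly decreasing primitive ideals generated by $\{p_{v_m}:m\ge n\}$, $n\ge2$. Nor can you take $J=\ker\pi_x$. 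In general $\pi_x$ is not factorial (already with abelian isotropy it can be a direct integral of distinct characters), so $\ker\pi_x$ need not be prime.

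What is missing is a homogeneity statement coming from ergodicity. One way to supply it:
\begin{enumerate}
\item Decompose the field $x\mapsto\pi_x$ centrally over pairs $(x,J)\in\Stab(\G)^\prim$.
\item Observe that irreducibility of $L$ makes the resulting measure class ergodic for the $\G$-action.
\item Conclude that the invariant Borel map $(x,J)\mapsto\Ind(x,J)$, which takes values in the countably separated space $\Prim C^*(\G)$, is a.e.\ constant.
\end{enumerate}
Only then does $\ker\Ind\pi_x=P$ yield $P=\Ind(x,J)$ for some pair.
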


In general there is no obvious useful topology on $\Stab(\G)^\prim$ and the above result says nothing about the topology on $\Prim C^*(\G)$. Recall that given a C$^*$-algebra $A$, the Jacobson, or hull-kernel, topology on the primitive ideal space $\Prim A$ is defined by declaring the closure of a set $\Omega\subset\Prim A$ to be $\{J\in\Prim A: \bigcap_{I\in\Omega}I\subset J\}$. Then the map $J\mapsto \hull(J)^c$, where
$$
\hull(J):=\{I\in\Prim A:J\subset I\},
$$
defines an isomorphism of the ideal lattice in $A$ onto the lattice of open subsets of $\Prim A$, see, e.g., \cite{Ped}*{Theorem 4.1.3}.

\smallskip

Let us now discuss a few more or less known (especially in the second countable case) ways to check amenability. For this discussion, let us remind that by a bisection one means any subset of $\G$ on which the range and source maps~$r$ and~$s$ are injective. By a \emph{$\Gamma$-grading} of a locally compact \'etale groupoid~$\G$, where $\Gamma$ is a discrete group, one means a continuous homomorphism $\Phi\colon\G\to\Gamma$. Given such a grading, we can define a $*$-homomorphism
$$
\delta\colon C^*(\G)\to C^*(\G)\otimes_{\mathrm{max}}C^*(\Gamma)
$$
such that if $f\in C_c(W)$ for an open bisection $W\subset\G$ and $\Phi(W)=\{\gamma\}$ for some $\gamma\in\Gamma$, then
$$
\delta(f)=f\otimes u_\gamma,
$$
where $u_{\gamma}$ is the canonical unitary generator of $C^*(\Gamma)$ associated to $\gamma$. It is known that $\delta$ passes to an injective homomorphism
$$
\delta_r\colon C^*_r(\G)\to C^*_r(\G)\otimes C^*_r(\Gamma),
$$
see, e.g., \cite{CN1}*{Lemma~4.6}.

\begin{prop}[cf.~\citelist{\cite{MR3256184}*{Proposition~9.3}\cite{RW}*{Corollary~4.5}}]\label{prop:graded-amen}
Assume $\G$ is a Hausdorff locally compact \'etale groupoid graded by an amenable discrete group $\Gamma$, with grading $\Phi\colon\G\to\Gamma$. Then $\G$ is amenable if and only if the subgroupoid $\Phi^{-1}(e)\subset\G$ is amenable.
\end{prop}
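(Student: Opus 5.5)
One direction is standard. If $\G$ is amenable, then $\HH:=\Phi^{-1}(e)$ is an open subgroupoid, since $\Phi$ is continuous and $\Gamma$ is discrete. It is also closed and contains $\Gu$. Restricting an approximate invariant density $(f_i)_i$ of $\G$ to $\HH$ does not directly give one for $\HH$, because the fibres $\HH_x$ are smaller than $\G_x$. The cleanest route is the operator-algebraic characterization: $C^*_r(\HH)$ embeds into $C^*_r(\G)$ as the image of a conditional expectation. Indeed, the map $C_c(\G)\ni f\mapsto f\un_{\HH}$ extends to a contractive conditional expectation $C^*_r(\G)\to C^*_r(\HH)$. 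To see this, average the coaction $\delta_r$: for $\Gamma$ discrete, compose $\delta_r$ with $\mathrm{id}\otimes\tau$, where $\tau$ is the canonical trace on $C^*_r(\Gamma)$. Nuclearity passes to images of conditional expectations (a retract of a nuclear algebra is nuclear), so $C^*_r(\HH)$ is nuclear and $\HH$ is amenable by the cited characterization in \cite{MR2391387}.

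For the converse, assume $\HH$ is amenable, with approximate invariant density $(f_i)_i$. Let $(\phi_j)_j$ be a Følner-type net on $\Gamma$: finitely supported $\phi_j\ge0$ with $\|\phi_j\|_1=1$ and $\|\phi_j(\cdot\,\gamma)-\phi_j\|_1\to0$. I want to combine $\phi_j$ and $f_i$ into densities on $\G$. The natural candidate is
$$
F(g):=\phi(\Phi(g))\,\theta_{\Phi(g)}(g),
$$
where $\theta_\gamma$ should be a density on the fibre $\G_x\cap\Phi^{-1}(\gamma)$. The difficulty is that this set is a single $\HH$-coset $g\HH_x$, but it may be empty, and there is no canonical basepoint in it. Transporting $f_i$ from $\HH$ to the coset requires choosing a $g$, and continuity in $x$ fails.

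The better approach, which I would follow, is to avoid explicit densities and argue via nuclearity. By Fell's absorption, the coaction $\delta_r$ makes $C^*_r(\G)$ a Fell bundle algebra over $\Gamma$ whose unit fibre is $C^*_r(\HH)$. More precisely, for each $\gamma$ the fibre $B_\gamma$ is the closure of $C_c(\Phi^{-1}(\gamma))$ in $C^*_r(\G)$, and $C^*_r(\G)\cong C^*_r(\{B_\gamma\}_\gamma)$; this identification holds since the expectation $E$ onto $C_0(\Gu)$ factors through the fibre $B_e$. A standard theorem (Exel, or Quigg) says that for an amenable group $\Gamma$, the reduced cross-sectional algebra of a Fell bundle is nuclear iff its unit fibre is nuclear. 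Hence $C^*_r(\G)$ is nuclear, and so $\G$ is amenable.

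The main obstacle is justifying this Fell bundle result and the identification $C^*_r(\G)=C^*_r(\mathcal B)$ in the non-second-countable setting. If I want to avoid citing it, I would instead use the dual picture. Nuclearity of $C^*_r(\HH)$ gives nuclearity of $C^*_r(\G)\rtimes_{\hat\delta}\Gamma$, since this crossed product is Morita equivalent, via the groupoid $\G\times_\Phi\Gamma$, to something built from $\HH$. Then Katayama/Takai duality for the dual coaction, together with amenability of $\Gamma$, returns nuclearity to $C^*_r(\G)$.
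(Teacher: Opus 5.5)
Your proposal is correct and is essentially the paper's proof. The forward direction is exactly the paper's argument via the conditional expectation $(\iota\otimes\tau)\circ\delta_r$ of $C^*_r(\G)$ onto $C^*_r(\Phi^{-1}(e))$. For the converse, the paper likewise just appeals to a known result for coactions of amenable discrete groups: nuclearity of the fixed-point algebra (unit fibre) $C^*_r(\Phi^{-1}(e))$ forces nuclearity of $C^*_r(\G)$. That is the same step as your use of the Exel/Quigg Fell bundle theorem.

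The second-countability concern you raise is moot. The identification of $C^*_r(\G)$ with the reduced cross-sectional algebra (via faithfulness of the expectation onto the unit fibre) and the nuclearity results are purely C$^*$-algebraic.
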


\bp
Let $\tau$ be the canonical trace on $C^*_r(\Gamma)$. Then $(\iota\otimes\tau)\circ\delta_r\colon C^*_r(\G)\to C^*_r(\G)$ maps $C_{c}(\G)$ onto $C_{c}(\Phi^{-1}(e))$. Since $\Phi^{-1}(e)\subset \mathcal{G}$ is an open subgroupoid, the closure of $C_{c}(\Phi^{-1}(e))$ in $C^*_r(\G)$ is identical to $C^*_r(\Phi^{-1}(e))$, so the map $(\iota\otimes\tau)\circ\delta_r$ is a conditional expectation $C^*_r(\G)\to C^*_r(\Phi^{-1}(e))$. It follows that if $\G$ is amenable and therefore $C^*_r(\G)$ is nuclear, then $C^*_r(\Phi^{-1}(e))$ is nuclear, equivalently, $\Phi^{-1}(e)$ is amenable.

Conversely, if $\Phi^{-1}(e)$ is amenable, so that $C^*(\Phi^{-1}(e))=C^*_r(\Phi^{-1}(e))$ is nuclear, then one can deduce nuclearity of $C^*(\G)$, and hence of $C^*_r(\G)$, using the coaction $\delta\colon C^*(\G)\to C^*(\G)\otimes_{\mathrm{max}}C^*(\Gamma)=C^*(\G)\otimes C^*_r(\Gamma)$ along the same lines as in the proof of~\cite{MR3256184}*{Proposition~9.3}. Alternatively, one can apply the main result of~\cite{MR1932666} to the coaction $\delta_r\colon C^*_r(\G)\to C^*_r(\G)\otimes C^*_r(\Gamma)$ to immediately conclude that the fixed point algebra $C^*_r(\Phi^{-1}(e))=C^*_r(\G)^{\delta_r}:=\{a\in C^*_r(\G):\delta_r(a)=a\otimes1\}$ is nuclear.
\ep

\begin{lemma}\label{lem:amen-sequence}
Assume $\G$ is a Hausdorff locally compact \'etale groupoid that is the union of an increasing net of amenable open subgroupoids. Then $\G$ is amenable.
\end{lemma}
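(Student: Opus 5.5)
We work directly with the definition of amenability through approximate invariant densities. Let $(\G_\alpha)_{\alpha\in A}$ be the increasing net of amenable open subgroupoids with $\bigcup_\alpha\G_\alpha=\G$. Each $\G_\alpha$ is open, so it contains $\Gu$. Indeed, a subgroupoid in our sense has the same unit space, and in any case $\G_\alpha^{(0)}$ is open in $\Gu$ and these unit spaces increase to $\Gu$. For each $\alpha$ we fix an approximate invariant density $(f^\alpha_i)_{i\in I_\alpha}$ on $\G_\alpha$. We extend each $f^\alpha_i$ by zero to a function $\tilde f^\alpha_i$ on $\G$, and we index the candidate net by the set $D$ of triples $(\alpha,K,\eps)$, where $K\subset\G$ is compact and $\eps>0$.

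The first issue is continuity of the extension by zero. Since $\G_\alpha$ is open but not necessarily closed, $\tilde f^\alpha_i$ need not be continuous on $\G$. To handle this, we use the flexibility noted in the Remark above and work with the equivalent formulation from~\cite{MR2391387}: functions $g_i\in C_c(\G)$ with $\sum_{h\in\G_x}g_i(h)\to1$ uniformly on compacts and asymptotic invariance uniformly on compacts. By the Remark, each $\G_\alpha$ admits such a net consisting of functions in $C_c(\G_\alpha)$. Extending a compactly supported continuous function on the open set $\G_\alpha$ by zero gives a function in $C_c(\G)$, so this is exactly the reason for switching to compactly supported densities.

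Next we construct the net. Given $(\alpha,K,\eps)$, we use compactness: $K$ is covered by the increasing open sets $\G_\beta$, so $K\subset\G_\beta$ for some $\beta\ge\alpha$. The same holds for $s(K)$, $r(K)$ and $K\cap\Gu$ with one $\beta$. Since the conditions in the alternative definition are uniform on compact subsets of $\G_\beta$ and $K$ is such a compact subset, we can choose $g=g^\beta_j\in C_c(\G_\beta)\subset C_c(\G)$ with
$$
\Big|\sum_{h\in\G_x}g(h)-1\Big|<\eps\ \ (x\in s(K)\cup (K\cap\Gu)),\qquad \sum_{\gamma\in\G_{r(h)}}|g(\gamma h)-g(\gamma)|<\eps\ \ (h\in K).
$$
For $x\in\G_\beta^{(0)}$ and $h\in\G_\beta$ the sums over $\G_x$ and $\G_{r(h)}$ only see points of $\G_\beta$, because $g$ vanishes outside $\G_\beta$ and, for $\gamma\in\G_{r(h)}$ with $h\in\G_\beta$, we have $\gamma h\in\G_\beta$ if and only if $\gamma\in\G_\beta$. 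So the $\G_\beta$-estimates transfer to $\G$. Setting $g_{(\alpha,K,\eps)}:=g$ and ordering $D$ by $\alpha\le\alpha'$, $K\subset K'$ and $\eps\ge\eps'$ gives a net satisfying the conditions of~\cite{MR2391387} on $\G$. The Remark then converts this net into an approximate invariant density, so $\G$ is amenable.

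The main obstacle is the bookkeeping in this uniformity step: making sure that the uniform-on-compacts estimates in $\G_\beta$ really give the required estimates in $\G$, including near boundary points of $\G_\beta$. The identity $\G_x\cap\G_\beta=(\G_\beta)_x$ for $x\in\G_\beta^{(0)}$ handles this, and choosing $\beta$ after $K$ avoids any boundary problems. An alternative route is through nuclearity, since $C^*_r(\G)$ is the inductive limit of the $C^*_r(\G_\alpha)$. That route would, however, require injectivity of the inclusions $C^*_r(\G_\alpha)\to C^*_r(\G)$, which holds for open subgroupoids, and it only yields $C^*_r$-nuclearity. The direct argument is cleaner.
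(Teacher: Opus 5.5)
Your proof is correct, but it follows a different route from the paper's.

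\textbf{The paper's route.} The paper argues C$^*$-algebraically. For an open subgroupoid $\HH\subset\G$, the inclusion $C_c(\HH)\subset C_c(\G)$ extends to an embedding $C^*_r(\HH)\hookrightarrow C^*_r(\G)$. Hence $C^*_r(\G)$ is the inductive limit of the nuclear algebras $C^*_r(\G_\alpha)$, so it is nuclear. Amenability of $\G$ then follows from the characterization of amenable \'etale groupoids by nuclearity of $C^*_r(\G)$ \cite{MR2391387}*{Section~5.6}.

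\textbf{Your route.} You work directly with densities:
\begin{enumerate}
\item You pass to the compactly supported formulation via the Remark and extend by zero.
\item You use compactness of $K$ to land inside a single $\G_\beta$.
\item There the identities $\G_x\cap\G_\beta=(\G_\beta)_x$ and ($\gamma h\in\G_\beta\iff\gamma\in\G_\beta$ for $h\in\G_\beta$) make the estimates transfer verbatim.
\end{enumerate}

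\textbf{Comparison.} Your argument is elementary and self-contained. It needs only the equivalence of the two definitions of amenability recorded in the Remark, not the much deeper nuclearity characterization. The paper's version is two lines once that theorem is taken for granted.

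\textbf{Corrections.}
\begin{enumerate}
\item Openness of $\G_\alpha$ does not imply $\Gu\subset\G_\alpha$; for example, $(0,1)$ is an open subgroupoid of the trivial groupoid $[0,1]$. Your argument never uses this, since $K\subset\G_\beta$ already gives $s(K),r(K)\subset\G_\beta^{(0)}$.
\item Your closing claim that the nuclearity route ``only yields $C^*_r$-nuclearity'' is mistaken. For \'etale groupoids, nuclearity of $C^*_r(\G)$ is equivalent to amenability of $\G$, and this is exactly how the paper concludes.
\item The component $\alpha$ in your index set $D$ plays no role, since $\beta$ is determined by $K$ alone.
\end{enumerate}
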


\bp
For any Hausdorff locally compact \'etale groupoid $\G$ and any open subgroupoid $\mathcal{H} \subset \mathcal{G}$, the natural inclusion $C_{c}(\mathcal{H}) \subset C_{c}(\mathcal{G})$ gives rise to an embedding $C_{r}^{*}(\mathcal{H}) \hookrightarrow C_{r}^{*}(\mathcal{G})$. It follows from this that $C_{r}^{*}(\mathcal{G})$ is the inductive limit of an increasing net of the reduced C$^{*}$-algebras of amenable open subgroupoids. Since inductive limits of nuclear C$^*$-algebras are nuclear, this proves the lemma.
\ep

Recall that a subset $X \subset\Gu$ is called locally closed if it is an intersection of an open and a closed set in $\Gu$. A locally closed subset of a Hausdorff  locally compact space is again a Hausdorff  locally compact space. It follows that the reduced groupoid $\G_X=\{g\in \G\mid r(g)\in X\}$ for an invariant locally closed subset $X\subset \Gu$ is again a locally compact Hausdorff \'etale groupoid. It is clear that if $\G$ is amenable, then $\G_X$ is amenable as well, with an approximate invariant density obtained by restricting such a density on~$\G$.

\begin{lemma}[{cf.~\cite{RW}*{Lemma 3.3}}]\label{lem:amen-decompose}
Assume $\G$ is a Hausdorff locally compact \'etale groupoid. Assume the unit space~$\Gu$ is the union of finitely many invariant locally closed subsets $X_1,\dots,X_n$ such that each groupoid $\G_{X_i}$ is amenable. Then $\G$ is amenable.
\end{lemma}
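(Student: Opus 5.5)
We argue by induction on $n$, using the nuclearity characterization of amenability together with extensions of C$^*$-algebras. The case $n=1$ is trivial. For the inductive step we first arrange the pieces in a convenient order. Since each $X_i$ is locally closed and invariant, we want one of them, or a union of some of them, to be an invariant open subset $U$ of $\Gu$. The complement $\Gu\setminus U$ should then again be a finite union of invariant locally closed sets, namely the sets $X_i\setminus U$.

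To obtain such a $U$ without assuming any ordering of the $X_i$, the cleanest route is to avoid ordering altogether. We use the reduced groupoid $\G_X$, and the fact that for an invariant open $U$ with closed complement $F$ there is an exact sequence
$$
0\to C^*_r(\G_U)\to C^*_r(\G)\to C^*_r(\G_F)
$$
at least at the level of full algebras. However, exactness issues make the C$^*$-route delicate, so instead we work directly with approximate invariant densities, as in \cite{RW}.

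\textbf{Step 1 (closed pieces).} Suppose $F\subset\Gu$ is a closed invariant set and $\G_F$ is amenable. We extend an approximate invariant density on $\G_F$ to a neighbourhood of $F$. Via Tietze-type extension for continuous functions on the closed subset $\G_F\subset\G$, we obtain functions on $\G$ that are approximately invariant near $F$, uniformly on compacts. The hard part is normalization: $\sum_{g\in\G_x}f(g)=1$ must hold off $F$ as well. Working in the $C_c$-formulation of the Remark above (with sums only tending to $1$), we cut off by a function of $s(g)$ that is supported near $F$. This gives approximate invariance on compact sets near $F$.

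\textbf{Step 2 (gluing).} We pick a single $X_i$ that is closed in a suitable relative sense. Among finitely many locally closed invariant sets covering $\Gu$, consider $\bigcup_i \overline{X_i}\setminus X_i$. Each $X_i$ is open in $\overline{X_i}$, and the sets $\overline{X_i}$ are invariant closed. Take the union $V$ of all points having an invariant neighbourhood contained in a single $X_i$. We then reduce to the case of two pieces: an invariant open $U$ and its closed complement $F$, with $\G_U$ and $\G_F$ amenable. Repeatedly applying this two-piece case, and intersecting with the closed sets $\overline{X_i}$, handles $n$ pieces by induction.

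\textbf{Step 3 (two-piece case).} Here the C$^*$-approach is in fact cleanest. The full algebras give an exact sequence
$$
0\to C^*(\G_U)\to C^*(\G)\to C^*(\G_F)\to0.
$$
For full algebras this sequence is always exact. The two outer algebras are nuclear, and $C^*(\G_U)=C^*_r(\G_U)$ and $C^*(\G_F)=C^*_r(\G_F)$ by amenability. Extensions of nuclear algebras by nuclear algebras are nuclear, so $C^*(\G)$ is nuclear. Hence its quotient $C^*_r(\G)$ is nuclear, and $\G$ is amenable by \cite{MR2391387}.

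This makes Step 1 unnecessary. The main obstacle is therefore Step 2, producing the filtration. We handle it by induction on $n$: choose $i$ with $X_i$ minimal in the sense that $\overline{X_i}\cap X_j$ is relatively open in $\overline{X_i}$. Alternatively, we take $U=\Gu\setminus\bigcup_j\overline{X_j\setminus\mathrm{int}}$.

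The simplest concrete choice is as follows. Let $W=\Gu\setminus\overline{X_2\cup\dots\cup X_n}$, an invariant open set contained in $X_1$, so $\G_W$ is amenable as a reduction of $\G_{X_1}$. Its complement $F$ is covered by the invariant locally closed sets $X_i\cap F$. If $W\neq\emptyset$ is not guaranteed, we instead apply the induction hypothesis to $F=\overline{X_2\cup\dots\cup X_n}$. This set is covered by $X_1\cap F$ and $X_2,\dots,X_n$, and we get a descending argument on the number of pieces whose closure is needed. Making this induction terminate is the point we would check most carefully.
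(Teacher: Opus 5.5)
Your Step 3 is correct and is essentially the paper's base case. The paper uses the reduced sequence $0\to C^*_r(\G_U)\to C^*_r(\G)\to C^*_r(\G_{U^c})\to0$, which is exact because $C^*(\G_{U^c})=C^*_r(\G_{U^c})$; you use the full sequence and then pass to the quotient $C^*_r(\G)$. Both work.

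The genuine gap is Step 2, which you leave open, and the concrete choice you end with does not work. For $W=\Gu\setminus\overline{X_2\cup\dots\cup X_n}$, the complement $F=\overline{X_2\cup\dots\cup X_n}$ is again covered by $n$ pieces $X_1\cap F,X_2,\dots,X_n$, so the induction on $n$ never applies. You also cannot merge $X_2,\dots,X_n$ into a single piece, because a finite union of locally closed sets need not be locally closed (take an open disc in $\R^2$ together with one boundary point). Your other suggestions, namely the set of points having an invariant neighbourhood inside one $X_i$, or a ``minimal'' $X_i$, come with no argument that the complementary closed set has an amenable reduction, or that the peeling stops after finitely many steps.

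The missing idea is to split along the closure of \emph{one} piece. Since $X_1=V\cap\overline{X_1}$ for some open $V$, the set $X_1$ is an invariant open subset of the unit space of $\G_{\overline{X_1}}$. Moreover, $\overline{X_1}$ and $\overline{X_1}\setminus X_1=\overline{X_1}\setminus V$ are invariant and closed. The sets $\Gu\setminus\overline{X_1}$ and $\overline{X_1}\setminus X_1$ are each covered by the $n-1$ invariant locally closed sets obtained by intersecting them with $X_2,\dots,X_n$. The reductions to these intersections are amenable, being reductions of the $\G_{X_i}$. So the induction hypothesis, stated for all such groupoids and applied to $\G_{\Gu\setminus\overline{X_1}}$ and $\G_{\overline{X_1}\setminus X_1}$, makes both amenable. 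Your two-piece case applied to $\G_{\overline{X_1}}$ with the open set $X_1$, and then to $\G$ with the open set $\Gu\setminus\overline{X_1}$, finishes the proof.

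The paper organizes the argument differently:
\begin{enumerate}
\item It writes each $X_i=U_i\cap F_i$ with $U_i$ invariant open and $F_i$ invariant closed, taking $U_i=r(\G_{V_i})$ and $F_i=\overline{X_i}$; this needs a short argument.
\item It notes that every atom of the finite algebra of sets generated by the invariant open sets $U_i$ and $F_i^c$ is invariant, locally closed and contained in some $X_i$.
\item It inducts on the number of generating open sets, splitting along one of them at a time.
\end{enumerate}
The route via the closure of a single piece avoids having to produce invariant open sets $U_i$ in $\Gu$ at all.
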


\bp
When $\G$ is second countable, this follows immediately from the fact that for such groupoids amenability is a Borel property~\cite{MR3403785}, see~\cite{RW}*{Lemma 3.3}. In the general case we will argue as follows.

First of all observe that for every invariant locally closed subset $X\subset\Gu$ there are an invariant open set $U$ and an invariant closed set $F$ such that $X=U\cap F$. Indeed, by assumption we have $X=V\cap\bar X$ for an open set $V$. We claim that the open set $U:=r(\G_V)$ and the closed set $F:=\bar X$ have the right properties. It is clear that they are invariant and $X\subset U\cap F$. To see that the equality holds, take any point $x\in U\cap F$ and choose a net of elements $x_i\in X$ converging to~$x$. There is an open bisection $W$ of $\G$ such that $x\in r(W)$ and $s(W)\subset V$. Let $g\in W$ and, for all $i$ large enough, $g_i\in W$ be such that $r(g)=x$ and $r(g_i)=x_i$. Then $g_i\to g$ and hence $s(g_i)\to s(g)$. Since $s(g_i)\in X$ it follows that $s(g)\in V\cap\bar X=X$. Then $x\in X$, proving our claim.

Therefore we can find invariant open sets $U_1,\dots,U_n$ and invariant closed sets $F_1,\dots,F_n$ such that $X_i=U_i\cap F_i$ for all $i$. Consider the finite algebra of sets generated by $U_1,\dots,U_n$ and $F_1,\dots,F_n$. Each atom, that is, a minimal set, $D$ in this algebra has the form $B_{1}\cap \dots \cap B_{2n}$ for some $B_{i}\in \{U_{j}, F_{j}, U_{j}^{c}, F_{j}^{c}: j=1, \dots, n\}$. It follows that $D$ is invariant, locally closed and entirely contained in some $X_i$. Hence $\G_D$ is amenable. Since we can consider our algebra of sets as generated by $U_1,\dots,U_n, F_1^{c},\dots,F_n^{c}$, it follows that it suffices to prove the following version of the lemma. Assume we are given finitely many invariant open sets $U_1,\dots,U_N$ such that $\G_D$ is amenable for every atom $D$ in the algebra generated by these sets. Then $\G$ is amenable. We will prove this by induction on $N$.

Consider the case $N=1$, so we are given one open set $U$ such that both $\G_U$ and $\G_{U^c}$ are amenable. Then $C^*(\G_{U^c})=C^*_r(\G_{U^c})$. Hence we get a short exact sequence
$$
0\to C^*_r(\G_U)\to C^*_r(\G)\to C^*_r(\G_{U^c})\to0,
$$
see, e.g.~\cite{CN2}*{Proposition~1.2}. Since the class of nuclear C$^*$-algebras is closed under extensions, we conclude that $C^*_r(\G)$ is nuclear, so $\G$ is amenable.

For the induction step we consider separately the groupoids $\G_{U_N}$ and $\G_{U_N^c}$ together with the open subsets $U_1\cap U_N,\dots, U_{N-1}\cap U_N\subset U_N$ and $U_1\cap U_N^c,\dots,U_{N-1}\cap U_N^c\subset U_N^c$ to conclude that these groupoids are amenable by the inductive assumption. Then $\G$ is amenable by the case $N=1$.
\ep

\subsection{Quasi-orbit map and dense sets of primitive ideals}\label{ssec:quasi-orbits}
Recall that a subset $Y$ of a topological space $X$ is called \emph{very dense} if $Y\cap F$ is dense in $F$ for every closed subset $F\subset X$. The role of this notion is explained by the following property.

\begin{lemma} \label{lem:verydense}
If $Y\subset X$ is a very dense subset, then the map $F\mapsto F\cap Y$ defines an isomorphism between the lattice of closed subsets of~$X$ and the lattice of closed (in the relative topology) subsets of $Y$, with the inverse map given by $F\cap Y\mapsto \overline{F\cap Y}$.
\end{lemma}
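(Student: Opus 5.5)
We check directly that the two maps $\alpha\colon F\mapsto F\cap Y$ (closed sets of $X$ to relatively closed sets of $Y$) and $\beta\colon C\mapsto \overline{C}$ (closure taken in $X$) are mutually inverse and order preserving. Then $\alpha$ is an order isomorphism of posets, hence automatically a lattice isomorphism.

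First, $\alpha$ is well defined, since $F\cap Y$ is closed in the relative topology of $Y$. Likewise $\beta$ lands in the closed subsets of $X$. Both maps clearly preserve inclusion.

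Next, take a relatively closed $C\subset Y$. By definition of the relative topology we can write $C=F\cap Y$ for some closed $F\subset X$. Then $\overline{C}\subset F$, so $\overline{C}\cap Y\subset F\cap Y=C$. The reverse inclusion $C\subset\overline{C}\cap Y$ is trivial. Hence $\alpha\beta(C)=C$, and this step uses nothing about $Y$.

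Finally, take a closed $F\subset X$. Here we use the very density of $Y$: $F\cap Y$ is dense in $F$, so $\overline{F\cap Y}=F$ (the closure in $X$ equals the closure in $F$, as $F$ is closed). Thus $\beta\alpha(F)=F$. This is the only place the hypothesis enters, and it is precisely what makes $\alpha$ injective, so there is no genuine obstacle.

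The two inverse identities show that $\alpha$ is bijective with inverse $\beta$, and both are monotone. An order isomorphism preserves all existing meets and joins. Concretely, finite unions and arbitrary intersections correspond: for intersections this is immediate because $(\bigcap F_i)\cap Y=\bigcap(F_i\cap Y)$. This completes the proof.
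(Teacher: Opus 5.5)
Your proof is correct and complete. The paper states this lemma without proof and treats it as a standard fact, so there is nothing to compare against. Your argument is the routine verification the authors leave implicit: $\overline{C}\cap Y=C$ holds for every relatively closed $C\subset Y$ and any subset $Y$, while very density gives $\overline{F\cap Y}=F$. As you note, very density is exactly the injectivity of $F\mapsto F\cap Y$; conversely, if $F\cap Y$ is not dense in a closed set $F$, then $F$ and $\overline{F\cap Y}$ are distinct closed sets with the same trace on $Y$.
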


It follows that in order to describe all ideals of a C$^*$-algebra $A$ it suffices to understand some very dense subset of $\Prim A$.

Consider an amenable second countable Hausdorff locally compact \'etale groupoid $\G$. Then we know that every primitive ideal has the form $\Ind(x,J)$ for some $(x,J)\in\Stab(\G)^\prim$. It is not difficult to understand when a set of such ideals where $x$ runs through a subset of $\Gu$ is dense. Before we formulate the result, let us recall a few notions.

For a topological space $X$, denote by $X^\sim$ its \emph{$T_0$-ization}, also known as the Kolmogorov quotient, obtained by identifying points with identical closures. Any locally compact \'etale groupoid $\G$ acts on $\Gu$ by $ g.s(g)=r(g)$. Denote by $[x]$ the $\G$-orbit $r(\G_x)$ of $x$, and by $\G\backslash\Gu$ the orbit space. The $T_0$-ization of the orbit space coincides with the space of \emph{quasi-orbits} of the action $\G\curvearrowright\Gu$, that is, with the quotient of $\Gu$ by the equivalence relation
$$
x\sim y\quad\text{iff}\quad\overline{[x]}=\overline{[y]},
$$
see~\cite{CN3}*{Corollary~1.6}. Denote by $\QQ$ the quotient map $\Gu\to(\G\backslash\Gu)^\sim$, so $\QQ(x)$ is the quasi-orbit of $x\in\Gu$. By ~\cite{CN3}*{Corollary~1.5}, the map $\QQ$ is open and it establishes a one-to-one correspondence between the open $\G$-invariant subsets of $\Gu$ and the open subsets of $(\G\backslash\Gu)^\sim$, hence also a one-to-one correspondence between the closed $\G$-invariant subsets of~$\Gu$ and the closed subsets of $(\G\backslash\Gu)^\sim$. Using this, the following observation becomes straightforward.

\begin{lemma} \label{lem:recap}
For any subset $\Omega \subset \Gu$ and any closed $\G$-invariant subset $F \subset \Gu$, the set $r(\G_{\Omega})\cap F$ is dense in $F$ if and only if $\QQ(\Omega)\cap \QQ(F)$ is dense in $\QQ(F)$.
\end{lemma}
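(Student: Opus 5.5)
The plan is to transport the density question from $\Gu$ to the quasi-orbit space, using that $\QQ$ is continuous, open and surjective, and that $\QQ$ is constant on orbits. The first step is to record two facts about $\QQ$:
\begin{itemize}
\item $\QQ^{-1}(\QQ(A))=r(\G_A)$ for any $A\subset\Gu$. Since $r(\G_A)$ is the $\G$-saturation of $A$, this holds provided we check that each quasi-orbit class is a union of orbits, which it is.
\item $\QQ^{-1}(\QQ(F))=F$ for any closed invariant $F$, and $\QQ(F)$ is closed in $(\G\backslash\Gu)^\sim$, by the correspondence from~\cite{CN3}*{Corollary~1.5} recalled above.
\end{itemize}

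Putting these together, $r(\G_\Omega)\cap F=\QQ^{-1}(\QQ(\Omega)\cap\QQ(F))$, so it remains to show that, for an open continuous surjection $\QQ$ and a subset $B\subset\QQ(F)$, the set $\QQ^{-1}(B)$ is dense in $F=\QQ^{-1}(\QQ(F))$ if and only if $B$ is dense in $\QQ(F)$. This is a general topological argument.

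For the forward direction, let $V$ be open with $V\cap\QQ(F)\ne\emptyset$. Then $\QQ^{-1}(V)$ is open and meets $F$, hence it meets $\QQ^{-1}(B)$. Applying $\QQ$ shows that $V$ meets $B$.

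For the converse, let $U$ be open in $\Gu$ with $U\cap F\ne\emptyset$. Then $\QQ(U)$ is open by openness of $\QQ$, and it meets $\QQ(F)$, so it contains some $\QQ(y)\in B$ with $y\in U$. Since $\QQ(y)\in B$, we get $y\in\QQ^{-1}(B)\subset F$, so $U$ meets $\QQ^{-1}(B)$.

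The only step needing care is the first fact, namely that saturation with respect to $\QQ$ coincides with orbit saturation when restricted to sets of the form considered here. In the converse direction we used only that $y\in\QQ^{-1}(B)$, which avoids the subtlety: we need $\QQ^{-1}(\QQ(\Omega)\cap\QQ(F))\supset r(\G_\Omega)\cap F$, which is trivial, together with density of the former.

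On reflection the cleaner route is therefore to work with $\QQ^{-1}(\QQ(\Omega))\cap F$. A point $y$ of this set satisfies $\overline{[y]}=\overline{[x]}$ for some $x\in\Omega$, so $y\in\overline{[x]}\subset\overline{r(\G_\Omega)}$. Every neighbourhood of $y$ therefore meets $[x]\subset r(\G_\Omega)$, and these points lie in $F$ because $F$ is closed and invariant and $x\in F$. Hence $r(\G_\Omega)\cap F$ is dense in $\QQ^{-1}(\QQ(\Omega))\cap F$, which closes the gap and completes the argument.
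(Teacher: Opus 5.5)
Your final argument is correct, but the write-up asserts something false along the way, and you should remove it rather than patch around it. The first bullet, $\QQ^{-1}(\QQ(A))=r(\G_A)$, does not hold. That quasi-orbit classes are unions of orbits only gives $r(\G_A)\subset\QQ^{-1}(\QQ(A))$, and the inclusion is usually strict: if $\G$ is minimal with more than one orbit, then $\QQ^{-1}(\QQ(\{x\}))=\Gu\ne[x]$. Hence the identity $r(\G_\Omega)\cap F=\QQ^{-1}(\QQ(\Omega)\cap\QQ(F))$ is false.

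Your remark that the converse direction ``avoids the subtlety'' is also the wrong way round. The implication ``$r(\G_\Omega)\cap F$ dense in $F$ $\Rightarrow$ $B$ dense in $\QQ(F)$'' needs only the trivial inclusion $r(\G_\Omega)\cap F\subset\QQ^{-1}(B)$, where $B:=\QQ(\Omega)\cap\QQ(F)$. The implication ``$B$ dense $\Rightarrow$ $r(\G_\Omega)\cap F$ dense'' is exactly where more is needed. Your last paragraph supplies it. Every $y\in\QQ^{-1}(B)=\QQ^{-1}(\QQ(\Omega))\cap F$ lies in $\overline{[x]}$ for some $x\in\Omega\cap F$, so $\overline{r(\G_\Omega)\cap F}=\overline{\QQ^{-1}(B)}$. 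Combined with your continuity/openness argument that $\QQ^{-1}(B)$ is dense in $F$ if and only if $B$ is dense in $\QQ(F)$, this proves the lemma. State it that way from the outset and drop the first bullet.

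Your route also differs from the paper's. The paper works on the quotient side. It notes $\QQ(\Omega)\cap\QQ(F)=\QQ(\Omega\cap F)=\QQ(r(\G_\Omega)\cap F)$. It then uses $\overline{\QQ(A)}=\QQ(\overline{A})$ for $A=r(\G_\Omega)\cap F$, which is valid because $A$ and $\overline A$ are invariant and $\QQ$ sends closed invariant sets to closed sets. It concludes from injectivity of $\QQ$ on closed invariant subsets of $\Gu$.

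You instead pull everything back to $\Gu$ and use continuity and openness of $\QQ$ directly. You make explicit the one dynamical input, that orbit saturation is dense in quasi-orbit saturation, which the paper absorbs into the closure identity. Both arguments are equally short. The paper's is more compact because it leans on the lattice correspondence from \cite{CN3}*{Corollary~1.5}; yours is more elementary and self-contained.
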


\bp
Since $\QQ^{-1}(\QQ(F))=F$, we have $\QQ(\Omega)\cap\QQ(F)=\QQ(\Omega\cap F)=\QQ(r(\G_\Omega)\cap F)$. Since the set $r(\G_\Omega)\cap F$ and its closure are invariant, we also have $\overline{\QQ(r(\G_{\Omega})\cap F)}=\QQ\big(\,\overline{r(\G_{\Omega})\cap F}\,\big)$. Therefore $\overline{\QQ(\Omega)\cap\QQ(F)}=\QQ\big(\,\overline{r(\G_{\Omega})\cap F}\,\big)$. Hence $\overline{\QQ(\Omega)\cap\QQ(F)}=\QQ(F)$ if and only if $\overline{r(\G_{\Omega})\cap F}=F$.
\ep

Taking $F=\Gu$, we conclude that a subset $\Omega\subset\Gu$ has a dense image in $(\G\backslash\Gu)^\sim$ if and only if $r(\G_\Omega)$ is dense in $\Gu$. Taking arbitrary $\G$-invariant closed sets, we conclude that a subset $\Omega\subset\Gu$ has a very dense image in $(\G\backslash\Gu)^\sim$ if and only if $r(\G_\Omega)\cap F$ is dense in $F$ for every $\G$-invariant closed subset $F\subset\Gu$.

Returning to the second countable amenable case, there is a well-defined continuous map $p\colon\Prim C^*(\G)\to (\G\backslash\Gu)^\sim$ such that $p(\Ind(x,J))=\QQ(x)$, see~\cite{AHW}*{Definition~3.3}. Moreover, by~\cite{AHW}*{Theorem~3.5} this map is open. The following proposition gives a closely related result.

\begin{prop}\label{prop:dense}
Assume $\G$ is an amenable second countable Hausdorff locally compact \'etale groupoid and $\Omega\subset\Gu$ is a subset. Then the set
$$
\{\Ind(x,J):x\in\Omega,\ J\in\Prim C^*(\Gxx)\}
$$
is dense in $\Prim C^*(\G)$ if and only if the set $r(\G_\Omega)$ is dense in $\Gu$.
\end{prop}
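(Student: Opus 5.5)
Both directions go through the continuous open map $p\colon\Prim C^*(\G)\to(\G\backslash\Gu)^\sim$ with $p(\Ind(x,J))=\QQ(x)$, from \cite{AHW}*{Definition~3.3, Theorem~3.5}, together with Lemma~\ref{lem:recap} for $F=\Gu$. That lemma says $r(\G_\Omega)$ is dense in $\Gu$ if and only if $\QQ(\Omega)$ is dense in $(\G\backslash\Gu)^\sim$. Write $S_\Omega$ for the set of ideals $\Ind(x,J)$ with $x\in\Omega$ and $J\in\Prim C^*(\Gxx)$.

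\emph{Necessity.} Assume $S_\Omega$ is dense. Since $p$ is continuous and surjective (surjectivity by Theorem~\ref{thm:EH}), $p(S_\Omega)=\QQ(\Omega)$ is dense in $(\G\backslash\Gu)^\sim$. Indeed, $p(\overline{S_\Omega})\subset\overline{p(S_\Omega)}$, and the left-hand side is everything. By Lemma~\ref{lem:recap}, $r(\G_\Omega)$ is then dense.

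\emph{Sufficiency.} Assume $r(\G_\Omega)$ is dense, so $\QQ(\Omega)$ is dense. Let $\mathcal U\subset\Prim C^*(\G)$ be a nonempty open set; we must show it meets $S_\Omega$. Since $p$ is open, $p(\mathcal U)$ is a nonempty open set, so it contains some $\QQ(x)$ with $x\in\Omega$. This only gives some $I\in\mathcal U$ with $p(I)=\QQ(x)$, and $I$ need not be induced from $x$ itself. This is the main obstacle, and I would handle it by passing to ideals.

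Let $I_0=\bigcap_{I\notin\mathcal U}I$, a nonzero ideal since $\mathcal U\neq\emptyset$. A primitive ideal lies in $\mathcal U$ exactly when it does not contain $I_0$. So it suffices to find $x\in\Omega$ and $J$ with $I_0\not\subset\Ind(x,J)$. Suppose instead that $I_0\subset\Ind(x,J)$ for all $x\in\Omega$ and all $J$. I would use the following chain:
\begin{itemize}
\item Intersecting over $J$, and using that induction respects weak containment and that the irreducible representations of $\Gxx$ jointly weakly contain the regular one, we get $I_0\subset\ker\rho_x$ for all $x\in\Omega$.
\item By invariance of the kernel along orbits, $I_0\subset\ker\rho_y$ for all $y\in r(\G_\Omega)$.
\item For $a\in I_0$, the function $y\mapsto(\rho_y(a^*a)\delta_y,\delta_y)=E(a^*a)(y)$ is continuous and vanishes on the dense set $r(\G_\Omega)$. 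Hence $E(a^*a)=0$.
\item Faithfulness of $E$ on $C^*_r(\G)=C^*(\G)$, which holds by amenability, gives $a=0$.
\end{itemize}
So $I_0=0$, a contradiction. Hence $\mathcal U$ meets $S_\Omega$, and $S_\Omega$ is dense.

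This second argument bypasses $p$ entirely, and the same regular-representation argument could also give necessity. For that, one would choose an invariant open set $U$ disjoint from $r(\G_\Omega)$ and take the ideal $C^*(\G_U)$, which lies in every $\Ind(x,J)$ with $x\notin U$ but is nonzero when $U\neq\emptyset$.
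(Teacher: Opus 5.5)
Your proof is correct. The necessity direction is argued exactly as in the paper, although surjectivity of $p$ does not need Theorem~\ref{thm:EH}, since $\QQ(x)=p(\Ind(x,J))$ for any $J$.

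For sufficiency the mechanism is the same as the paper's: $\rho_x\prec\bigoplus_J\Ind^\G_{\Gxx}\pi_J$, and regular representations at a dense set of units detect everything. The packaging differs, however.
\begin{itemize}
\item The paper writes an arbitrary primitive ideal as $\Ind(y,I)$ via Theorem~\ref{thm:EH}. It then chains $\Ind^\G_{\G^y_y}\pi_I\prec\rho_y\prec\bigoplus_{x\in\Omega}\rho_x$, using amenability of $\G^y_y$ for the first step and citing \cite{NS}*{Proposition~1.4} for the second.
\item You instead show that the common kernel of your set $S_\Omega$ vanishes. You prove faithfulness of $\bigoplus_{y\in r(\G_\Omega)}\rho_y$ directly from the faithful conditional expectation $E$.
\end{itemize}

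Your route is self-contained. It shows that sufficiency needs neither the Effros--Hahn theorem nor amenability of the stabilizers, only $C^*(\G)=C^*_r(\G)$. Your alternative necessity argument via $C^*(\G_U)$ likewise avoids the map $p$ of \cite{AHW}. The paper's route is shorter given the citation, and it gives an explicit weak containment for each primitive ideal.

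Finally, your detour through $\mathcal U$ and $I_0$ is unnecessary. Since $\overline{S_\Omega}=\hull\bigl(\bigcap_{I\in S_\Omega}I\bigr)$ and $\bigcap_{I\in S_\Omega}I\subset\bigcap_{x\in\Omega}\ker\rho_x=0$, density is immediate.
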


\bp
Since $p\colon\Prim C^*(\G)\to (\G\backslash\Gu)^\sim$ is surjective and continuous, density of $\{\Ind(x,J):x\in\Omega,\ J\in\Prim C^*(\Gxx)\}$ in $\Prim C^*(\G)$ implies density of $\QQ(\Omega)$ in $(\G\backslash\Gu)^\sim$, which is in turn equivalent to density of $r(\G_\Omega)$ in $\Gu$.

Conversely, assume $r(\G_\Omega)$ is dense. Since the induction map is constant on the $\G$-orbits, without loss of generality we may assume that $\Omega$ is $\G$-invariant, so we assume that $\Omega$ itself is dense. Take a primitive ideal $\Ind(y,I)$. It is known that density of $\Omega$ implies that the representation $\rho_y$ is weakly contained in $\bigoplus_{x\in\Omega}\rho_x$, see \cite{NS}*{Proposition~1.4} for a short proof. Since the regular representation of an amenable group is weakly equivalent to the direct sum of all irreducible representations, it follows first that $\pi_{I} \prec \lambda_{\G_{y}^{y}}$ and then that
$$
\Ind^\G_{\G^y_y}\pi_I\prec \Ind^\G_{\G^y_y}\lambda_{\G_{y}^{y}} \prec \rho_y\prec\bigoplus_{x\in\Omega}\rho_x\sim\bigoplus_{x\in\Omega}\bigoplus_{J\in\Prim C^*(\Gxx)}\Ind^\G_{\Gxx}\pi_J.
$$
This means exactly that $\Ind(y,I)$ lies in the closure of $\{\Ind(x,J): x\in\Omega,\ J\in\Prim C^*(\Gxx)\}$.
\ep

As we discussed above, a much more interesting question is when a subset is very dense.

\begin{question} \label{ques}
Given a subset $\Omega\subset\Gu$ with a very dense image in $(\G\backslash\Gu)^\sim$, is the set
$$
\{\Ind(x,J):x\in\Omega,\ J\in\Prim C^*(\Gxx)\}
$$
very dense in $\Prim C^*(\G)$? Does the above set coincide with $\Prim C^*(\G)$ when $\Omega$ contains a representative of every quasi-orbit, that is, when $\QQ(\Omega)=(\G\backslash\Gu)^\sim$?
\end{question}

Note that by continuity of $p\colon\Prim C^*(\G)\to (\G\backslash\Gu)^\sim$ the image of any very dense subset of $\Prim C^*(\G)$ in $(\G\backslash\Gu)^\sim$ is very dense. Hence very density of the image of $\Omega\subset\Gu$ in $(\G\backslash\Gu)^\sim$ is necessary for very density of $\{\Ind(x,J):x\in\Omega,\ J\in\Prim C^*(\Gxx)\}$ in $\Prim C^*(\G)$.

\smallskip

As we will see, for the main examples of groupoids analyzed in this paper both parts of Question~\ref{ques} have positive answers.

\subsection{Dynamical ideals}

As we saw above, in the amenable second countable case the space $\Prim C^*(\G)$ is fibered over $(\G\backslash\Gu)^\sim$. 
The open subsets of $(\G\backslash\Gu)^\sim$, or equivalently, the open $\G$-invariant subsets of~$\Gu$, correspond to certain ideals of $C^*(\G)$ that make sense without any assumptions on $\G$.

Namely, let $\G$ be any Hausdorff locally compact \'etale groupoid. Given a nonempty $\G$-invariant open subset $U\subset\Gu$, it is well known that the C$^*$-algebra $C^*_r(\G_U)$ can be viewed as a C$^*$-subalgebra of $C^*_r(\G)$ and in fact becomes an ideal in it; we also let $C^*_r(\G_U):=0$ for $U=\emptyset$. In~\cite{BCS} it is proposed to call such ideals \emph{dynamical}, as they are determined by the orbit structure of the dynamical system $\G\curvearrowright\Gu$. We thus have an isomorphism $U\mapsto C^*_r(\G_U)$ of the lattice of invariant open subsets of $\Gu$ onto the lattice of dynamical ideals in $C^*_r(\G)$. The goal of this subsection is to give a couple of (more or less known) alternative characterizations of such ideals.

\smallskip

Given an invariant open set $U\subset\Gu$, the restriction map $C_c(\G)\to C_c(\G_{U^c})$, $f\mapsto f|_{\G_{U^c}}$, extends to a surjective $*$-homomorphism $C^*_r(\G)\to C^*_r(\G_{U^c})$ whose kernel contains $C^*_r(\G_U)$. A groupoid $\G$ is called \emph{inner exact} if the sequence
\begin{equation}\label{eq:inner}
0\to C^*_r(\G_U)\to C^*_r(\G)\to C^*_r(\G_{U^c})\to0
\end{equation}
is exact for every invariant open set $U\subset\Gu$. If $\G$ is amenable, then it is inner exact.

Recall that $E\colon C^*_r(\G)\to C_0(\Gu)$ denotes the canonical conditional expectation. An ideal $I\subset C^*_r(\G)$ is called \emph{diagonal-invariant} if $E(I)\subset I$. It is clear that every ideal $C^*_r(\G_U)$ is diagonal-invariant.

\begin{prop}[cf.~\cite{BL}*{Lemma~3.6}]\label{prop:diagonal}
Assume $\G$ is a Hausdorff locally compact \'etale groupoid. Then the following conditions are equivalent:
\begin{enumerate}
  \item the groupoid $\G$ is inner exact;
  \item every diagonal-invariant ideal of $C^*_r(\G)$ is dynamical.
\end{enumerate}
\end{prop}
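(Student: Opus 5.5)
Both implications hinge on one construction: to a diagonal-invariant ideal $I\subset C^*_r(\G)$ we attach the invariant open set
$$
U_I:=\{x\in\Gu: f(x)\ne0\ \text{for some}\ f\in I\cap C_0(\Gu)\}.
$$
The aim is to prove $I=C^*_r(\G_{U_I})$ using inner exactness. Conversely, if inner exactness fails for some $U$, we will exhibit a diagonal-invariant ideal that is not dynamical.

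\emph{Step 1: preliminaries on $U_I$.} The set $I\cap C_0(\Gu)$ is an ideal of $C_0(\Gu)$, so it equals $C_0(U_I)$ for the open set $U_I$. Invariance follows from conjugation by functions supported on open bisections. If $f\in I\cap C_0(\Gu)$ and $u\in C_c(W)$ for an open bisection $W$, then $u^*fu\in I\cap C_0(\Gu)$ and $u^*fu$ equals $|u|^2(f\circ r)$ transported to $s(W)$. Hence $x\in U_I$ implies $s(W)\cap r^{-1}(x)\subset U_I$, which gives invariance. Since $C_0(U_I)\subset I$ and $C^*_r(\G_{U_I})$ is the ideal generated by $C_0(U_I)$, we get $C^*_r(\G_{U_I})\subset I$. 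Diagonal invariance gives $E(I)\subset I\cap C_0(\Gu)=C_0(U_I)$.

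\emph{Step 2: (1)$\Rightarrow$(2).} Let $q\colon C^*_r(\G)\to C^*_r(\G_{U^c})$ be the restriction map, where $U=U_I$. We have $E_{U^c}\circ q=q\circ E$, where $E_{U^c}$ is the canonical expectation on $C^*_r(\G_{U^c})$. This is checked on $C_c(\G)$ and extended by continuity. Take $a\in I$. Then $a^*a\in I$, so $E(a^*a)\in C_0(U)$ and therefore $E_{U^c}(q(a^*a))=0$. Faithfulness of $E_{U^c}$ gives $q(a)=0$, so $a\in\ker q$. By inner exactness $\ker q=C^*_r(\G_U)$, hence $I\subset C^*_r(\G_U)$. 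Together with Step 1 this gives $I=C^*_r(\G_U)$, which is dynamical.

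\emph{Step 3: (2)$\Rightarrow$(1).} Fix an invariant open set $U$ and put $I:=\ker q$. It is an ideal containing $C^*_r(\G_U)$, and it is diagonal-invariant: if $q(a)=0$, then $q(E(a))=E_{U^c}(q(a))=0$. By (2), $I=C^*_r(\G_V)$ for some invariant open $V$. Then $I\cap C_0(\Gu)=C_0(V)$. On the other hand, $I\cap C_0(\Gu)$ consists of those functions vanishing on $U^c$, which is $C_0(U)$. Hence $V=U$, so $\ker q=C^*_r(\G_U)$, and the sequence \eqref{eq:inner} is exact.

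The main obstacle is making the identities used in Step 1 rigorous. The first is that $C^*_r(\G_U)$ is exactly the ideal generated by $C_0(U)$ inside $C^*_r(\G)$; this holds because $C_c(\G_U)$ is spanned by products $f\cdot g$ with $f\in C_c(U)$, $g\in C_c(\G)$. The second is the intertwining $E_{U^c}\circ q=q\circ E$. Both are routine checks on $C_c$ followed by density arguments.
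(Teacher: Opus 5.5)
Your proof is correct and follows the paper's argument. For (1)$\Rightarrow$(2), showing $q(I)=0$ via $E_{U^c}\circ q=q\circ E$ and faithfulness of $E_{U^c}$ is the paper's ``pass to the quotient $C^*_r(\G_{U^c})$ and use faithfulness of the expectation'' step written out directly, and for (2)$\Rightarrow$(1) you use the same diagonal-invariant ideal $\ker q$, argued directly rather than by contrapositive; the only blemish is notational, since the invariance conclusion in Step 1 should read $s(W\cap r^{-1}(x))\subset U_I$.
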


\bp
Assume (1) holds. Let $I\subset C^*_r(\G)$ be a diagonal-invariant ideal and $U\subset \Gu$ be the invariant open set such that $I\cap C_0(\Gu)=C_0(U)$. Then $C^*_r(\G_U)\subset I$ and we want to prove that in fact the equality holds. Since the map $C^*_r(\G)\to C^*_r(\G_{U^c})$ respects the canonical conditional expectations, by using inner exactness and passing to the quotient $C^*_r(\G)/C^*_r(\G_U)\cong C^*_r(\G_{U^c})$, we may assume that $I\cap C_0(\Gu)=0$, and then we need to prove that $I=0$. The condition $E(I)\subset I$ implies that $E=0$ on $I$. As the conditional expectation $E$ is faithful, it follows that $I=0$. Therefore (1)$\Rightarrow$(2).

For the opposite implication, assume that $\G$ is not inner exact, so that there exists an invariant open set $U\subset\Gu$ such that~\eqref{eq:inner} is not exact. Consider the kernel $I$ of the map $C^*_r(\G)\to C^*_r(\G_{U^c})$. Then $I\cap C_0(\Gu)=C_0(U)$ and $I\ne C^*_r(\G_U)$, so the ideal $I$ is not dynamical. This ideal is diagonal-invariant, since the map $C^*_r(\G)\to C^*_r(\G_{U^c})$ respects the canonical conditional expectations.
\ep

Next, recall that a groupoid $\G$ is called \emph{effective} if the interior $\Iso$ of its isotropy bundle coincides with $\Gu$. It is called \emph{strongly effective} if the reduced groupoid $\G_X$ is effective for every nonempty invariant  closed  subset $X\subset\Gu$.

Given a $\Gamma$-grading of $\G$, consider the corresponding coaction $\delta_r\colon C^*_r(\G)\to C^*_r(\G)\otimes C^*_r(\Gamma)$.
An ideal $I\subset C^*_r(\G)$ is called \emph{gauge-invariant} if $\delta_r(I)\subset I\otimes C^*_r(\Gamma)$. Clearly, every dynamical ideal is gauge-invariant.

\begin{prop}\label{prop:gauge}
Assume $\G$ is a Hausdorff locally compact \'etale groupoid graded by a discrete group $\Gamma$, with grading $\Phi\colon\G\to\Gamma$. Consider the following conditions:
\begin{enumerate}
  \item the groupoid $\G$ is inner exact and, for every nonempty $\G$-invariant closed  subset $X\subset\Gu$, the reduced groupoid $\Phi^{-1}(e)_X$ is effective;
  \item every gauge-invariant ideal of $C^*_r(\G)$ is dynamical.
\end{enumerate}
Then $(1)\Rightarrow(2)$. If the units $x$ with amenable isotropy groups~$\Gxx$ are dense in every $\G$-invariant closed subset $X\subset\Gu$ (for example, if $\G$ is amenable) and $\Gamma$ is exact, then $(2)\Rightarrow(1)$.
\end{prop}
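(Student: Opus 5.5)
For $(1)\Rightarrow(2)$ I will reduce to Proposition~\ref{prop:diagonal}. Let $I$ be a gauge-invariant ideal and $U\subset\Gu$ the invariant open set with $I\cap C_0(\Gu)=C_0(U)$. By inner exactness we may pass to the quotient $C^*_r(\G)/C^*_r(\G_U)\cong C^*_r(\G_{U^c})$. This quotient map is compatible with the coactions, so the image of $I$ is gauge-invariant and meets $C_0(U^c)$ trivially. We may therefore assume $I\cap C_0(\Gu)=0$ and must show $I=0$.

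Gauge-invariance gives $E_\Gamma(I)\subset I$, where $E_\Gamma=(\iota\otimes\tau)\circ\delta_r$ is the faithful conditional expectation onto $C^*_r(\Phi^{-1}(e))$ from the proof of Proposition~\ref{prop:graded-amen}. Hence $J:=I\cap C^*_r(\Phi^{-1}(e))$ is an ideal of $C^*_r(\Phi^{-1}(e))$ with $J\cap C_0(\Gu)=0$.

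Since $\Phi^{-1}(e)$ is effective (take $X=\Gu$, or $X=U^c$ after the reduction), the standard intersection property for effective étale groupoids gives that every nonzero ideal of $C^*_r(\Phi^{-1}(e))$ meets $C_0(\Gu)$. So $J=0$. Then for $a\in I$ we get $E_\Gamma(a^*a)\in J=0$, and faithfulness of $E_\Gamma$ yields $I=0$.

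One subtlety is the intersection property for effective $\G$ in the reduced setting. I will use the known result: if $\G$ is effective, then every nonzero ideal of $C^*_r(\G)$ meets $C_0(\Gu)$. This follows from the fact that $E$ is approximately implemented by compressions by functions in $C_0(\Gu)$ on an effective groupoid, so it needs no second countability in the Hausdorff case.

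For $(2)\Rightarrow(1)$ I argue by contrapositive, checking the two parts of (1) separately.

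\emph{Inner exactness.} If $\G$ is not inner exact, the proof of Proposition~\ref{prop:diagonal} produces the non-dynamical ideal $\ker(C^*_r(\G)\to C^*_r(\G_{U^c}))$. This ideal is gauge-invariant because the restriction map intertwines the coactions. To be careful, this uses that $\delta_r$ on $C^*_r(\G_{U^c})$ is injective, so that the kernel is mapped into kernel $\otimes\, C^*_r(\Gamma)$. Exactness of $\Gamma$ is needed precisely here, to identify $\ker(\pi\otimes\iota)$ with $\ker\pi\otimes C^*_r(\Gamma)$.

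\emph{Effectiveness.} Suppose $\G$ is inner exact but $\Phi^{-1}(e)_X$ is not effective for some closed invariant $X$. Passing to $\G_X$ via inner exactness, we may assume $X=\Gu$. There is then an open bisection $W\subset\Phi^{-1}(e)$ of isotropy, not contained in $\Gu$. Using density of units with amenable isotropy, I pick such a unit $x\in s(W)$ with $W\cap\Gxx\ni g\ne x$.

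The candidate ideal is the kernel of a gauge-invariant family of representations, for example $\bigoplus_{\chi}$ of $\Ind^\G_{\Gxx}$ applied to representations trivial on the degree-zero interior isotropy. Simpler is to take $I=\ker\rho$, with $\rho=\bigoplus_{y\in\Gu}\Ind^\G_{S_y}\epsilon$, where $S_y=(\Iso)^y_y\cap\Phi^{-1}(e)$. This family is invariant under twisting by the grading, so $I$ is gauge-invariant. It contains $f-f|_{\Gu}$-type elements built from $f\in C_c(W)$, so it is nonzero, yet $\rho$ is faithful on $C_0(\Gu)$. Thus $I$ is gauge-invariant, $I\cap C_0(\Gu)=0$, and $I\ne0$, so $I$ is not dynamical.

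The main obstacle is making this last construction rigorous: showing that the induced representations from the trivial character of interior degree-zero isotropy factor through the reduced algebra, and that the resulting kernel is gauge-invariant. This is where amenability of isotropy is used (trivial $\prec$ regular on amenable groups) and exactness of $\Gamma$ is used to handle the coaction on kernels.
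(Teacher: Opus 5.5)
Your argument for $(1)\Rightarrow(2)$ is correct and takes a shorter route than the paper. Both proofs make the same reduction to $I\cap C_0(\Gu)=0$, and both use the intersection property for the effective groupoid $\Phi^{-1}(e)$. The paper uses it to show $I_e=0$. It then concludes $I=0$ because an approximate unit of $I_e$ is an approximate unit of $I$, which requires Lemma~\ref{lem:spandense} and hence the Podle\'s condition. You instead note that $E_\Gamma=(\iota\otimes\tau)\circ\delta_r$ maps $I$ into $I\cap C^*_r(\Phi^{-1}(e))=I_e=0$, and then use faithfulness of $E_\Gamma$. That faithfulness follows from $E\circ E_\Gamma=E$, which you should state. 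This bypasses the spectral-subspace machinery entirely.

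The non-inner-exact half of $(2)\Rightarrow(1)$ is the paper's argument. Injectivity of $\delta_r$ on $C^*_r(\G_{U^c})$ plays no role there; only $(q\otimes\iota)\circ\delta_r=\delta_r'\circ q$ and exactness of $\Gamma$ are used.

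The genuine gap is in the non-effective case, at exactly the step you flag, and it has two parts.

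\emph{The representation $\rho$.} As written, $\rho=\bigoplus_{y\in\Gu}\Ind^\G_{S_y}\epsilon$ is not known to be a representation of $C^*_r(\G)$. For units $y$ with nonamenable $S_y$ the step $\epsilon\prec\lambda_{S_y}$ is unavailable. Nothing in your argument shows that $\Ind^\G_{S_y}\epsilon$ is weakly contained in the regular representations, and the hypothesis controls only a dense set of units.

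\emph{Gauge-invariance.} ``Invariance under twisting by the grading'' only makes sense for abelian $\Gamma$. For general $\Gamma$ you would have to prove $(\rho\otimes\lambda)\circ\delta_r\cong\rho\otimes1$, via $\delta_{gS_y}\otimes\delta_t\mapsto\delta_{gS_y}\otimes\delta_{\Phi(g)t}$. You would then need exactness to pass from $\delta_r(\ker\rho)\subset\ker(\rho\otimes\iota)$ to $\delta_r(\ker\rho)\subset\ker\rho\otimes C^*_r(\Gamma)$.

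Both problems disappear with three changes.

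(a) Sum only over the dense set $D$ of units with amenable isotropy. Then $\Ind^\G_{S_y}\epsilon\prec\Ind^\G_{S_y}\lambda_{S_y}\cong\rho_y$, so $\rho$ factors through $C^*_r(\G)$. Density of $D$ makes $\rho$ isometric on $C_0(\Gu)$.

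(b) Check $\rho(f)=\rho(f_0)$ for $f\in C_c(W)$ with $W\subset\operatorname{Iso}(\Phi^{-1}(e))^\circ\setminus\Gu$. The unique $h\in W$ with $r(h)=r(g)$ lies in $S_{r(g)}$. By normality of $\IsoG^\circ$ and since $\Phi(h)=e$, we get $g^{-1}h^{-1}g\in S_y$, hence $\xi(h^{-1}g)=\xi(g)$.

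(c) Take $I$ to be the ideal \emph{generated} by $f-f_0$ rather than $\ker\rho$. Then $I$ is gauge-invariant for free, because $\delta_r(f-f_0)=(f-f_0)\otimes1$ and $C^*_r(\Gamma)$ is unital. It is nonzero, since $E(f-f_0)=-f_0\neq0$. And $I\subset\ker\rho$ gives $I\cap C_0(\Gu)=0$.

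This is precisely the paper's route, where the last fact is quoted from the proof of \cite{CN4}*{Proposition~4.23}.
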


When $\Phi$ is trivial, the proposition implies that if $\G$ is inner exact and strongly effective, then the map $U\mapsto C^*_r(\G_U)$ defines an isomorphism between the lattice of invariant open subsets of~$\Gu$ and the ideal lattice in $C^*_r(\G)$. This result essentially goes back to Renault~\cite{Rbook}*{Proposition II.4.6} (but note that the formulation in~\cite{Rbook} misses the assumption of inner exactness), see the discussion in~\cite{CN4}*{Section~4.5}. In the general case the implication $(1)\Rightarrow(2)$ is deduced from this by analyzing spectral subspaces for the coaction of $\Gamma$. In the context of Steinberg algebras of ample groupoids this has been done by Clark, Exel and Pardo~\cite{MR3794898}*{Theorem~5.3} under the formally stronger assumption of strong effectiveness of the groupoid $\Phi^{-1}(e)$. In order to deal with our C$^*$-algebraic case we need to recall some facts about group coactions.

Assume $I\subset C^*_r(\G)$ is a gauge-invariant ideal. Denote the generators of $C^*_r(\Gamma)$ by $\lambda_\gamma$, and recall that $\tau$ denotes the canonical trace on $C^*_r(\Gamma)$. For $\gamma\in\Gamma$, the spectral subspace $I_\gamma\subset I$ is defined by
$$
I_\gamma:=\{a\in I:\delta_r(a)=a\otimes\lambda_\gamma\}.
$$

\begin{lemma}\label{lem:spandense}
Assume $\G$ is a Hausdorff locally compact \'etale groupoid graded by a discrete group $\Gamma$, and $I\subset C^*_r(\G)$ is a gauge-invariant ideal. Then the spaces $I_\gamma$ ($\gamma\in\Gamma$) span a dense subspace of $I$.
\end{lemma}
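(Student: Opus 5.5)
The plan is to use the standard spectral projections of the coaction. For each $\gamma\in\Gamma$, define
$$E_\gamma:=(\iota\otimes\tau)\big((1\otimes\lambda_\gamma^*)\delta_r(\cdot)\big)\colon C^*_r(\G)\to C^*_r(\G),$$
where $\tau$ is the canonical trace and $\iota\otimes\tau$ is the slice map (a contraction on the minimal tensor product).

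First, we identify $E_\gamma$ on $C_c(\G)$. If $f\in C_c(W)$ for an open bisection $W$ with $\Phi(W)=\{\eta\}$, then $\delta_r(f)=f\otimes\lambda_\eta$. Hence $E_\gamma(f)=\tau(\lambda_{\gamma^{-1}\eta})f$, which is $f$ if $\eta=\gamma$ and $0$ otherwise. Since $\Phi$ is continuous and $\Gamma$ is discrete, every $f\in C_c(\G)$ is a finite sum of such functions. So $E_\gamma(f)=f|_{\Phi^{-1}(\gamma)}$ and $f=\sum_\gamma E_\gamma(f)$, with only finitely many nonzero terms. By continuity, $E_\gamma$ maps $C^*_r(\G)$ into the spectral subspace $C^*_r(\G)_\gamma$. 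Indeed, the identity $\delta_r(E_\gamma(a))=E_\gamma(a)\otimes\lambda_\gamma$ holds on the dense subalgebra $C_c(\G)$, and both sides are continuous in $a$.

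Second, we use gauge-invariance. For $a\in I$ we have $\delta_r(a)\in I\otimes C^*_r(\Gamma)$, so $(1\otimes\lambda_\gamma^*)\delta_r(a)\in I\otimes C^*_r(\Gamma)$. Slice maps $\iota\otimes\tau$ send $I\otimes C^*_r(\Gamma)$ into $I$; this holds on algebraic tensors, and the closed ideal $I$ absorbs the limit. Therefore $E_\gamma(I)\subset I\cap C^*_r(\G)_\gamma=I_\gamma$.

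Third, and this is the main obstacle, we must show that $a\in I$ lies in the closed span of $\{E_\gamma(a)\}_\gamma$. We cannot simply write $a=\sum_\gamma E_\gamma(a)$, because this series need not converge in norm. I would use Fejér-type averaging. Take an approximation $f\in C_c(\G)$ of $a$; it lies in $C^*_r(\G)$ but not necessarily in $I$, so working with $f$ directly does not suffice. Instead, use that $\Gamma$ acts by multipliers on $C^*_r(\Gamma)$: for a finitely supported positive definite function $\varphi$ on $\Gamma$ with $\varphi(e)=1$, the Schur multiplier $m_\varphi\colon\lambda_\gamma\mapsto\varphi(\gamma)\lambda_\gamma$ is completely bounded with cb-norm at most $1$. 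Hence $(\iota\otimes\tau)\big((\iota\otimes m_\varphi)\delta_r(a)\big)=\sum_\gamma\varphi(\gamma)\ldots$ is problematic, since $\tau$ only picks out the identity component. The better move is to build the map $\Psi_\varphi:=(\iota\otimes\tau)\circ\ldots$ differently. Let $a\mapsto\sum_\gamma\varphi(\gamma)E_\gamma(a)$; on $C_c(\G)$ this is the Schur multiplier $f\mapsto(\varphi\circ\Phi)f$. Since $\varphi\circ\Phi$ is positive definite on $\G$ (pulled back from $\Gamma$), this map is completely positive and contractive on $C^*_r(\G)$. It sends $I$ into $\operatorname{span}\bigcup_\gamma I_\gamma$, because the sum is finite and $E_\gamma(I)\subset I_\gamma$.

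To finish, we need a net of such $\varphi$ converging to $1$ pointwise with uniformly bounded multipliers. Pointwise convergence gives $\sum\varphi_i(\gamma)E_\gamma(f)\to f$ for $f\in C_c(\G)$, and an $\varepsilon/3$ argument then gives convergence on all of $C^*_r(\G)$, in particular on $I$. Such a net exists when $\Gamma$ is amenable or, more generally, has the approximation property; for general $\Gamma$ it does not. In that case I would instead argue by duality: if a functional $\omega\in I^*$ vanishes on all $I_\gamma$, then $\omega\circ E_\gamma=0$ on $I$ for every $\gamma$. One then shows $\omega=0$ using that $(\omega\otimes\iota)\delta_r(a)\in C^*_r(\Gamma)$ has all Fourier coefficients $\omega(E_\gamma(a))$ equal to zero, and an element of $C^*_r(\Gamma)$ with vanishing Fourier coefficients is zero, since $\tau$ is faithful. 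Applying $\varepsilon\otimes\iota$, where $\varepsilon$ is the counit, to recover $a$ requires care. Instead, one uses the coaction identity $(\iota\otimes\tau)\big((1\otimes x)\delta_r(a)\big)$ together with the nondegeneracy $\overline{\delta_r(C^*_r(\G))(1\otimes C^*_r(\Gamma))}=C^*_r(\G)\otimes C^*_r(\Gamma)$, which follows from the generators. This duality step is the crux, and I expect it to require the most work.
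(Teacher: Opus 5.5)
Your first two steps are correct: on $C_c(\G)$ the map $E_\gamma$ is restriction to $\Phi^{-1}(\gamma)$, it lands in the $\gamma$-spectral subspace, and gauge-invariance gives $E_\gamma(I)\subset I_\gamma$. This is the easy inclusion in the paper's formula~\eqref{eq:spectral}. Your Fej\'er-type averaging also proves the lemma when $\Gamma$ is amenable. (A net of finitely supported positive definite $\varphi_i\to1$ pointwise exists exactly in that case; the approximation property does not give uniformly bounded multipliers.)

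The lemma, however, is stated and used in Proposition~\ref{prop:gauge} for arbitrary $\Gamma$, and there your duality step does not close. You correctly obtain $(\omega\otimes\iota)\delta_r(a)=0$ for all $a\in I$; this is where $\delta_r(I)\subset I\otimes C^*_r(\Gamma)$ is used. To conclude $\omega=0$, you need the span of $(1\otimes C^*_r(\Gamma))\delta_r(I)$ to be dense in $I\otimes C^*_r(\Gamma)$, i.e.\ the Podle\'s condition for the restriction of $\delta_r$ to $I$. The nondegeneracy you invoke is for $C^*_r(\G)$ instead. It approximates $b\otimes y$, $b\in I$, by sums $\sum_i(1\otimes x_i)\delta_r(a_i)$ with $a_i\in C^*_r(\G)$, not $a_i\in I$. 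For such $a_i$ you know nothing about $(\omega\otimes\iota)\delta_r(a_i)$, even after extending $\omega$ by Hahn--Banach.

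The whole-algebra statement is easy only because $C_c(\G)$ is spanned by homogeneous elements, and for $I$ you have no such generators. In fact the Podle\'s condition for $I$ is equivalent to the lemma itself, since $b\otimes y=(1\otimes y\lambda_\gamma^*)\delta_r(b)$ for $b\in I_\gamma$. So the step you flag as the crux is exactly the content of the statement, and it remains unproved.

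The missing ingredient is the one the paper imports. By \cite{MR1010978}*{Corollaire~7.15}, injectivity of $\delta_r$ (restricted to $I$, which maps into $I\otimes C^*_r(\Gamma)$ by gauge-invariance) implies the Podle\'s condition for $I$. With it, no duality is needed. Approximate $a\otimes1$, $a\in I$, by finite sums $\sum_i(1\otimes\lambda_{\gamma_i})\delta_r(b_i)$ with $b_i\in I$, and apply the contraction $\iota\otimes\tau$. This approximates $a$ by $\sum_iE_{\gamma_i^{-1}}(b_i)\in\operatorname{span}\bigcup_\gamma I_\gamma$. Your duality argument would also go through once this density is available.

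Some input of this kind is unavoidable. Invariance of $I$ under all the maps $E_\gamma$ is strictly weaker than $\delta_r(I)\subset I\otimes C^*_r(\Gamma)$ when $C^*_r(\Gamma)$ is not exact. The full spatial-tensor-product invariance therefore has to enter through something like the Podle\'s condition, not just through the Fourier coefficient maps.
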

\bp
By \cite{MR1010978}*{Corollaire 7.15}, injectivity of~$\delta_r$ implies that the \emph{Podle\'{s} condition} is satisfied: the space $(1\otimes C^*_r(\Gamma))\delta_r(I)$, spanned by the elements $(1\otimes a)\delta_r(b)$ ($a\in C^*_r(\Gamma)$, $b\in I$), is dense in $I\otimes C^*_r(\Gamma)$. This implies the lemma by the following formula for the spectral subspaces:
\begin{equation}\label{eq:spectral}
I_\gamma=(\iota\otimes\tau)\big((1\otimes\lambda_\gamma^*)\delta_r(I)\big).
\end{equation}
This is a standard formula in the theory of group coactions, we include a short proof for the reader's convenience.

Since $(\iota\otimes\tau)\big((1\otimes\lambda_\gamma^*)\delta_r(\cdot)\big)$ acts as the identity operator on $I_\gamma$, the inclusion $\subset$ is clear. For the opposite inclusion, take $a\in I$ and let $b:=(\iota\otimes\tau)\big((1\otimes\lambda_\gamma^*)\delta_r(a)$. Since $(\delta_r\otimes\iota)\circ\delta_r=(\iota\otimes\Delta)\circ\delta_r$, where $\Delta\colon C^*_r(\Gamma)\to C^*_r(\Gamma)\otimes C^*_r(\Gamma)$ is the comultiplication defined by $\Delta(\lambda_\gamma)=\lambda_\gamma\otimes\lambda_\gamma$, we get that
\begin{equation}\label{eeeq}
\delta_{r}(b)=(\iota\otimes\iota\otimes\tau)\big((1\otimes 1\otimes\lambda_\gamma^*)( \iota \otimes \Delta)(\delta_{r}(a))\big) .
\end{equation}
Since $(\iota\otimes\tau)\big((1\otimes\lambda_\gamma^*)\Delta(c)\big)=\tau(\lambda_\gamma^*c)\lambda_\gamma$ for $c\in C^*_r(\Gamma)$, which can be checked on the generators of $C^*_r(\Gamma)$, we have the following identity for all $c\in C^*_r(\G)\otimes C^*_r(\Gamma)\otimes C^*_r(\Gamma)$:
$$
(\iota\otimes\iota\otimes\tau)\big((1\otimes 1\otimes\lambda_\gamma^*)( \iota \otimes \Delta)(c)\big)=(\iota\otimes \tau)((1\otimes \lambda_{\gamma}^{*} )c) \otimes \lambda_{\gamma}.
$$
Combining this with \eqref{eeeq}, we see that $\delta_{r}(b)=b\otimes \lambda_{\gamma}$, which finishes the proof of~\eqref{eq:spectral}.
\ep

\bp[Proof of Proposition~\ref{prop:gauge}]
Assume (1) holds. Consider a gauge-invariant ideal $I\subset C^*_r(\G)$. Let $U\subset\Gu$ be the $\G$-invariant open set such that $I\cap C_0(\Gu)=C_0(U)$. We want to show that $I=C^*_r(\G_U)$. Letting $\pi$ denote the canonical homomorphism $C_{r}^{*}(\G)\to C_{r}^{*}(\G_{U^{c}})$ and $\delta_{r}'$ be the coaction defined by the grading of $\G_{U^{c}}$, we see that $(\pi\otimes \iota) \circ \delta_{r} = \delta_{r}' \circ \pi$. Hence $\pi(I)$ is a gauge-invariant ideal in $C^*_r(\G_{U^c})$. As in the proof of Proposition~\ref{prop:diagonal}, by passing to the quotient $C^*_r(\G_{U^c})\cong C^*_r(\G)/C^*_r(\G_U)$, it therefore suffices to consider the case $U=\emptyset$, that is, $I\cap C_0(\Gu)=0$.

The spectral subspace $I_e$ is an ideal in $C^*_r(\G)_e=C^*_r(\Phi^{-1}(e))$. By assumption, the groupoid $\Phi^{-1}(e)$ is effective. Since $I_e\cap C_0(\Gu)=0$, this is known to imply that $I_e=0$, see, e.g., \cite{CN4}*{Proposition~4.23}. Any approximate unit $(e_i)_i$ in $I_e$ is an approximate unit in $I$ by Lemma \ref{lem:spandense}, since if $a\in I_\gamma$ for some~$\gamma$, then $a^*a\in I_e$ and
$$
\|a-ae_i\|^{2}=\|(1-e_i)a^*a(1-e_i)\|\le\|a^*a-a^*ae_i\|.
$$
It follows that the ideal $I$ is generated by $I_e$, so we conclude that also $I=0$. Thus, (1)$\Rightarrow$(2).

\smallskip

Assume now that the units with amenable isotropy groups are dense in every $\G$-invariant closed subset $X\subset\Gu$, the group $\Gamma$ is exact, but (1) does not hold. We need to show that then there is a nondynamical gauge-invariant ideal. Assume first that $\G$ is not inner exact, so that there exists an invariant open set $U\subset\Gu$ such that~\eqref{eq:inner} is not exact. As in the proof of Proposition~\ref{prop:diagonal}, consider the kernel $I$ of the quotient map $q\colon C^*_r(\G)\to C^*_r(\G_{U^c})$. Then $I$ is nondynamical and gauge-invariant, since $q$ is equivariant with respect to the coaction of $\Gamma$ and $I\otimes C^*_r(\Gamma)$ is the kernel of the map $q\otimes\iota\colon C^*_r(\G)\otimes C^*_r(\Gamma)\to C^*_r(\G_{U^c})\otimes C^*_r(\Gamma)$ by exactness of $\Gamma$.

Assume next that $\G$ is inner exact and there is a nonempty $\G$-invariant closed subset $X\subset\Gu$ such that the reduced groupoid $\Phi^{-1}(e)_X$ is not effective. By passing to a quotient of~$C^*_r(\G)$ we may assume that $X=\Gu$. We can then argue similarly to the case of trivial grading, cf.~\citelist{\cite{MR1088230}*{Theorem~4.1}\cite{MR1258035}*{Theorem~2}\cite{MR3189105}*{Theorem~5.1}}. Namely, by assumption there is an open bisection~$W$ of~$\G$ contained in $\operatorname{Iso}(\Phi^{-1}(e))^\circ\setminus\Gu$. Take any nonzero function $f\in C_c(W)$. Define a function $f_0\in C_c(r(W))$ by $f_0(r(g)):=f(g)$ for $g\in W$. Then the ideal $I\subset C^*_r(\G)$ generated  by $f-f_0$ is nonzero and $I\cap C_0(\Gu)=0$, see the proof of the implication $(2)\Rightarrow(1)$ in~\cite{CN4}*{Proposition~4.23}. Therefore $I$ is nondynamical. As $\delta_r(f-f_0)=(f-f_0)\otimes1$, this ideal is obviously gauge-invariant.
\ep

\begin{remark}
When $\Gamma$ is abelian, then $C^*_r(\Gamma)\cong C(\widehat\Gamma)$ and every character $\chi\in\widehat\Gamma$ defines a gauge automorphism $\alpha_\chi$ of $C^*_r(\G)$ such that for all $f\in C_c(\G)$ and $g\in\G$ we have
$$
\alpha_\chi(f)(g)=\chi(\Phi(g))f(g).
$$
Under the identification $C_r^{*}(\G)\otimes C^*_r(\Gamma)\simeq C(\widehat{\Gamma}; C_r^{*}(\G))$, the automorphism $\alpha_\chi$ is obtained by applying $\delta_{r}\colon C^*_r(\G)\to C(\widehat{\Gamma}; C_r^{*}(\G))$ and then evaluating at $\chi$. It follows that the condition $\delta_r(I)\subset I\otimes C^*_r(\Gamma)$ is equivalent to $\alpha_\chi(I)=I$ for all $\chi\in\widehat\Gamma$.\ee
\end{remark}

It is sometimes convenient to think about the condition on $\Phi$ in Proposition~\ref{prop:gauge} in a different way.

\begin{lemma}\label{lem:ess-injectivity}
Assume $\G$ is a Hausdorff locally compact \'etale groupoid graded by a discrete group~$\Gamma$, with grading $\Phi\colon\G\to\Gamma$. Consider the following conditions:
\begin{enumerate}
  \item[(1)] the groupoid $\Phi^{-1}(e)$ is effective;
  \item[(2)] the set of units $x$ such that $\Phi$ is injective on $\Gxx$ is dense in $\Gu$;
  \item[(3)] the set of units $x$ such that $\Phi$ is injective on $\Gxx$ is residual in $\Gu$.
\end{enumerate}
Then $(3)\Rightarrow(2)\Rightarrow(1)$. If $\G$ can be covered by countably many open bisections, then all three conditions are equivalent.
\end{lemma}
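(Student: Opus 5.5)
(3)$\Rightarrow$(2) is immediate from the Baire category theorem: $\Gu$ is locally compact Hausdorff, so residual subsets are dense.

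For (2)$\Rightarrow$(1), argue by contraposition. Suppose $\Phi^{-1}(e)$ is not effective. Then there is a nonempty open bisection $W\subset\operatorname{Iso}(\Phi^{-1}(e))^\circ\setminus\Gu$; since $\Phi^{-1}(e)$ is open in $\G$, $W$ is also an open bisection of $\G$. Every $g\in W$ satisfies $r(g)=s(g)$, $\Phi(g)=e$ and $g\ne r(g)$. So at each $x\in r(W)$, the nonempty open set, the isotropy group $\Gxx$ contains a nontrivial element in $\ker\Phi$, and $\Phi$ is not injective there. Hence the set of units with injective $\Phi|_{\Gxx}$ misses $r(W)$ and is not dense, contradicting (2).

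For (1)$\Rightarrow$(3), assume $\G$ is covered by countably many open bisections $W_n$, and that $\Phi^{-1}(e)$ is effective. We may intersect each $W_n$ with the open sets $\Phi^{-1}(\gamma)$ and $\G\setminus\Gu$ (closed, as $\G$ is Hausdorff and $\Gu$ is open), removing the units, so assume each $W_n$ lies in $\Phi^{-1}(e)\setminus\Gu$ or misses $\Phi^{-1}(e)$; we only need those in the first class. A unit $x$ fails injectivity exactly when some $g\in\Phi^{-1}(e)\cap\Gxx$ has $g\ne x$, i.e. when $x\in A_n:=\{s(g):g\in W_n,\ r(g)=s(g)\}$ for some $n$ with $W_n\subset\Phi^{-1}(e)\setminus\Gu$. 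Each $A_n$ is closed in $s(W_n)$: it is the image under the homeomorphism $s|_{W_n}$ of the closed set $\{g\in W_n: r(g)=s(g)\}$, closed because $\Gu$ is Hausdorff. I would then show the closure $\overline{A_n}$ has empty interior in $\Gu$, so the complement of $\bigcup_n\overline{A_n}$ is residual and consists of units where $\Phi$ is injective on isotropy.

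The main obstacle is the nowhere-density of $A_n$, where effectiveness enters. If $\overline{A_n}$ contained a nonempty open $V$, then $V\cap s(W_n)$ is open and nonempty (as $V\subset\overline{A_n}\subset\overline{s(W_n)}$), and $V\cap A_n=V\cap s(W_n)\cap A_n$ is relatively closed in it and dense in it, hence equal to it. Thus $W_n\cap s^{-1}(V)$ is a nonempty open bisection made of isotropy elements of $\Phi^{-1}(e)$ not in $\Gu$. This contradicts effectiveness of $\Phi^{-1}(e)$. Care is needed to check the density/closedness step precisely, but it is standard.
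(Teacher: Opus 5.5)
Your proof is correct and follows essentially the same route as the paper: (3)$\Rightarrow$(2) by Baire, and (2)$\Rightarrow$(1) because density of units with $\Phi^{-1}(e)^x_x=\{x\}$ rules out a nonempty open bisection of nontrivial kernel-isotropy. The only difference is in (1)$\Rightarrow$(3): the paper cites \cite{CN2}*{Lemma~5.2} (the units where the isotropy of $\Phi^{-1}(e)$ equals its interior form a residual set), whereas your nowhere-density argument for the sets $A_n$ over the countably many bisections in $\Phi^{-1}(e)\setminus\Gu$ proves that Baire-category fact directly.
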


\bp
Obviously (3) implies (2). Condition (2) is equivalent to saying that the set of units $x$ such that $\Phi^{-1}(e)^x_x=\{x\}$ is dense in $\Gu$. This is clearly stronger than the effectiveness condition $\operatorname{Iso}(\Phi^{-1}(e))^\circ=\Gu$. If~$\G$, and hence $\Phi^{-1}(e)$, can be covered by countably many bisections, then the three conditions are equivalent, since the set of units $x$ such that $\operatorname{Iso}(\Phi^{-1}(e))^\circ_x=\Phi^{-1}(e)^x_x$ is residual in $\Gu$, see, e.g., \cite{CN2}*{Lemma~5.2}.
\ep

\bigskip

\section{The ideal structure of graded groupoids with essentially central isotropy}\label{sec:ess-isotropy}

\subsection{Groupoids with essentially central isotropy}

The class of groupoids we are interested in can be introduced in several ways. As a main definition we take the following.

\begin{defn}
Assume $\G$ is a Hausdorff locally compact \'etale groupoid graded by a discrete group $\Gamma$, with grading $\Phi\colon\G\to\Gamma$. We say that the graded groupoid $\G$ has \emph{essentially central isotropy} if, for every $x\in\Gu$, the map $\Phi$ is injective on $\IsoGx{x}^\circ_x$ and $\Phi(\IsoGx{x}^\circ_x)\subset Z(\Gamma)$.
\end{defn}

Here $\IsoGx{x}^\circ$ denotes the interior, relative to the reduced groupoid~$\G_{\overline{[x]}}$, of the isotropy bundle of~$\G_{\overline{[x]}}$. Its fiber $\IsoGx{x}^\circ_x$ at $x$ is called the \emph{essential isotropy group} of~$x$. We recall that for any \'etale groupoid $\HH$ the groupoid $\operatorname{Iso}(\HH)^\circ$ is normal in $\HH$ in the sense that $h \operatorname{Iso}(\HH)^\circ_{s(h)}h^{-1}=\operatorname{Iso}(\HH)^\circ_{r(h)}$ for all $h\in\HH$. The closure $\overline{\operatorname{Iso}(\HH)^\circ}$ is a normal subgroupoid of $\HH$ as well.

The following lemma implies among other things that it suffices to check the conditions on~$\Phi$ on representatives of quasi-orbits of the action $\G\curvearrowright\Gu$.

\begin{lemma}\label{lem:iso}
Assume $\G$ is a Hausdorff locally compact \'etale groupoid graded by a discrete group~$\Gamma$, with grading $\Phi\colon\G\to\Gamma$.
Assume $x\in\Gu$ is a unit such that $\overline{[x]}=\Gu$, $\Phi$ is injective on $\Iso_x$ and $\Phi(\Iso_x)\subset Z(\Gamma)$. Then, for any $y\in\Gu$, we have:
\begin{enumerate}
  \item $\Phi$ is injective on $\overline{\IsoG^\circ}\cap\G^y_y$;
  \item $\Phi(\overline{\IsoG^\circ}\cap\G^y_y)\subset\Phi(\Iso_x)$;
  \item $\overline{\IsoG^\circ}\cap\G^y_y\subset Z(\G^y_y)$.
\end{enumerate}
In particular, if $\overline{[y]}=\Gu$, then
$$
\overline{\IsoG^\circ}\cap\G^y_y=\Iso_y\quad\text{and}\quad \Phi(\Iso_y)=\Phi(\Iso_x).
$$
\end{lemma}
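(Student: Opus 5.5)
Since $x$ has a dense orbit, the plan is to move elements of the interior of the isotropy bundle near $y$ over to neighbourhoods of $x$, where $\Phi$ is injective and central. The tool is open bisections $W\subset\IsoG^\circ$ with $\Phi$ constant on $W$ (possible because $\Phi$ is continuous and $\Gamma$ is discrete).

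\emph{Step 1.} Take $g\in\overline{\IsoG^\circ}\cap\G^y_y$. Choose an open bisection $W\ni g$ with $\Phi|_W\equiv\Phi(g)$. Since $g$ lies in the closure of $\IsoG^\circ$, the open set $W\cap\IsoG^\circ$ is nonempty. Its image $r(W\cap\IsoG^\circ)$ is open, and every element $h$ of $W\cap\IsoG^\circ$ satisfies $r(h)=s(h)$. By density of $[x]$ there is some $z\in[x]$ with $z=r(h)$ for such an $h$. Pick $k\in\G$ with $s(k)=x$ and $r(k)=z$. By normality of $\IsoG^\circ$, the element $k^{-1}hk$ lies in $\Iso_x$. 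Because $\Gamma$ is abelian on the relevant elements, or more directly because $\Phi(h)$ is central, we get $\Phi(k^{-1}hk)=\Phi(k)^{-1}\Phi(h)\Phi(k)=\Phi(h)$; this uses $\Phi(\Iso_x)\subset Z(\Gamma)$ to conclude that $\Phi(h)=\Phi(k)\Phi(k^{-1}hk)\Phi(k)^{-1}$ is central and equals $\Phi(k^{-1}hk)$. Hence $\Phi(g)=\Phi(h)\in\Phi(\Iso_x)$, which proves (2). It also shows $\Phi(g)$ is central.

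\emph{Step 2 (injectivity, the main point).} Suppose $g_1,g_2\in\overline{\IsoG^\circ}\cap\G^y_y$ have $\Phi(g_1)=\Phi(g_2)$. Take bisections $W_1\ni g_1$ and $W_2\ni g_2$ as above. The product $W_2^{-1}W_1$ is an open bisection containing $g_2^{-1}g_1$, and $\Phi\equiv e$ on it. I want to show $g_2^{-1}g_1=y$. Suppose not. Then, by Hausdorffness, shrink the bisections so that $W_2^{-1}W_1\cap\Gu=\emptyset$.

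The obstacle is that $g_2^{-1}g_1$ is in the closure of $\IsoG^\circ$, but nearby elements of $W_1\cap\IsoG^\circ$ and $W_2\cap\IsoG^\circ$ need not sit over the same units. To handle this, I would use the open set $V:=r(W_1\cap\IsoG^\circ)\cap r(W_2\cap\IsoG^\circ)$. I would argue it is nonempty, with $y$ in its closure. For this I would use the fact that $\overline{\IsoG^\circ}$ is a subgroupoid and that in an étale groupoid a closure point of $\IsoG^\circ$ inside a bisection $W$ is approximated by points of $W\cap\IsoG^\circ$ over a set of units having $r(g)$ in its closure. Nonemptiness of the intersection then needs care. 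If this fails, I would instead use that the units over which $\IsoG^\circ$ coincides with the full isotropy form a residual set, as in Lemma~\ref{lem:ess-injectivity}, to pick good units.

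Given some $u\in V$, take $h_i\in W_i\cap\IsoG^\circ$ with $r(h_i)=u$. Then $h_2^{-1}h_1\in\IsoG^\circ_u$ has $\Phi=e$ and is not a unit. Moving $u$ into $[x]$ (it is an open condition) and conjugating to $x$ as in Step 1 gives a nontrivial element of $\Iso_x$ in $\ker\Phi$, a contradiction. This proves (1).

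\emph{Step 3.} For (3), take $g\in\overline{\IsoG^\circ}\cap\G^y_y$ and $a\in\G^y_y$. The element $aga^{-1}$ lies in $\overline{\IsoG^\circ}\cap\G^y_y$ by normality of the closure. It has $\Phi(aga^{-1})=\Phi(g)$ because $\Phi(g)$ is central by Step 1. By (1), $aga^{-1}=g$.

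For the final claim, if $\overline{[y]}=\Gu$, apply (1)–(3) with the roles of $x$ and $y$ swapped. The containment $\IsoG^\circ_y\subset\overline{\IsoG^\circ}\cap\G^y_y$ together with (2) in both directions gives $\Phi(\Iso_y)=\Phi(\Iso_x)$. Then injectivity on the larger group yields equality of the two subgroups of $\G^y_y$.
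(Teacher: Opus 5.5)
Your Step 1 is correct and is essentially the paper's proof of (2). The paper approximates $g$ by conjugates $h_ig_ih_i^{-1}$ with $g_i\in\Iso_x$; you take one element of $W\cap\IsoG^\circ$ over a point of $[x]$ and conjugate it back to $x$.

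The gap is in Step 2. Nothing ensures that $V=r(W_1\cap\IsoG^\circ)\cap r(W_2\cap\IsoG^\circ)$ is nonempty, since two open sets accumulating at $y$ need not meet. Your residual-set fallback needs countably many bisections, which is not assumed, and in any case it does not address where the two interiors sit. The paper never compares two elements. It proves only the kernel statement: if $g\in\overline{\IsoG^\circ}\cap\G^y_y$ and $\Phi(g)=e$, then $g=y$. Your Step 1 gives this at once if you choose $W\ni g$ with $W\cap\Gu=\emptyset$ and $\Phi|_W\equiv e$, because a point of $W\cap\IsoG^\circ$ over $[x]$ would be conjugate to a non-unit of $\Iso_x\cap\ker\Phi$. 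The paper then relies on its earlier remark that $\overline{\IsoG^\circ}$ is a normal subgroupoid, so the kernel statement can be applied to $g_2^{-1}g_1$.

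That remark, which you also invoke, is not true in general, and neither are (1) and (3) for arbitrary $y$. So Step 2 cannot be completed as stated. A counterexample:
\begin{itemize}
\item Let $\Gu=\{y\}\cup\{p_j:j\in\Z\setminus\{0\}\}$ be the one-point compactification of $\Z\setminus\{0\}$.
\item Let $\G$ consist of the isolated arrows $(p_i,l,p_j)$, $l\in\Z$, together with $\G^y_y=\{a^mb^n\}\cong\Z^2$.
\item Declare $a^mb^n$ to be the limit of $(p_{j-n},m+n,p_j)$ as $j\to-\infty$ and of $(p_{j+m},m+n,p_j)$ as $j\to+\infty$.
\item Put $\Phi(p_i,l,p_j)=l$ and $\Phi(a^mb^n)=m+n$.
\end{itemize}
Distinct elements of $\G^y_y$ are approximated by arrows that differ for every $j$. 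Hence this is a Hausdorff, second countable \'etale groupoid with a continuous $\Z$-grading, and $x=p_1$ satisfies all the hypotheses.

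Yet $a=\lim_{j\to-\infty}(p_j,1,p_j)$ and $b=\lim_{j\to+\infty}(p_j,1,p_j)$ both lie in $\overline{\IsoG^\circ}\cap\G^y_y$ with $\Phi(a)=\Phi(b)=1$, while $ab\notin\overline{\IsoG^\circ}$. Here your $V$ is empty. Adjoining $c=\lim_{|j|\to\infty}(p_{-j},0,p_j)$ gives $cac^{-1}=b$, so (3) fails as well.

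The following statements do hold:
\begin{itemize}
\item Statement (2), and the kernel statement above.
\item $\IsoG^\circ\cdot\overline{\IsoG^\circ}\subset\overline{\IsoG^\circ}$: multiply approximants of the second factor by elements of a bisection inside $\IsoG^\circ$.
\item Consequently $\Phi$ is injective on the group $\Iso_y$, and $\Iso_y\subset Z(\G^y_y)$.
\item The final claim. If $\overline{[y]}=\Gu$ and $g\in\overline{\IsoG^\circ}\cap\G^y_y$, first get $\Phi(\Iso_y)=\Phi(\Iso_x)$ by symmetry. Then pick $g'\in\Iso_y$ with $\Phi(g')=\Phi(g)$ and apply the kernel statement to $g'^{-1}g$.
\end{itemize}
This is all the paper uses later. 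Your Step 3 and last paragraph should be run with $\Iso_y$ in place of $\overline{\IsoG^\circ}\cap\G^y_y$.
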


\bp
Take $g\in \overline{\IsoG^\circ}\cap\G^y_y$. Since the orbit of $x$ is dense in $\Gu$ and hence $\bigcup_{z\in[x]}\Iso_z$ is dense in~$\IsoG^\circ$, we can find nets of elements $h_i\in\G_x$ and $g_i\in\Iso_x$ such that $h_ig_ih_i^{-1}\to g$. Then $\Phi(g_i)\to\Phi(g)$, hence $\Phi(g_i)=\Phi(g)$ for $i$ large enough. This already proves (2). If $\Phi(g)=e$, then we also get $\Phi(g_i)=e$ for all $i$ large enough, hence $g_i=x$ by the injectivity of $\Phi$ on $\Iso_x$. It follows that $h_ig_ih_i^{-1}=r(h_i)$, hence $g=y$, proving (1).

To prove (3), take $g\in \overline{\IsoG^\circ}\cap\G^y_y$ and $h\in\G^y_y$. Then by (2) and the centrality of $\Phi(\Iso_x)$ we have $\Phi(hgh^{-1})=\Phi(g)$. Since $\overline{\IsoG^\circ}\cap\G^y_y$ is a normal subgroup of~$\G^y_y$ and $\Phi$ is injective on $\overline{\IsoG^\circ}\cap\G^y_y$ by (1), it follows that $hgh^{-1}=g$, proving (3).

Finally, if $\overline{[y]}=\Gu$, then by (1) and (2) the unit $y$ satisfies the same assumptions as $x$, hence by symmetry we have $\Phi(\Iso_x)\subset\Phi(\Iso_y)$. This implies the last two identities in the statement of the lemma.
\ep

It can be a nontrivial task to understand the interior of an isotropy bundle, so we will next characterize essential centrality only in terms of isotropy groups. For this we need the following lemma, which will also be useful later.

\begin{lemma}\label{lem:nice-points}
Assume $\G$ is a second countable Hausdorff locally compact \'etale groupoid. Then, for every $x\in \Gu$, the set of points $y\in\overline{[x]}$ such that $\overline{[y]}=\overline{[x]}$ and $\IsoGx{x}^\circ_y=\G^y_y$ is residual, in particular dense, in $\overline{[x]}$.
\end{lemma}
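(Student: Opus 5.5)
Replacing $\G$ by the reduced groupoid $\G_{\overline{[x]}}$, which is again a second countable Hausdorff locally compact étale groupoid with unit space the closed set $X:=\overline{[x]}$, I may assume that $[x]$ is dense in $\Gu$. Then $\IsoGx{x}^\circ=\IsoG^\circ$, and I need two facts. First, the set $A$ of units $y$ with $\overline{[y]}=\Gu$ is residual. Second, the set $B$ of units $y$ with $\Iso_y=\G^y_y$ is residual. The intersection $A\cap B$ is then residual. Since $\Gu$ is locally compact Hausdorff, it is a Baire space, so $A\cap B$ is also dense.

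For $B$ I would invoke the fact already used in the proof of Lemma~\ref{lem:ess-injectivity}, namely \cite{CN2}*{Lemma~5.2}: if $\G$ can be covered by countably many open bisections, then the units $y$ with $\Iso_y=\G^y_y$ form a residual set. Second countability gives such a cover, since $\G$ has a countable base and the open bisections form a base. The idea of that argument, which I would reproduce if needed, is the following. Fix a countable cover by open bisections $W_n$ and let $D_n:=\{s(g): g\in W_n,\ s(g)=r(g)\}$, which is closed in $s(W_n)$. The set of bad points is contained in $\bigcup_n\big(D_n\setminus \operatorname{int}D_n\big)$, where the interior is taken in $s(W_n)$. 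Each $D_n\setminus\operatorname{int}D_n$ is closed in the open set $s(W_n)$ and has empty interior, hence is nowhere dense in $\Gu$. So the bad set is meager.

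For $A$, let $\{U_n\}$ be a countable base of nonempty open sets of $\Gu$. A unit $y$ has dense orbit if and only if $y\in r(\G_{U_n})$ for every $n$, that is, $y\in\bigcap_n r(\G_{U_n})$. Each $r(\G_{U_n})$ is open and invariant, and it is dense because it is a nonempty invariant open set containing points of the dense orbit $[x]$, and any invariant open set meeting $[x]$ contains $[x]$. So $A$ is a countable intersection of dense open sets, hence residual.

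The main obstacle I expect is the residuality of $B$, in particular checking that a point $y$ with $\Iso_y\ne\G^y_y$ really lies in some $D_n\setminus\operatorname{int}D_n$. Take $g\in\G^y_y$ with $g\notin\IsoG^\circ$ and some $W_n\ni g$. If $y$ were in $\operatorname{int}D_n$, then the part of $W_n$ above a neighborhood of $y$ would consist of isotropy, putting $g$ in the interior of the isotropy bundle, a contradiction. Since this is exactly \cite{CN2}*{Lemma~5.2}, I would cite it. The remaining point is to verify that the reduction to $\G_{\overline{[x]}}$ preserves second countability and the étale property, which holds because $\overline{[x]}$ is closed and invariant.
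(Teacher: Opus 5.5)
Your proof is correct and takes essentially the paper's route. The paper simply cites \cite{CN2}*{Lemmas~5.2 and~6.2}, which supply your two ingredients: residuality of the units $y$ with $\Iso_y=\G^y_y$, and of the units with dense orbit after passing to $\G_{\overline{[x]}}$. Your written-out arguments for both residuality claims, and the Baire category step that combines them, are sound.
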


\bp
This follows from \cite{CN2}*{Lemmas~5.2 and~6.2}.
\ep

\begin{prop}\label{prop:ess-central-alt}
Assume $\G$ is a second countable Hausdorff locally compact \'etale groupoid  graded by a discrete group~$\Gamma$, with grading $\Phi\colon\G\to\Gamma$. Consider the sets
$$
\Omega_1:=\{x\in\Gu: \Phi\ \text{is injective on}\ \Gxx\}\quad\text{and}\quad\Omega_2:=\{x\in\Gu: \Phi(\Gxx)\subset Z(\Gamma)\}.
$$
Then $\G$ has essentially central isotropy if and only if, for every $\G$-invariant closed  subset $X\subset\Gu$, the sets $\Omega_i\cap X$ ($i=1,2$) are dense in $X$.
\end{prop}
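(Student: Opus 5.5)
Both directions will be reduced, inside a fixed invariant closed set, to the residual set of ``nice'' points from Lemma~\ref{lem:nice-points}; Lemma~\ref{lem:iso} then transfers properties between points with the same orbit closure.

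\emph{Necessity.} Assume $\G$ has essentially central isotropy and let $X\subset\Gu$ be a nonempty invariant closed set. We must show that $\Omega_1\cap X$ and $\Omega_2\cap X$ are dense in $X$. Fix a nonempty open set $V\subset\Gu$ with $V\cap X\ne\emptyset$ and pick $x\in V\cap X$. Then $\overline{[x]}\subset X$ and $V\cap\overline{[x]}\ne\emptyset$. By Lemma~\ref{lem:nice-points} there is $y\in V\cap\overline{[x]}$ such that $\overline{[y]}=\overline{[x]}$ and $\IsoGx{x}^\circ_y=\G^y_y$. Since $\overline{[y]}=\overline{[x]}$, the groupoids $\G_{\overline{[y]}}$ and $\G_{\overline{[x]}}$ coincide. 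Hence the definition of essentially central isotropy, applied at $y$, says that $\Phi$ is injective on $\IsoGx{y}^\circ_y=\G^y_y$ and that $\Phi(\G^y_y)\subset Z(\Gamma)$. So $y\in\Omega_1\cap\Omega_2\cap X\cap V$, and both intersections are dense.

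\emph{Sufficiency.} Assume the density conditions hold and fix $x\in\Gu$. Put $X:=\overline{[x]}$ and pass to the groupoid $\G_X$, whose unit space $X$ has a dense orbit. Let $R\subset X$ be the residual set from Lemma~\ref{lem:nice-points}, consisting of points $y$ with $\overline{[y]}=X$ and $\IsoGx{x}^\circ_y=\G^y_y$. It suffices to find one $y\in R\cap\Omega_1\cap\Omega_2$. Indeed, such $y$ satisfies the hypotheses of Lemma~\ref{lem:iso} for $\G_X$: $\Phi$ is injective on $\Iso(\G_X)^\circ_y=\G^y_y$, this group has central image, and $[y]$ is dense in $X$. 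The final assertion of Lemma~\ref{lem:iso}, applied with $x$ in place of $y$ (allowed since $\overline{[x]}=X$), then gives that $\Phi$ is injective on $\IsoGx{x}^\circ_x$ and that $\Phi(\IsoGx{x}^\circ_x)=\Phi(\Iso(\G_X)^\circ_y)\subset Z(\Gamma)$. This is exactly essentially central isotropy at $x$.

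\emph{Main obstacle.} The difficulty is producing a single point lying in $R$, $\Omega_1$ and $\Omega_2$ simultaneously, since mere density of $\Omega_1\cap X$ and $\Omega_2\cap X$ does not obviously allow intersecting them. I would first upgrade density to residuality, using second countability as in Lemma~\ref{lem:ess-injectivity}.

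For $\Omega_1$, the set $\Omega_1\cap X$ consists of the units where $\Phi^{-1}(e)_X$ has trivial isotropy. By the density assumption, applied to the invariant closed subsets of $X$ (these are the invariant closed subsets of $\Gu$ contained in $X$), the groupoid $\Phi^{-1}(e)_X$ is effective. The equivalence $(2)\Leftrightarrow(3)$ of Lemma~\ref{lem:ess-injectivity} for $\G_X$ then makes $\Omega_1\cap X$ residual.

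For $\Omega_2$, I would argue via bisections. Cover $\G_X$ by countably many open bisections $W$ with $\Phi(W)=\{\gamma\}$ for some $\gamma\notin Z(\Gamma)$. The complement of $\Omega_2$ in $X$ is the countable union of the sets $A_W:=\{z: \G^z_z\cap W\ne\emptyset\}$. Each $A_W$ is closed in $r(W)$, because on $W$ the condition $r=s$ is closed. Its interior must be empty: otherwise some open set of $X$ would avoid $\Omega_2$, contradicting density of $\Omega_2\cap X$ (density in $X$ suffices here). A countable union of relatively closed nowhere dense sets is meagre, so $\Omega_2\cap X$ is residual.

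Finally, $X$ is locally compact Hausdorff, hence Baire, so $R\cap\Omega_1\cap\Omega_2$ is nonempty, which completes the argument.
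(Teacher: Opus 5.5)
Your proof is correct. The necessity direction is what the paper intends (``immediate from the definitions and Lemma~\ref{lem:nice-points}''). For sufficiency you handle $\Omega_2$ by a different route from the paper.

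\textbf{The paper's route.} After reducing to $\Gu=\overline{[x_0]}$, the paper upgrades only $\Omega_1$ to a residual set via Lemma~\ref{lem:ess-injectivity}. It then picks $x\in\Omega_1$ with dense orbit and $\Iso_x=\Gxx$, so by Lemma~\ref{lem:iso} it remains to show $\Phi(\Iso_x)\subset Z(\Gamma)$. This is done by approximation. For $g\in\Iso_x$, take $x_n\in\Omega_2$ with $x_n\to x$. Since $g$ lies in an open bisection made of isotropy, there are $g_n\in\G^{x_n}_{x_n}$ with $g_n\to g$. Local constancy of $\Phi$ then gives $\Phi(g)=\Phi(g_n)\in Z(\Gamma)$ for large $n$.

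\textbf{Your route.} You prove by a separate Baire-category argument that $\Omega_2\cap X$ is itself residual. You cover the non-central part of $\G_X$ by countably many bisections $W$ on which $\Phi$ is constant. The sets $A_W$ are relatively closed in $r(W)$ and have empty interior, hence are nowhere dense. You then intersect three residual sets in the Baire space $X$.

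\textbf{Comparison.} The paper's approximation step is shorter and avoids a second Baire argument. It also shows $\Phi(\IsoGx{x}^\circ_x)\subset Z(\Gamma)$ at every point, using only density of $\Omega_2$ near $x$. Your route instead yields residuality of $\Omega_2\cap X$ directly. The paper records that fact only afterwards in a remark, deducing it from centrality of the essential isotropy and residuality of $\{x:\Iso_x=\Gxx\}$.
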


\bp
The ``only if'' direction is immediate from the definitions and the previous lemma. For the opposite direction we need to consider the reductions of $\G$ by orbit closures, so without loss of generality we may assume in addition that $\Gu=\overline{[x_0]}$ for some $x_0$ and need to prove that~$\Phi$ is injective on $\Iso_x$ and~$\Phi(\Iso_x)\subset Z(\Gamma)$ for all $x\in\Gu$ with $\overline{[x]}=\Gu$.

By assumption, the set $\Omega_1$ is dense in $\Gu$. By Lemma~\ref{lem:ess-injectivity} if follows that it is in fact residual. By Lemma~\ref{lem:nice-points} we can then find $x\in\Omega_1$ such that $\overline{[x]}=\Gu$ and $\Iso_x=\Gxx$. Then, by Lemma~\ref{lem:iso}, in order to finish the proof it suffices to show that $\Phi(\Iso_x)\subset Z(\Gamma)$. Choose a sequence of elements $x_n\in\Omega_2$ converging to $x$. For every $g\in\Iso_x$ we can then find elements $g_n\in\G^{x_n}_{x_n}$ such that $g_n\to g$. Then $\Phi(g_n)=\Phi(g)$ for all $n$ sufficiently large, and since $\Phi(g_n)\in Z(\Gamma)$, we conclude that $\Phi(g)\in Z(\Gamma)$.
\ep

\begin{remark}
If $\G$ is second countable and has essentially central isotropy, then the sets $\Omega_1\cap X$ and $\Omega_2\cap X$, and hence also $\Omega_1\cap\Omega_2\cap X$, are in fact residual in $X$ for every $\G$-invariant closed subset $X\subset\Gu$. In order to see this it suffices to consider $X=\Gu$. Then $\Omega_1$ is residual by Lemma~\ref{lem:ess-injectivity}. As for $\Omega_2$, the same reasoning as above shows that density of $\Omega_2$ implies that $\Phi(\Iso_x)\subset Z(\Gamma)$ for all $x$. Since the set $\Omega_0:=\{x: \Iso_x=\Gxx\}$ is residual and $\Omega_0\subset\Omega_2$, we conclude that $\Omega_2$ is residual.\ee
\end{remark}

\begin{cor}
Assume $\G$ is a second countable Hausdorff locally compact \'etale groupoid  graded by a discrete group~$\Gamma$, with grading $\Phi\colon\G\to\Gamma$. Assume also that $\G$ has essentially central isotropy and is inner exact. Then an ideal $I\subset C^*_r(\G)$ is dynamical if and only if it is gauge-invariant.
\end{cor}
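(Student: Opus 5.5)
The plan is to deduce the corollary from Proposition~\ref{prop:gauge}, implication $(1)\Rightarrow(2)$, using the essential central isotropy hypothesis only to verify the effectiveness part of condition~(1). One direction needs nothing: every dynamical ideal is gauge-invariant, as observed before Proposition~\ref{prop:gauge}. For the converse, inner exactness is assumed, so it remains to show that for every nonempty $\G$-invariant closed subset $X\subset\Gu$ the reduced groupoid $\Phi^{-1}(e)_X$ is effective.

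Fix such an $X$. The groupoid $\G_X$ is again a second countable Hausdorff locally compact \'etale groupoid, graded by the restriction of $\Phi$, and it can be covered by countably many open bisections. By Lemma~\ref{lem:ess-injectivity}, implication $(2)\Rightarrow(1)$ applied to $\G_X$, it suffices to show that the set of units $x\in X$ with $\Phi$ injective on $(\G_X)^x_x=\Gxx$ is dense in $X$. In the notation of Proposition~\ref{prop:ess-central-alt}, this set is exactly $\Omega_1\cap X$. Since $\G$ has essentially central isotropy and is second countable, the ``only if'' part of Proposition~\ref{prop:ess-central-alt} says that $\Omega_1\cap X$ is dense in $X$. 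This completes the verification of~(1), and Proposition~\ref{prop:gauge} then shows that every gauge-invariant ideal is dynamical.

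The only point needing care is the ``only if'' direction of Proposition~\ref{prop:ess-central-alt}, which the paper calls immediate. To spell it out: let $X$ be closed invariant and $U\subset X$ nonempty and relatively open. By Lemma~\ref{lem:nice-points}, applied to some $x$ with $\overline{[x]}$ meeting $U$, we find $y\in U$ with $\overline{[y]}=\overline{[x]}$ and $\IsoGx{x}^\circ_y=\G^y_y$. Concretely, take $x\in U$: then $\overline{[x]}\cap U$ is a nonempty relatively open subset of $\overline{[x]}$, and the residual set of Lemma~\ref{lem:nice-points} meets it. For this $y$ we have $\IsoGx{y}^\circ_y=\IsoGx{x}^\circ_y=\G^y_y$, so essential central isotropy at $y$ gives that $\Phi$ is injective on $\G^y_y$. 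Hence $y\in\Omega_1\cap U$, which proves density. I do not expect any real obstacle beyond keeping track of reductions to $X$.
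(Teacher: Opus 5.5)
Your proposal is correct and follows the paper's own argument. You use the ``only if'' part of Proposition~\ref{prop:ess-central-alt} together with the implication $(2)\Rightarrow(1)$ of Lemma~\ref{lem:ess-injectivity} to get that $\Phi^{-1}(e)_X$ is effective for every nonempty closed invariant $X$, and then conclude with Proposition~\ref{prop:gauge}; your explicit derivation of that ``only if'' direction from Lemma~\ref{lem:nice-points} is also sound.
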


\bp
By Proposition~\ref{prop:ess-central-alt} and Lemma~\ref{lem:ess-injectivity}, the groupoid $\Phi^{-1}(e)_X$ is effective for every nonempty $\G$-invariant closed subset $X\subset\Gu$. Hence the corollary follows from Proposition~\ref{prop:gauge}.
\ep

In our applications we will mostly be interested in the case where $\Gamma$ is abelian. In this case, using again Proposition~\ref{prop:ess-central-alt} and Lemma~\ref{lem:ess-injectivity}, we get the following result.

\begin{cor}\label{cor:effective-vs-central}
Assume $\G$ is a second countable Hausdorff locally compact \'etale groupoid  graded by a discrete abelian group~$\Gamma$, with grading $\Phi\colon\G\to\Gamma$. Then $\G$ has essentially central isotropy if and only if the groupoid $\Phi^{-1}(e)_X$ is effective for every nonempty $\G$-invariant closed subset $X\subset\Gu$.
\end{cor}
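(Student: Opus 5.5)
Since $\Gamma$ is abelian, $Z(\Gamma)=\Gamma$, so $\Omega_2=\Gu$ and the centrality part of the definition is automatic. By Proposition~\ref{prop:ess-central-alt}, $\G$ therefore has essentially central isotropy if and only if $\Omega_1\cap X$ is dense in $X$ for every $\G$-invariant closed subset $X\subset\Gu$. It remains to show that this density condition is equivalent to effectiveness of $\Phi^{-1}(e)_X$ for every nonempty $\G$-invariant closed $X$.

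Fix a nonempty $\G$-invariant closed subset $X\subset\Gu$. The reduced groupoid $\G_X$ is a second countable Hausdorff locally compact \'etale groupoid, because $X$ is closed. It is graded by $\Phi|_{\G_X}$, and we have $(\Phi|_{\G_X})^{-1}(e)=\Phi^{-1}(e)_X$. The isotropy groups of $\G_X$ at points of $X$ coincide with those of $\G$, so the set of units of $\G_X$ on whose isotropy $\Phi$ is injective is exactly $\Omega_1\cap X$.

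Second countability of $\G_X$ means it can be covered by countably many open bisections. Lemma~\ref{lem:ess-injectivity} applied to $\G_X$ then shows that density of $\Omega_1\cap X$ in $X$ is equivalent to effectiveness of $\Phi^{-1}(e)_X$. Letting $X$ range over all nonempty $\G$-invariant closed subsets gives the corollary; the case $X=\emptyset$ is vacuous for the density condition.

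There is no real obstacle here. The only point to check is that the interior of the isotropy bundle in the definition of effectiveness is taken relative to $\Phi^{-1}(e)_X$, which is exactly what Lemma~\ref{lem:ess-injectivity} gives when applied to the groupoid $\G_X$.
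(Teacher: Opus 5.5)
Your proof is correct and follows the paper's argument: the paper deduces the corollary directly from Proposition~\ref{prop:ess-central-alt} (where $\Omega_2=\Gu$ because $Z(\Gamma)=\Gamma$) and Lemma~\ref{lem:ess-injectivity} applied to the reduced groupoids $\G_X$. You have spelled out the routine checks that the paper leaves implicit, namely that second countability of $\G_X$ gives a countable cover by open bisections and that $(\G_X)^x_x=\Gxx$ for $x\in X$.
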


\subsection{A parametrization of the primitive ideals}\label{ssec:parameterization}
Assume $\G$ is a graded groupoid with essentially central isotropy. For $x\in\Gu$ and $J\in\Prim C^*(\Gxx)$, choose an irreducible representation~$\pi_J$ with kernel $J$.
Since $\IsoGx{x}^\circ_x\subset Z(\Gxx)$ by Lemma~\ref{lem:iso}(3), the restriction of~$\pi_J$ to $\IsoGx{x}^\circ_x$ defines a character $\chi_J$. Since a character of an abelian C$^{*}$-algebra is uniquely determined by its kernel and $\ker\chi_J=J\cap C^{*}(\IsoGx{x}^\circ_x)$, it follows that  $\chi_J$ depends only on~$J$, not on the choice of~$\pi_J$.

Given a character $\chi$ of $Z(\Gamma)$, consider the induced representation
$$
\omega^\chi_x:=\Ind^\G_{\IsoGx{x}^\circ_x}(\chi\circ\Phi|_{\IsoGx{x}^\circ_x}).
$$
If $\IsoGx{x}^\circ_x\ne\Gxx$, then the representation $\omega^\chi_x$ is not irreducible, since by centrality of $\IsoGx{x}^\circ_x$ in $\Gxx$ the action of $\Gxx$ on the right on $\G_x$ defines a nontrivial representation of $\Gxx$ into the commutant of $\omega^\chi_x(C^*(\G))$. But as the following result shows, at least for amenable $\G$ the kernel of $\omega^\chi_x$ is a primitive ideal.

\begin{thm}\label{thm:essentially-central}
Let $\G$ be an amenable second countable Hausdorff locally compact \'etale groupoid graded by a discrete group $\Gamma$, with grading $\Phi\colon\G\to\Gamma$, and assume $\G$ has essentially central isotropy. Then
\begin{enumerate}
  \item the map $\Gu\times\widehat{Z(\Gamma)}\to \Prim C^*(\G)$, $(x,\chi)\mapsto\ker\omega^\chi_x$, is well-defined and surjective; specifically, if $J\in \Prim C^*(\Gxx)$ and $\chi_J=\chi\circ\Phi|_{\IsoGx{x}^\circ_x}$, then $\ker\omega^\chi_x=\Ind(x,J)$;
  \item we have $\ker\omega^{\chi_1}_{x_1}=\ker\omega^{\chi_2}_{x_2}$ if and only if  $\overline{[x_1]}=\overline{[x_2]}$ and $\chi_1=\chi_2$ on $\Phi(\IsoGx{x_1}^\circ_{x_1})=\Phi(\IsoGx{x_2}^\circ_{x_2})$.
\end{enumerate}
\end{thm}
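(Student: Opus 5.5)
Write $H_x:=\IsoGx{x}^\circ_x$ for brevity. By Lemma~\ref{lem:iso}(3), applied to the reduced groupoid $\G_{\overline{[x]}}$ and the unit $x$, the group $H_x$ is central in $\Gxx$.

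\emph{Part (1): an induction-in-stages formula.} We begin by decomposing $\Ind^{\Gxx}_{H_x}(\chi\circ\Phi|_{H_x})$. Since $\Gxx$ is amenable (isotropy groups of an amenable groupoid are amenable) and $H_x$ is central, this representation is weakly equivalent to $\bigoplus\{\pi_J : J\in\Prim C^*(\Gxx),\ \chi_J=\chi\circ\Phi|_{H_x}\}$. To see this, note that the induced representation is a direct integral over the irreducibles of $\Gxx$ restricting to the given character on $H_x$. Conversely, each such $\pi_J$ is weakly contained in it, because $\pi_J\prec\Ind\Res\pi_J$ for amenable groups, and $\Res\pi_J$ is a multiple of $\chi_J$.

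By induction in stages, $\ker\omega^\chi_x=\bigcap_{J:\chi_J=\chi\circ\Phi}\Ind(x,J)$. It therefore suffices to show that all $\Ind(x,J)$ with the same $\chi_J$ coincide. For this, pass to $\G_{\overline{[x]}}$, which is harmless because $C^*(\G_{\overline{[x]}})$ is a quotient of $C^*(\G)$ and inductions factor through it. Then use Lemma~\ref{lem:nice-points} to pick points $y_n\to x$ with $\overline{[y_n]}=\overline{[x]}$ and $\G^{y_n}_{y_n}=H_{y_n}$. For such $y$ the group $\G^y_y$ is abelian and $\Phi$-injective, and $\Phi(H_y)=\Phi(H_x)$ by Lemma~\ref{lem:iso}.

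Next, we compare $\Ind(x,J)$ with $\Ind(y,\ker(\chi\circ\Phi))$ in both directions. Here we use the standard continuity of induction along convergent nets: if $y_n\to x$ and $g_n\to g$ with $g_n\in\G^{y_n}_{y_n}$, then the limit of $\Ind$ of characters is weakly contained in suitable inductions. The key observation is that $\chi\circ\Phi$ is locally constant. One direction is the estimate \[\ker\Ind(y,\chi\circ\Phi)\subset\Ind(x,J).\] I would prove it by evaluating on $f\in C_c(W)$ for bisections $W$ and using the explicit formula~\eqref{eq:ind-rep} together with Proposition~\ref{prop:dense}-type weak containment arguments, as in \cite{CN2}. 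The reverse inclusion follows since $x\in\overline{[y]}$ and $y\in\overline{[x]}$ are symmetric: apply the same to $y$ approximated by the orbit of $x$, using the character $\chi_J$ of $H_x$ and injectivity of $\Phi$. Thus all $\Ind(x,J)$ with $\chi_J=\chi\circ\Phi|_{H_x}$ equal $\Ind(y,\chi\circ\Phi)$, which is primitive. Surjectivity then follows from Theorem~\ref{thm:EH}, because every $J$ has $\chi_J$ extending to a character of $\Phi(H_x)\subset Z(\Gamma)$, and hence to a character of $Z(\Gamma)$ by injectivity of $\Phi$ on $H_x$.

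\emph{Part (2).} For ``if'', use part (1) together with the reduction to a nice point $y$ common to $\overline{[x_1]}=\overline{[x_2]}$. For ``only if'', the map $p$ to quasi-orbits gives $\overline{[x_1]}=\overline{[x_2]}$. To recover $\chi$ on $\Phi(H_x)$, I would use the gauge action of $\widehat{Z(\Gamma)}$, or more concretely test the ideal on elements $f-\chi(\gamma)f_0$ built from bisections $W\subset\operatorname{Iso}^\circ$ with $\Phi(W)=\gamma$, as in the proof of Proposition~\ref{prop:gauge}. The element $f-\chi(\gamma)f_0$ lies in $\ker\omega^\chi_y$ but not in $\ker\omega^{\chi'}_y$ when $\chi(\gamma)\neq\chi'(\gamma)$.

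The main obstacle is the two-sided weak-containment comparison between $x$ and nice nearby points $y$ in part (1), i.e.\ showing that the bad units contribute nothing new. I would model it on the arguments of \cite{CN2} for groupoids with abelian essential isotropy.
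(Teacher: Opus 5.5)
Your route differs from the paper's and can be made to work. However, the step you defer to ``as in \cite{CN2}'' is exactly where the paper has to go beyond \cite{CN2}, and the ingredient you name for it (local constancy of $\chi\circ\Phi$) is not the one that makes it work.

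The paper reduces the theorem to Proposition~\ref{prop:essentially-central} on orbit closures. It proves that proposition first at one unit $x_0$ with dense orbit and $\Iso_{x_0}=\G^{x_0}_{x_0}$, for an arbitrary primitive ideal $I=\Ind(y,J)$ supplied by Theorem~\ref{thm:EH}:
\begin{itemize}
\item the lower bound $\ker\rho^\chi_{x_0}\subset I$ comes from \cite{CN2}*{Lemma~6.5} and Lemma~\ref{lem:equiv};
\item the upper bound $I\subset\ker\rho^\omega_{x_0}$ comes from \cite{CN2}*{Theorem~2.14};
\item the identification $\omega=\chi$ on $\Phi(\Iso_{x_0})$ comes from \cite{CN2}*{Lemma~6.7}.
\end{itemize}
You instead fix $(x,J)$ and compare $\Ind(x,J)$ directly, in both directions, with $\rho^\chi_y:=\Ind^\G_{\G^y_y}(\chi\circ\Phi)$ at a nice point $y$ of the same quasi-orbit.

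\textbf{First direction.} Here you need $\ker\rho^\chi_y\subset\Ind(x,J)$, that is, $\omega^\chi_x\prec\bigoplus_n\rho^\chi_{y_n}$ for $y_n\in[y]$ with $y_n\to x$, combined with your step $\pi_J\prec\Ind^{\Gxx}_{H_x}\chi_J$. Local constancy of $\chi\circ\Phi$ only handles bisections $W$ passing through points of $H_x$. You must also exclude the case $W\cap\G^{y_n}_{y_n}=\{g_n\}$ with $g_n\to g\in\Gxx\setminus H_x$. In that case, for $f\in C_c(W)$, the vector states of $\rho^\chi_{y_n}$ tend to $f(g)\chi(\Phi(g))$, which need not vanish, while the corresponding state of $\omega^\chi_x$ is $0$. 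What rules this out is the identity $\overline{\IsoGx{x}^\circ}\cap\Gxx=H_x$. This is Lemma~\ref{lem:iso} applied to $\G_{\overline{[x]}}$, and its proof uses density of $[x]$ together with injectivity of $\Phi$ on the closure of the interior of the isotropy. It is precisely the modification of \cite{CN2}*{Lemma~6.6} carried out in Lemma~\ref{lem:equiv}. You invoke Lemma~\ref{lem:iso} only for $\Phi(H_y)=\Phi(H_x)$ and never supply this identity.

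\textbf{Reverse direction.} This is not the same argument with the roles swapped. At $x_n=r(h_n)\in[x]$ the representations are $\Ind^\G_{\G^{x_n}_{x_n}}(\pi_J\circ\Ad h_n^{-1})$, where $\pi_J$ is an arbitrary irreducible representation of the possibly non-abelian group $\Gxx$. The argument works because $y$ is nice:
\begin{itemize}
\item A bisection through $g\in\G^y_y=\IsoGx{x}^\circ_y$ can be shrunk into the interior of the isotropy of $\G_{\overline{[x]}}$. It then meets $\G^{x_n}_{x_n}$ only inside $\IsoGx{x}^\circ_{x_n}$.
\item On $\IsoGx{x}^\circ_{x_n}$, normality of the essential isotropy and centrality of $\Phi(H_x)$ make $\pi_J\circ\Ad h_n^{-1}$ act by the scalar $\chi\circ\Phi$.
\item Functions supported on bisections missing $\G^y_y$ eventually vanish on $\G^{x_n}_{x_n}$, by a compactness argument.
\end{itemize}
Injectivity of $\Phi$ plays no role in this direction.

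With these two points made explicit, your argument is complete. Your second direction replaces \cite{CN2}*{Theorem~2.14} by a direct computation of the limit state, which already identifies the character. Your test elements $f-\chi(\gamma)f_0$ in part~(2) correctly replace \cite{CN2}*{Lemma~6.7}. The paper's route, by contrast, reuses the general machinery of \cite{CN2}, so that only Lemma~\ref{lem:equiv} requires new work.
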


Note that by Lemma~\ref{lem:iso} the equality $\Phi(\IsoGx{x_1}^\circ_{x_1})=\Phi(\IsoGx{x_2}^\circ_{x_2})$ here is not an extra requirement, but a consequence of the assumption $\overline{[x_1]}=\overline{[x_2]}$.

\smallskip

Theorem~\ref{thm:essentially-central} is an extension of results in \cite{CN2}*{Section~6}, which were obtained under slightly stronger assumptions. We will sketch a proof following~\cite{CN2} and providing details where changes (fortunately, minor ones) are needed.

\smallskip

We claim that, similarly to~\cite{CN2}, it suffices to establish the following.

\begin{prop}\label{prop:essentially-central}
Assume $\G$ is an amenable second countable Hausdorff locally compact \'etale groupoid graded by a discrete group $\Gamma$, with grading $\Phi\colon\G\to\Gamma$.
Assume there is a unit $x\in\Gu$ with dense orbit such that $\Phi$ is injective on $\Iso_x$ and $\Phi(\Iso_x)\subset Z(\Gamma)$, hence (by Lemma~\ref{lem:iso}) the same properties hold for any unit with dense orbit. Denote by $\mathcal I\subset\Prim C^*(\G)$ the set of primitive ideals $I$ such that $C_0(\Gu)\cap I=0$. Then
\begin{enumerate}
  \item the map $\Prim C^*(\Gxx)\to\mathcal I$, $J\mapsto\Ind(x,J)$, is surjective;
  \item the map $\widehat{Z(\Gamma)}\to \mathcal I$, $\chi\mapsto\ker\omega^\chi_x$, is well-defined and surjective; specifically, if $J\in \Prim C^*(\Gxx)$ and $\chi_J=\chi\circ\Phi|_{\Iso_x}$, then $\ker\omega^\chi_x=\Ind(x,J)$;
  \item if $y\in\Gu$ is a unit with dense orbit, then $\ker\omega^{\chi}_{x}=\ker\omega^{\kappa}_y$ if and only if $\chi=\kappa$ on $\Phi(\Iso_x)=\Phi(\Iso_y)$.
\end{enumerate}
\end{prop}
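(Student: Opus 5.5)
We follow the strategy of \cite{CN2}*{Section~6}. By Theorem~\ref{thm:EH} every $I\in\mathcal I$ has the form $\Ind(y,J')$ for some $(y,J')\in\Stab(\G)^\prim$. The condition $C_0(\Gu)\cap I=0$ forces $\overline{[y]}=\Gu$. Indeed, $\Ind(y,J')$ contains $C_0(\Gu\setminus\overline{[y]})$, because $\Ind\pi$ restricted to $C_0(\Gu)$ is supported on $[y]$.

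For (1) we then have to show that $\Ind(y,J')=\Ind(x,J)$ for a suitable $J\in\Prim C^*(\Gxx)$. We plan to pass through the kernel of $\omega^\chi$ and reduce to the following key claim. If $z$ has dense orbit and $J\in\Prim C^*(\G^z_z)$ has $\chi_J=\chi\circ\Phi|_{\Iso_z}$, then $\Ind(z,J)=\ker\omega^\chi_z$, and moreover $\ker\omega^\chi_z=\ker\omega^\chi_{z'}$ for all $z,z'$ with dense orbit. Granting this, (1), (2) and the ``if'' part of (3) follow. Given $I=\Ind(y,J')$, extend $\chi_{J'}\circ\Phi^{-1}$ from $\Phi(\Iso_y)=\Phi(\Iso_x)\subset Z(\Gamma)$ (Lemma~\ref{lem:iso}) to a character $\chi$ of $Z(\Gamma)$. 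Then $I=\ker\omega^\chi_y=\ker\omega^\chi_x$. Finally choose $J\supset\ker(\Ind^{\Gxx}_{\Iso_x}\chi\circ\Phi)$ primitive; then $\chi_J=\chi\circ\Phi$, so $\Ind(x,J)=\ker\omega^\chi_x=I$.

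To prove the key claim, we first note that $\omega^\chi_z=\Ind^\G_{\G^z_z}\bigl(\Ind^{\G^z_z}_{\Iso_z}\chi\circ\Phi\bigr)$. Since $\Iso_z$ is central and $\G^z_z$ is amenable (as $\G$ is amenable), the inner representation is weakly equivalent to the direct sum of the irreducibles $\pi_J$ with $\chi_J=\chi\circ\Phi$. Hence $\ker\omega^\chi_z=\bigcap_J\Ind(z,J)$ over such $J$, and the inclusion $\ker\omega^\chi_z\subset\Ind(z,J)$ holds for each such $J$. The reverse inclusion, together with the independence of $z$, is the main obstacle. Here we plan to adapt the argument of \cite{CN2}. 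Choose a point $z_0$ with dense orbit and $\Iso_{z_0}=\G^{z_0}_{z_0}$, which exists by Lemma~\ref{lem:nice-points}. At such a point $\omega^\chi_{z_0}$ is irreducible, since it is induced from a character of the full isotropy group.

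We then use an approximation argument to show that $\omega^\chi_{z_0}\prec\Ind(z,\pi_J)$ for every $z$ with dense orbit and every $J$ with $\chi_J=\chi\circ\Phi$. Take $z_n\in[z]$ with $z_n\to z_0$, and fix an element $g\in\Iso_{z_0}$. Lift $g$ through an open bisection inside $\Iso^\circ$ to elements $g_n\in\Iso_{z_n}$, so that $g_n\to g$ and $\Phi(g_n)=\Phi(g)$. By Lemma~\ref{lem:iso} these $g_n$ act by the scalar $\chi(\Phi(g))$ in $\Ind(z,\pi_J)$. Consequently the vector states of $\Ind(z,\pi_J)$ concentrated at $z_n$ converge to the state of $\omega^\chi_{z_0}$ associated with $\delta_{z_0}$. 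Conversely, we need $\Ind(z,\pi_J)\prec\omega^\chi_{z_0}$. For this we take $z_0$ to be in the residual set where the isotropy is continuous, and use the fact that points of $[z_0]$ accumulate at $z$ while $\Iso$ varies continuously there. Since $\ker\omega^\chi_{z_0}$ is then independent of $J$ and sits between the various $\Ind(z,J)$, all the ideals coincide.

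For the ``only if'' part of (3) we evaluate on functions supported on bisections $W\subset\Iso^\circ$ with $\Phi(W)=\{\gamma\}$. For $f_0\in C_c(s(W))$ we compute that $\omega^\chi_x(f-\chi(\gamma)f_0)=0$ in a neighbourhood of $x$. If $\chi\ne\kappa$ at $\gamma\in\Phi(\Iso_x)$, this produces an element lying in one kernel but not in the other.
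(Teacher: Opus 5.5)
Your overall architecture works, and one part of it is more direct than the paper's. The decomposition $\ker\omega^\chi_z=\bigcap_J\Ind(z,J)$ is the paper's use of \cite{CN2}*{Lemma~6.5}. Your approximation from points $z_n\in[z]$ to the generic point $z_0$ is correct. For $f$ supported in an open bisection $W$, the vector state of $\Ind(z,\pi_J)$ at $z_n=r(h_n)$ is $\sum_{k\in\G^{z_n}_{z_n}}f(k)\langle\pi_J(h_n^{-1}kh_n)v,v\rangle$. If $W$ meets $\G^{z_0}_{z_0}=\Iso_{z_0}$, the only term eventually comes from $W\cap\Iso_{z_n}$ and equals $f(k_n)\chi(\Phi(k_n))$; otherwise it tends to $0$ by Hausdorffness. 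This gives $\Ind(z,J)\subset\ker\omega^\chi_{z_0}$ with the right character built in. The paper instead gets $I\subset\ker\rho^\omega_{x_0}$ for an unknown $\omega$ from \cite{CN2}*{Theorem~2.14}, and then needs \cite{CN2}*{Lemma~6.7} to force $\omega=\chi$ on $\Phi(\Iso_{x_0})$. By using that $\pi_J$ is scalar on the central subgroup $\Iso_z$, you bypass both. Your argument for the ``only if'' part of~(3) is also fine: in fact $\omega^\chi_x(f-\chi(\gamma)f_0)=0$ globally, and $\omega^\kappa_y(f_0)\neq0$ because $[y]$ is dense.

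The gap is the converse inclusion $\ker\omega^\chi_{z_0}\subset\Ind(z,J)$, which is where the real work lies.

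First, the approximation you describe cannot produce $\Ind(z,\pi_J)$ directly. You use vector states of $\omega^\chi_{z_0}$ at points $w_n\in[z_0]$ converging to $z$, and these states are $f\mapsto\sum_{g\in\Iso_{w_n}}f(g)\chi(\Phi(g))$. Their limits therefore only see elements of $\G^z_z$ that are limits of essential isotropy, with the scalar $\chi\circ\Phi$. They never see the part of $\G^z_z$ on which $\pi_J$ is non-scalar. The right target is $\omega^\chi_z\prec\omega^\chi_{z_0}$. After that, your decomposition closes the chain $\ker\omega^\chi_{z_0}\subset\ker\omega^\chi_z\subset\Ind(z,J)\subset\ker\omega^\chi_{z_0}$.

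More importantly, your justification for this approximation is not the operative one. The genericity of $z_0$ plays no role here, and ``$\Iso$ varies continuously'' at the arbitrary point $z$ is exactly what must be proved. Concretely, take an open bisection $W$ with $z\in s(W)$, and suppose $W\cap\Iso_{w_n}\neq\emptyset$ along a subsequence. The limit of these elements lies in $W\cap\G^z_z\cap\overline{\IsoG^\circ}$, and you need it to belong to $\Iso_z$. This is the identity $\overline{\IsoG^\circ}\cap\G^z_z=\Iso_z$ of Lemma~\ref{lem:iso}, which rests on the injectivity of $\Phi$ on $\Iso_x$ and the centrality of $\Phi(\Iso_x)$. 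It is precisely the modification of \cite{CN2}*{Lemma~6.6} carried out in Lemma~\ref{lem:equiv}, and it is the step where the hypothesis on $\Phi$ is genuinely needed. Once you supply it, your proof goes through.
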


Indeed, assuming that this proposition is true and working in the setting of Theorem~\ref{thm:essentially-central}, denote by
$\pi_x\colon C^{*}(\G)\to C^{*}(\G_{\overline{[x]}})$ the canonical surjection for $x\in\Gu$. Then, by the definition of induction, for any subgroup $S\subset\Gxx$ and any unitary representation $\rho$ of $S$ we have
\begin{equation}\label{eqpiker}
\big(\Ind_{S}^{\G_{\overline{[x]}}} \rho\big)\circ\pi_x=\Ind_{S}^{\G} \rho.
\end{equation}
For every ideal $I\in \Prim C^{*}(\G)$ there exists a unit $x\in \Gu$ such that $C_{0}(\Gu)\cap I=C_{0}\big(\overline{[x]}^{c}\big)$, implying that
$I=\pi_x^{-1}(J)$ for some $J\in\Prim C^*(\G_{\overline{[x]}})$ such that $C_0\big(\G_{\overline{[x]}}^{(0)}\big)\cap J=0$. Using~\eqref{eqpiker} for $S=\Gxx$ and $S=\IsoGx{x}^\circ_x$ we then conclude that Theorem~\ref{thm:essentially-central} follows from parts~(2) and~(3) of Proposition~\ref{prop:essentially-central} applied to the groupoids $\G_{\overline{[x]}}$.

\smallskip

Proposition~\ref{prop:essentially-central} is an extension of~\cite{CN2}*{Proposition~6.3}. The part of the proof in \cite{CN2} that requires some changes is contained in the following lemma. In the setting of the proposition, for $z\in\Gu$ and $\chi\in\widehat{Z(\Gamma)}$, let $\rho^\chi_z:=\Ind^\G_{\Iso_z}(\chi\circ\Phi|_{\Iso_z})$. Therefore $\rho^\chi_z=\omega^\chi_z$ if the orbit of $z$ is dense.

\begin{lemma}\label{lem:equiv}
For each $\chi\in\widehat{Z(\Gamma)}$, the weak equivalence class of $\rho^\chi_z$ is independent of $z$ such that $\overline{[z]}=\Gu$.
\end{lemma}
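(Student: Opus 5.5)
By symmetry it suffices to show $\rho^\chi_y\prec\rho^\chi_z$ whenever $y,z\in\Gu$ have dense orbits. Since induced representations along orbits are unitarily equivalent, and by Lemma~\ref{lem:iso} the groups $\Iso_w$ and the characters $\chi\circ\Phi|_{\Iso_w}$ are transported into each other by conjugation $\Ad h$, the representation $\rho^\chi_w$ is unitarily equivalent to $\rho^\chi_{r(h)}$ for $h\in\G_w$. We may therefore replace $z$ by any point of its orbit. As the orbit of $z$ is dense, we can choose a sequence $z_n\in[z]$ with $z_n\to y$. The goal is then a continuity statement: $\rho^\chi_y$ is weakly contained in $\bigoplus_n\rho^\chi_{z_n}\sim\rho^\chi_z$.

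To prove this continuity I would use positive definite functions. The representation $\rho^\chi_y$ is generated by the cyclic vector $\xi_0$ supported on $\Iso_y$, with $\xi_0(g)=\overline{\chi(\Phi(g))}$ there. Its vector state is
\[
\varphi_y(f)=\sum_{g\in\Iso_y} f(g)\,\chi(\Phi(g)).
\]
This is an integral over the subgroup $\Iso_y$ of the function $\chi\circ\Phi$, which is a character on that subgroup. It is enough to show that $\varphi_y$ is a weak$^*$ limit of the corresponding states $\varphi_{z_n}$ on $f\in C_c(\G)$. Since states of $\rho^\chi_{z_n}$ are weakly contained in $\rho^\chi_z$, this gives $\rho^\chi_y\prec\rho^\chi_z$.

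Checking on $f\in C_c(W)$ for an open bisection $W$, the key point is a convergence of subgroups: $\Iso_{z_n}\to\Iso_y$ in the Fell sense, on bisections. For $g\in\Iso_y$ take an open bisection $W\subset\IsoG^\circ$ through $g$ (possible since $\Iso$ is open in $\G$, being the interior, here with $\overline{[y]}=\Gu$). Then for large $n$ the element of $W$ over $z_n$ lies in $\Iso_{z_n}$ and has the same $\Phi$-value, so the terms converge. Conversely, suppose elements $g_n\in\Iso_{z_n}$ converge to some $g\in\G^y_y$. Then $g\in\overline{\IsoG^\circ}\cap\G^y_y=\Iso_y$ by the final statement of Lemma~\ref{lem:iso}, and $\Phi(g_n)=\Phi(g)$ eventually. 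So within a fixed bisection $W$, for large $n$ the point of $W$ over $z_n$ contributes to $\varphi_{z_n}$ exactly when the point over $y$ contributes to $\varphi_y$, and with the same $\chi\circ\Phi$ value. This needs the extra observation that if $W\cap\G^y_y\not\subset\Iso_y$ then $W\cap\Iso_{z_n}=\emptyset$ eventually, which follows by passing to subsequences and the same closure argument. By Hausdorffness and compactness of $\operatorname{supp}f$, only finitely many bisections matter, and $\varphi_{z_n}(f)\to\varphi_y(f)$.

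The main obstacle is exactly this use of $\overline{\IsoG^\circ}\cap\G^y_y=\Iso_y$: limits of essential isotropy must not pick up non-essential isotropy at $y$. Here Lemma~\ref{lem:iso} (injectivity and the identification for points with dense orbit) is essential, and I would verify carefully that the subsequence argument handles elements of $\G^y_y\setminus\Iso_y$ lying in the closure.
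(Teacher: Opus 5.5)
Your proposal is correct and follows essentially the paper's route. The paper defers the state/weak-containment part to \cite{CN2}*{Lemma~6.6} and only verifies the key point you isolate: for an open bisection $W$ with $y\in s(W)$, one has $W\cap\Iso_y\ne\emptyset$ if and only if $W\cap\Iso_{z_n}\ne\emptyset$ for all large $n$. It proves this via $\overline{\IsoG^\circ}\cap\G^y_y=\Iso_y$ from Lemma~\ref{lem:iso}, exactly as you do; the remaining cases $y\notin s(W)$ or $r(W\cap\G_y)\ne y$ are trivial by continuity and Hausdorffness.
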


\bp This corresponds to~\cite{CN2}*{Lemma~6.6}. What needs a modification in that lemma is the proof of the claim that if $w_n\to z$, then $\rho^\chi_z\prec\bigoplus_n\rho^\chi_{w_n}$. Specifically, we need to show that if $W\subset\G$ is an open bisection such that $z\in s(W)$, then $W\cap \Iso_z\ne\emptyset$ if and only if $W\cap  \Iso_{w_n}\ne\emptyset$ for all $n$ large enough. The ``only if'' part is obvious. For the ``if'' part, let $g_n$ be the unique element of $W\cap  \Iso_{w_n}$. Then $g_n\to g$ for some $g\in W\cap\G^z_z$. Since $g\in\overline{\IsoG^\circ}$, by Lemma~\ref{lem:iso} it follows that $g\in\Iso_z$.
\ep

\bp[Sketch of the proof of Proposition~\ref{prop:essentially-central}]
Fix a unit $x_0$ with dense orbit such that $\Iso_{x_0}=\G^{x_0}_{x_0}$, which is possible by Lemma~\ref{lem:nice-points}. We will first prove the proposition for $x=x_0$. In this case there is no difference between statements (1) and (2).

Take $I\in\mathcal I$. By Theorem~\ref{thm:EH} we have $I=\Ind(y,J)$ for some $(y,J)\in\Stab(\G)^\prim$. As $C_0(\Gu)\cap I=0$, the orbit of $y$ must be dense. Choose $\chi\in\widehat{Z(\Gamma)}$ such that $\chi_J=\chi\circ\Phi|_{\Iso_y}$.
By~\cite{CN2}*{Lemma 6.5} then $\pi_J\prec\Ind^{\G_{y}^{y}}_{\Iso_y}\chi_J$, so Lemma~\ref{lem:equiv} implies that
$$
\ker\rho^\chi_{x_{0}}=\ker\rho^\chi_y\subset I,
$$
see the arguments in~\cite{CN2} between Lemma~6.5 and Lemma~6.7. On the other hand, by \cite{CN2}*{Theorem~2.14} there exists $\omega\in\widehat{Z(\Gamma)}$ such that $I\subset \ker\rho^\omega_{x_{0}}$. By~\cite{CN2}*{Lemma~6.7}, whose proof goes through without any changes (with $A:=\Phi(\Iso_{x_{0}})=\Phi(\G_{x_{0}}^{x_{0}})$), the inclusion $\ker\rho^\chi_{x_{0}}\subset \ker\rho^\omega_{x_{0}}$ implies that $\chi=\omega$ on $\Phi(\Iso_{x_{0}})$. Hence $\ker\rho^\chi_{x_{0}}=I$. Since $\rho^\chi_z=\omega^\chi_z$ for the units with dense orbits, this proves~(1) and~(2) for $x=x_{0}$, as well as (3) for $y=x=x_{0}$.

Now, take any unit $y$ with dense orbit. For every $\chi\in\widehat{Z(\Gamma)}$ there is $J\in\Prim C^*(\G^y_y)$ such that $\chi_J=\chi\circ\Phi|_{\Iso_y}$, see
the last paragraph of the proof of \cite{CN2}*{Proposition~6.3}. Then we see from the previous paragraph that $\Ind(y,J)=\ker\rho^\chi_{x_{0}}=\ker\rho^\chi_y$. This shows that~(1) and~(2) hold for $x=y$. We also conclude that (3) is true for $x=x_{0}$, but then it is true for any~$x$ with dense orbit.
\ep

As an immediate application we see that the second part of Question~\ref{ques} has a positive answer for $\G$.

\begin{cor}\label{cor:very-dense}
In the setting of Theorem~\ref{thm:essentially-central}, assume $\Omega\subset\Gu$ is a subset that contains a representative of every quasi-orbit. Then every primitive ideal of $C^*(\G)$ has the form $\Ind(x,J)$, or equivalently the form $\ker\omega^\chi_x$, for some $x\in\Omega$, $J\in\Prim C^*(\Gxx)$ and $\chi\in\widehat{Z(\Gamma)}$.
\end{cor}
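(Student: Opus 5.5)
The plan is to reduce everything to Theorem~\ref{thm:essentially-central}. Take a primitive ideal $I$ of $C^*(\G)$. By part~(1) of Theorem~\ref{thm:essentially-central}, we can write $I=\ker\omega^\chi_y$ for some unit $y\in\Gu$ and some $\chi\in\widehat{Z(\Gamma)}$. By assumption, $\Omega$ contains a point $x$ with $\QQ(x)=\QQ(y)$, that is, $\overline{[x]}=\overline{[y]}$.

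By Lemma~\ref{lem:iso}, applied to the reduced groupoid $\G_{\overline{[x]}}$, in which both $x$ and $y$ have dense orbit, we get $\Phi(\IsoGx{x}^\circ_x)=\Phi(\IsoGx{y}^\circ_y)$. Part~(2) of Theorem~\ref{thm:essentially-central} then gives $\ker\omega^\chi_x=\ker\omega^\chi_y=I$, since the same $\chi$ trivially agrees with itself on this common subgroup. This already gives the form $\ker\omega^\chi_x$ with $x\in\Omega$.

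For the equivalent form $\Ind(x,J)$, we use the ``specifically'' clause of Theorem~\ref{thm:essentially-central}(1). It suffices to find $J\in\Prim C^*(\Gxx)$ with $\chi_J=\chi\circ\Phi|_{\IsoGx{x}^\circ_x}$. This existence statement is the only point that is not purely formal.

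Here we follow the last step of the proof of Proposition~\ref{prop:essentially-central}, applied to $\G_{\overline{[x]}}$ and the unit $x$ with dense orbit there. Since $\IsoGx{x}^\circ_x$ is central in $\Gxx$ by Lemma~\ref{lem:iso}(3), the character $\chi\circ\Phi|_{\IsoGx{x}^\circ_x}$ induces to a representation of $\Gxx$ on which this central subgroup acts by that scalar character. Any irreducible representation weakly contained in it has the same central character, and we take $J$ to be its kernel.

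Then $\Ind(x,J)=\ker\omega^\chi_x=I$. Conversely, every $\Ind(x,J)$ is primitive, so the two descriptions cover exactly the same set of ideals, which proves the equivalence of the two forms. I expect no real obstacle, since the only subtle point, the existence of $J$ with prescribed central character, is already handled in the cited argument.
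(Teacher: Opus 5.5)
Your proposal is correct and is essentially the argument the paper intends. The paper presents this corollary as an immediate consequence of Theorem~\ref{thm:essentially-central}: surjectivity in part~(1), the fact from part~(2) that $\ker\omega^\chi_y$ depends only on the quasi-orbit of $y$ for fixed $\chi$, and the ``specifically'' clause. That clause is combined with the existence of $J\in\Prim C^*(\Gxx)$ with prescribed central character, as in the last step of the proof of Proposition~\ref{prop:essentially-central}. (Your appeal to Lemma~\ref{lem:iso} is harmless but unnecessary: with the same $\chi$ on both sides, the condition on the common subgroup in Theorem~\ref{thm:essentially-central}(2) holds automatically.)
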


Another consequence of the theorem is that instead of the representations $\omega^\chi_x$ we can take many other representations to parameterize the primitive ideals.

\begin{cor}\label{cor:differen-reps}
In the setting of Theorem~\ref{thm:essentially-central}, take $x\in \Gu$ and $\chi\in\widehat{Z(\Gamma)}$. Assume $S\subset\Gxx$ is a subgroup containing $\IsoGx{x}^\circ_x$ and  $\pi$ is a unitary representation of $S$ such that $\pi|_{\IsoGx{x}^\circ_x}=(\chi\circ\Phi)(\cdot)1$. Then the representations $\omega^\chi_x$ and $\Ind^\G_S\pi$ of $C^*(\G)$ are weakly equivalent.
\end{cor}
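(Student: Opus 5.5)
Write $H:=\IsoGx{x}^\circ_x$, which is central in $\Gxx$ by Lemma~\ref{lem:iso}(3). Since $S\supset H$, induction in stages gives $\omega^\chi_x\sim\Ind^\G_S\big(\Ind^S_H(\chi\circ\Phi|_H)\big)$. Induction respects weak containment, so it suffices to show that $\pi$ and $\sigma:=\Ind^S_H(\chi\circ\Phi|_H)$ are weakly equivalent as representations of~$S$, modulo ideals that become invisible after inducing to $\G$. Amenability of $\G$ makes all isotropy groups amenable (restrict an approximate invariant density to $\Gxx$). Hence $S$ and $S/H$ are amenable, and $\sigma$ is weakly equivalent to $\lambda_{S/H}$ twisted by a character extending $\chi\circ\Phi|_H$. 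In fact $\sigma\cong\tilde\chi\otimes\lambda_{S/H}$ whenever such an extension $\tilde\chi$ of $\chi\circ\Phi|_H$ to~$S$ exists.

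\emph{Step 1: $\Ind^\G_S\pi\prec\omega^\chi_x$.} Decompose $\pi$ (or its kernel) into irreducible pieces. Every irreducible representation $\pi'$ of $S$ that is weakly contained in~$\pi$ satisfies $\pi'|_H=\chi\circ\Phi(\cdot)1$. Then $\Ind^{\Gxx}_S\pi'$ has kernel contained in the primitive ideals $J$ of $C^*(\Gxx)$ with $\chi_J=\chi\circ\Phi|_H$. Centrality of $H$ in $\Gxx$ ensures that $H$ still acts by the scalar $\chi\circ\Phi$ after inducing. So every irreducible $\rho$ weakly contained in $\Ind^{\Gxx}_S\pi$ has the form $\pi_J$ with $\chi_J=\chi\circ\Phi|_H$. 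By Theorem~\ref{thm:essentially-central}(1), $\Ind(x,J)=\ker\omega^\chi_x$ for every such $J$. Since $\Ind^\G_S\pi\sim\Ind^\G_{\Gxx}\Ind^{\Gxx}_S\pi$ is weakly equivalent to the direct sum of the $\Ind^\G_{\Gxx}\pi_J$ over such $J$, we get $\ker\Ind^\G_S\pi=\ker\omega^\chi_x$. This settles both directions at once.

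\emph{Step 2: the main obstacle.} The delicate point is justifying that $\Ind^{\Gxx}_S\pi$ is weakly equivalent to a family of irreducibles $\pi_J$ that all restrict to $\chi\circ\Phi$ on~$H$. This follows because $H$ is central in~$\Gxx$: the induced representation is then a representation of the quotient $C^*(\Gxx)/\langle u_h-\chi(\Phi(h))\rangle$. Every representation is weakly equivalent to the set of irreducibles in the hull of its kernel, and each such irreducible factors through this quotient.

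Nonemptiness of this family is automatic because $\Ind^{\Gxx}_S\pi\ne0$. Consequently no extension of $\chi\circ\Phi|_H$ and no use of amenability of $S/H$ is actually needed beyond Theorem~\ref{thm:essentially-central}. The only other point to check is that the kernel of an induced representation is the intersection of the kernels obtained by inducing the irreducibles in its support, which holds because induction preserves weak containment in both directions.
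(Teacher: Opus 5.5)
Your argument in Steps~1--2 is correct and is essentially the paper's proof. Centrality of $\IsoGx{x}^\circ_x$ in $\Gxx$ forces every irreducible weakly contained in $\Ind^{\Gxx}_S\pi$ to restrict to the character $\chi\circ\Phi$ there, Theorem~\ref{thm:essentially-central}(1) identifies the kernel of each of their inductions with $\ker\omega^\chi_x$, and induction in stages together with preservation of weak containment under induction finishes the argument. The opening paragraph (amenability of $S/H$, extending $\chi\circ\Phi|_H$ to $S$) is an unused detour and can simply be dropped.
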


\bp
The representation $\Ind^{\Gxx}_S\pi$ is weakly equivalent to a direct sum of irreducible representations $\pi_n$. Since $\IsoGx{x}^\circ_x$ is a central subgroup of $\Gxx$, the restriction of each of these representations to this subgroup is a scalar representation defined by the character $\chi\circ\Phi|_{\IsoGx{x}^\circ_x}$. By Theorem~\ref{thm:essentially-central} we conclude that $\Ind^\G_{\Gxx}\pi_n\sim\omega^\chi_x$ for all~$n$. It follows that
$$
\Ind^\G_S\pi\sim \Ind^\G_{\Gxx}\Ind^{\Gxx}_S\pi\sim\bigoplus_n \Ind^\G_{\Gxx}\pi_n\sim\omega^\chi_x,
$$
establishing the required equivalence.
\ep

Let us now consider the case where $\Gamma$ is abelian. Then by the previous corollary instead of the representations $\omega^\chi_x$  in Theorem~\ref{thm:essentially-central} we can take the irreducible representations
$$
\pi_{(x,\chi)}:=\Ind^\G_{\Gxx}(\chi\circ\Phi|_{\Gxx}).
$$
By definition, the underlying space of $\pi_{(x,\chi)}$ is the Hilbert space of functions $f\colon\G_x\to\C$ such that $f(gh)=\overline{\chi(\Phi(h))}f(g)$ for $g\in\G_x$ and $h\in \Gxx$, and $\sum_{g\in\G_x/\Gxx}|f(g)|^2<\infty$. We can identify this space with $\ell^2([x])$ by associating to every $\xi\in\ell^2([x])$ the function $f_\xi\colon\G_x\to\C$ by
$f_\xi(g):=\overline{\chi(\Phi(g))}\xi(r(g))$. Then $\pi_{(x,\chi)}$ becomes a representation $C^*(\G)\to B(\ell^2([x]))$ such that
\begin{equation}\label{eq:pi(x,z)}
\pi_{(x,\chi)}(f)\delta_y=\sum_{g\in\G_y}\chi(\Phi(g))f(g)\delta_{r(g)},\quad y\in[x],\ f\in C_c(\G).
\end{equation}

The following result shows that for abelian $\Gamma$ the assumption of essential centrality in Theorem~\ref{thm:essentially-central} is in a certain sense optimal.

\begin{prop}\label{prop:central-necessity}
Let $\G$ be a Hausdorff locally compact \'etale groupoid graded by a discrete abelian group $\Gamma$, with grading $\Phi\colon\G\to\Gamma$.
Assume there exists a nonempty $\G$-invariant closed subset $X\subset\Gu$ such that the groupoid $\Phi^{-1}(e)_X$ is not effective. Then the map
$$
\Gu\times\widehat{\Gamma}\to \Prim C^*(\G),\quad (x,\chi)\mapsto\ker\pi_{(x,\chi)},
$$
is not surjective. In particular, if $\G$ is amenable and second countable, then the above map is surjective if and only if $\G$ has essentially central isotropy.
\end{prop}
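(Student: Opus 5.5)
By restricting to $X$ we may assume $X=\Gu$, so $\Phi^{-1}(e)$ is not effective. This is legitimate because $C^*(\G)\to C^*(\G_X)$ is surjective and the representations $\pi_{(x,\chi)}$ with $x\in X$ factor through it. We then produce an ideal $I$ of $C^*(\G)$ that is not an intersection of kernels of the form $\ker\pi_{(x,\chi)}$ and yet lies in some primitive ideal that is not of this form. The natural candidate comes from the proof of Proposition~\ref{prop:gauge}. Choose an open bisection $W\subset \operatorname{Iso}(\Phi^{-1}(e))^\circ\setminus\Gu$ and a nonzero $f\in C_c(W)$, and define $f_0\in C_c(r(W))$ by $f_0(r(g))=f(g)$. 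As shown there (via \cite{CN4}*{Proposition~4.23}), $f-f_0\ne0$ in $C^*(\G)$; more precisely, its image in $C^*_r(\G)$ is nonzero, and this suffices.

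The key computation is that $\pi_{(x,\chi)}(f-f_0)=0$ for all $x\in\Gu$ and $\chi\in\widehat\Gamma$. By \eqref{eq:pi(x,z)},
$$
\pi_{(x,\chi)}(f)\delta_y=\sum_{g\in W\cap\G_y}\chi(\Phi(g))f(g)\delta_{r(g)}.
$$
If $g\in W\cap\G_y$, then $g$ lies in the isotropy with $\Phi(g)=e$, so $r(g)=y$ and $\chi(\Phi(g))=1$. Hence this sum equals $f_0(y)\delta_y=\pi_{(x,\chi)}(f_0)\delta_y$. If instead $W\cap\G_y=\emptyset$, then $y\notin s(W)=r(W)$ and both sides vanish. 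Thus $f-f_0$ lies in $\bigcap_{(x,\chi)}\ker\pi_{(x,\chi)}$.

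Now suppose the map were surjective. Then every primitive ideal would contain $f-f_0$, and since the intersection of all primitive ideals of a C$^*$-algebra is zero, we would get $f-f_0=0$ in $C^*(\G)$, a contradiction. The only subtle point is the nonvanishing of $f-f_0$ in the full algebra, and this follows from its nonvanishing in $C^*_r(\G)$, which is a quotient of $C^*(\G)$. For that I would argue directly: $E(f-f_0)=-f_0$ while $f\ne0$, so $f-f_0\ne0$ as an element of $C_c(\G)\subset C^*_r(\G)$, because the functions $f$ and $f_0$ have disjoint supports $W$ and $r(W)\subset\Gu$. This is in fact simpler than citing the earlier reference. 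The restriction to $X$ preserves all of this: nonvanishing in $C^*(\G_X)$ suffices, since the $\pi_{(x,\chi)}$ with $x\in X$ factor through $C^*(\G_X)$ and, after replacing $\G$ by $\G_X$, the primitive ideals in question are those of $C^*(\G_X)$.

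For the final assertion, the ``if'' direction follows from Theorem~\ref{thm:essentially-central}(1) combined with Corollary~\ref{cor:differen-reps}, which shows $\ker\pi_{(x,\chi)}=\ker\omega^{\chi|_{Z(\Gamma)}}_x$ with $Z(\Gamma)=\Gamma$. For the ``only if'' direction, if $\G$ does not have essentially central isotropy, Corollary~\ref{cor:effective-vs-central} provides an invariant closed $X$ with $\Phi^{-1}(e)_X$ not effective, and the first part applies. I expect no step to be a real obstacle beyond making the reduction to $X$ precise.
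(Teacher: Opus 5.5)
Your approach is the same as the paper's. You reduce to $X=\Gu$, take $f-f_0$ built from an open bisection $W\subset\operatorname{Iso}(\Phi^{-1}(e))^\circ\setminus\Gu$, check via \eqref{eq:pi(x,z)} that every $\pi_{(x,\chi)}$ kills it, and get the final assertion from Theorem~\ref{thm:essentially-central} and Corollary~\ref{cor:effective-vs-central}. Your direct argument that $f-f_0\ne0$ is fine: it is a nonzero element of $C_c(\G)$, which embeds in $C^*_r(\G)$, a quotient of $C^*(\G)$.

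The one step that is still unjustified is the reduction to $X$, as you suspected. Non-surjectivity for $\G_X$ gives a primitive ideal $J$ of $C^*(\G_X)$ different from every $\ker\pi^{\G_X}_{(x,\chi)}$ with $x\in X$. Its preimage $I:=q^{-1}(J)$ under the surjection $q\colon C^*(\G)\to C^*(\G_X)$ is primitive. Your factorization remark shows only that $I\ne\ker\pi_{(x,\chi)}$ for $x\in X$; surjectivity for $\G$ could a priori hit $I$ with some $x\notin X$.

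This is exactly the observation the paper makes explicitly. If $x\notin X$, choose $h\in C_c(\Gu\setminus X)$ with $h(x)\ne0$. Then $q(h)=h|_X=0$, so $h\in\ker q\subset I$. On the other hand, $\pi_{(x,\chi)}(h)\delta_x=h(x)\delta_x\ne0$, so $\ker\pi_{(x,\chi)}\not\supset\ker q$ and therefore $\ker\pi_{(x,\chi)}\ne I$. With this added, your proof is complete.
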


\bp
We argue similarly to the proof of Proposition~\ref{prop:gauge}. Note that if $x\notin X$, then $C_0(\Gu)\cap\ker\pi_{(x,\chi)}=C_0\big(\overline{[x]}^c\big)$ and therefore $C^*(\G_{X^c})\not\subset\ker\pi_{(x,\chi)}$. It follows that by passing to the quotient $C^*(\G_X)\cong C^*(\G)/C^*(\G_{X^c})$ we may assume that $X=\Gu$.  Then by assumption there is an open bisection $W$ of $\G$ contained in $\operatorname{Iso}(\Phi^{-1}(e))^\circ\setminus\Gu$. Take any nonzero function $f\in C_c(W)$. Define a function $f_0\in C_c(r(W))$ by $f_0(r(g)):=f(g)$ for $g\in W$. Then one can easily see from~\eqref{eq:pi(x,z)} that $f-f_0$ is contained in the kernel of every representation~$\pi_{(x,\chi)}$, hence the kernels of these representations cannot exhaust $\Prim C^*(\G)$.

The last statement of the proposition follows from Theorem~\ref{thm:essentially-central} and Corollary~\ref{cor:effective-vs-central}.
\ep

\subsection{The primitive spectrum as a topological space}
We will next describe the topology on $\Prim C^*(\G)$. For this it will be convenient to use the following notion~\cite{CN3}*{Definition~3.13}. Given a discrete abelian group $H$ and a sequence $(S_{n})^\infty_{n=1}$ of subsets of $H$, we say that a sequence $(\chi_{n})^\infty_{n=1}$ in $\widehat{H}$ \emph{converges to $\chi\in \widehat{H}$ along $(S_{n})_n$} if, for all $h \in H$, we have
$$
\lim_{n\to\infty}\un_{S_n}(h)|\chi_n(h)-\chi(h)|=0,
$$
where $\un_{S_n}$ is the characteristic function of $S_n$.

\begin{thm}\label{thm:convfg}
In the setting of Theorem~\ref{thm:essentially-central}, the topology on $\Prim C^*(\G)$ is described as follows. Fix $(x,\chi)\in\Gu\times\widehat{Z(\Gamma)}$ and, for every $g\in\IsoGx{x}^\circ_x$, choose an open bisection $W_g$ of~$\G$ containing~$g$. Then the following conditions on a sequence $((x_n,\chi_n))_n$ in $\Gu\times\widehat{Z(\Gamma)}$ are equivalent:
\begin{enumerate}
  \item $\ker\omega^{\chi_n}_{x_n}\to\ker\omega^\chi_x$ in $\Prim C^*(\G)$;
  \item there exist points $y_n\in[x_n]$ such that $y_n\to x$ and $\chi_n\to\chi$ along the sets
\begin{equation*}\label{eq:Rn}
R_n:=\{\Phi(g): g\in\IsoGx{x}^\circ_x,\ W_g\cap\IsoGx{x_n}^\circ_{y_n}\ne\emptyset\};
\end{equation*}
  \item there exist points $y_n\in[x_n]$ such that $y_n\to x$ and $\chi_n\to\chi$ along the sets
\begin{equation*}\label{eq:Sn}
S_n:=\{\Phi(g): g\in\IsoGx{x}^\circ_x,\ W_g\cap\G^{y_n}_{y_n}\ne\emptyset\}.
\end{equation*}
\end{enumerate}
\end{thm}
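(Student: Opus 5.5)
The plan is to reduce to the results of~\cite{CN3}, applied to the set $\Omega:=\{z\in\Gu:\IsoGx{z}^\circ_z=\G^z_z\}$ of ``good'' units, and then to transfer the description from good units to arbitrary units.

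\emph{Step 1: the good units.} For $z\in\Omega$ we have $\omega^\kappa_z=\Ind^\G_{\G^z_z}(\kappa\circ\Phi|_{\G^z_z})$. This is induced from a character of an abelian isotropy group: $\G^z_z\cong\Phi(\G^z_z)$ is abelian by Lemma~\ref{lem:iso}(1),(3). By Lemma~\ref{lem:nice-points}, every quasi-orbit meets $\Omega$, so Corollary~\ref{cor:very-dense} shows that $\Omega\times\widehat{Z(\Gamma)}$ already parametrizes $\Prim C^*(\G)$. Convergence of $\ker\omega^{\chi_n}_{x_n}\to\ker\omega^{\chi}_x$ then follows from the convergence criterion in~\cite{CN3} for induced primitive ideals whose stabilizers are abelian. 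The relevant criterion is the one formulated through convergence of characters along sets as in~\cite{CN3}*{Definition~3.13}. For $x_n,x\in\Omega$ it reads: there are $y_n\in[x_n]$ with $y_n\to x$ and $\chi_n\circ\Phi\to\chi\circ\Phi$ along $\{g\in\G^x_x: W_g\cap\G^{y_n}_{y_n}\ne\emptyset\}$. Since $\Phi$ is injective on $\G^x_x$, this is condition~(3). The hypotheses of~\cite{CN3} involve every isotropy group, so I expect either to apply the result only to the reductions $\G_{\overline{[x]}}$, or to redo its proof using only points of $\Omega$. Amenability, together with Proposition~\ref{prop:dense}, supplies the necessary weak containments.

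\emph{Step 2: (2)$\Leftrightarrow$(3) for arbitrary points.} Since $\IsoGx{x_n}^\circ_{y_n}\subset\G^{y_n}_{y_n}$, we have $R_n\subset S_n$, so (3)$\Rightarrow$(2) is immediate. For the converse, I would show that for $g\in\IsoGx{x}^\circ_x$ the conditions $W_g\cap\G^{y_n}_{y_n}\ne\emptyset$ and $W_g\cap\IsoGx{x_n}^\circ_{y_n}\ne\emptyset$ agree for all large $n$. The argument is a limiting one, like the proof of Lemma~\ref{lem:equiv}: if $W_g\cap\G^{y_n}_{y_n}=\{h_n\}$, then $h_n\to g$, and $g$ lies in the interior of the isotropy near $x$ after restricting to $\overline{[x]}$. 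This step needs care, because $\overline{[y_n]}$ can be larger than $\overline{[x]}$. Instead of proving the equality $R_n=S_n$ eventually, I would perturb the points: replace $y_n$ by nearby good points of $[x_n]$ (Lemma~\ref{lem:nice-points}), pass to subsequences, and use $\Phi$-continuity. Since convergence along sets only constrains the coordinates in $R_n$ or $S_n$, it suffices to show that every $h\in S_n\setminus R_n$ can be dropped. These perturbation arguments do that.

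\emph{Step 3: extending to non-good $x_n$ and $x$.} We have $\ker\omega^{\chi}_x=\ker\omega^{\chi}_{x'}$ for any $x'\in\Omega$ with $\overline{[x']}=\overline{[x]}$ (Theorem~\ref{thm:essentially-central}(2)). So both sides of the equivalence must be shown invariant under such replacements. For the $x_n$ this is handled by choosing good $x'_n$ close to $y_n$ in $\overline{[x_n]}$ and using the essential-isotropy description of $R_n$. For $x$, I would use Lemma~\ref{lem:iso}, which identifies $\Phi(\IsoGx{x}^\circ_x)$ with $\Phi(\G^{x'}_{x'})$, and move $W_g$ along bisections carrying $x'$ close to $x$.

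\emph{Main obstacle.} I expect the main obstacle to be this last transfer combined with Step~2. Convergence along sets is sensitive to the exact sets, and the points $y_n$ are not in $\Omega$, so relating the sets $R_n$ computed at $x$ with those computed at a good point $x'$ requires a diagonal argument. I would first try to prove directly that (2) at $x$ implies (2) at $x'$, using sequences of bisections together with the continuity of $\Phi$.
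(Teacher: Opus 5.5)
Your overall architecture matches the paper's. The good case is \cite{CN3}*{Corollary~2.7}, and there $R_n=S_n$ because goodness is constant along orbits. The implication (3)$\Rightarrow$(2) is trivial. The equivalence (1)$\Leftrightarrow$(2) comes from showing that (2) is stable under replacing $x$ and $x_n$ by points on the same quasi-orbits, using the property $\overline{\IsoGx{x_n}^\circ}\cap\G^y_y=\IsoGx{x_n}^\circ_y$ from Lemma~\ref{lem:iso}.

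The genuine gap is in (2)$\Rightarrow$(3). Your first idea, that $W_g\cap\G^{y_n}_{y_n}\ne\emptyset$ and $W_g\cap\IsoGx{x_n}^\circ_{y_n}\ne\emptyset$ eventually agree, is false in general. Your fallback, replacing $y_n$ by nearby good points of $[x_n]$ via Lemma~\ref{lem:nice-points}, cannot work either. By normality of the essential isotropy, the property $\IsoGx{x_n}^\circ_z=\G^z_z$ is constant on orbits. So if $x_n$ is not good, $[x_n]$ contains no good points at all. Lemma~\ref{lem:nice-points} only produces good points in $\overline{[x_n]}$, and these are not admissible in (3), which requires $y_n\in[x_n]$ and computes $S_n$ with the full isotropy $\G^{y_n}_{y_n}$.

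What is missing is a way to get rid of the non-essential isotropy elements that the finitely many $W_g$ may pick up, by moving inside the orbit $[x_n]$ itself. This is the paper's Lemma~\ref{lem:essential-pass}. Suppose $y'$ has dense orbit, $\overline{\IsoG^\circ}\cap\G^{y'}_{y'}=\Iso_{y'}$, and $W_1,\dots,W_l$ are bisections with $W_i\cap\Iso_{y'}=\emptyset$ whose sources contain a neighbourhood $U$ of $y'$. Then there is $y\in[y']\cap U$ with $W_i\cap\G^y_y=\emptyset$ for all $i$. The proof goes as follows.
\begin{enumerate}
\item Shrink $U$ so that $W_i\cap\Iso_z=\emptyset$ for all $z\in U$; this again uses the closure property.
\item Induct on $l$. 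The point $y'$ is not interior to the fixed-point set of the partial homeomorphism defined by $W_l$. So, by density of the orbit, some $y''\in[y']\cap U$ is not fixed, and hence an open $V\ni y''$, $V\subset U$, contains no fixed points of $W_l$.
\item Apply the induction hypothesis at $y''$ inside $V$.
\end{enumerate}

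With this, the finite-set form of (2) gives $y'\in[x_n]\cap U$ such that for each $g\in F$ either $W_g\cap\IsoGx{x_n}^\circ_{y'}=\emptyset$ or $|\chi(\Phi(g))-\chi_n(\Phi(g))|<\varepsilon$. Applying the lemma in $\G_{\overline{[x_n]}}$ at $y'$, to those $W_g$ that miss $\IsoGx{x_n}^\circ_{y'}$, produces $y\in[x_n]\cap U$ with $W_g\cap\G^y_y=\emptyset$ for those $g$. That is exactly the finite-set form of (3). Without an argument of this kind, your Step~2, and with it the equivalence with (3) for non-good $x_n$, is not established.
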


We remark that although the sets $R_n$ and $S_n$ depend on the choice of the bisections $W_g$, conditions (2) and (3) do not, since thanks to the condition $y_n\to x$, for any given $g\in\IsoGx{x}^\circ_x$, a different choice of $W_g$ changes the sets $W_g\cap\IsoGx{x_n}^\circ_{y_n}$ and $W_g\cap\G^{y_n}_{y_n}$ only for finitely many~$n$'s.

\smallskip

For the proof of the theorem we need the following lemma. We note in passing that it can be used to simplify some of the arguments in~\cite{CN3}*{Section~3.2}.

\begin{lemma}\label{lem:essential-pass}
Assume $\G$ is a Hausdorff locally compact \'etale groupoid and $x\in\Gu$ is a unit such that $\overline{[x]}=\Gu$ and $\overline{\IsoG^\circ}\cap\Gxx=\Iso_x$. Assume $U$ is an open neighbourhood of $x$ and $W_1,\dots,W_n$ are open bisections of $\G$ such that $U\subset\bigcap^n_{i=1}s(W_i)$ and $W_i\cap\Iso_x=\emptyset$ for all $i$. Then there exists $y\in [x]\cap U$ such that $W_i\cap\G^y_y=\emptyset$ for all $i$.
\end{lemma}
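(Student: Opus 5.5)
The idea is to reduce the statement to a finite "nowhere dense" argument in the unit space and then use density of the orbit $[x]$. Since $\overline{[x]}=\Gu$, every nonempty open subset of $\Gu$ meets $[x]$. It therefore suffices to find a nonempty open set $V\subset U$ such that $W_i\cap\G^z_z=\emptyset$ for all $z\in V$ and all $i$.

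For each $i$ consider the local homeomorphism $\sigma_i\colon s(W_i)\to r(W_i)$, $\sigma_i(z):=r\big((s|_{W_i})^{-1}(z)\big)$, and the set
$$
F_i:=\{z\in s(W_i):\sigma_i(z)=z\}=\{z\in s(W_i): W_i\cap\G^z_z\ne\emptyset\}.
$$
Since $\Gu$ is Hausdorff and $\sigma_i$ is continuous, $F_i$ is relatively closed in $s(W_i)$, hence $F_i\cap U$ is relatively closed in $U$. We want to show that $U\setminus\bigcup_iF_i$ has nonempty interior.

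The obstacle is that some $F_i$ might have nonempty interior inside $U$, possibly away from $x$. We handle this with the hypothesis on $x$. Let $O_i$ be the interior of $F_i$. Over $O_i$ the bisection $W_i$ consists of isotropy elements, so $W_i\cap s^{-1}(O_i)\subset\IsoG^\circ$.

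We claim $x\notin\overline{O_i}$. Otherwise, take $z_k\in O_i$ with $z_k\to x$ and let $g_k\in W_i$ with $s(g_k)=z_k$. Then $g_k\to g$, where $g$ is the element of $W_i$ with source $x$, and $r(g)=\lim r(g_k)=\lim z_k=x$. Hence $g\in\overline{\IsoG^\circ}\cap\Gxx=\Iso_x$, contradicting $W_i\cap\Iso_x=\emptyset$.

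So $U':=U\setminus\bigcup_i\overline{O_i}$ is an open neighbourhood of $x$. Inside $U'$, each $F_i\cap U'$ is relatively closed with empty interior: an open subset of $U'$ contained in $F_i$ would lie in $O_i$, which is disjoint from $U'$.

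A finite union of closed nowhere dense subsets of $U'$ is nowhere dense. Hence $V:=U'\setminus\bigcup_iF_i$ is a dense open subset of $U'$, and in particular it is nonempty. By density of $[x]$ there is $y\in[x]\cap V\subset[x]\cap U$. Since $y\notin F_i$ for every $i$, we get $W_i\cap\G^y_y=\emptyset$ for all $i$, as required.

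The only delicate step is the claim $x\notin\overline{O_i}$. There one has to check that the limit element lies in $\G^x_x$ and in the closure of $\IsoG^\circ$, and this is exactly where the assumption $\overline{\IsoG^\circ}\cap\Gxx=\Iso_x$ is used.
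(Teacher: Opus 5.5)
Your proof is correct, and its first half coincides with the paper's. The paper begins by shrinking $U$ so that $W_i\cap\Iso_y=\emptyset$ for all $y\in U$. A point $y\in s(W_i)$ lies in the interior $O_i$ of the fixed-point set $F_i$ exactly when the element of $W_i$ with source $y$ lies in $\IsoG^\circ$. So this shrinking is the same as your passage to $U'=U\setminus\bigcup_i\overline{O_i}$, and it is proved by the same limit argument using $\overline{\IsoG^\circ}\cap\Gxx=\Iso_x$.

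The routes then diverge. The paper argues by induction on $n$:
\begin{enumerate}
\item it uses density of $[x]$ to find $y'\in[x]\cap U$ that is moved by the map $T_n$ defined by $W_n$;
\item it shrinks to a neighbourhood $V\subset U$ of $y'$ on which $T_n$ moves every point;
\item it applies the inductive hypothesis with base point $y'$ and the bisections $W_1,\dots,W_{n-1}$.
\end{enumerate}
The last step implicitly uses that the hypotheses on $x$ pass to points of its orbit, by normality of $\IsoG^\circ$ and its closure.

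You instead observe that in $U'$ each $F_i$ is relatively closed with empty interior. You then invoke the elementary fact that a finite union of such sets still has empty interior. Density of $[x]$ is used only once, at the end, and the base point never moves. The paper's induction is essentially an unrolled proof of that nowhere-density fact, interleaved with the density argument. Your version is shorter and isolates exactly where the hypothesis on $x$ enters.

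One small correction: $\Gu$ is not assumed first countable. So the approximation $z_k\to x$ in your proof that $x\notin\overline{O_i}$ should use a net rather than a sequence, as the paper does; the argument is otherwise unchanged.
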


\bp
Let us show first that by replacing $U$ by a smaller neighbourhood we may assume that $W_i\cap\Iso_y=\emptyset$ for all $1\le i\le n$ and $y\in U$. Indeed, if this is not true, we can find an index~$i$  and a net of units $y_j$ such that $y_j\to x$ and $W_i\cap\Iso_{y_j}\ne\emptyset$ for all $j$. Then $W_i\cap\Gxx\ne\emptyset$, contradicting the assumptions  $\overline{\IsoG^\circ}\cap\Gxx=\Iso_x$ and $W_i\cap\Iso_x=\emptyset$.

We will now prove the lemma by induction on $n$. Assuming that it is proved for $n-1$, consider the map $T_n\colon s(W_n)\to r(W_n)$ defined by $W_n$. The assumption $W_n\cap\Iso_x=\emptyset$ means that $x$ does not belong to the interior of the set of points fixed by $T_n$. As $[x]$ is dense in $\Gu$, it follows that there exists $y'\in [x]\cap U$ such that $T_ny'\ne y'$. Choose an open neighbourhood $V$ of~$y'$ such that $V\subset U$ and $T_ny\ne y$ for all $y\in V$. Then, for all $y\in V$, we have $W_n\cap\G^y_y=\emptyset$. On the other hand, $W_i\cap\Iso_{y'}=\emptyset$ for all $1\le i\le n-1$, so by the inductive assumption there is $y\in [y']\cap V=[x]\cap V$ such that $W_i\cap\G^y_y=\emptyset$ for all $1\le i\le n-1$. The unit $y$ has the required properties.
\ep

\bp[Proof of Theorem~\ref{thm:convfg}]
We will first prove equivalence of (1) and (2). When $x$ and $x_n$ are such that $\IsoGx{x}^\circ_{x}=\Gxx$ and $\IsoGx{x_n}^\circ_{x_n}=\G^{x_n}_{x_n}$, then this equivalence follows from \cite{CN3}*{Corollary~2.7} (see also Corollaries~2.9 and~3.14 there for formulations of convergence closer to our current setup). As we have already used in the proof of Proposition~\ref{prop:essentially-central}, by Lemma~\ref{lem:nice-points}, for every unit $y\in\Gu$ there exists $y'\in\Gu$ such that $\overline{[y]}=\overline{[y']}$ and $\IsoGx{y}^\circ_{y'}=\G^{y'}_{y'}$. By Theorem~\ref{thm:essentially-central} we have $\ker\omega^\chi_y=\ker\omega^\chi_{y'}$. It follows that in order to prove that (1) and (2) are equivalent it suffices to show that condition (2) is stable under replacing~$x$ and~$x_n$ by units on the same quasi-orbits.

It will be convenient to replace (2) by the following equivalent condition, cf.~the proof of \cite{CN3}*{Corollary~3.14}. For every neighbourhood~$U$ of~$x$, every $\eps>0$, every finite subset $F\subset\IsoGx{x}^\circ_x$ and arbitrary open bisections $W_g$ containing $g\in F$, there exists an index~$n_0$ such that for each $n\ge n_0$ we can find $y\in [x_n]\cap U$ satisfying the following property: for every $g\in F$, we have
$$
\text{either}\quad W_g\cap\IsoGx{x_n}^\circ_y=\emptyset\quad \text{or}\quad |\chi(\Phi(g))-\chi_n(\Phi(g))|<\eps.
$$
Let us call this condition (2$'$). Assuming that this condition is satisfied, we want to show that then the same condition holds for any units on the quasi-orbits of $x$ and $x_n$.

Let us first consider $x'$ such that $\overline{[x']}=\overline{[x]}$. Take an open neighbourhood~$U$ of~$x'$, $\eps>0$, a finite subset $F\subset\IsoGx{x'}^\circ_{x'}$ and open bisections $W_g$ containing $g\in F$.  We may assume that $U$ is small enough so that $\Phi(W_g\cap s^{-1}(U))=\{\Phi(g)\}$ for all $g\in F$ and $W_g\cap \IsoGx{x'}^\circ_{y}\ne\emptyset$ for all $y\in \overline{[x']}\cap U$. It is easy to see that condition (2$'$) is stable under replacing $x$ by another point on the same orbit. Hence without loss of generality we may also assume that $x\in U$. Then applying condition (2$'$) to the neighbourhood $U$ of $x$, $\eps>0$, the set $(\bigcup_{g\in F}W_g)\cap \IsoGx{x}^\circ_{x}$ and the bisections $W_g$, we see that the same points $y\in [x_n]\cap U$ can be used to check (2$'$) for $x'$ instead of~$x$, the neighbourhood~$U$ of~$x'$, $\eps>0$, $F\subset\IsoGx{x'}^\circ_{x'}$ and the bisections $W_g$.

Next we keep $x$ intact and take units $x_n'$ such that $\overline{[x_n']}=\overline{[x_n]}$. In order to see that (2$'$) holds for $x_n'$ instead of $x_n$, it suffices to show the following: if $U$ is an open neighbourhood of $x$,  $F\subset\IsoGx{x}^\circ_{x}$ is finite, $W_g$ are bisections containing $g\in F$ such that $U\subset s(W_g)$, and $y\in [x_n]\cap U$ for some~$n$, then there exists $y'\in [x_n']\cap U$ such that for each $g\in F$ with $W_g\cap\IsoGx{x_n}^\circ_y=\emptyset$ we have $W_g\cap\IsoGx{x_n}^\circ_{y'}=\emptyset$. This is, in fact, true for any $y'\in [x_n']$ sufficiently close to~$y$. Indeed, assume this is not the case. Then there exist $g\in F$ and a sequence of elements $y_k\in[x_n']$ such that $y_k\to y$, $W_g\cap\IsoGx{x_n}^\circ_y=\emptyset$ and $W_g\cap\IsoGx{x_n}^\circ_{y_k}\ne\emptyset$ for all $k$. But this contradicts the property $\overline{\IsoGx{x_n}^\circ}\cap\G^y_y=\IsoGx{x_n}^\circ_y$ established in Lemma~\ref{lem:iso}, cf.~the proof of Lemma~\ref{lem:equiv}. This finishes the proof of equivalence of (1) and (2).

\smallskip

Condition (3) is obviously stronger than (2). It remains to show that (2) implies (3). Condition~(3) can be equivalently formulated as follows. For every neighbourhood~$U$ of~$x$, every $\eps>0$ and every finite subset $F\subset\IsoGx{x}^\circ_x$, there exists an index~$n_0$ such that for each $n\ge n_0$ we can find $y\in [x_n]\cap U$ satisfying the following property: for every $g\in F$, we have either $W_g\cap\G^y_y=\emptyset$ or $|\chi(\Phi(g))-\chi_n(\Phi(g))|<\eps$. Let us call this condition (3$'$).

We may assume that $U\subset\bigcap_{g\in F}s(W_g)$. By condition (2), in its equivalent form (2$'$), we know that we can find $n_0$ such that for each $n\ge n_0$ there is $y'\in [x_n]\cap U$ satisfying the following property: for every $g\in F$, we have either $W_g\cap\IsoGx{x_n}^\circ_{y'}=\emptyset$ or $|\chi(\Phi(g))-\chi_n(\Phi(g))|<\eps$. Note that $\overline{\operatorname{Iso}(\G_{\overline{[x_n]}})^\circ}\cap\G^{y'}_{y'}=\IsoGx{x_n}^\circ_{y'}$ by Lemma~\ref{lem:iso}. We then apply Lemma~\ref{lem:essential-pass} to the unit $y'$ and the reduced groupoid $\G_{\overline{[x_n]}}$ to find $y\in[y']\cap U=[x_n]\cap U$ such that $W_g\cap\G^y_y=\emptyset$ for all $g\in F$ with $W_g\cap\IsoGx{x_n}^\circ_{y'}=\emptyset$. The unit $y$ has the properties required in~(3$'$).
\ep

We can now show that the first part of Question~\ref{ques} also has a positive answer for $\G$.

\begin{cor}
In the setting of Theorem~\ref{thm:essentially-central}, assume $\Omega\subset\Gu$ is a subset such that its image in $(\G\backslash\Gu)^\sim$ is very dense. Then the set
$$
\{\Ind(x,J):x\in\Omega,\ J\in\Prim C^*(\Gxx)\}=\{\ker\omega^\chi_x:x\in\Omega,\ \chi\in\widehat{Z(\Gamma)}\}
$$
is very dense in $\Prim C^*(\G)$.
\end{cor}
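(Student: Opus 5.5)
First, the equality of the two sets in the statement follows from Theorem~\ref{thm:essentially-central}(1): for $x\in\Omega$, every $\Ind(x,J)$ equals $\ker\omega^{\chi}_x$ for any $\chi$ extending $\chi_J$ (via $\Phi$), and conversely every $\ker\omega^\chi_x$ is of the form $\Ind(x,J)$. For very density, we take a closed subset $C\subset\Prim C^*(\G)$ and a point $\ker\omega^\chi_x\in C$. We must show that it lies in the closure of $C\cap\{\ker\omega^\kappa_y: y\in\Omega\}$. Since $\Prim C^*(\G)$ is second countable here, it suffices to produce a sequence $(y_n,\chi_n)$ with $y_n\in\Omega$ and $\ker\omega^{\chi_n}_{y_n}\in C$ converging to $\ker\omega^\chi_x$.

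The plan is to first reduce to the case $C=\overline{\{\ker\omega^\chi_x\}}$. For this, note that $\ker\omega^\kappa_y\in\overline{\{\ker\omega^\chi_x\}}$ implies $\ker\omega^\kappa_y\in C$. The closure consists of ideals containing $\ker\omega^\chi_x$, and it is stable under the continuous map $p$; hence every such $y$ lies in $F:=\overline{[x]}$. Then use very density of $\QQ(\Omega)$ via Lemma~\ref{lem:recap}: $r(\G_\Omega)\cap F$ is dense in $F$. Since $\omega$ is constant on orbits, we may replace $\Omega$ by $r(\G_\Omega)$, and so we get a sequence $y_n\in\Omega\cap F$ with $y_n\to x$. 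Set $\chi_n:=\chi$ for all $n$.

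By Theorem~\ref{thm:convfg} (implication (2)$\Rightarrow$(1), with $x_n=y_n$ and the points $y_n$ themselves), $\chi_n=\chi$ converges to $\chi$ along any sets. Hence $\ker\omega^\chi_{y_n}\to\ker\omega^\chi_x$.

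The remaining, and main, issue is that these $\ker\omega^\chi_{y_n}$ lie in $\overline{\{\ker\omega^\chi_x\}}$, i.e.\ that $\ker\omega^\chi_x\subset\ker\omega^\chi_{y_n}$ for $y_n\in\overline{[x]}$. I would derive this by applying Theorem~\ref{thm:convfg} again, now to the constant sequence $(x,\chi)$ converging to $(y_n,\chi)$. Points of $[x]$ converge to $y_n$, so condition (2) holds with the constant character, and thus $\ker\omega^\chi_{y_n}\in\overline{\{\ker\omega^\chi_x\}}\subset C$.

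In that last step I need the sets along which convergence is tested to be harmless, which they are since the character is constant. I also need $\chi$ to be a character of $Z(\Gamma)$ defined independently of the point, which holds by construction. Thus the hardest point reduces to observing that Theorem~\ref{thm:convfg} gives the specialization $\ker\omega^\chi_x\subset\ker\omega^\chi_y$ whenever $y\in\overline{[x]}$.
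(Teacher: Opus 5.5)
Your proposal is correct and follows the paper's proof: both apply Theorem~\ref{thm:convfg} twice with a constant character. The first application shows $\ker\omega^\chi_y\in\overline{\{\ker\omega^\chi_x\}}$ whenever $y\in\overline{[x]}$; the second gives $\ker\omega^\chi_{y_n}\to\ker\omega^\chi_x$ for points $y_n\in\Omega\cap\overline{[x]}$ converging to $x$, which exist by Lemma~\ref{lem:recap}. So already the part of $\overline{\{\ker\omega^\chi_x\}}$ coming from $\Omega$ has $\ker\omega^\chi_x$ in its closure. The appeal to second countability of $\Prim C^*(\G)$ is unnecessary: in any topological space, a sequence from a set converging to a point places that point in the set's closure.
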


\bp
Since the induction map is constant on the $\G$-orbits, without loss of generality we may assume that $\Omega$ is $\G$-invariant.
Take any point $\ker\omega^\chi_y$ in $\Prim C^*(\G)$. By assumption and Lemma~\ref{lem:recap}, the set $\Omega\cap\overline{[y]}$ is dense in $\overline{[y]}$. For every point $x$ in this set we can find points $y_n\in[y]$ such that $y_n\to x$. Then $\ker\omega^\chi_y=\ker\omega^\chi_{y_n}\to\ker\omega^\chi_x$, that is, $\ker\omega^\chi_x\in\overline{\{\ker\omega^\chi_y\}}$. On the other hand, we can find points $x_n\in\Omega\cap\overline{[y]}$ such that $x_n\to y$ and then get that $\ker\omega^\chi_{x_n}\to\ker\omega^\chi_y$. Therefore already the set $\{\ker\omega^\chi_x:x\in\Omega\cap\overline{[y]}\}$ is dense $\overline{\{\ker\omega^\chi_y\}}$. This implies the corollary.
\ep

\bigskip

\section{Exel--Pardo groupoids and their higher rank analogues} \label{sec:prelim}

\subsection{Higher rank graphs}\label{ssec:higher-rank}
We recall basic notions of the higher rank graph theory, see~\cite{KP} and \cite{MR2301938} for details.

Fix a natural number $k\ge1$. A \emph{$k$-graph} is a pair $(\Lambda, d)$, where $\Lambda$ is a small category, thought of as a set of morphisms, and $d\colon \Lambda \to \Z_+^{k}$ is a functor with the unique factorization property, meaning that if $d(\mu)=n+m$ for some $\mu \in \Lambda$ and $n,m\in \Z_+^{k}$, then there exist unique morphisms $\alpha, \beta \in \Lambda$ satisfying $d(\alpha)=n$, $d(\beta)=m$ and $\mu=\alpha \beta$.  For each $n\in \Z_+^{k}$, we set $\Lambda^{n}:=d^{-1}(n)$. Then~$\Lambda^0$ coincides with the set of identity morphisms in the category, so it can be identified with the set of objects, which are called the vertices of $\Lambda$. Any $1$-graph corresponds to an oriented graph in the usual sense, with the vertex set $\Lambda^0$ and the edge set $\Lambda^1$; then $\Lambda^n$ is the set of paths of length $n$ in the graph.

For elements $n,m \in \Z_+^{k}$, we define $n \vee m \in \Z_+^{k}$ by $(n \vee m)_{i} := \max(n_{i}, m_{i})$ and we define $n \wedge m \in \Z_+^{k}$  by $(n \wedge m)_{i} := \min(n_{i}, m_{i})$. A  $k$-graph is called \emph{finitely aligned} if for all $\lambda , \mu \in \Lambda$ the set
$$
\Lambda^{\text{min}}(\lambda, \mu) := \{ (\alpha, \beta)\in\Lambda\times\Lambda : \lambda \alpha =\mu \beta \; , d(\lambda \alpha)=d(\lambda) \vee d(\mu) \}
$$
is finite. For $k=1$ this property obviously always holds. We assume that
\textbf{all $k$-graphs considered in this paper are finitely aligned}.

\smallskip

Denote by $r,s\colon\Lambda\to\Lambda^0$ the codomain and domain maps, resp. For subsets $A, B \subset \Lambda$ we define
$$
AB:=\{ \alpha \beta : \alpha \in A,\ \beta \in B\ \ \text{and}\ \ s(\alpha)=r(\beta) \}.
$$
Therefore $v\Lambda:=\{v\}\Lambda$ for $v \in \Lambda^{0}$ is the set of morphisms $\alpha \in \Lambda$ with $r(\alpha)=v$. A $k$-graph $\Lambda$ is called \emph{row-finite} if $|v\Lambda^{n}|<\infty$ for all $v\in \Lambda^{0}$ and $n \in \Z_+^{k}$, and it is called \emph{source-free} if $v \Lambda^{n} \neq \emptyset$ for all $v\in \Lambda^{0}$ and $n \in \Z_+^{k}$.

A functor $x\colon \Lambda_{1} \to \Lambda_{2}$ between two $k$-graphs is called a graph morphism if it preserves the degree map, that is, $d(x(\lambda))=d(\lambda)$ for all $\lambda$.
For  each $m \in (\Z_+ \cup \{\infty\})^{k}$ we define a $k$-graph by considering the set
$$
\Omega_{m, k} := \{ (p,q) \in \Z_+^{k} \times \Z_+^{k} \mid p \leq q \leq m \}
$$
with $r(p,q)=p$, $s(p,q)=q$, $(n,p)(p, m)=(n,m)$ and $d(p,q)=q-p$.
We now define the \emph{path space} of $\Lambda$ to be the set
$$
X_{\Lambda} := \bigcup_{m \in (\Z_+ \cup \{\infty\})^{k}} \{ x: \Omega_{m,k} \to \Lambda \mid \text{$x$ is a graph morphism} \}.
$$
We write $x(n)$ for $x(n,n)\in\Lambda^0$. Define the (generalized) degree map on $X_\lambda$ by setting $d(x):=m \in (\Z_+ \cup \{\infty\})^{k}$ when $x\colon \Omega_{m,k} \to \Lambda$, and the range map $r\colon X_{\Lambda} \to \Lambda^{0}$ by setting $r(x):=x(0)$. For $m \in \Z_+ ^{k}$ the graph morphisms $x\colon \Omega_{m,k} \to \Lambda$ can be identified with the elements of $\Lambda^{m}$, because for each morphism $\lambda \in \Lambda^{m}$ there exists a unique graph morphism $x\colon \Omega_{m,k} \to \Lambda$ such that $x(0, m)=\lambda$. We therefore often view $\Lambda$ as a subset of $X_{\Lambda} $ and then refer to the elements of $\Lambda$ as finite paths.

For $x\in X_{\Lambda}$ and $m \in \Z_+^{k}$ with $m \leq d(x)$ we can define a unique path $\sigma^{m}(x) \in X_{\Lambda}$ such that $\sigma^{m}(x)(p, q)=x(m+p, m+q)$ for all $p, q \in \Z_+^{k}$ with $p \leq q \leq d(x)-m$. For any $x\in X_{\Lambda}$ and $\lambda \in \Lambda$ with $s(\lambda)=r(x)$ one can likewise define a concatenation of paths $\lambda x$ 
such that $(\lambda x)(0,d(\lambda))=\lambda$ and $\sigma^{d(\lambda)}(\lambda x)=x$.

\smallskip

For each $\lambda \in \Lambda$ define a cylinder set $Z(\lambda)$ in $X_{\Lambda}$ by
$$
Z(\lambda) := \{ x\in X_{\Lambda} : d(x) \geq d(\lambda)\ \ \text{and}\ \ x=\lambda y\ \ \text{for some}\ \ y \in X_{\Lambda}\}.
$$
For a finite subset $F \subset s(\lambda)\Lambda$ put then $Z_{F}(\lambda):=Z(\lambda) \setminus \bigcup_{\alpha \in F} Z(\lambda \alpha)$. These sets give a basis of a Hausdorff locally compact topology on $X_\Lambda$. The sets $\{x \in X_{\Lambda} : d(x) \geq m \}$ are open in~$X_{\Lambda}$ and the maps $\sigma^{m} \colon \{x \in X_{\Lambda} \mid d(x) \geq m \} \to X_{\Lambda}$ are local homeomorphisms. We remark that the assumption of finite alignment is essential to ensure local compactness of the topology, cf.~\cite{MR2301938}*{Remark 3.7}. When $\Lambda$ is in addition countable, then $X_\Lambda$ is second countable.

\smallskip

Given a vertex $v\in \Lambda^{0}$, a subset $E \subset v\Lambda$ is called \emph{exhaustive} for $v$ if for each $\lambda \in v\Lambda$ there exists an element $\mu\in E$ such that $\Lambda^{\min}(\lambda, \mu) \neq \emptyset$. A path $x\in X_{\Lambda}$ is called a \emph{boundary path} if it satisfies the following condition: for every $m \in \Z_+^{k}$ with $m \leq d(x)$ and for every \emph{finite} exhaustive set $E \subset x(m)\Lambda$, there exists an element $\lambda \in E$ such that $x(m, m+d(\lambda))=\lambda$. We let $\partial \Lambda$ denote the set of all boundary paths in $\Lambda$. The set $\partial \Lambda$ is a closed subset of $X_{\Lambda}$ invariant under the shifts~$\sigma^{m}$ and their inverses, meaning that if $x\in X_\Lambda$, $\lambda \in \Lambda$ and $s(\lambda)=r(x)$, then $\lambda x \in \partial \Lambda$ if and only if $x\in\partial\Lambda$, see \cite{MR2301938}*{Propositions~4.4 and~4.7}.

For $k=1$ the set $\partial\Lambda$ has a simple description. Namely, consider the set
$$
\Lambda^\sing:=\{v\in \Lambda^{0} : |v\Lambda^1| \in \{0, \infty\}\}
$$
of \emph{singular vertices}. Then $\partial\Lambda$ consists of all infinite paths $x$ and all finite paths $\lambda\in\Lambda$ such that $s(\lambda)\in\Lambda^\sing$, see~\cite{KL}*{Proposition 4.6}.

\subsection{Self-similar \texorpdfstring{$k$}{k}-graphs and the associated groupoids}
Let $\Lambda$ be a $k$-graph and $G$ be a discrete group with the unit element~$1_G$. For the purpose of the next definition, by an action of~$G$ on $\Lambda$ we mean an action of~$G$ on the set $\Lambda$ that preserves the degree map and commutes with the range and source maps, that is, $d(g\cdot\lambda)=d(\lambda)$, $r(g\cdot\lambda)=g\cdot r(\lambda)$ and $s(g\cdot\lambda)=g\cdot s(\lambda)$, but we do not require the maps $\lambda\mapsto g\cdot\lambda$ to be graph morphisms, that is, to respect the composition of morphisms in $\Lambda$.

\begin{defn}[{\cites{MR3581326,MR4294118}}]\label{def:selfsim}
An action $G\curvearrowright\Lambda$ is called \emph{self-similar}, and the pair $(G,\Lambda)$ is called a \emph{self-similar $k$-graph}, if we are also given a map $G\times \Lambda \to G$, $(g,\mu)\mapsto g|_\mu$ , satisfying the following properties:
\begin{enumerate}
\item[(i)] $g \cdot (\mu \nu) = (g \cdot \mu)(g|_{\mu} \cdot \nu) $ for all $g \in G$ and $\mu, \nu \in \Lambda$ with $s(\mu)=r(\nu)$;
\item[(ii)] $g|_{\mu \nu} =\big(g|_{\mu}\big)|_{\nu}$ for all $g\in G$ and $\mu, \nu \in \Lambda$ with $s(\mu)=r(\nu)$;
\item[(iii)]  $(gh)|_{\mu}=g|_{h\cdot \mu} h|_{\mu}$ for all $g,h \in G$ and $\mu \in \Lambda$;
\item[(iv)] $g|_{v}=g$ for all $g\in G$ and $v\in \Lambda^{0}$.
\end{enumerate}

We furthermore call the action $G\curvearrowright\Lambda$, or the self-similar $k$-graph $(G,\Lambda)$, \emph{pseudo-free} if for all $g\in G$ and $\mu\in \Lambda$ the two conditions $g\cdot \mu=\mu$ and $g|_{\mu}=1_{G}$ imply that $g=1_{G}$. We say that $(G,\Lambda)$ is countable if both $G$ and $\Lambda$ are countable.
\end{defn}

Note that condition (i) applied to $\nu=s(\mu)$ implies that for all $g\in G$ and $\mu\in\Lambda$ we have
$$
g\cdot s(\mu)=g|_\mu\cdot s(\mu),
$$
while condition (iii) applied to $g=h=1_G$ implies that $1_{G}|_{\mu}=1_{G}$ for all $\mu\in\Lambda$.

Any action $G\curvearrowright\Lambda$ by graph automorphisms gives an example of a pseudo-free self-similar action such that $g|_\mu=g$ for all $g\in G$ and $\mu\in\Lambda$. Conversely, if $(G,\Lambda)$ is a self-similar $k$-graph such that $g|_\mu=g$ for all $g\in G$ and $\mu\in\Lambda$, then $G$ acts on $\Lambda$ by graph automorphisms.

\smallskip

Assume $(G,\Lambda)$ is a self-similar $k$-graph. Our goal is to define certain groupoids associated with $(G,\Lambda)$. For $k=1$ these groupoids were introduced under some conditions in~\cite{MR3581326} as tight groupoids of inverse semigroups associated with self-similar graphs. They were also shown to provide models for certain Cuntz--Pimsner algebras. Without any restrictions on the graph, and even in a more general setting of self-similar groupoid actions, these constructions have been discussed in~\cites{MS,KM}. In the higher rank case different points of view haven't been fully reconciled. We will not try to complete the picture here and rather introduce the groupoids in an ad hoc manner following~\cite{MR4294118} and checking that everything works without the assumptions on the $k$-graphs made there.

It is not difficult to see that the action of~$G$ on~$\Lambda$ extends uniquely to an action on $X_\Lambda$ such that
$$
(g\cdot x) (n,m)= g|_{x(0, n)} \cdot x(n,m)
$$
for all $x\in X_{\Lambda}$, $g\in G$ and $n,m \in \Z_+^k$ with $n\le m \leq d(x)$. It is clear that this action satisfies $d(g\cdot x)=d(x)$ and $r(g\cdot x)=g\cdot r(x)$ for all $g\in G$ and $x\in X_{\Lambda}$. It is not difficult to see, cf.~Lemma~\ref{lem:calcrules}(1) below, that
$$
g\cdot \Big(Z(\lambda) \setminus \bigcup_{\alpha \in F} Z(\lambda \alpha)\Big)
=Z(g\cdot\lambda) \setminus \bigcup_{\alpha \in F} Z\big((g\cdot\lambda) (g|_{\lambda}\cdot\alpha)\big)\; .
$$
It follows that $G$ acts by homeomorphisms of $X_{\Lambda}$.

\begin{lemma} \label{lem:Ginvbound}
The subset $\partial\Lambda\subset X_\Lambda$ is $G$-invariant.
\end{lemma}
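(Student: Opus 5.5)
It suffices to show $g\cdot x\in\partial\Lambda$ whenever $x\in\partial\Lambda$ and $g\in G$; applying this to $g^{-1}$ then gives $G$-invariance. The definition of a boundary path asks, for each $m\le d(x)$ and each finite exhaustive set $E\subset x(m)\Lambda$, for some $\lambda\in E$ with $x(m,m+d(\lambda))=\lambda$. I will transport such conditions from $g\cdot x$ back to $x$ using a restricted group element. I will rely on two auxiliary facts, both coming from the self-similarity axioms (i)--(iv).

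The first fact is that for each $h\in G$ and vertex $v$, the bijection $v\Lambda\to (h\cdot v)\Lambda$, $\lambda\mapsto h\cdot\lambda$, maps finite exhaustive sets to finite exhaustive sets. It suffices to show that $\Lambda^{\min}(\lambda,\mu)\ne\emptyset$ implies $\Lambda^{\min}(h\cdot\lambda,h\cdot\mu)\ne\emptyset$; the converse follows by applying this to $h^{-1}$. Suppose $\lambda\alpha=\mu\beta$ with degree $d(\lambda)\vee d(\mu)$. By (i),
$$
h\cdot(\lambda\alpha)=(h\cdot\lambda)(h|_\lambda\cdot\alpha)=(h\cdot\mu)(h|_\mu\cdot\beta).
$$
Since the action preserves degrees, this path has degree $d(h\cdot\lambda)\vee d(h\cdot\mu)$. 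Hence $(h|_\lambda\cdot\alpha,\,h|_\mu\cdot\beta)\in\Lambda^{\min}(h\cdot\lambda,h\cdot\mu)$. The map is a bijection with inverse given by $h^{-1}$ on $(h\cdot v)\Lambda$. This uses $(h^{-1})\cdot(h\cdot\lambda)=\lambda$, which holds because we have a group action on the set $\Lambda$. So finiteness and exhaustiveness transfer in both directions.

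The second fact is a shift formula: $\sigma^m(g\cdot x)=g|_{x(0,m)}\cdot\sigma^m(x)$. This follows directly from the defining formula $(g\cdot x)(n,n')=g|_{x(0,n)}\cdot x(n,n')$ combined with (ii). For $\sigma^m(x)(p,q)=x(m+p,m+q)$ we need $g|_{x(0,m+p)}=(g|_{x(0,m)})|_{x(m,m+p)}$, and this is exactly (ii) after factorizing $x(0,m+p)=x(0,m)x(m,m+p)$.

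With these facts, the proof goes as follows. Fix $m\le d(g\cdot x)=d(x)$ and a finite exhaustive $E\subset (g\cdot x)(m)\Lambda$. Put $h:=g|_{x(0,m)}$, so $(g\cdot x)(m)=h\cdot x(m)$. By the first fact, $h^{-1}\cdot E$ is a finite exhaustive subset of $x(m)\Lambda$. Since $x\in\partial\Lambda$, there is $\lambda\in E$ with $x(m,m+d(\lambda))=h^{-1}\cdot\lambda$. Using the shift formula,
$$
(g\cdot x)(m,m+d(\lambda))=h\cdot x(m,m+d(\lambda))=\lambda,
$$
so $g\cdot x$ satisfies the boundary condition at $m$ for $E$.

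The only step that needs care is the exhaustiveness transfer, specifically the bookkeeping with $h|_\lambda$ versus $h|_\mu$ in (i). Everything else is a direct unwinding of the definitions.
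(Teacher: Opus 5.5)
Your proof is correct and is essentially the paper's argument: push exhaustive sets forward along $h\cdot{}$ using axiom (i), then pull a finite exhaustive set at $(g\cdot x)(m)$ back to $x(m)\Lambda$ via $(g|_{x(0,m)})^{-1}$. One simplification: you do not need the shift formula, because the defining formula for the action already gives $(g\cdot x)(m,m+d(\lambda))=g|_{x(0,m)}\cdot x(m,m+d(\lambda))$.
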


\begin{proof}
Take $g\in G$. We claim first that if $E \subset v \Lambda$ is an exhaustive set for a vertex $v\in \Lambda^{0}$, then $g\cdot E \subset (g\cdot v) \Lambda$ is an exhaustive set for $g\cdot v$. Indeed, if $\lambda\alpha=\mu\beta$ in $\Lambda$, then
$$
(g\cdot\lambda)(g|_\lambda\cdot\alpha)=(g\cdot\mu)(g|_\mu\cdot\beta),
$$
which implies that the sets $\Lambda^{\min}(\lambda, \mu)$ and $\Lambda^{\min}(g \cdot \lambda,g\cdot \mu)$ have the same cardinality. This proves our claim.

Assume now that $x \in \partial \Lambda$ is a boundary path and consider a finite exhaustive set
$$
E \subset (g\cdot x)(m) \Lambda=(g\cdot x(m))\Lambda=(g|_{x(0,m)}\cdot x(m))\Lambda
$$
for some $m \leq d(x)$. Then $(g|_{x(0, m)})^{-1} \cdot E$ is a finite exhaustive set for $x(m)\Lambda$, and hence there is a path $\lambda \in E$ such that $x(m, m+d(\lambda))= (g|_{x(0, m)})^{-1}\cdot \lambda$. Then $(g\cdot x)(m, m+d(\lambda))= g|_{x(0, m)} \cdot x(m, m+d(\lambda)) = \lambda$, and we conclude that $g\cdot x$ is again a boundary path. This finishes the proof of the lemma.
\end{proof}

Denote by $C(\Z_+^k; G)$ the group of all mappings from $\Z_+^k$ to $G$, with the group operation given by pointwise multiplication. For $m \in \Z^{k}$ and $f \in C(\Z_+^k; G)$ we define $\mathcal{T}_{m}(f) \in C(\Z_+^k; G)$ by
$$
\mathcal{T}_{m}(f) (n):=
\begin{cases}
f(n-m),&\text{if}\ \ n \geq m,\\
1_G, \ &\text{otherwise.}
\end{cases}
$$
Let $Q_k(G)$ denote the quotient group $C(\Z_+^k; G)/\sim$ with respect to the equivalence relation such that $f_{1} \sim f_{2}$ if and only there exists $m\in \Z_+^k$ such that $f_{1}(n)=f_{2}(n)$ for all $n \geq m$. It is then straightforward to check that $\mathcal{T}_{m}$ defines an automorphism on $Q_k(G)$, which we continue to denote by $\TT_m$, so $\mathcal{T}_{m}([f])=[\mathcal{T}_{m}(f) ]$. This way we get an action of $\Z^k$ on $Q_k(G)$ by automorphisms and hence a semidirect product $Q_k(G)\rtimes \Z^k$, called the \emph{lag group}.

Next, for $x\in X_{\Lambda}$ and $g\in G$, consider the element $f\in C(\Z_+^k; G)$ defined by
$$
f(n): = g|_{x(0, n \wedge d(x))}.
$$
We denote by $[g|_x]$ the class of $f$ in $Q_k(G)$. For $x\in \Lambda \subset X_{\Lambda}$ we have $f(n) = g|_{x}$ for all $n \geq d(x)$, and hence $[f] $ coincides with the equivalence class of the constant map $\Z_+^k \to G$, $n \mapsto g|_{x}$. Therefore the notation $[g|_{x}]$ should be unambiguous. Using this notation we get
the following generalizations of identities from Definition \ref{def:selfsim}.

\begin{lemma} \label{lem:calcrules}
For all $g,h\in G$, $x\in X_\Lambda$ and $\lambda \in \Lambda$  with $r(x)=s(\lambda)$, we have:
\begin{enumerate}
\item $g \cdot (\lambda x) = (g \cdot \lambda) (g|_{\lambda} \cdot x)$;
\item $[g|_{\lambda x} ]=\mathcal{T}_{d(\lambda)} ([(g|_{\lambda})|_{x}])$;
\item $[(gh)|_{x}]=[g|_{h \cdot x}][h|_{x}]$.
\end{enumerate}
\end{lemma}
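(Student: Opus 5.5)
All three identities are verified pointwise from the definition of the extended action on $X_\Lambda$ and of the classes $[g|_x]$, using the axioms (i)--(iv) of Definition~\ref{def:selfsim} on finite segments. Throughout, write $p:=d(\lambda)$. The one general fact I will use is that, for $n\le m\le d(\lambda x)$, axiom~(i) applied to $\lambda x(0,n)$ and $x(n,m)$, together with the factorization of a finite path into its initial segment and the rest, gives $(g\cdot(\lambda x))(n,m)=g|_{(\lambda x)(0,n)}\cdot(\lambda x)(n,m)$. This is just the definition of the action on $X_\Lambda$.

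For (1), I compare both sides as graph morphisms $\Omega_{d(\lambda x),k}\to\Lambda$. It suffices to check that they agree on $(0,p)$ and that their shifts by $\sigma^p$ agree. Indeed, by unique factorization every segment $(\lambda x)(n,m)$ is determined by these data, since we may enlarge to $(0,m\vee p)$.

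On $(0,p)$, the left side gives $g|_{v}\cdot\lambda=g\cdot\lambda$ by (iv), which matches the right side. For the shifted part with $n\ge0$, the left side gives $g|_{\lambda x(0,n)}\cdot x(n,m)$. By (ii) this equals $(g|_\lambda)|_{x(0,n)}\cdot x(n,m)$, which is exactly $(g|_\lambda\cdot x)(n,m)$. Segments straddling $p$ are then handled by unique factorization and (i). Degrees and ranges match because $s(g\cdot\lambda)=g|_\lambda\cdot s(\lambda)$.

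For (2), the function representing $[g|_{\lambda x}]$ is $f(n)=g|_{(\lambda x)(0,n\wedge d(\lambda x))}$. For $n\ge p$ we have $(\lambda x)(0,n\wedge(p+d(x)))=\lambda\, x(0,(n-p)\wedge d(x))$. So by (ii), $f(n)=(g|_\lambda)|_{x(0,(n-p)\wedge d(x))}=\TT_p(f')(n)$, where $f'$ represents $[(g|_\lambda)|_x]$. Hence $f$ and $\TT_p(f')$ agree for $n\ge p$, which is exactly equivalence in $Q_k(G)$.

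For (3), set $\mu_n:=x(0,n\wedge d(x))$. By (iii), $(gh)|_{\mu_n}=g|_{h\cdot\mu_n}\,h|_{\mu_n}$. It remains to identify $h\cdot\mu_n$ with $(h\cdot x)(0,n\wedge d(h\cdot x))$. This holds because $d(h\cdot x)=d(x)$ and $(h\cdot x)(0,m)=h|_{x(0,0)}\cdot x(0,m)=h\cdot x(0,m)$ by (iv). The identity then holds pointwise in $C(\Z_+^k;G)$, hence in $Q_k(G)$.

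The only step needing care is (1) in the higher rank case, where segments with $n\not\ge p$ must be handled via unique factorization. I expect that to be the main (though mild) obstacle; the rest is formal.
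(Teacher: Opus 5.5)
Your proof is correct and follows the paper's argument essentially verbatim: (1) by comparing both sides on $(0,d(\lambda))$ and after the shift $\sigma^{d(\lambda)}$, (2) by rewriting $g|_{(\lambda x)(0,n\wedge d(\lambda x))}$ for $n\ge d(\lambda)$ via axiom (ii), and (3) pointwise via axioms (iii) and (iv). The only small difference is in (1), where you check all segments of the shifted part and so invoke (ii). The paper checks only the initial segments $(d(\lambda),d(\lambda)+n)$, which needs only (iv), since a path is already determined by its initial segments.
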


\begin{proof}
The first identity holds, since $\big(g \cdot (\lambda x)\big) (0, d(\lambda) ) = g\cdot (\lambda x) (0, d(\lambda) ) = g \cdot \lambda$ and
$$
\big(g \cdot (\lambda x)\big) (d(\lambda), d(\lambda)+n ) = g|_{(\lambda x) (0,d(\lambda))} \cdot (\lambda x) (d(\lambda), d(\lambda)+n )  = g|_{\lambda} \cdot x(0, n)= (g|_{\lambda} \cdot x )(0, n).
$$
The second identity follows by observing that for all $n \in \Z_+^k$ with $n \geq d(\lambda)$  we have
$$
g|_{(\lambda x) (0, n \wedge d(\lambda x))} = g|_{(\lambda x) (0, d(\lambda)+(n- d(\lambda) )\wedge d( x))}
= g|_{\lambda x (0, (n- d(\lambda) )\wedge d( x))}
=(g|_{\lambda})|_{x (0, (n- d(\lambda) )\wedge d( x))}.
$$
Finally, the last identity follows, since for all $n \in \Z_+^k$ we have
$$
(gh)|_{x(0,n \wedge d(x))} = g|_{h \cdot x(0, n\wedge d(x))} h|_{x(0, n\wedge d(x))} =  g|_{(h \cdot x)(0, n\wedge d(x))} h|_{x(0, n\wedge d(x))}.
$$
This completes the proof of the lemma.
\end{proof}

The pseudo-freeness assumption, when it is needed, will usually be used in the following form.

\begin{lemma}[cf.~\citelist{\cite{MR3581326}*{Proposition~5.6}\cite{MR4294118}*{Corollary 5.7}}]\label{lem:cor57}
Assume $(G, \Lambda)$ is pseudo-free. Then, for any $g,h \in G$ and $x\in X_{\Lambda}$, the two conditions $g \cdot x = h \cdot x$ and $[g|_{x}]=[h|_{x}]$ imply that $g=h$.
\end{lemma}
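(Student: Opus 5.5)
The plan is to reduce everything to the pseudo-freeness condition applied to a single finite path, and then let $g^{-1}h$ play the role of the element in that condition. Put $k:=g^{-1}h$. Lemma~\ref{lem:calcrules}(3) gives
$$
[h|_x]=[(gk)|_x]=[g|_{k\cdot x}][k|_x],
$$
and $k\cdot x=g^{-1}\cdot(h\cdot x)=g^{-1}\cdot(g\cdot x)=x$. So the two hypotheses become $k\cdot x=x$ and $[g|_x][k|_x]=[g|_x]$. Cancelling in the group $Q_k(G)$ gives $[k|_x]=[1_G]$, where $[1_G]$ is the class of the constant map $n\mapsto 1_G$, that is, the identity of $Q_k(G)$. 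It therefore suffices to prove the special case $h=1_G$: if $k\cdot x=x$ and $[k|_x]$ is trivial, then $k=1_G$.

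For this special case I would unwind the definition of the equivalence relation on $C(\Z_+^k;G)$. Triviality of $[k|_x]$ means there is $m\in\Z_+^k$ with $k|_{x(0,n\wedge d(x))}=1_G$ for all $n\ge m$. Take $n=m$ and put $\mu:=x(0,m\wedge d(x))$, which is a finite path in $\Lambda$ (note $m\wedge d(x)\in\Z_+^k$ even if $d(x)$ has infinite coordinates). Then $k|_\mu=1_G$.

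Next I would check that $k$ fixes $\mu$. Since $k\cdot x=x$, the defining formula of the action on $X_\Lambda$ (with first argument $0$, where $k|_{x(0,0)}=k|_{r(x)}=k$ by property (iv)) gives
$$
x(0,p)=(k\cdot x)(0,p)=k\cdot x(0,p)\quad\text{for all } p\le d(x).
$$
Taking $p=m\wedge d(x)$ yields $k\cdot\mu=\mu$. Pseudo-freeness of $(G,\Lambda)$, applied to $k$ and $\mu$, now gives $k=1_G$, that is, $g=h$.

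I expect no serious obstacle. The only points needing care are that $[1_G]$ really is the identity of $Q_k(G)$, which is clear, and that truncating by $m\wedge d(x)$ produces a genuine finite path even when $x$ is finite or has some infinite degree coordinates.
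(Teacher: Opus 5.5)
Your proof is correct and follows the paper's route: reduce to the case $h=1_G$ using the cocycle identity of Lemma~\ref{lem:calcrules}(3), then apply pseudo-freeness to the finite initial segment $x(0,m\wedge d(x))$. The only cosmetic difference is that you work with $g^{-1}h$ and expand $[(gk)|_x]$ directly, whereas the paper works with $h^{-1}g$ by inverting $[h|_x]$.
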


\bp
Since $[h|_x]^{-1}=[h^{-1}|_{h\cdot x}]=[h^{-1}|_{g\cdot x}]$ and hence $1_{Q_k(G)}=[h|_x]^{-1}[g|_x]=[(h^{-1}g)|_x]$, we may assume in addition that $h=1_G$.
The assumption $[g|_{x}]=1_{Q_k(G)}$ means then that $g|_{x(0, n \wedge d(x))}=1_G$ for all~$n$ large enough. Since we also have $g\cdot x(0,n \wedge d(x))=x(0,n \wedge d(x))$, we conclude that $g=1_G$ by the pseudo-freeness assumption.
\ep

Consider the groupoid $X_\Lambda\times (Q_k(G)\rtimes\Z^k)\times X_\Lambda$ defined as the product of the group $Q_k(G)\rtimes\Z^k$ and the full equivalence relation on $X_\Lambda$, so the product on $X_\Lambda\times (Q_k(G)\rtimes\Z^k)\times X_\Lambda$ is defined~by
$$
(x,a,y)(y,b,z)=(x,ab,z)
$$
for $x,y,z\in X_\Lambda$ and $a,b\in Q_k(G)\rtimes\Z^k$. We view $X_\Lambda\times (Q_k(G)\rtimes\Z^k)\times X_\Lambda$ as a purely algebraic groupoid without attempting to define a topology on it.

\begin{prop} \label{prop:subgroupoid}
For any self-similar $k$-graph $(G, \Lambda)$, the set
\begin{align*}
  \mathcal{G}_{(G, \Lambda)}:&=\{(y,\TT_m([g|_{\sigma^n(z)}]),m-n,z): y\in\operatorname{dom}(\sigma^m),\ z\in\operatorname{dom}(\sigma^n),\ \sigma^m(y)=g\cdot\sigma^n(z)\} \\
 &= \left \{  (\mu (g\cdot x) , \mathcal{T}_{d(\mu)}([g|_{x}]), d(\mu)-d(\nu), \nu x ): s(\nu)=r(x),\ s(\mu)=g\cdot r(x)\right\}
\end{align*}
is a subgroupoid of $X_{\Lambda} \times (Q_k(G) \rtimes \Z^{k}) \times X_{\Lambda}$.
\end{prop}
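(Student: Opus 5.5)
The plan is to first check that the two descriptions of $\G_{(G,\Lambda)}$ coincide, and then verify closure under inverses and products, using the first description for the algebra. For the equality of the sets: given $(y,\TT_m([g|_{\sigma^n(z)}]),m-n,z)$ with $\sigma^m(y)=g\cdot\sigma^n(z)$, we write $z=\nu x$ with $\nu=z(0,n)$ and $x=\sigma^n(z)$, and $y=\mu\sigma^m(y)$ with $\mu=y(0,m)$. Then $y=\mu(g\cdot x)$, $d(\mu)=m$, $d(\nu)=n$ and $s(\mu)=r(g\cdot x)=g\cdot r(x)$. Conversely, an element of the second form visibly satisfies $\sigma^{d(\mu)}(\mu(g\cdot x))=g\cdot\sigma^{d(\nu)}(\nu x)$.

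Units and inverses come next. Taking $g=1_G$ and $m=n=0$ shows that all $(z,1,0,z)$ lie in the set, since $[1_G|_z]$ is trivial. For the inverse, note that in the semidirect product $(a,p)^{-1}=(\TT_{-p}(a^{-1}),-p)$. The inverse of $(y,\TT_m([g|_{x}]),m-n,z)$ with $x=\sigma^n(z)$ is therefore
\[
(z,\TT_{n-m}\TT_m([g|_x]^{-1}),n-m,y)=(z,\TT_n([g|_x]^{-1}),n-m,y).
\]
By Lemma~\ref{lem:calcrules}(3) we have $[g|_x]^{-1}=[g^{-1}|_{g\cdot x}]$, and $g\cdot x=\sigma^m(y)$, $\sigma^n(z)=g^{-1}\cdot\sigma^m(y)$. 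So the inverse has the required form with roles $(n,g^{-1})$.

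For products, take $(y,\TT_m([g|_{\sigma^n(z)}]),m-n,z)$ and $(z,\TT_p([h|_{\sigma^q(w)}]),p-q,w)$ with $\sigma^n(z)=$ stuff and $\sigma^p(z)=h\cdot\sigma^q(w)$. The key step is to shift both to a common level $l=n\vee p$, that is, to show that any element can be re-represented with $(m,n)$ replaced by $(m+j,n+j)$ for $j\in\Z_+^k$, provided $n+j\le d(z)$. For this write $x=\sigma^n(z)=\alpha x'$ with $\alpha=x(0,j)$. Lemma~\ref{lem:calcrules}(1) gives $g\cdot x=(g\cdot\alpha)(g|_\alpha\cdot x')$, so $\sigma^{m+j}(y)=g|_\alpha\cdot\sigma^{n+j}(z)$. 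Lemma~\ref{lem:calcrules}(2) gives $\TT_m([g|_x])=\TT_{m+j}([(g|_\alpha)|_{x'}])$, and the degree $m-n$ is unchanged.

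A subtle point is that $n\vee p\le d(z)$ holds because $n,p\le d(z)$ componentwise, even for infinite degrees. Once both elements are shifted to the same level $l$ (with $n=p=l$), we get $\sigma^m(y)=g\cdot\sigma^l(z)=g\cdot h\cdot\sigma^q(w)$. The product is then
\[
(y,\TT_m([g|_{\sigma^l z}])\,\TT_{m-l}\TT_l([h|_{\sigma^q w}]),\,m-q,\,w),
\]
and Lemma~\ref{lem:calcrules}(3) combines the lag component into $\TT_m([(gh)|_{\sigma^q(w)}])$, using $\sigma^l(z)=h\cdot\sigma^q(w)$.

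I expect the main obstacle to be this re-leveling step, specifically getting the $\TT$-bookkeeping in the semidirect product right: the convention $(a,p)(b,q)=(a\TT_p(b),p+q)$ must be the one that matches $\TT_m$ in the formula. I would fix that convention first and check it on the inverse computation above.
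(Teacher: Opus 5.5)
Your proposal is correct and follows essentially the same route as the paper, which also uses Lemma~\ref{lem:calcrules}(1),(2) to extend the middle paths so that one may assume $\nu=\alpha$ and $x=h\cdot y$ (your re-leveling to $l=n\vee p$), combines the lag components via Lemma~\ref{lem:calcrules}(3), and handles inverses through $[g|_{g^{-1}\cdot y}]^{-1}=[g^{-1}|_y]$. The only differences are cosmetic: you work with the $\sigma^m,\sigma^n$ description instead of the $\mu,\nu,x$ one, and for the second factor the re-leveling condition should read $q+j'\le d(w)$, which is equivalent to your $l\le d(z)$.
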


\begin{proof}
Observe that if  $x=\lambda y $ for some $\lambda \in \Lambda$ and $y\in X_\Lambda$, then
\begin{multline*}
(\mu (g\cdot x) , \mathcal{T}_{d(\mu)}([g|_{x}]), d(\mu)-d(\nu), \nu x )\\
=(\mu (g\cdot \lambda) (g|_{\lambda} \cdot y), \mathcal{T}_{d(\mu (g \cdot \lambda))}([(g|_{\lambda})|_{y}]), d(\mu (g\cdot \lambda))-d(\nu\lambda), \nu \lambda y ).
\end{multline*}
It follows that when we want to prove that the product of two elements
$$
(\mu (g\cdot x) , \mathcal{T}_{d(\mu)}([g|_{x}]), d(\mu)-d(\nu), \nu x )\quad\text{and}\quad(\alpha (h\cdot y) , \mathcal{T}_{d(\alpha)}([h|_{y}]), d(\alpha)-d(\beta), \beta y ),
$$
with $\nu x=\alpha (h\cdot y)$, lies in $\mathcal{G}_{(G, \Lambda)}$, we may assume without loss of generality that $\nu = \alpha$ and $x= h \cdot y$. Then the product becomes
$$
(\mu ((gh)\cdot y) , \mathcal{T}_{d(\mu)}([g|_{x}]) \mathcal{T}_{d(\alpha)+d(\mu)-d(\nu)}([h|_{y}]), d(\mu)-d(\beta), \beta y ),
$$
but since $d(\alpha)=d(\nu)$, the entry in $Q_k(G)$ equals $\mathcal{T}_{d(\mu)}([g|_{h \cdot y}] [h|_{y}])=\mathcal{T}_{d(\mu)}([(gh)|_{y}])$, proving that $\mathcal{G}_{(G, \Lambda)}$ is closed under composition.

To see that $\mathcal{G}_{(G, \Lambda)}$ is closed under inverses, we have to show that the element
$$
(\mu (g\cdot x) , \mathcal{T}_{d(\mu)}([g|_{x}]), d(\mu)-d(\nu), \nu x )^{-1}
=(\nu x  , \mathcal{T}_{d(\nu)-d(\mu)}(\mathcal{T}_{d(\mu)}([g|_{x}]^{-1})), d(\nu)-d(\mu),  \mu (g\cdot x))
$$
again lies in $\mathcal{G}_{(G, \Lambda)}$. Setting $y:=g\cdot x$ and substituting we get that the inverse equals
$$
(\nu (g^{-1}\cdot y)  , \mathcal{T}_{d(\nu)}([g|_{g^{-1} \cdot y}]^{-1}), d(\nu)-d(\mu),  \mu y) .
$$
Since $[g|_{g^{-1} \cdot y}] [g^{-1}|_{y}]=[(gg^{-1})|_{y}]=1_{Q_k(G)}$, we have $[g|_{g^{-1} \cdot y}]^{-1} = [g^{-1}|_{y}]$, which finishes the proof.
\end{proof}

For $g\in G$, $\mu, \nu \in\Lambda$ with $g\cdot s(\nu)=s(\mu)$ and a finite set $F \subset s(\nu) \Lambda$, define
$$
Z(\mu,g,\nu, F) := \{ (\mu (g\cdot x) , \mathcal{T}_{d(\mu)}([g|_{x}]), d(\mu)-d(\nu), \nu x ) : x\in Z_{F}(s(\nu)) \},
$$
where $Z_{F}(s(\nu))= Z(s(\nu)) \setminus \bigcup_{\lambda \in F}Z(\lambda)$ are the sets introduced in Section~\ref{ssec:higher-rank}. When $F=\emptyset$, we will often write $Z(\mu,g,\nu )$ for $Z(\mu,g,\nu, \emptyset)$.

\begin{prop} \label{prop:topetale}
Assume $(G,\Lambda)$ is a pseudo-free self-similar $k$-graph. Then the sets $Z(\mu,g,\nu, F)$ constitute a basis of a topology on $\mathcal{G}_{(G, \Lambda)}$ that makes it into a Hausdorff locally compact  \'etale groupoid with unit space $X_\Lambda$. Moreover, the map $\Phi\colon \G_{(G,\Lambda)}\to\Z^k$, $(y,q,n,z)\mapsto n$, defines a $\Z^k$-grading on $\G_{(G,\Lambda)}$. If the self-similar $k$-graph $(G,\Lambda)$ is countable, then the groupoid $\G_{(G,\Lambda)}$ is second countable.
\end{prop}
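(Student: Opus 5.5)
The plan follows the standard pattern for Deaconu--Renault and Kumjian--Pask type groupoids: show that the sets $Z(\mu,g,\nu,F)$ form a basis, that each basic set is a bisection homeomorphic to $Z_F(s(\nu))$ via the source map, and then check continuity of the structure maps on basic sets.

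\emph{Step 1: basic sets are bisections and the source map is a homeomorphism onto its image.} The source map sends an element of $Z(\mu,g,\nu,F)$ to $\nu x$, so it restricts to a bijection of $Z(\mu,g,\nu,F)$ onto $\nu Z_F(s(\nu))=Z_{\nu F}(\nu)$, an open compact-type set in $X_\Lambda$. Injectivity of $r$ also holds: $\mu(g\cdot x)$ determines $g\cdot x$, hence $x$. The same holds for $s$, which is even easier.

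\emph{Step 2: the sets form a basis.} We need that the intersection of two basic sets is a union of basic sets. Take $\gamma=(\mu(g\cdot x),\TT_{d(\mu)}([g|_x]),d(\mu)-d(\nu),\nu x)$ in $Z(\mu,g,\nu,F)\cap Z(\mu',g',\nu',F')$. Using the rewriting in the proof of Proposition~\ref{prop:subgroupoid}, we extend $\nu,\nu'$ along $x$ to a common path $\nu\lambda=\nu'\lambda'$, with $\lambda$ an initial segment of $x$, so that both presentations read $(\mu(g\cdot\lambda), g|_\lambda,\nu\lambda)$ and $(\mu'(g'\cdot\lambda'),g'|_{\lambda'},\nu'\lambda')$ applied to $y=\sigma^{d(\lambda)}(x)$. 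Now $g|_\lambda\cdot y=g'|_{\lambda'}\cdot y$, and the $Q_k(G)$-components agree. By Lemma~\ref{lem:cor57}, after passing further along $y$ so that the cocycle classes become equal at a finite stage, pseudo-freeness forces $g|_{\lambda''}=g'|_{\lambda'''}$ for a long enough initial segment. The two presentations then coincide on a cylinder around $\gamma$, intersected with finitely many complements, and this is again a basic set.

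This is the main obstacle. Equality in $Q_k(G)$ is only eventual equality, so a finite prefix must be chosen beyond which the restrictions agree. Pseudo-freeness in the form of Lemma~\ref{lem:cor57} is precisely what makes the group elements equal rather than merely their actions. Without it, the sets would still form a basis but Hausdorffness would fail. Finite alignment will be needed to handle the $\Z^k_+$ order when extending $\nu,\nu'$ to a common path.

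\emph{Step 3: Hausdorffness, continuity and étaleness.} Two distinct elements either differ in the range or source, which are separated by Hausdorffness of $X_\Lambda$ and continuity of $r,s$, or differ in the group or $\Z^k$ components with the same $y,z$. The $\Z^k$ case is separated by a grading argument, since $\Phi$ is constant on basic sets. In the remaining case we again shrink to common presentations as in Step 2 and use Lemma~\ref{lem:cor57} to show that the two basic neighbourhoods are disjoint once the prefix is long enough.

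Continuity of inversion is immediate, since it maps $Z(\mu,g,\nu,F)$ to $Z(\nu,g^{-1},\mu,g\cdot F)$, using the formula in Proposition~\ref{prop:subgroupoid}. For multiplication, given composable elements we use the Step 2 reduction $\nu=\alpha$ and obtain that the product of $Z(\mu,g,\nu)$ and $Z(\nu,h,\beta)$ lies in $Z(\mu,gh,\beta)$ locally. Local compactness follows from Step 1, and $\Phi$ is continuous since it is constant on basic sets. Second countability holds because in the countable case there are countably many basic sets.
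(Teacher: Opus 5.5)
This is essentially the paper's argument: refine basic sets to a common presentation along the source path, use that $s$ maps $Z(\mu,g,\nu,F)$ bijectively onto $Z_F(\nu)$, check inversion and multiplication on basic bisections, and get Hausdorffness from Lemma~\ref{lem:cor57}, which makes $Z(\mu,g,\nu)$ and $Z(\mu,g',\nu)$ disjoint for $g\ne g'$. Your observation that inversion maps $Z(\mu,g,\nu,F)$ onto $Z(\nu,g^{-1},\mu,g\cdot F)$ is a slightly quicker route to continuity of inversion than the paper's.

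Two small corrections. First, the common extension $\nu\lambda=\nu'\lambda'$ is just $(\nu x)(0,d(\nu)\vee d(\nu'))$ and needs no finite alignment. Finite alignment is what makes the new exclusion set $F''$ of minimal common extensions finite, and you should justify your ``finitely many complements'' with it. Second, in the basis step Lemma~\ref{lem:cor57} already gives $g|_\lambda=g'|_{\lambda'}$ at the first common extension. Your ``passing further along $y$'' works from eventual equality in $Q_k(G)$ alone, without pseudo-freeness (cf.\ Remark~\ref{rem:non-pseudo-free}).
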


For the proof we need a few lemmas.

\begin{lemma}\label{lem:basis0}
Assume that $(y, q, n, z) \in Z(\mu,g,\nu, F)$ and $z\in Z_{F'}(\nu')$. Then there exist paths $\eta,\eta'\in\Lambda$ satisfying $\nu\eta=\nu'\eta'=z(0,m)$ for some $m\le d(z)$, and for any such $\eta$ and $\eta'$ we have
$$
(y, q, n, z) \in Z(\mu (g\cdot \eta) ,g|_{\eta},\nu \eta, F'') \subset Z(\mu,g,\nu, F)\quad\text{and}\quad z\in Z_{F''}(\nu\eta)\subset Z_{F'}(\nu'),
$$
where $F''$ consists of all paths $\lambda\in s(\eta)\Lambda=s(\eta')\Lambda$ such that there exists $\rho\in\Lambda$ satisfying either $(\lambda, \rho) \in \Lambda^{\min}(\eta,\kappa)$ for some $\kappa\in F$ or $(\lambda, \rho) \in \Lambda^{\min}(\eta',\kappa)$ for some $\kappa\in F'$.
\end{lemma}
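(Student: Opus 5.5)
The plan is to unpack membership in the basic sets explicitly and then check the two inclusions directly. Since $(y,q,n,z)\in Z(\mu,g,\nu,F)$, we can write $z=\nu x$ with $x\in Z_F(s(\nu))$, $y=\mu(g\cdot x)$, $q=\TT_{d(\mu)}([g|_x])$, $n=d(\mu)-d(\nu)$. Also $z\in Z(\nu')$. For existence of $\eta,\eta'$, take $m:=d(\nu)\vee d(\nu')$. This satisfies $m\le d(z)$ because $d(z)\ge d(\nu)$ and $d(z)\ge d(\nu')$. By unique factorization, $z(0,m)=\nu\eta=\nu'\eta'$ with $\eta:=z(d(\nu),m)$ and $\eta':=z(d(\nu'),m)$. (Any $m\ge d(\nu)\vee d(\nu')$ with $m\le d(z)$ works the same way, which is why the statement says ``for any such $\eta,\eta'$''.)

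Next fix any such $\eta,\eta'$, and write $x=\eta w$ and $z=\nu\eta w=\nu'\eta' w$ with $w=\sigma^m(z)$. By Lemma~\ref{lem:calcrules}(1),(2) and the computation at the start of the proof of Proposition~\ref{prop:subgroupoid}, every element of $Z(\mu(g\cdot\eta),g|_\eta,\nu\eta)$ has the form
$$
(\mu(g\cdot\eta)(g|_\eta\cdot w'),\TT_{d(\mu)+d(\eta)}([(g|_\eta)|_{w'}]),d(\mu)-d(\nu),\nu\eta w')=(\mu(g\cdot\eta w'),\TT_{d(\mu)}([g|_{\eta w'}]),d(\mu)-d(\nu),\nu\eta w').
$$
So $Z(\mu(g\cdot\eta),g|_\eta,\nu\eta,F'')$ is exactly the set of elements of $Z(\mu,g,\nu)$ whose source is of the form $\nu\eta w'$ with $w'\in Z_{F''}(s(\eta))$. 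This gives the claim $(y,q,n,z)\in Z(\mu(g\cdot\eta),g|_\eta,\nu\eta,F'')$, provided $w\in Z_{F''}(s(\eta))$. It also reduces both inclusions to the following statement about unit space cylinders: for $w'\in Z(s(\eta))$, $w'\in Z_{F''}(s(\eta))$ if and only if $\eta w'\notin Z(\kappa)$ for all $\kappa\in F$ and $\eta'w'\notin Z(\kappa)$ for all $\kappa\in F'$. Indeed, this gives $\nu\eta w'\in Z_F(\nu)\cap Z_{F'}(\nu')$, hence both $Z_{F''}(\nu\eta)\subset Z_{F'}(\nu')$ and the groupoid inclusion, and applied to $w'=w$ it gives $w\in Z_{F''}(s(\eta))$.

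The key step is that lemma: $\eta w'\in Z(\kappa)$ if and only if $w'\in Z(\lambda)$ for some $(\lambda,\rho)\in\Lambda^{\min}(\eta,\kappa)$. For ``$\Rightarrow$'', if $\eta w'=\kappa u$, set $p=d(\eta)\vee d(\kappa)$. Then $(\eta w')(0,p)=\eta\lambda=\kappa\rho$ with $\lambda=w'(0,p-d(\eta))$, which gives a minimal common extension. For ``$\Leftarrow$'', if $w'=\lambda u$, then $\eta w'=\eta\lambda u=\kappa\rho u\in Z(\kappa)$. Finiteness of $F''$ follows from finite alignment, and $F''$ is defined precisely as the union over $\kappa\in F$ of such $\lambda$'s for $\eta$, together with those for $\eta'$ and $\kappa\in F'$.

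The main obstacle is only bookkeeping: checking that the $Q_k(G)$-component matches under re-basing via Lemma~\ref{lem:calcrules}(2), namely $\TT_{d(\mu)+d(\eta)}([(g|_\eta)|_{w'}])=\TT_{d(\mu)}([g|_{\eta w'}])$, and that the degree component is unchanged, since $d(\mu(g\cdot\eta))-d(\nu\eta)=d(\mu)-d(\nu)$. Everything else is the cylinder computation above.
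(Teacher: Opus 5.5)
Your proposal is correct and follows the same route as the paper. You re-base the element via the computation from Proposition~\ref{prop:subgroupoid} and Lemma~\ref{lem:calcrules} to place it in $Z(\mu(g\cdot\eta),g|_\eta,\nu\eta)\subset Z(\mu,g,\nu)$, reduce everything to $z\in Z_{F''}(\nu\eta)\subset Z_F(\nu)\cap Z_{F'}(\nu')$, and then spell out the cylinder-set check that the paper leaves as ``easily done using the definition of $F''$'', namely the equivalence $\eta w'\in Z(\kappa)\iff w'\in Z(\lambda)$ for some $(\lambda,\rho)\in\Lambda^{\min}(\eta,\kappa)$.
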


\bp
First of all observe that by the finite alignment assumption the set $F''$ is finite.

Since $z\in Z(\nu)\cap Z(\nu')$, the existence of $\eta$ and $\eta'$ such that $\nu\eta=\nu'\eta'=z(0,m)$ for some $m\le d(z)$ is obvious.
We then have $z=\nu\eta x$ for some $x\in X_\Lambda$. Since $(y, q, n, z) \in Z(\mu,g,\nu, F)$, as in the in the proof of Proposition~\ref{prop:subgroupoid}, we get
\begin{equation*}\label{eq:Haus1}
(y, q, n, z)=(\mu(g\cdot\eta)(g|_\eta\cdot x), \mathcal{T}_{d(\mu(g\cdot\eta))}([(g|_{\eta})|_x]), d(\mu)-d(\nu), \nu\eta x ).
\end{equation*}
We then see that $(y,q,n,z)\in Z(\mu (g\cdot \eta) ,g|_{\eta},\nu \eta) \subset Z(\mu,g,\nu)$, and in order to finish the proof of the lemma we need to check only that $z\in Z_{F''}(\nu\eta)\subset Z_F(\nu)\cap Z_{F'}(\nu')$. This can be easily done using the definition of $F''$.
\ep

The pseudo-freeness assumption has the following implication, which will be vital to guarantee that the topology is Hausdorff.

\begin{lemma}\label{lem:separat}
Whenever $g \neq h$ and $d(\mu)=d(\alpha)$, we have $Z(\mu,g,\nu) \cap Z(\alpha,h,\beta) = \emptyset$.
\end{lemma}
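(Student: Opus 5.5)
We argue by contradiction: suppose some element $\gamma$ lies in both sets, and show this forces $g=h$ via Lemma~\ref{lem:cor57}. Write
$$
\gamma=(\mu(g\cdot x),\TT_{d(\mu)}([g|_x]),d(\mu)-d(\nu),\nu x)=(\alpha(h\cdot y),\TT_{d(\alpha)}([h|_y]),d(\alpha)-d(\beta),\beta y)
$$
with $x\in Z(s(\nu))$ and $y\in Z(s(\beta))$.

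\textbf{Step 1: equate degrees.} Comparing the $\Z^k$-components gives $d(\mu)-d(\nu)=d(\alpha)-d(\beta)$. Since $d(\mu)=d(\alpha)$, it follows that $d(\nu)=d(\beta)$.

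\textbf{Step 2: match paths.} Comparing the last components gives $\nu x=\beta y$. Taking the initial segment of degree $d(\nu)=d(\beta)$, the unique factorization property yields $\nu=\beta$, and then, applying $\sigma^{d(\nu)}$, also $x=y$. Similarly, comparing first components gives $\mu(g\cdot x)=\alpha(h\cdot x)$. Since $d(\mu)=d(\alpha)$, we get $\mu=\alpha$ and $g\cdot x=h\cdot x$.

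\textbf{Step 3: cancel the shift.} Comparing the $Q_k(G)$-components gives $\TT_{d(\mu)}([g|_x])=\TT_{d(\mu)}([h|_x])$. Because $\TT_{d(\mu)}$ is an automorphism of $Q_k(G)$, this yields $[g|_x]=[h|_x]$.

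\textbf{Step 4: conclude.} By Lemma~\ref{lem:cor57}, which uses pseudo-freeness, the equalities $g\cdot x=h\cdot x$ and $[g|_x]=[h|_x]$ imply $g=h$. This contradicts the assumption $g\neq h$, so the two sets are disjoint.

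The only step that needs care is the injectivity of $\TT_m$ on $Q_k(G)$ in Step~3, which the paper records when it says $\TT_m$ defines an automorphism. Concretely, if $\TT_m(f_1)$ and $\TT_m(f_2)$ agree for all $n\ge p$, then $f_1$ and $f_2$ agree for all $n\ge p-m$ (we may take $p\ge m$). Everything else is routine unique factorization.
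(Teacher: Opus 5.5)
Your proposal is correct and follows the paper's proof essentially step for step. In both, equal degrees force $d(\nu)=d(\beta)$ and hence $x=y$ and $g\cdot x=h\cdot x$; cancelling $\TT_{d(\mu)}$ then gives $[g|_x]=[h|_x]$, and Lemma~\ref{lem:cor57} yields $g=h$. Your explicit check that $\TT_m$ is injective on $Q_k(G)$ simply fills in a detail the paper leaves implicit.
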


\begin{proof}
Assume there exists an element $(y, q, n, z)  \in Z(\mu,g,\nu) \cap Z(\alpha,h,\beta)$ for some $g,h\in G$ and $\mu,\alpha\in\Lambda$ with $d(\mu)=d(\alpha)$. This implies that $d(\mu)-d(\nu)=d(\alpha)-d(\beta)$, and hence $d(\nu)=d(\beta)$. We can write $y=\mu (g \cdot x)=\alpha (h \cdot t)$ and $z=\nu x=\beta t$ for some $x,t\in X_\Lambda$. Since $d(\nu)=d(\beta)$, we conclude that $x=t$, and since $d(\mu)=d(\alpha)$, we get that $g \cdot x = h \cdot t=h\cdot x$. The condition $\mathcal{T}_{d(\mu)}([g|_{x}])=q=\mathcal{T}_{d(\alpha)}([h|_{t}])$ ensures that $[g|_{x}]=[h|_t] = [h|_{x}]$, and hence $g=h$ by Lemma~\ref{lem:cor57}.
\end{proof}


A similar argument combined with Lemma~\ref{lem:basis0} gives the following result.

\begin{lemma}\label{lem:basis}
Assume that $(y, q, n, z) \in Z(\mu,g,\nu, F) \cap Z(\mu ',g',\nu ', F ')$. Then there exist paths $\eta,\eta'\in\Lambda$ satisfying $\nu\eta=\nu'\eta'=z(0,m)$ for some $m\le d(z)$, and for any such $\eta$ and $\eta'$ we have $\mu(g\cdot \eta)=\mu' (g'\cdot \eta')$, $g|_{\eta}=g'|_{\eta'}$ and
$$
(y, q, n, z) \in Z(\mu (g\cdot \eta) ,g|_{\eta},\nu \eta, F'') \subset Z(\mu,g,\nu, F) \cap Z(\mu ',g',\nu ', F '),
$$
where $F''$ consists of all paths $\lambda\in s(\eta)\Lambda=s(\eta')\Lambda$ such that there exists $\rho\in\Lambda$ satisfying either $(\lambda, \rho) \in \Lambda^{\min}(\eta,\kappa)$ for some $\kappa\in F$ or $(\lambda, \rho) \in \Lambda^{\min}(\eta',\kappa)$ for some $\kappa\in F'$.
\end{lemma}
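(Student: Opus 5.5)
The plan is to reduce to Lemma~\ref{lem:basis0} and then argue as in the proof of Lemma~\ref{lem:separat}. Apply Lemma~\ref{lem:basis0} to the element $(y,q,n,z)\in Z(\mu,g,\nu,F)$ and to the cylinder $Z_{F'}(\nu')$, which contains $z$ because $(y,q,n,z)\in Z(\mu',g',\nu',F')$. This gives the existence of $\eta,\eta'$ with $\nu\eta=\nu'\eta'=z(0,m)$. It also shows, for any such pair, that
$$
(y,q,n,z)\in Z(\mu(g\cdot\eta),g|_\eta,\nu\eta,F'')\subset Z(\mu,g,\nu,F)
\quad\text{and}\quad
z\in Z_{F''}(\nu\eta)\subset Z_{F'}(\nu').
$$
What remains is to show $\mu(g\cdot\eta)=\mu'(g'\cdot\eta')$ and $g|_\eta=g'|_{\eta'}$. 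Once these hold, the basic set $Z(\mu(g\cdot\eta),g|_\eta,\nu\eta,F'')$ coincides with $Z(\mu'(g'\cdot\eta'),g'|_{\eta'},\nu'\eta',F'')$. Applying Lemma~\ref{lem:basis0} symmetrically to the second basic set then gives the inclusion into $Z(\mu',g',\nu',F')$. The definition of $F''$ is symmetric in $(\eta,F)$ and $(\eta',F')$, so the same $F''$ appears on both sides.

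For the two identities, write $z=\nu\eta x=\nu'\eta'x$, which uses $\nu\eta=\nu'\eta'$. As computed in the proof of Proposition~\ref{prop:subgroupoid},
$$
y=\mu(g\cdot\eta)(g|_\eta\cdot x)=\mu'(g'\cdot\eta')(g'|_{\eta'}\cdot x),\qquad q=\TT_{d(\mu(g\cdot\eta))}([(g|_\eta)|_x])=\TT_{d(\mu'(g'\cdot\eta'))}([(g'|_{\eta'})|_x]).
$$
Next I compare degrees. The grading gives $d(\mu)-d(\nu)=n=d(\mu')-d(\nu')$. Together with $d(\nu\eta)=d(\nu'\eta')$ this yields
$$
d(\mu(g\cdot\eta))=d(\mu)+d(\eta)=n+d(\nu\eta)=n+d(\nu'\eta')=d(\mu'(g'\cdot\eta')).
$$
Unique factorization of $y$ at this degree then gives $\mu(g\cdot\eta)=\mu'(g'\cdot\eta')$ and $g|_\eta\cdot x=g'|_{\eta'}\cdot x$. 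Since $\TT_m$ is an automorphism of $Q_k(G)$, the equality of the two expressions for $q$ gives $[(g|_\eta)|_x]=[(g'|_{\eta'})|_x]$. Lemma~\ref{lem:cor57}, which is where pseudo-freeness enters, then yields $g|_\eta=g'|_{\eta'}$.

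I expect the main care to be needed in the bookkeeping of the $F''$ sets, namely checking that $z\in Z_{F''}(\nu\eta)$ forces the $x$-parts to avoid both $F$ and $F'$. That check is already contained in Lemma~\ref{lem:basis0}, so it only needs to be invoked twice, once for each basic set.
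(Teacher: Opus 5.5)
Your proposal is correct and follows essentially the same route as the paper. You apply Lemma~\ref{lem:basis0} once for each basic set, using that $F''$ is symmetric in $(\eta,F)$ and $(\eta',F')$. You then obtain $\mu(g\cdot\eta)=\mu'(g'\cdot\eta')$ and $g|_\eta=g'|_{\eta'}$ from the degree count, unique factorization of $y$, injectivity of $\TT_m$, and Lemma~\ref{lem:cor57}; the only difference is that you prove the two identities before the second application of Lemma~\ref{lem:basis0}, whereas the paper makes both applications first.
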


\bp
The existence of $\eta$ and $\eta'$ is obvious, and by Lemma~\ref{lem:basis0} we have
$$
(y, q, n, z) \in Z(\mu (g\cdot \eta) ,g|_{\eta},\nu \eta, F'') \subset Z(\mu,g,\nu, F),
$$
$$
(y, q, n, z) \in Z(\mu' (g'\cdot \eta') ,g'|_{\eta'},\nu' \eta', F'') \subset Z(\mu',g',\nu', F').
$$
It remains to show that $\mu(g\cdot \eta)=\mu' (g'\cdot \eta')$ and $g|_{\eta}=g'|_{\eta'}$. Write $z$ as $z=\nu\eta x=\nu'\eta' x$. Since
$$
d(\mu)-d(\nu)=n=d(\mu')-d(\nu')\quad\text{and}\quad d(\nu)+d(\eta)=d(\nu')+d(\eta'),
$$
we have $d(\mu)+d(\eta)=d(\mu')+d(\eta')$. Since we also have
$$
\mu(g\cdot\eta)(g|_\eta\cdot x)=y=\mu'(g'\cdot\eta')(g'|_{\eta'}\cdot x),
$$
it follows that $\mu(g\cdot \eta)=\mu' (g'\cdot \eta')$ and $g|_\eta\cdot x=g'|_{\eta'}\cdot x$. Finally, the equalities
$$
 \mathcal{T}_{d(\mu(g\cdot\eta))}([(g|_{\eta})|_x])=q= \mathcal{T}_{d(\mu'(g'\cdot\eta'))}([(g'|_{\eta'})|_x])
$$
show that $[(g|_{\eta})|_x]=[(g'|_{\eta'})|_x]$, so by Lemma~\ref{lem:cor57} we conclude that $g|_{\eta}=g'|_{\eta'}$.
\ep

\begin{remark}\label{rem:non-pseudo-free}
A version of the above lemma remains true without pseudo-freeness. Namely, the claim is that although  the equality $\nu\eta=\nu'\eta'=z(0,m)$ alone is not enough to guarantee all other properties of the paths $\eta$ and $\eta'$ in the statement of the lemma, there \emph{exist} paths satisfying them. The reason is that although at the end of the proof we cannot conclude that the equality $[(g|_{\eta})|_x]=[(g'|_{\eta'})|_x]$ implies that $g|_{\eta}=g'|_{\eta'}$, it is still true that $g|_{\eta x(0,n)}=g'|_{\eta'x(0,n)}$ for sufficiently large $n\le d(x)$, so that we get the required properties by appending $x(0,n)$ to $\eta$ and $\eta'$. \ee
\end{remark}

\begin{proof}[Proof of Proposition \ref{prop:topetale}]
It follows from Lemma \ref{lem:basis} that the sets $Z(\mu,g,\nu, F)$ constitute a basis of a topology on $\G_{(G,\Lambda)}$. We identify the unit space of $\G_{(G,\Lambda)}$ with $X_\Lambda$ as a set. Then the source map $s\colon \G_{(G,\Lambda)}\to\G_{(G,\Lambda)}^{(0)}=X_\Lambda$, $(y,q,n,z)\mapsto z$, maps every set
$Z(\mu ,g,\nu, F)$ bijectively onto~$Z_{F}(\nu)$. It follows that if we consider the standard topology on $X_\Lambda$, then the source map is open. By Lemma~\ref{lem:basis0} it is also continuous. Therefore it is a local homeomorphism. Since the space $X_\Lambda$ is locally compact, we conclude that the space $\G_{(G,\Lambda)}$ is locally compact as well (but we still have to show that it is Hausdorff).

To see that the inverse is continuous on $\G_{(G,\Lambda)}$, observe first that it maps $Z(\mu ,g,\nu )$ onto $Z(\nu, g^{-1}, \mu )$, see the proof of Proposition~\ref{prop:subgroupoid}. Since the source map is a homeomorphism of $Z(\mu ,g,\nu )$ and $Z(\nu, g^{-1}, \mu )$ onto $Z(\nu)$ and $Z(\mu)$, resp., it is then enough to show that the corresponding map $Z(\nu)\to Z(\mu)$, $\nu x\mapsto \mu (g\cdot x)$, is continuous. But this we know to be true, since the maps $\sigma^m$ are local homeomorphisms and $G$ acts on $X_\Lambda$ by homeomorphisms. It follows then that the range map $r\colon \G_{(G,\Lambda)}\to\G_{(G,\Lambda)}^{(0)}=X_\Lambda$ is a local homeomorphism and the sets $Z(\mu,g,\nu)$, hence also $Z(\mu,g,\nu, F)$, are open bisections of $\G_{(G,\Lambda)}$.

To see that the product on $\G_{(G,\Lambda)}$ is continuous, recall that any pair of composable elements lies in a set of the form $Z(\mu ,g,\nu )\times Z(\nu ,h,\alpha )$ and $Z(\mu ,g,\nu ) Z(\nu ,h,\alpha ) \subset Z(\mu ,g h,\alpha )$, see again the proof of Proposition~\ref{prop:subgroupoid}. Since the source map is continuous on $Z(\nu ,h,\alpha )$ and is a homeomorphism of $Z(\mu ,g h,\alpha )$ onto its image $Z(\alpha)$, the product is then obviously continuous.

The homomorphism $\Phi\colon \G_{(G,\Lambda)}\to\Z^k$ is constant on the sets $Z(\mu,g,\nu)$, so it is continuous. We have thus proved that $\G_{(G,\Lambda)}$ is a locally compact \'etale groupoid graded by the group $\Z^k$. It is also clear that if $(G,\Lambda)$ is countable, then the groupoid $\G_{(G,\Lambda)}$ is second countable. It remains to prove that $\G_{(G,\Lambda)}$ is Hausdorff.

Since the unit space $X_\Lambda$ is Hausdorff, we just need to be able to separate points that cannot be distinguished by $\Phi$ and have the same range and source. Let us consider two such points $(y,q,n,z)$ and $(y,q',n,z)$ in $\G_{(G,\Lambda)}$. Then $(y,q,n,z)\in Z(\mu,g,\nu)$ and $(y,q',n,z)\in Z(\mu',g',\nu)$ for some~$\mu$, $\mu'$, $\nu$, $g$ and $g'$. Since $d(\mu)-d(\nu)=n=d(\mu')-d(\nu)$, we have $d(\mu)=d(\mu')$ and hence $\mu=\mu'$. If $g=g'$, then $(y,q,n,z)=(y,q',n,z)$, while if $g\neq g'$, then $Z(\mu ,g ,\nu ) \cap Z(\mu ,g',\nu ) = \emptyset$ by Lemma~\ref{lem:separat}. This proves that the topology is Hausdorff.
\end{proof}

\begin{remark}
We also have a homomorphism $\G_{(G,\Lambda)}\to Q_k(G)\rtimes\Z^k$, $(y,q,n,z)\mapsto(q,n)$, but it is not continuous in general, at least if we consider the discrete topology on $Q_k(G)\rtimes\Z^k$, since the map $X_\Lambda\to Q_k(G)$, $x\mapsto [g|_x]$, is not necessarily locally constant for a fixed $g\in G$. \ee
\end{remark}

It is not difficult to see that a subset of $X_\Lambda$ is $\G_{(G,\Lambda)}$-invariant if and only if it is invariant under the action of $G$, the shifts $\sigma^m$ and their inverses. By Lemma \ref{lem:Ginvbound} we conclude that the boundary path space $\partial \Lambda\subset X_\Lambda$ is a closed invariant subset, so we can consider the reduced groupoid
$$
\G_{G,\Lambda}:= \mathcal{G}_{(G, \Lambda)}|_{\partial \Lambda} .
$$
We call it the \emph{Exel--Pardo groupoid} of a self-similar $k$-graph $(G,\Lambda)$. When $(G,\Lambda)$ is pseudo-free, this is a Hausdorff locally compact  \'etale groupoid. Without pseudo-freeness, the above considerations (together with Remark~\ref{rem:non-pseudo-free}) show that we still get a locally compact  \'etale groupoid, but it is not Hausdorff in general.

When the group is trivial, we get the groupoid $\G_\Lambda$ defining the Cuntz--Krieger algebra $\OO_\Lambda$ of~$\Lambda$.

\begin{remark}[cf.~\cite{MR3581326}*{Example~3.5}]\label{rem:CK-crossed}
When a self-similar $k$-graph is defined by an action by graph automorphisms, so that $g|_\mu=g$ for all~$\mu$, then the group $Q_k(G)\rtimes\Z^k$ in the definitions of $\G_{(G,\Lambda)}$ and $\G_{G,\Lambda}$ can be replaced by its subgroup $G\times\Z^k$, where the elements of $G$ are viewed as constant functions $\Z^k\to G$.
Moreover, in this case the group $G$ acts by automorphisms on the groupoid $\G_\Lambda$ and the map $G\ltimes\G_\Lambda\to \G_{G,\Lambda}$, $(g,(y,n,z))\mapsto(g\cdot y,g,n,z)$, is a topological isomorphism of groupoids. Therefore $C^*(\G_{G,\Lambda})\cong\OO_\Lambda\rtimes G$ and $C^*_r(\G_{G,\Lambda})\cong\OO_\Lambda\rtimes_r G$. \ee
\end{remark}

\begin{remark}\label{rem:isotropy}
The isotropy groups of $\G:=\G_{G,\Lambda}$ can be described as follows. Fix $x\in\partial\Lambda$. Denote by $G_x$ the stabilizer of $x$ in $G$. For $n\le m\le d(x)$, we have a homomorphism $G_{\sigma^n(x)}\to G_{\sigma^m(x)}$, $g\mapsto g|_{x(n,m)}$. This way we get an inductive system $(G_{\sigma^n(x)})_{n\le d(x)}$. Denote by $H_x$ its direct limit. Then we have a short exact sequence
$$
1\to H_x\to\Gxx\to\Phi(\Gxx)\to1,
$$
and $\Phi(\Gxx)\subset\Z^k$ consists of the elements $m-n$ such that $m,n\in\Z_+^k$ and there exists $g\in G$ such that $\sigma^m(x)=g\cdot\sigma^n(x)$.\ee
\end{remark}

\begin{remark}
Since the Deaconu--Renault groupoid C$^{*}$-algebra associated to the shift on $X_{\Lambda}$ gives rise to the so-called \emph{higher-rank Toeplitz graph C$^{*}$-algebra} and the Deaconu--Renault group\-oid C$^{*}$-algebra associated to the reduction to $\partial \Lambda$ gives rise to the higher-rank graph C$^{*}$-algebra, one could make an argument that the C$^{*}$-algebra of $\mathcal{G}_{(G, \Lambda)}$ is a candidate for a \emph{self-similar higher-rank Toeplitz graph C$^{*}$-algebra}. \ee
\end{remark}

\subsection{Amenability}
The goal of this subsection is to prove the following theorem.

\begin{thm}\label{thm:EP-amenability}
Assume $(G,\Lambda)$ is a pseudo-free self-similar $k$-graph. Consider the following conditions:
\begin{enumerate}
  \item the Exel--Pardo groupoid $\G_{G,\Lambda}$ is amenable;
  \item the action $G\curvearrowright\partial\Lambda$ is amenable;
  \item the stabilizer of every vertex for the action $G\curvearrowright\Lambda^0$ is amenable.
\end{enumerate}
Then $(3)\Rightarrow(2)\Rightarrow(1)$. If $(G,\Lambda)$ is countable, then $(1)\Rightarrow(2)$. If either $\Lambda$ is row-finite or $k=1$, then $(2)\Rightarrow(3)$, so if in addition $(G,\Lambda)$ is countable, then all three conditions are equivalent.
\end{thm}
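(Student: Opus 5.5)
We prove the implications in the order $(3)\Rightarrow(2)$, $(2)\Rightarrow(1)$, $(1)\Rightarrow(2)$, $(2)\Rightarrow(3)$.

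For $(3)\Rightarrow(2)$, we decompose $\partial\Lambda$ according to the range vertex. For a $G$-orbit $Gv\subset\Lambda^0$, the set $r^{-1}(Gv)\cap\partial\Lambda$ is open and $G$-invariant. Moreover, $G\ltimes r^{-1}(Gv)$ is equivalent to $G_v\ltimes Z(v)$, since $r^{-1}(Gv)\cong G\times_{G_v}Z(v)$ as a $G$-space. When $G_v$ is amenable, $G_v\ltimes Z(v)$ is amenable, so $G\ltimes r^{-1}(Gv)$ is amenable as well. Concretely, we induce an approximate invariant density by choosing coset representatives. The full groupoid $G\ltimes\partial\Lambda$ is the disjoint union of these clopen invariant pieces, so it is amenable.

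For $(2)\Rightarrow(1)$, we use the $\Z^k$-grading $\Phi$ and Proposition~\ref{prop:graded-amen}. It suffices to show that $\Phi^{-1}(0)$ is amenable. This subgroupoid is the increasing union over $n\in\Z_+^k$ of the open subgroupoids
$$\HH_n:=\{(\mu(g\cdot x),\TT_{n}([g|_x]),0,\nu x): d(\mu)=d(\nu)=n\},$$
restricted to the paths of degree at least $n$, together with the pieces coming from paths of smaller degree. Finite paths make this delicate, and we would handle them with Lemma~\ref{lem:amen-decompose} or by working with $d(x)\wedge n$. Each $\HH_n$ is, roughly, the equivalence relation ``same tail after $\sigma^n$'' twisted by the $G$-action. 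We would show that $\HH_n$ is equivalent, via the local homeomorphism $\sigma^n$, to the transformation groupoid $G\ltimes\sigma^n(\cdot)$, or is built from it by a proper equivalence relation. Pseudo-freeness ensures that the map $(\mu(g\cdot x),\dots,\nu x)\mapsto(g,x)$ is well defined and continuous. Amenability of $\HH_n$ then follows from (2), and Lemma~\ref{lem:amen-sequence} gives amenability of $\Phi^{-1}(0)$. We expect this to be the main obstacle: identifying $\HH_n$ precisely and handling boundary paths with $d(x)\not\ge n$, which for non-row-finite graphs requires a decomposition into locally closed invariant sets.

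For $(1)\Rightarrow(2)$ in the countable case, we note that $G\ltimes\partial\Lambda$ maps to $\G_{G,\Lambda}$ via $(g,x)\mapsto(g\cdot x,[g|_x],0,x)$. By Lemma~\ref{lem:cor57} this homomorphism is injective. It is continuous with open image $\bigcup_g Z(r\text{-vertex},g,v)$, so it identifies $G\ltimes\partial\Lambda$ with an open subgroupoid of $\G_{G,\Lambda}$. Open subgroupoids of amenable groupoids are amenable: the reduced C$^*$-algebra embeds, and there is a conditional expectation onto it, as in the proof of Proposition~\ref{prop:graded-amen}. Alternatively, in the second countable setting amenability passes to closed or open subgroupoids by the Borel characterization.

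For $(2)\Rightarrow(3)$, we fix a vertex $v$. The action $G\ltimes\partial\Lambda$ restricts to an amenable action $G_v\curvearrowright Z(v)\cap\partial\Lambda$, since this set is clopen and invariant for $G_v$, and reductions to clopen sets preserve amenability. We then apply Lemma~\ref{lem:inv-measure-amenable}, for which we need a $G_v$-invariant probability measure on $v\partial\Lambda$. When $\Lambda$ is row-finite and $v\Lambda^n$ is finite, we take $\mu_n$ uniform on cylinders of $v\Lambda^n$ (pushed to points in those cylinders). Each $\mu_n$ is invariant as a measure on the finite set $v\Lambda^n$, because $G_v$ permutes $v\Lambda^n$. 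A weak$^*$ limit point on the compact set $v\partial\Lambda$ then gives an invariant measure, since $Z(v)$ is compact in the row-finite case. For $k=1$ with $v$ singular, we distinguish cases. If $v$ is a source, $\{v\}\in\partial\Lambda$ is a fixed point and the point mass works. If $v$ is an infinite emitter, $v$ itself (as a path of length zero) lies in $\partial\Lambda$ and is $G_v$-fixed, so the Dirac measure $\delta_v$ works. If $v$ is regular, every path from $v$ eventually reaches either a singular vertex or an infinite path, and we combine the finite-level uniform measures as in the row-finite argument, using the compactness of $Z(v)$ in $X_\Lambda$. In each case Lemma~\ref{lem:inv-measure-amenable} yields amenability of $G_v$.
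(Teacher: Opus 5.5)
Your $(3)\Rightarrow(2)$, your plan for $(2)\Rightarrow(1)$ and the row-finite part of $(2)\Rightarrow(3)$ follow the paper's route. However, the $k=1$, non-row-finite part of $(2)\Rightarrow(3)$ has a genuine gap.

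The hard case is a regular vertex $v$ for which some $v\Lambda^n$ is infinite. For example, $v$ may receive finitely many edges, one of which has an infinite receiver as its source. Then $v\Lambda^m$ is infinite for every $m\ge n$, so there are no finite-level uniform measures to combine, and nothing forces $G_v$ to have finite orbits on these sets. Compactness of $Z(v)$ gives you weak$^*$ limits, not invariance, and the remark that every path reaches a singular vertex or is infinite does not replace an argument.

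The missing idea is to find a finite boundary path with a finite $G_v$-orbit:
\begin{itemize}
\item If all $v\Lambda^m$ are finite, you are effectively in the row-finite case.
\item Otherwise, let $n$ be the least integer with $v\Lambda^n$ infinite, and pick $\mu\in v\Lambda^{n-1}$ with $|s(\mu)\Lambda^1|=\infty$. Then $\mu\in\partial\Lambda$ because $s(\mu)$ is singular.
\item The orbit $G_v\cdot\mu$ lies in the finite set $v\Lambda^{n-1}$. So the normalized counting measure on $G_v\cdot\mu$ is a $G_v$-invariant probability measure on $v\partial\Lambda$, and Lemma~\ref{lem:inv-measure-amenable} applies.
\end{itemize}
The paper phrases the last step differently: $G_\mu\subset G_{s(\mu)}$ is amenable as a subgroup of a point stabilizer of an amenable action, and $G_\mu$ has finite index in $G_v$.

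There are three smaller points.
\begin{itemize}
\item \textbf{Row-finite case with sources.} Row-finite $\Lambda$ may have sources, so $v\Lambda^n$ can be empty and boundary paths need not have degree $\ge n$. Also, placing the mass of each $\mu\in v\Lambda^n$ at an arbitrary point of $Z(\mu)\cap\partial\Lambda$ gives invariance only on cylinders $Z(\lambda)$ with $d(\lambda)\le n$. The paper avoids both problems with the truncations $\pi_n(x)=x(0,d(x)\wedge e(n))$. These are $G_v$-equivariant, locally constant with finite range by row-finiteness, and separate points; an invariant state then comes from Lemma~\ref{lem:inv-state}.
\item \textbf{$(1)\Rightarrow(2)$.} Restricting functions to a merely open subgroupoid does not in general map $C_c(\G)$ into $C_c$ of the subgroupoid, so it gives no conditional expectation. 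The expectation in Proposition~\ref{prop:graded-amen} comes from the coaction, not from restriction. What makes the step work is that the image of $G\ltimes\partial\Lambda$ is also closed, which the paper proves using pseudo-freeness (Lemma~\ref{lem:trans-embedding}). It then uses inheritance of amenability by closed subgroupoids in the second countable case. Your Borel-characterization fallback is asserted rather than argued.
\item \textbf{$(2)\Rightarrow(1)$.} The paper removes the obstacle you anticipate with an explicit density. Let $\HH^n$ be the elements of $\Phi^{-1}(0)$ with $d(\mu)=d(\nu)\le n$, and $U_n$ the boundary paths of degree $\ge n$. Fix $\mu_w\in\Lambda^n w$ for each $w\in s(\Lambda^n)$, and put $\tilde f_i(\mu(g\cdot x),\TT_n([g|_x]),0,\nu x):=f_i(g,x)$ if $\mu=\mu_{r(g\cdot x)}$ and $0$ otherwise. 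This is an approximate invariant density on $\HH^n_{U_n}$. For an atom $D$ of the algebra generated by the $U_m$, $m\le n$, one has $\HH^n_D=\HH^m_D$, where $m$ is the common value of $d(x)\wedge n$ on $D$. Lemma~\ref{lem:amen-decompose} then finishes.
\end{itemize}
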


It seems reasonable to expect that conditions (1)--(3) are equivalent for all pseudo-free self-similar $k$-graphs. For $k=1$ the implication (3)$\Rightarrow$(1) is essentially known, see~\cite{MR3581326}*{Corollary~10.18 and Remark~10.17}, \cite{MR3725509}*{Remark~3.9}. 
For $k=1$, under the assumption of countability, the equivalence of (1) and (2) follows from~\cite{MS}*{Theorem~2.18}. For row-finite $k$-graphs without sources it is also known that amenability of $G$ implies amenability of $\G_{G,\Lambda}$, see~\cite{MR4283280}*{Proposition~3.10}.

\smallskip

For the proof of the theorem we will need the following property.

\begin{lemma}\label{lem:trans-embedding}
For any pseudo-free self-similar $k$-graph $(G,\Lambda)$, the map $G\ltimes\partial\Lambda\to \G_{G,\Lambda}$, $(g,x)\mapsto (g\cdot x,[g|_{x}],0,x)$, is a topological isomorphism of $G\ltimes\partial\Lambda$ onto a clopen subgroupoid of $\G_{G,\Lambda}$.
\end{lemma}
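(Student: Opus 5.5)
We will denote the map by $\theta(g,x):=(g\cdot x,[g|_x],0,x)$ and check in turn that it is a groupoid homomorphism, that it is injective, that its image is clopen, and that it is a homeomorphism onto the image.

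\emph{Step 1: $\theta$ is a homomorphism with values in $\G_{G,\Lambda}$.} The element $(g\cdot x,[g|_x],0,x)$ is exactly the element of $\G_{(G,\Lambda)}$ with $\mu=g\cdot r(x)$ and $\nu=r(x)$ (vertices), since then $\TT_0([g|_x])=[g|_x]$. It lies in $\G_{G,\Lambda}$ because $\partial\Lambda$ is $G$-invariant by Lemma~\ref{lem:Ginvbound}. Multiplicativity follows from Lemma~\ref{lem:calcrules}(3):
$$
(gh\cdot x,[g|_{h\cdot x}][h|_x],0,x)=\theta(gh,x).
$$

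\emph{Step 2: injectivity.} If $\theta(g,x)=\theta(h,y)$, then $x=y$, $g\cdot x=h\cdot x$ and $[g|_x]=[h|_x]$. Hence $g=h$ by Lemma~\ref{lem:cor57}; this is the only place where pseudo-freeness is used.

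\emph{Step 3: the image is clopen and $\theta$ is a homeomorphism onto it.} We will show that the image equals
$$
\bigcup_{g\in G,\ v\in\Lambda^0} Z(g\cdot v,g,v)\cap\G_{G,\Lambda}.
$$
This follows from the identity $Z(g\cdot v,g,v)=\{\theta(g,x):r(x)=v\}$. The set is therefore open, and on each piece $\theta$ is the inverse of the homeomorphism $s$ restricted to the basic bisection. Since $\{g\}\times Z(v)$ is open in $G\ltimes\partial\Lambda$ and $s$ restricts to a homeomorphism on both sides, $\theta$ is a homeomorphism onto an open subgroupoid.

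For closedness, the plan is to argue that a general element $(\mu(g\cdot x),\TT_{d(\mu)}([g|_x]),d(\mu)-d(\nu),\nu x)$ lies in the image if and only if it can be written with $d(\mu)=d(\nu)=0$. Take a point $\gamma$ of the closure with $\Phi(\gamma)=0$ (the condition $\Phi=0$ is clopen) and a basic neighbourhood $Z(\mu,g,\nu,F)\ni\gamma$ with $d(\mu)=d(\nu)$. Pick an element $\theta(h,y)$ of the image inside this neighbourhood. Then $y=\nu x$ and $h\cdot y=\mu(g\cdot x)$, so $h\cdot\nu=\mu$ and $h|_\nu\cdot x=g\cdot x$. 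Comparing the $Q_k(G)$-components gives $[h|_y]=\TT_{d(\nu)}([(h|_\nu)|_x])$ and also $[h|_y]=\TT_{d(\mu)}([g|_x])$. Since $d(\mu)=d(\nu)$ and $\TT_{d(\nu)}$ is injective, Lemma~\ref{lem:cor57} yields $h|_\nu=g$.

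With $h\cdot\nu=\mu$ and $h|_\nu=g$, every element of $Z(\mu,g,\nu,F)$ satisfies
$$
(\mu(g\cdot x'),\TT_{d(\mu)}([g|_{x'}]),0,\nu x')=\theta(h,\nu x')
$$
by Lemma~\ref{lem:calcrules}(1),(2). In particular $\gamma$ lies in the image, so the image is closed.

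The main obstacle is this closedness step: one must be sure that the $Q_k(G)$-component determines $h|_\nu$. This uses pseudo-freeness once more via Lemma~\ref{lem:cor57}, together with injectivity of $\TT_m$ on $Q_k(G)$, which is clear from its definition as an automorphism.
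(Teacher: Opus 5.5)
Your proof is correct and follows the same route as the paper. You get the homomorphism property from Lemma~\ref{lem:calcrules}(3) and injectivity from Lemma~\ref{lem:cor57}. Openness comes from identifying images of basic sets with basic bisections; you use the coarser cover $\{g\}\times Z(v)$, which suffices given injectivity, where the paper uses $\{g\}\times Z_F(\lambda)$. Closedness rests on the same key computation $h\cdot\nu=\mu$, $h|_\nu=g$. The one difference is in how you finish closedness: you show that a basic set $Z(\mu,g,\nu,F)$ with $d(\mu)=d(\nu)$ that meets the image lies entirely in it, whereas the paper runs a net argument and applies Lemma~\ref{lem:cor57} a second time to make the $g_i$ eventually constant.
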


\bp
It is immediate that the map is a homomorphism of groupoids. It is injective by Lemma~\ref{lem:cor57}. To see that it is a homeomorphism onto an open set, recall that a basis of topology on $G\ltimes\partial\Lambda$ is given by the sets $\{g\}\times (Z_F(\lambda)\cap\partial\Lambda)$. Every such set is mapped bijectively onto the open bisection  $Z(g\cdot\lambda,g|_\lambda,\lambda,F)\cap\G_{G,\Lambda}$ of $\G_{G,\Lambda}$. Moreover, we can write the map $\{g\}\times (Z_F(\lambda)\cap\partial\Lambda)\to\G_{G,\Lambda}$ as the map $(g,x)\mapsto x\in Z_F(\lambda)\cap\partial\Lambda$ followed by the homeomorphism $Z_F(\lambda)\cap\partial\Lambda\to Z(g\cdot\lambda,g|_\lambda,\lambda,F)\cap\G_{G,\Lambda}$ that is the inverse of the source map, which makes it clear that we get a homeomorphism of $\{g\}\times (Z_F(\lambda)\cap\partial\Lambda)$ onto its open image.

It remains to show that the image of $G\ltimes\partial\Lambda$ is closed. Assume a net of elements $(g_i\cdot x_i,[g_i|_{x_i}],0,x_i)$ converges to a point in $\G_{G,\Lambda}$. This point must lie in the kernel of~$\Phi$, that is, to be of the form $(\mu(h\cdot y),\TT_{d(\mu)}([h|_y]),0,\nu y)$ for some $\mu,\nu\in\Lambda$ of the same degree. For all $i$ large enough, we can write $x_i=\nu y_i$ for some $y_i$. Then
$$
(g_i\cdot x_i,[g_i|_{x_i}],0,x_i)=((g_i\cdot\nu)(g_i|_\nu\cdot y_i),\TT_{d(\mu)}([(g_i|_\nu)|_{y_i}]),0,\nu y_i).
$$
Hence we must have $g_i\cdot\nu=\mu$ for all $i$ large enough. By assumption, the above expressions eventually lie in $Z(\mu,h,\nu)$. It follows that for all $i$ large enough we have
$$
g_i|_\nu\cdot y_i=h\cdot y_i\quad\text{and}\quad [(g_i|_\nu)|_{y_i}]=[h|_{y_i}].
$$
By Lemma~\ref{lem:cor57} it follows that $g_i|_\nu=h$. Therefore for all $i,j$ large enough we get
$$
g_i\cdot\nu=\mu=g_j\cdot\nu\quad\text{and}\quad g_i|_\nu=g_j|_\nu.
$$
Applying Lemma~\ref{lem:cor57} again we conclude that eventually the elements $g_i$ are equal to one element $g\in G$. Therefore $(g_i,x_i)\to(g,x)$ in $G\ltimes\partial\Lambda$ and hence $(\mu(h\cdot y),\TT_{d(\mu)}([h|_y]),0,\nu y)$ coincides with the image of $(g,x)$.
\ep

In the second countable case amenability of groupoids passes to closed subgroupoids \cite{MR1799683}*{Proposition~5.1.1}. Therefore the above lemma proves the implication (1)$\Rightarrow$(2) for countable $(G,\Lambda)$.

\bp[Proof of $(2)\Rightarrow(1)$ in Theorem~\ref{thm:EP-amenability}]
The overall strategy of the proof is by now standard, cf. \cite{RW}*{Theorem~5.13}.
Denote by $\HH$ the kernel of $\Phi\colon\G_{G,\Lambda}\to\Z^k$. It is an open subgroupoid of~$\G_{G,\Lambda}$ consisting of points of the form
$
(\mu (g\cdot x) , \mathcal{T}_{d(\mu)}([g|_{x}]), 0, \nu x )
$
with $d(\mu)=d(\nu)$. For every $n\in\Z^k_+$, denote by $\HH^n\subset\HH$ the subset of points of the form $(\mu (g\cdot x) , \mathcal{T}_{d(\mu)}([g|_{x}]), 0, \nu x )$ with $d(\mu)=d(\nu)\le n$. Let also denote by $U_n\subset\partial\Lambda$ the domain of definition of $\sigma^n|_{\partial\Lambda}$, that is, the open subset of boundary paths of degree $\ge n$. One can then easily check the following properties:
\begin{itemize}
  \item every set $\HH^n$ is an open subgroupoid of $\HH$;
  \item $\HH^n\subset\HH^m$ if $n\le m$, and $\bigcup_{n\in\Z^k_+}\HH^n=\HH$;
  \item every set $U_n$ is $\HH$-invariant, hence it is $\HH^m$-invariant for any $m$.
\end{itemize}

By Proposition~\ref{prop:graded-amen}, in order to show that $\G_{G,\Lambda}$ is amenable it suffices to show that~$\HH$ is amenable. By Lemma~\ref{lem:amen-sequence} for this, in turn, it suffices to show that every groupoid~$\HH^n$ is amenable.

Let us first show that each reduced groupoid $\HH^n_{U_n}$ is amenable. This groupoid consists of points of the form $(\mu (g\cdot x) , \mathcal{T}_n([g|_{x}]), 0, \nu x )$ with $d(\mu)=d(\nu)=n$. Let $(f_i)_i$ be an approximate invariant density on $G\ltimes\partial\Lambda$. For every $v\in s(\Lambda^n)\subset\Lambda^0$ fix a path $\mu_v\in\Lambda^nv$. Define a function~$\tilde f_i$ on $\HH^n_{U_n}$ by
$$
\tilde f_i(\mu (g\cdot x) , \mathcal{T}_n([g|_{x}]), 0, \nu x ):=\begin{cases}
                                                                         f_i(g,x), & \mbox{if } \mu=\mu_{r(g\cdot x)}, \\
                                                                         0, & \mbox{otherwise},
                                                                       \end{cases}
$$
where $d(\mu)=d(\nu)=n$. Lemma~\ref{lem:trans-embedding} implies that this function is well-defined and continuous. For every $\nu x\in U_n$, $d(\nu)=n$, we then have
$$
\sum_{\gamma\in\HH^n_{\nu x}}\tilde f_i(\gamma)=\sum_{g\in G} f_i(g,x)=1,
$$
and if $x=h\cdot y$ and $\gamma'=(\nu(h\cdot y),\TT_n([h|_{y}]),0,\lambda y)$, $d(\lambda)=n$, then
$$
\sum_{\gamma\in\HH^n_{\nu x}}|\tilde f_i(\gamma\gamma')-\tilde f_i(\gamma)|=\sum_{g\in G} |f_i(gh,y)-f_i(g,x)|\xrightarrow[i\to\infty]{}0.
$$
This shows that $\HH^n_{U_n}$ is indeed amenable.

Now, for a fixed $n\in\Z^k_+$, consider the algebra of sets generated by $U_m$ for $m\le n$. Let $D_1,\dots,D_l$ be its atoms. By Lemma~\ref{lem:amen-decompose} in order to finish the proof of amenability of $\HH^n$ it suffices to show that the groupoid $\HH^n_{D_i}$ is amenable for every $i$. For a fixed $i$, let $m$ be the largest element dominated by $n$ such that $D_i\subset U_{m}$. Note that it exists, because $U_p\cap U_q=U_{p\vee q}$. Then $\HH^n_{D_i}=\HH^{m}_{D_i}$, hence amenability of $\HH^n_{D_i}$ follows from that of $\HH^m_{U_m}$.
\ep

\bp[Proof of $(3)\Rightarrow(2)$ in Theorem~\ref{thm:EP-amenability}]
Amenability of the stabilizers of vertices is equivalent to amenability of the action of $G$ on the discrete space $\Lambda^0$. The $G$-equivariant map $r\colon\partial\Lambda\to\Lambda^0$ allows us to pull an approximate invariant density on $G\ltimes\Lambda^0$ to that on $G\ltimes\partial\Lambda$, so the action $G\curvearrowright\partial\Lambda$ is amenable.
\ep

It remains to prove that under additional assumptions (2) implies  (3).  For this we will need the following simple fact.

\begin{lemma}\label{lem:inv-state}
Assume a discrete group $\Gamma$ acts on a unital AF-algebra $A=\overline{\bigcup^\infty_{n=1}A_n}$, where $A_1\subset A_2\subset\dots$ are unital finite dimensional C$^*$-subalgebras of $A$ such that each algebra $A_n$ is $\Gamma$-invariant. Then there exists a $\Gamma$-invariant tracial state on $A$.
\end{lemma}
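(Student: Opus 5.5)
The plan is to build the invariant trace level by level, using the fact that on a finite-dimensional algebra the set of tracial states is a finite-dimensional simplex that any group action permutes in a controlled way. Then we pass to the limit by weak$^*$ compactness.

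Fix $n$ and write $A_n\cong\bigoplus_{j=1}^{m_n}M_{d_j}(\C)$, with minimal central projections $p_1,\dots,p_{m_n}$. Every automorphism of $A_n$ maps the center of $A_n$ onto itself. Hence it permutes the minimal central projections, and it preserves the canonical normalized trace $\operatorname{tr}_j$ on each summand, transporting $\operatorname{tr}_j$ to $\operatorname{tr}_{\sigma(j)}$. Since $A_n$ is $\Gamma$-invariant, $\Gamma$ acts on $A_n$ by automorphisms, and so $\Gamma$ acts on the finite set $\{1,\dots,m_n\}$. Choose one $\Gamma$-orbit $O$, which is finite, and define
$$
\tau_n:=\frac{1}{|O|}\sum_{j\in O}\operatorname{tr}_j\circ(\text{compression to }p_jA_n).
$$
This is a $\Gamma$-invariant tracial state on $A_n$, using unitality to get $\tau_n(1)=1$. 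More canonically, the set $T_n^\Gamma$ of $\Gamma$-invariant tracial states on $A_n$ is nonempty and compact.

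Next comes compatibility between levels. The restriction of a $\Gamma$-invariant tracial state on $A_{n+1}$ to $A_n$ is again a $\Gamma$-invariant tracial state on $A_n$, and here we use that the inclusion is unital. So consider the nonempty compact sets
$$
K_n:=\{\tau\in S(A):\ \tau|_{A_n}\ \text{is tracial and}\ \Gamma\text{-invariant}\}.
$$
Each $K_n$ is nonempty, because we can extend $\tau_n$ to a state of $A$ by Hahn--Banach. Each $K_n$ is weak$^*$ closed in the state space $S(A)$. Moreover $K_{n+1}\subset K_n$: if $\tau|_{A_{n+1}}$ is tracial and invariant, so is its restriction to $A_n$. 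By compactness of $S(A)$, which holds since $A$ is unital, the intersection $\bigcap_n K_n$ is nonempty.

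Take any $\tau$ in this intersection. Then $\tau$ is tracial and $\Gamma$-invariant on the dense $*$-subalgebra $\bigcup_nA_n$. By continuity, $\tau$ is a $\Gamma$-invariant tracial state on $A$.

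I expect no serious obstacle. The only point needing care is that group elements permute the summands of $A_n$ while preserving the summand traces, so that invariant traces exist at each finite level. This is clear because automorphisms of a full matrix algebra are inner and hence preserve its unique trace.
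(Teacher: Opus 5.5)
Your proof is correct and follows the paper's argument: you take the nonempty, decreasing, weak$^*$-closed sets of states on $A$ that are tracial and $\Gamma$-invariant on $A_n$, and intersect them using compactness of the state space. The only difference is cosmetic: at each finite level the paper uses the normalized canonical trace on $A_n$, which is invariant under every automorphism, whereas you average the summand traces over a $\Gamma$-orbit of minimal central projections; both give an invariant tracial state on $A_n$.
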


\bp
Denote by $S_n$ the set of $\Gamma$-invariant tracial states on $A_n$. It is nonempty, since it contains, for example, the normalized canonical trace. Denote by $\tilde S_n$ the set of states $\varphi$ on $A$ such that $\varphi|_{A_n}\in S_n$. Then the sets $\tilde S_n$ are nonempty, weakly$^*$ closed and $\tilde S_1\supset\tilde S_2\supset\dots$. It follows that the set $\bigcap^\infty_{n=1}\tilde S_n$, which is exactly the set of $\Gamma$-invariant tracial states, is nonempty.
\ep

\bp[Proof of $(2)\Rightarrow(3)$ in Theorem~\ref{thm:EP-amenability}]
We assume that the action $G\curvearrowright\partial\Lambda$ is amenable and want to deduce from this that the stabilizer of every vertex is amenable when either $\Lambda$ is row-finite or $k=1$.

\smallskip

Assume first that $\Lambda$ is row-finite. Fix a vertex $v\in\Lambda^0$ and consider its stabilizer $\Gamma:=G_v$. We claim that the action $\Gamma\curvearrowright\partial\Lambda$ is amenable. In the second countable case this already follows from the fact that $\Gamma\ltimes\partial\Lambda$ is a closed subgroupoid of $G\ltimes\partial\Lambda$, but the claim is true without any countability assumptions for the same reasons as in the case of groups. Namely, if $(f_i)_i$ is an approximate invariant density on $G\ltimes\partial\Lambda$ and $g_j\in G$ are representatives of $G/\Gamma$, then the functions
$$
\tilde f_i(h,x):=\sum_jf_i(g_jh,x)
$$
form an approximate invariant density on $\Gamma\ltimes\partial\Lambda$.

By~\cite{MR2301938}*{Proposition~4.3}, the set $\Omega:=v\partial\Lambda$ is nonempty. Since the set $vX_\Lambda=Z(v)$ is compact by row-finiteness of $\Lambda$, the set $\Omega=Z(v)\cap\partial\Lambda$ is compact as well. This set is also $\Gamma$-invariant, so we get an amenable action $\Gamma\curvearrowright\Omega$.

For $n\in\N$, put $e(n):=(n,\dots,n)\in\Z^k_+$. It is not difficult to see that row-finiteness of $\Lambda$ implies that the map $X_\Lambda\to\Z^k_+$, $x\mapsto d(x)\wedge e(n)$, is locally constant. Denote by $\Lambda^{\le e(n)}$ the set of all paths in $\Lambda$ of degree $\le e(n)$. The set $v\Lambda^{\le e(n)}$ is finite, and we can define a continuous map
$$
\pi_n\colon \Omega\to v\Lambda^{\le e(n)},\quad \pi_n(x):=x(0,d(x)\wedge e(n)).
$$
Let $A_n\subset C(\Omega)$ be the subalgebra of functions of the form $f\circ\pi_n$, with $f\in C(v\Lambda^{\le e(n)})$. The C$^*$-algebras $A_n$ are unital, finite dimensional and $\Gamma$-invariant, $A_1\subset A_2\subset\dots$, and $\cup_nA_n$ is dense in $C(\Omega)$, since the maps $\pi_n$ separate points of $\Omega$. By Lemma~\ref{lem:inv-state} we conclude that there is a $\Gamma$-invariant state on $C(\Omega)$. By Lemma~\ref{lem:inv-measure-amenable} it follows then that $\Gamma$ is amenable.

\smallskip

Assume now that $k=1$. Fix again a vertex $v\in\Lambda^0$ and consider its stabilizer $\Gamma:=G_v$. If the sets $v\Lambda^n$ are finite for all $n\ge1$, then we effectively deal with the row-finite case, and the same arguments as above show that $\Gamma$ is amenable. Assume therefore that some sets $v\Lambda^n$ are infinite, and let $n\ge1$ be the smallest number with this property. Then the finite set $v\Lambda^{n-1}$ contains a path $\mu$ such that the set $s(\mu)\Lambda^1$ is infinite. As was remarked at the end of Section~\ref{ssec:higher-rank}, then $s(\mu)\in\partial\Lambda$. Amenability of the action $G\curvearrowright\partial\Lambda$ implies then that the stabilizer $G_{s(\mu)}$ of $s(\mu)$ is amenable. It follows that the stabilizer $G_\mu\subset \Gamma\cap G_{s(\mu)}$ of the path $\mu$ is amenable as well. But since the set $v\Lambda^{n-1}$ is finite, the stabilizer $\Gamma_\mu=G_\mu$ of $\mu$ for the action of $\Gamma$ on this set is a finite index subgroup of $\Gamma$, hence $\Gamma$ is also amenable.
\ep

\bigskip

\section{Self-similar directed graphs}\label{sec:1-graphs}
Throughout this section we fix a countable self-similar $1$-graph $(G,E)$. Recall that then $E$ is an ordinary directed graph and that the standing assumption of finite alignment is automatically satisfied for such graphs. In order to be more consistent with the literature on graph algebras, we will slightly change the notation used in the previous section. Namely, we write $E^*$, rather than simply $E$, for the set of finite paths. A path $x\in X_E$ of length $|x|=d(x)\ge1$ is often written as $x_1x_2\dots$, with $x_n\in E^1$, so $x_n=x(n-1,n)$ in the earlier notation. We write $\sigma$ for the shift $\sigma^1\colon X_E\setminus E^0\to X_E$.

\subsection{The quasi-orbit space of a self-similar graph}\label{ssec:quasio}
Consider the Exel--Pardo groupoid $\G:=\G_{G,E}$. In this subsection only the topology on $\partial E=\Gu$ will matter to us, so we do not require $(G,E)$ to be pseudo-free and do not consider any topology on the entire groupoid~$\G$. Our goal is to obtain a description of the quasi-orbit space $(\G\backslash\partial E)^\sim$ that generalizes and refines that given in \cite{CN3}*{Section~4.1}  for the groupoid~$\G_E$ underlying the Cuntz--Krieger algebra~$\OO_E$. We remark that since the $\G$-saturation $r(\G_U)$ of every open subset $U\subset\partial E$ is open, the discussion at the beginning of Section~\ref{ssec:quasi-orbits} still applies and  the quasi-orbit space indeed coincides with the $T_0$-ization of the orbit space $\G\backslash\partial E$, see~\cite{CN3}*{Section~1.4}.

\smallskip

Recall that there is a preorder on $E^{0}$ given by $v \geq w$ if and only if $v E^{*} w \neq \emptyset$. It defines an equivalence relation on $E^{0}$ such that $v \sim w$ if and only if $v \leq w$ and $w \leq v$. 
The group $G$ respects the preorder in the sense that for any $g\in G $ we have $v \geq w$ if and only if $g\cdot v \geq g \cdot w$.

Recall that a vertex $v$ is called \emph{regular} if $0<|vE^{1}|<\infty$, and it is called \emph{singular} otherwise. There are two kinds of singular vertices -- the sources ($|vE^{1}|=0$) and the infinite receivers ($|vE^{1}|=\infty$). We remind that $\partial E$ is the union of the set $E^\infty$ of infinite paths and the set of finite paths $\alpha\in E^*$ such that $s(\alpha)\in E^\sing$.

The notion of maximal tails~\cite{MR1988256} is important for understanding the ideal structure of~$\OO_E$. For self-similar graphs we introduce a similar notion that takes into account the group action.

\begin{defn}\label{def:max-G-tail}
A nonempty $G$-invariant subset $M \subset E^{0}$ is called a \emph{maximal $G$-tail} if the following conditions are satisfied:
\begin{enumerate}
\item[(i)] if $v\in E^{0}$ and $v \geq w$ for some $w\in M$, then $v\in M$;
\item[(ii)] if $v \in M$ is a regular vertex, then $vE^1M\ne\emptyset$;
\item[(iii)] for every $v,w \in M$, there are elements $g\in G$ and $u\in M$ such that $v \geq u$ and $w \geq g\cdot u$.
\end{enumerate}
We denote by $\M(G,E)$  the set of  maximal $G$-tails in $(G,E)$.
\end{defn}

\begin{remark}
The definition of a maximal $G$-tail is identical with the definition of a maximal tail in the case where the $G$-action is trivial. Observe that any self-similar action $G\curvearrowright E$, being restricted to the vertices and edges of $E$, defines a quotient graph $G\backslash E$. Then it is not difficult to see that if $E$ is row-finite and has no sources, so that all vertices are regular, then the quotient map $E^0\to G\backslash E^0$ establishes a one-to-one correspondence between the maximal $G$-tails in $(G,E)$ and maximal tails in $G\backslash E$. 
\ee
\end{remark}

Define for each $x\in \partial E$ a subset $\MT_G(x)$ of $E^{0}$ by
$$
\MT_G(x):= \{ r(y) : y \in  [ x]_{\mathcal{G}} \},
$$
where $[ x]_{\G}:=r(\G_x)$ is the orbit of $x$ under the action of $\mathcal{G}$ on $\partial E$. We prefer to keep the subscript~$\G$ to make it clear that we are talking about $\G$-orbits, not $G$-orbits or $\sigma$-orbits.

The $\G$-orbit of $x$ is the smallest set containing $x$ that is invariant under the action of $G$, the shift~$\sigma$ and its inverse. As a consequence, it can be easily seen that it has the following description.

\begin{lemma}\label{lem:EP-orbit}
For every $x\in\partial E$, its $\G$-orbit $[x]_\G$ consists of all paths of the form $\mu(g\cdot\sigma^n(x))$, with $n\ge0$ such that $x\in\operatorname{dom}(\sigma^n)$, $g\in G$ and $\mu\in E^*(g\cdot s(x_n))$.
\end{lemma}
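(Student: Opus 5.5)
We prove the two inclusions separately. Let $O$ be the set of all paths $\mu(g\cdot\sigma^n(x))$ with $n\ge0$, $x\in\operatorname{dom}(\sigma^n)$, $g\in G$ and $\mu\in E^*(g\cdot s(x_n))$; for $n=0$ we read $s(x_0)$ as $r(x)$. We first check that $O\subset\partial E$. Boundary paths are invariant under the shifts and their inverses, and also under the action of $G$ by Lemma~\ref{lem:Ginvbound}. Also $r(g\cdot\sigma^n(x))=g\cdot r(\sigma^n(x))=g\cdot s(x_n)$, so the concatenation with $\mu$ makes sense.

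For the inclusion $O\subset[x]_\G$ we exhibit an arrow from $x$ to each element of $O$. Write $x=\nu y$ with $\nu=x_1\dots x_n$ and $y=\sigma^n(x)$. By Proposition~\ref{prop:subgroupoid} the element
$$
(\mu(g\cdot y),\TT_{|\mu|}([g|_y]),|\mu|-n,\nu y)
$$
lies in $\G_{(G,E)}$. Its source is $x\in\partial E$ and its range lies in $\partial E$, so it belongs to $\G=\G_{G,E}$. Hence $\mu(g\cdot\sigma^n(x))\in[x]_\G$.

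For the opposite inclusion, take any arrow in $\G_x$. By Proposition~\ref{prop:subgroupoid} it has the form $(\mu(g\cdot z),\dots,\nu z)$ with $\nu z=x$, $s(\nu)=r(z)$ and $s(\mu)=g\cdot r(z)$. Putting $n=|\nu|$, we have $z=\sigma^n(x)$ and $r(z)=s(x_n)$, so the range $\mu(g\cdot\sigma^n(x))$ lies in $O$.

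Alternatively, one can argue abstractly. The set $O$ contains $x$ (take $n=0$, $g=1_G$, $\mu=r(x)$). It is invariant under $G$, since
$$
h\cdot(\mu(g\cdot\sigma^n x))=(h\cdot\mu)\bigl((h|_\mu g)\cdot\sigma^n x\bigr)
$$
by Lemma~\ref{lem:calcrules}(1). It is invariant under prepending edges, directly from the definition. It is invariant under $\sigma$: if $\mu$ is nonempty we simply shorten $\mu$, and if $\mu$ is a vertex we use
$$
\sigma(g\cdot\sigma^n x)=g|_{x_{n+1}}\cdot\sigma^{n+1}x.
$$
Conversely, $O$ is contained in every set containing $x$ that is invariant under these operations. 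Combined with the description of $[x]_\G$ as the smallest set containing $x$ that is invariant under $G$, $\sigma$ and $\sigma^{-1}$, this gives equality. The only point needing care is the bookkeeping of the vertex $g\cdot s(x_n)$ and the restriction $g|_{x_{n+1}}$; there is no real obstacle.
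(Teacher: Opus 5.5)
Your proposal is correct.

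Your second, ``abstract'' argument is essentially the paper's own justification. The paper simply observes that $[x]_\G$ is the smallest set containing $x$ that is invariant under $G$, $\sigma$ and $\sigma^{-1}$, and says the description then follows easily. Your checks of $G$- and $\sigma$-invariance of the candidate set supply exactly the details the paper leaves to the reader. These checks use Lemma~\ref{lem:calcrules}(1) and $\sigma(g\cdot\sigma^n(x))=g|_{x_{n+1}}\cdot\sigma^{n+1}(x)$. The only implicit point is $s(h\cdot\mu)=h\cdot s(\mu)=h|_\mu\cdot s(\mu)$, which makes $(h\cdot\mu)\bigl((h|_\mu g)\cdot\sigma^n(x)\bigr)$ a legitimate element of your set.

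Your main argument takes a slightly different and more self-contained route. You read off the ranges of arrows in $\G_x$ directly from the second parametrization of $\G_{(G,\Lambda)}$ in Proposition~\ref{prop:subgroupoid}. This way you do not rely on the characterization of $\G$-invariant sets via $G$, $\sigma$ and $\sigma^{-1}$, which the paper only asserts. In effect, your computation proves that characterization for orbits.

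Both arguments are purely algebraic. So, as that subsection requires, neither uses pseudo-freeness or any topology on $\G$.
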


Observe also that given a point $\mu(g\cdot\sigma^n(x))\in[x]_\G$ such that $x\in\operatorname{dom}(\sigma^{n+k})$ for some $k\ge1$, we can write
\begin{equation}\label{eq:obs-orbit}
\mu (g \cdot \sigma^{n}(x)) = \mu (g \cdot (x_{n+1}\cdots x_{n+k} )) ( g|_{x_{n+1}\cdots x_{n+k}} \cdot \sigma^{n+k}(x)).
\end{equation}

The relevance of maximal $G$-tails for the quasi-orbit structure can be seen from the following lemma.

\begin{lemma}\label{lem:Gtails}
For every $x\in\partial E$,  the set $\MT_G(x)$ is the smallest maximal $G$-tail containing all vertices of $x$. If $\overline{[x]}_\G=\overline{[y]}_\G$ for some $x,y\in\partial E$, then $\MT_G(x)=\MT_G(y)$. If $x,y\in E^\infty$, then conversely, the equality $\MT_G(x)=\MT_G(y)$ implies that $\overline{[x]}_\G=\overline{[y]}_\G$.
\end{lemma}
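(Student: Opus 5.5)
We split the argument into three parts, treating the description of $[x]_\G$ in Lemma~\ref{lem:EP-orbit} as the basic tool throughout.

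\emph{$\MT_G(x)$ is a maximal $G$-tail containing the vertices of $x$.} By Lemma~\ref{lem:EP-orbit}, $\MT_G(x)$ consists of all vertices $r(\mu)$ with $\mu\in E^*(g\cdot x(n))$, where $x(n)=s(x_n)$ ranges over the vertices of $x$. Equivalently, it is the set of vertices $v$ with $v\ge g\cdot x(n)$ for some $g$ and $n$. We verify the conditions of Definition~\ref{def:max-G-tail} in turn.
\begin{itemize}
\item $G$-invariance: $h\cdot(\mu(g\cdot\sigma^n x))$ again lies in the orbit, by Lemma~\ref{lem:EP-orbit} and Lemma~\ref{lem:calcrules}(1).
\item Hereditary upward property (i): this is immediate from the description above.
\item Condition (ii): take a regular vertex $v=r(y)$ with $y=\mu(g\cdot\sigma^n x)\in[x]_\G$. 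Since $y\in\partial E$ and $v$ is regular, $y$ cannot be the trivial path $v$. Hence its first edge $e$ satisfies $s(e)=r(\sigma(y))\in\MT_G(x)$, because $\sigma(y)\in[x]_\G$.
\item Condition (iii): for $v\ge g\cdot x(n)$ and $w\ge h\cdot x(m)$, we may assume $m\ge n$. Using \eqref{eq:obs-orbit}, we have $g\cdot x(n)\ge g\cdot x(m)$, since $g\cdot(x_{n+1}\cdots x_m)$ is a path from $g\cdot x(n)$ to $g|_{x_{n+1}\cdots x_m}\cdot x(m)$, where the latter vertex equals $g\cdot x(m)$. So $u:=g\cdot x(m)$ and the group element $hg^{-1}$ work.
\end{itemize}
Minimality then follows: any maximal $G$-tail containing all $x(n)$ contains all $g\cdot x(n)$ by $G$-invariance, and hence everything above them by (i).

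\emph{Orbit closures determine $\MT_G$.} It suffices to show that $\overline{[y]}_\G\subset\overline{[x]}_\G$ implies $\MT_G(y)\subset\MT_G(x)$. Take $v=r(z)$ with $z\in[y]_\G$. Then $z$ is a limit of points of $[x]_\G$. The cylinder $Z(v)\cap\partial E$ is an open neighbourhood of $z$, so it contains some point $z'\in[x]_\G$, and then $v=r(z')\in\MT_G(x)$.

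\emph{Converse for infinite paths.} Suppose $x,y\in E^\infty$ with $\MT_G(x)=\MT_G(y)$. We show that $x\in\overline{[y]}_\G$; the other inclusion follows by symmetry. A basic neighbourhood of $x$ is $Z_F(x_1\cdots x_n)\cap\partial E$ with $F$ finite. Since $x$ is infinite, after enlarging $n$ we may take $F$ to consist of edges at $s(x_n)$ other than $x_{n+1}$, so it suffices to produce a point of $[y]_\G$ of the form $x_1\cdots x_n x_{n+1}w$. Now $x(n+1)\in\MT_G(y)$, so $x(n+1)\ge g\cdot y(m)$ for some $g$ and $m$. Let $\mu$ be a path from $x(n+1)$ to $g\cdot y(m)$. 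Then the point $x_1\cdots x_{n+1}\mu\,(g\cdot\sigma^m y)$ lies in $[y]_\G$ by Lemma~\ref{lem:EP-orbit}, and it lies in the required neighbourhood.

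The main obstacle I expect is making this last step careful: one needs to check that every neighbourhood of an infinite path contains a cylinder of the form $Z(x_1\cdots x_{n+1})$. This is the standard fact that for an infinite path the finite excluded set $F$ can be absorbed by extending the prefix. It is exactly where infiniteness of $x$ is used, and why the converse can fail for finite boundary paths, whose neighbourhoods $Z_F(x)$ must avoid the edges in $F$ while no longer prefix is available.
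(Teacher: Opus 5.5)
Your proposal is correct and follows essentially the same route as the paper. You verify the tail axioms from the orbit description of Lemma~\ref{lem:EP-orbit}, using \eqref{eq:obs-orbit} to align indices for (iii), and you get the second claim from local constancy of $r$ on cylinders. You prove the converse by prefixing a point of $[y]_\G$ with an initial segment of $x$, exactly as the paper does. The differences are cosmetic: you work with the vertex description $v\ge g\cdot x(n)$ rather than with orbit elements, and your reduction to cylinders could be stated more directly ($Z(x_1\cdots x_N)\subset Z_F(x_1\cdots x_n)$ for $N$ large), but neither affects correctness.
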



\begin{proof}
The set $\MT_G(x)$ is obviously $G$-invariant. If $y\in[x]_\G$ and $s(\mu)=r(y)$, then $\mu y\in[x]_\G$ and therefore $r(\mu)\in\MT_G(x)$. This proves property (i) in Definition~\ref{def:max-G-tail}. Assume $v=r(y)$ for some $y\in[x]_\G$ and $0<|vE^{1}|<\infty$. Since $v$ is a regular vertex, the path $y$ must have length at least~$1$, so $y_{1} \in vE^{1} $ and $s(y_{1}) = r(\sigma(y)) \in \MT_G(x)$, proving (ii). For (iii), take $v=r(y)$ and $w = r(z)$ for some $y, z \in  [ x]_{\mathcal{G}}$. We can write $y=\nu (g \cdot \sigma^{n}(x))$ and $z=\mu (h \cdot \sigma^{m}(x))$. By observation~\eqref{eq:obs-orbit} we may assume that $n=m$. Set $u := r(g\cdot \sigma^{n}(x)) \in \MT_G(x)$. Then $v=r(\nu)$ and $u=s(\nu)$, so $v\ge u$. Since $hg^{-1} \cdot u = h g^{-1} \cdot  r(g\cdot \sigma^{n}(x)) = r(h\cdot \sigma^{n}(x))=s(\mu)$, we also get that $w\ge hg^{-1} \cdot u$. This proves that $\MT_G(x)$ is a maximal $G$-tail.

By definition and Lemma~\ref{lem:EP-orbit}, the set $\MT_G(x)$ consists of all vertices~$v$ such that $v\ge g\cdot w$ for some vertex $w$ of $x$. It follows that $\MT_G(x)$ is the smallest maximal $G$-tail containing all vertices of $x$.

Since the map $r\colon\partial E\to E^0$ is continuous, we have $r(y)\in\MT_G(x)$ for all $y\in \overline{[x]}_\G$. Hence $\MT_G(x)=\MT_G(y)$ whenever $\overline{[x]}_\G=\overline{[y]}_\G$.

Assume now that $x,y\in E^\infty$ and $\MT_G(x)=\MT_G(y)$. If we write $x=\alpha x'$ for some $\alpha\in E^*$ and $x'\in E^\infty$, then $r(x')=r(y')$ for some $y'\in[y]_\G$, hence $\alpha y'\in[y]_\G$ and therefore $Z(\alpha)\cap[y]_\G\ne\emptyset$. This implies that $x\in\overline{[y]}_\G$. By symmetry we also have $y\in\overline{[x]}_\G$.
\end{proof}

To get a full description of quasi-orbits we need to understand the image of $\MT_G$ and the orbit closures of finite paths in~$\partial E$.

\smallskip

Consider the canonical cocycle $\Phi\colon\G\to\Z$ and define the set of \emph{$G$-aperiodic boundary paths} by
\begin{equation}\label{eq:AGE}
A_{G,E} := \{ x \in \partial E : \Phi (\Gxx)=0\}.
\end{equation}
In other words, $A_{G,E}$ consists of all boundary paths $x$ such that if $\sigma^n(x)=g\cdot\sigma^m(x)$ for some $g\in G$ and $n,m\in\Z_+$, then $n=m$. When the action of $G$ is trivial, this is the set of aperiodic points for the shift $\sigma$. Since the group $\Phi(\Gxx)$ depends only on the $\G$-orbit of $x$, it is clear that the set $A_{G,E}$ is $\G$-invariant.

Let us look closer at the set $\partial E\setminus A_{G,E}$. So assume we have $\sigma^n(x)=g\cdot\sigma^m(x)$ for some $n\ne m$. By acting by $g^{-1}$ if necessary, we may assume that $n>m$. Then $\sigma^{n-m}(\sigma^m(x))=g\cdot\sigma^m(x)$. The paths $y$ satisfying $\sigma^n(y)=g\cdot y$ for some $n\ge1$ have been analyzed in~\cite{MR3581326}*{Section~14}. Let us recall a few notions and results from there.

By a \emph{$G$-circuit} one means a pair $(g,\gamma)$, where $g\in G$ and $\gamma \in E^{*}$ is a finite path of length $|\gamma|>0$ such that $s(\gamma)=g\cdot r(\gamma)$. The following lemma is a reformulation of Propositions~14.2 and~14.3 in~\cite{MR3581326}.

\begin{lemma}\label{lem:G-circuits}
Given a $G$-circuit $(g,\gamma)$ with $|\gamma|=n$, there exists a unique path $x\in E^\infty$ such that $\sigma^n(x)=g\cdot x$ and $x_1\dots x_n=\gamma$. Conversely, if we are given a path $x$ such that $\sigma^n(x)=g\cdot x$ for some $n\ge1$ and $g\in G$, then $x\in E^\infty$ and the pair $(g,x_1\dots x_n)$ is a $G$-circuit.
\end{lemma}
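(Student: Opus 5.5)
The plan is to build the path $x$ inductively, block by block, using the self-similarity relations. The converse direction is handled by a separate, short argument.

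\emph{Existence and uniqueness.} Put $g_0:=g$ and define finite paths $\gamma^{(j)}$ and group elements $g_j$ recursively by
$$\gamma^{(0)}:=\gamma,\qquad \gamma^{(j+1)}:=g_j\cdot\gamma^{(j)},\qquad g_{j+1}:=g_j|_{\gamma^{(j)}}.$$
Then set $x:=\gamma^{(0)}\gamma^{(1)}\gamma^{(2)}\cdots$. First I check that these blocks are composable. We have $s(\gamma^{(0)})=g\cdot r(\gamma)=r(\gamma^{(1)})$. Inductively, if $s(\gamma^{(j)})=g_j\cdot r(\gamma^{(j)})=r(\gamma^{(j+1)})$, then
$$s(\gamma^{(j+1)})=g_j|_{\gamma^{(j)}}\cdot s(\gamma^{(j)})=g_{j+1}\cdot r(\gamma^{(j+1)}),$$
where the first equality uses the identity $h\cdot s(\mu)=h|_\mu\cdot s(\mu)$ recorded after Definition~\ref{def:selfsim}. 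So $x\in E^\infty$ is well defined and $x_1\dots x_n=\gamma$.

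Next I verify that $\sigma^n(x)=g\cdot x$. By the extension of the action to $X_E$ (Lemma~\ref{lem:calcrules}(1), iterated, or directly from the formula $(g\cdot x)(m,m')=g|_{x(0,m)}\cdot x(m,m')$), the $(j+1)$-st block of $g\cdot x$ is
$$g|_{\gamma^{(0)}\cdots\gamma^{(j-1)}}\cdot\gamma^{(j)}.$$
By property (ii) of Definition~\ref{def:selfsim}, $g|_{\gamma^{(0)}\cdots\gamma^{(j-1)}}=g_j$. Hence this block equals $g_j\cdot\gamma^{(j)}=\gamma^{(j+1)}$, which is exactly the $(j+1)$-st block of $\sigma^n(x)$.

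For uniqueness, suppose $y$ is another such path. The relation $\sigma^n(y)=g\cdot y$ forces $y$ to be infinite, since lengths satisfy $|y|-n=|y|$. The same block computation then shows that the blocks of $y$ satisfy the same recursion with the same initial block $\gamma$. An induction on $j$ therefore gives $y=x$.

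\emph{Converse.} Suppose $\sigma^n(x)=g\cdot x$ with $n\ge1$. The action preserves length, so $|x|-n=|x|$. This forces $|x|=\infty$, and for a finite path it is simply impossible. Now set $\gamma:=x_1\dots x_n$. Then
$$s(\gamma)=r(\sigma^n(x))=r(g\cdot x)=g\cdot r(x)=g\cdot r(\gamma),$$
so $(g,\gamma)$ is a $G$-circuit.

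The only mildly delicate step is the bookkeeping for the restrictions $g|_{\gamma^{(0)}\cdots\gamma^{(j-1)}}$. This is handled by property (ii) of Definition~\ref{def:selfsim} together with the displayed composability identity.
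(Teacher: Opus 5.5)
Your proof is correct and takes the same route as the paper. The paper defers to Exel--Pardo (Propositions~14.2 and~14.3) but recalls exactly your recursive construction $\gamma^{(j+1)}=g_j\cdot\gamma^{(j)}$, $g_{j+1}=g_j|_{\gamma^{(j)}}$, $x=\gamma^{(0)}\gamma^{(1)}\cdots$ (indices shifted by one); your block-by-block check via property (ii), the uniqueness induction, and the length argument for the converse fill in the details it leaves to the reference, relying only implicitly on each $\gamma^{(j)}$ having length $n$, which holds because the action preserves degree.
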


Let us recall how the path $x$ associated with a $G$-circuit $(g,\gamma)$ is constructed. Define inductively a sequence of finite paths $\gamma^{(n)}\in E^{*}$ and a sequence of group elements $g_{n}\in G$ by setting $\gamma^{(1)}:=\gamma$, $g_{1}:=g$, $\gamma^{(n+1)}:= g_{n} \cdot \gamma^{(n)}$ and $g_{n+1}:=g_{n}|_{\gamma^{(n)}}$. Then $x=\gamma^{(1)}\gamma^{(2)}\dots$.

Since we assumed that the self-similar graph $(G,E)$ is countable and hence there can only be countably many finite paths and $G$-circuits, the above discussion leads to the following conclusion.

\begin{lemma} \label{lem:uncountable}
The set $\partial E\setminus A_{G,E}$ is countable and consists of infinite paths.
\end{lemma}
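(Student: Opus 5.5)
The plan is to reduce everything to the $G$-circuits of Lemma~\ref{lem:G-circuits}, of which there are only countably many. Take $x\in\partial E\setminus A_{G,E}$. By the definition \eqref{eq:AGE} of $A_{G,E}$, there are $g\in G$ and $n\ne m$ in $\Z_+$ with $\sigma^n(x)=g\cdot\sigma^m(x)$. As explained before the statement, after replacing $g$ by $g^{-1}$ if necessary we may assume $n>m$. Then $y:=\sigma^m(x)$ satisfies $\sigma^{n-m}(y)=g\cdot y$ with $n-m\ge1$.

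\emph{Infiniteness.} By the converse part of Lemma~\ref{lem:G-circuits}, $y\in E^\infty$. Since $x=x_1\dots x_m\,y$, it follows that $x\in E^\infty$ as well.

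\emph{Countability.} Lemma~\ref{lem:G-circuits} also says that $(g,y_1\dots y_{n-m})$ is a $G$-circuit, and that $y$ is the unique infinite path determined by this circuit. Moreover, $x=\alpha y$ with $\alpha=x_1\dots x_m\in E^*$. This gives a surjection onto $\partial E\setminus A_{G,E}$ from a subset of the set of pairs $(\alpha,(g,\gamma))$, where $\alpha\in E^*$ and $(g,\gamma)$ is a $G$-circuit, namely $(\alpha,(g,\gamma))\mapsto\alpha y_{(g,\gamma)}$ whenever $s(\alpha)=r(\gamma)$. Because $(G,E)$ is countable, both $E^*$ and $G\times E^*$ are countable, so the index set is countable. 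Hence $\partial E\setminus A_{G,E}$ is countable.

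The only step needing any care is the reduction to $n>m$ together with the fact that $x$ is obtained from $y$ by prepending a finite path. Both follow directly from the discussion preceding Lemma~\ref{lem:G-circuits}, so I do not expect a genuine obstacle.
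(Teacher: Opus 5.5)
Your proof is correct and is essentially the paper's argument. You reduce to $\sigma^{n-m}(\sigma^m(x))=g\cdot\sigma^m(x)$ with $n>m$, use Lemma~\ref{lem:G-circuits} to see that $\sigma^m(x)$ is an infinite path determined by a $G$-circuit, and conclude from the countability of $E^*$ and of the set of $G$-circuits, since $x$ is obtained from $\sigma^m(x)$ by prepending a finite path.
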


Given a maximal $G$-tail and an infinite path $x\in E^\infty$ with vertices in $M$, we say that $x$ \emph{has no entry in $M$} if $r(x_n)E^1M=\{x_n\}$ for all $n\ge1$. Note that if $x$ has no entry in $M$, then either all vertices $r(x_n)$ are different or $x=\mu\alpha\alpha\dots$ for some $\mu\in E^*$ and a simple cycle $\alpha=e_1\dots e_p$, i.e., $r(e_{1})=s(e_{p})$ and $r(e_{i})\neq r(e_{j})$ for $i\neq j$.

We are now ready to prove surjectivity of $\MT_G$. In fact, we will prove a more precise result, which will be important for a number of subsequent arguments.

\begin{prop} \label{prop:Gmaxt}
Assume $(G,E)$ is a countable self-similar directed graph and $M\subset E^0$ is a maximal $G$-tail. Then there exists $x\in\partial E$ such that $\MT_G(x)=M$. Moreover, such a path $x$ can be chosen so that either $x\in A_{G,E}$ or $x\in E^{\infty} \setminus A_{G,E}$ is a path with no entry in $M$.
\end{prop}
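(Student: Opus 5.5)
The plan is to construct the path $x$ directly from an enumeration of $M$, using Lemma~\ref{lem:Gtails}. That lemma says $\MT_G(x)$ is the smallest maximal $G$-tail containing the vertices of $x$. Concretely, $\MT_G(x)$ is the set of vertices $v$ with $v\ge g\cdot w$ for some $g\in G$ and some vertex $w$ of $x$. Hence it suffices to produce $x\in\partial E$ such that:
\begin{itemize}
\item all vertices of $x$ lie in $M$, which by property (i) gives $\MT_G(x)\subset M$;
\item every $v\in M$ satisfies $v\ge g\cdot w$ for some vertex $w$ of $x$ and some $g\in G$.
\end{itemize}

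Since $E$ is countable, enumerate $M=\{v_1,v_2,\dots\}$, listing each vertex infinitely often. Start at any $w_0\in M$ and build finite paths $\alpha_1,\alpha_2,\dots$ with vertices in $M$ inductively. Let $w_n$ be the current source. Apply (iii) to $w_n$ and $v_{n+1}$: this gives $u\in M$ and $g\in G$ with $w_n\ge u$ and $v_{n+1}\ge g\cdot u$. Append a path from $w_n$ to $u$. Any path from a vertex of $M$ to $u\in M$ has all its vertices in $M$ by (i).

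We want every appended segment to have positive length, so that $x$ becomes infinite.
\begin{itemize}
\item If $w_n$ is regular, we can always first append an edge in $w_nE^1M$ using (ii), and only then apply (iii).
\item If $w_n$ is singular, one of two things happens. Either every $v\in M$ dominates some $g\cdot w_n$, and then we stop: the finite path ending at the singular vertex $w_n$ lies in $\partial E$ and has $\MT_G=M$. (For a source this case is forced, since $w_n\ge u$ implies $u=w_n$.) Or some $v_j$ does not dominate any $g\cdot w_n$. In that case (iii), applied to $w_n$ and $v_j$, gives $u\ne w_n$, hence a positive-length path inside $M$, and we can continue.
\end{itemize}
If the process never stops, we obtain $x\in E^\infty$ with all vertices in $M$ and with every $v_j$ dominating a $G$-translate of a vertex of $x$. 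Thus $\MT_G(x)=M$.

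For the refinement, suppose the path produced is not in $A_{G,E}$. I would then add branching to the construction. Whenever the current vertex has at least two edges into $M$, the construction may choose between two distinct first edges, and afterwards continues exactly as before. Each branch still runs through the enumeration, so every resulting path has $\MT_G=M$. There are two possibilities.
\begin{itemize}
\item Branching points occur infinitely often along every branch. Then we obtain uncountably many distinct paths in $\partial E$ with $\MT_G=M$. By Lemma~\ref{lem:uncountable} only countably many paths lie outside $A_{G,E}$, so one of these paths lies in $A_{G,E}$.
\item Some branch eventually has no branching points. Then from some $N$ on, each vertex $r(x_n)$ has exactly one edge into $M$, namely $x_n$. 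So $\sigma^N(x)$ has no entry in $M$. Since $\sigma^N(x)\in[x]_\G$, we also have $\MT_G(\sigma^N(x))=M$. If this path lies in $A_{G,E}$ we are done in the first alternative; otherwise it lies in $E^\infty\setminus A_{G,E}$ and has no entry in $M$.
\end{itemize}

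The main obstacle I expect is making the branching argument rigorous. One must ensure that different choices really give distinct infinite paths (the branches diverge at a definite edge). One must also ensure that branching does not interfere with the singular-vertex stopping rule: if a branch stops at a singular vertex, it already gives a finite boundary path, which lies in $A_{G,E}$ by Lemma~\ref{lem:uncountable}, so that case is harmless. Finally, the dichotomy between infinitely many branching points and eventual uniqueness must be organised as a tree argument, for instance König-style, selecting a branch along which branching fails from some point on if the tree is not perfect.
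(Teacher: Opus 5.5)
Your existence argument is correct, and it is a more uniform route than the paper's, which splits on whether $M$ has a minimal element and in that case analyses $vE^*v$. You use (iii) greedily, force positive length by (ii) at regular vertices and by a $v_j$ dominating no $g\cdot w_n$ at singular ones, and stop at a singular vertex that every vertex of $M$ dominates up to $G$.

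The gap is in the refinement: your branching rule cannot deliver both halves of your dichotomy at once.

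\emph{Branching only at stage starts.} As written, you branch only at the current source $w_n$ and then complete the stage by (iii). Every branch then runs through the enumeration. But in the second alternative you only learn that the stage-start vertices eventually have a unique edge into $M$. The interior vertices of the segments from $w_n$ to $u$ supplied by (iii) were never examined, so $r(x_n)E^1M=\{x_n\}$ for large $n$ does not follow.

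\emph{Branching at every vertex.} If instead you branch at every vertex with two edges into $M$ and let the deviating branch re-plan, the no-entry conclusion becomes valid. However, ``each branch still runs through the enumeration'' now fails, because a branch may deviate forever without finishing a stage. For example, let $a$ carry two loops and an edge $f$ with $r(f)=a$, $s(f)=b$, where $b$ carries two loops; take $G$ trivial and $M=\{a,b\}$. The rule permits the branch that, whenever it sits at $a$, takes a loop (as the deviating edge if necessary). This path never reaches $b$, so its maximal tail is $\{a\}\ne M$. Thus the uncountable family you build need not consist of paths with $\MT_G=M$, and Lemma~\ref{lem:uncountable} applied to it proves nothing. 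The obstacles you listed (distinctness, the stopping rule, K\"onig) are not the real issue.

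\emph{The missing bookkeeping.} At each stage, fix the planned positive-length segment $\beta$ from the current source to a vertex $u$ with $v_{n+1}\ge g\cdot u$.
\begin{itemize}
\item If some vertex $r(\beta_i)$ has at least two edges into $M$, split only at the first such $i$.
\item The second child takes another edge $e'\in r(\beta_i)E^1M$ and completes the stage by a fresh application of (iii) from $s(e')$, with no further splitting in that stage.
\item If some node stops at a singular vertex, you are done anyway.
\end{itemize}
With this rule:
\begin{itemize}
\item every infinite branch completes all stages, so it has $\MT_G=M$;
\item distinct branches diverge at a definite edge;
\item a branch with finitely many splittings follows its unsplit planned segments from some stage on, so all its later vertices have a unique edge into $M$.
\end{itemize}
No K\"onig argument is needed: either some branch splits finitely often, or every node has a splitting descendant and you get a Cantor set of branches.

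\emph{Comparison with the paper.} The paper avoids this issue differently. When $M$ has no minimal element, it first fixes a chain $v_1\ge v_2\ge\cdots$ of pairwise inequivalent vertices with $u_n\ge g\cdot v_n$. Progress through the enumeration then just means passing through $G\cdot v_j$ for ever larger $j$. The dichotomy is whether some $v_n$ has a unique path of each length into $M$, or every $v_n$ has branching below it. The minimal case is handled via the structure of $vE^*v$. This case analysis also yields Corollary~\ref{cor:Gmaxt}, which is used later and which your uniform construction does not provide.
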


\begin{proof}
We say that an element $v\in M$ is \emph{minimal} if for any $w\in M$ with $w \leq v$ we have $w \sim v$. We first consider the case where there exists a minimal element $v\in M$. It follows then from properties (iii) and (i) of maximal $G$-tails that $M$ consists of all vertices $w$ such that $w\ge g\cdot v$ for some $g\in G$. In particular, any path $y\in\partial E$ with vertices in $M$ and $v\in\MT_G(y)$ satisfies $\MT_G(y)=M$. We now consider several subcases.

If $vE^{*}v=\{v\}$, then $v$ must be a singular vertex, because if it was regular, the second condition in the definition of a maximal $G$-tail would give us an edge $e$ with $s(e) \in M$ and $v \geq s(e)$, hence $s(e)\sim v$ by minimality of $v$ and we would be able to construct an element of $vE^{*}v$ different from $v$. So if $vE^{*}v=\{v\}$, we set $x:=v$ and remark that $x\in A_{G,E}$.

If $vE^{*}v$ consists of powers of one simple cycle $\alpha$, we set $x:=\alpha\alpha\dots$. The path $x$ has no entry in $M$, since if $r(x_n)E^1M$ contained an edge $e\ne x_n$, then $s(e)\sim v$ by minimality of $v$ and we would be able to construct a path in $vE^*v$ different from a power of $\alpha$.

If $vE^{*}v$ contains a simple cycle $\alpha$ and another, not necessarily simple, cycle $\beta$ that is not a power of $\alpha$, then the collection of all infinite paths obtained by concatenating the cycles $\alpha^{|\beta|}$ and $\beta^{|\alpha|}$ is uncountable, hence it contains an element $x \in A_{G, E}$ by Lemma \ref{lem:uncountable}.

\smallskip

Next we consider the case where there are no minimal elements in $M$. Enumerate the elements of~$M$ as $u_{1}, u_{2}, \dots$. We can inductively construct pairwise inequivalent vertices $v_{1} \geq v_{2}\geq \dots $ in~$M$ such that for each $n$ there is an element $g\in G$ (depending on $n$) satisfying $u_{n} \geq g\cdot v_{n}$. Namely, for $n=1$ we take $v_1:=u_1$. Once $v_1,\dots,v_n$ are constructed, take any vertex $v$ in~$M$ such that $v\le v_n$ and $v\not\sim v_n$. Then we can find $v_{n+1}\in M$ and $g\in G$ such that $v\ge v_{n+1}$ and $u_{n+1}\ge g\cdot v_{n+1}$.

By concatenating paths from $v_{n+1}$ to $v_n$ we can construct an infinite path $y$ passing through all vertices $v_n$. In particular, we have $|v_{n}E^{m}M|\ge1$ for all $n,m\geq 1$. 
Consider two subcases.

\smallskip

\underline{Case 1}. \emph{There exists $n\ge 1$ such that $|v_{n}E^{m}M|=1$ for all $m\geq 1$.}

\smallskip

In this case there is a unique path $x\in E^\infty$ with vertices in $M$ such that $r(x)=v_n$. Since it must pass through all vertices $v_k$ with $k\ge n$, we have $M=\MT_G(x)$. It is also clear that $x$ has no entry in $M$.

\smallskip

\underline{Case 2}. \emph{For each $n\ge1$, we have $|v_{n}E^{m}M|>1$ for some $m\geq 1$ (depending on $n$).}

\smallskip

In this case consider the set $\Omega$ of all infinite paths $y$ with vertices in $M$ satisfying the following property: for every $n\ge1$, there exist $j\ge n$ and $k\ge1$ such $r(y_k)\in G\cdot v_j$. For any such path~$y$ we have $v_n\in\MT_G(y)$ for all $n$, hence $\MT_G(y)=M$. We claim that the set $\Omega$ is uncountable, hence we can find $x\in\Omega\cap A_{G,E}$ by Lemma~\ref{lem:uncountable}.

In order to prove the claim we will show how to embed $\{0,1\}^\N$ into $\Omega$. Observe first that for any $v\in M$ and $n\ge1$ there exist $j\ge n$ and different paths $y,z\in vE^mM$ for some $m\ge1$ such that $r(y_k),r(z_k)\in G\cdot v_j$ for some $k\le m$. Indeed, we can find $j\ge n$ and $g\in G$ such that $v\ge g\cdot v_j$. Take any $\lambda\in v E^*(g\cdot v_j)$. Since $|(g\cdot v_{j})E^{l}M|=|v_{j}E^{l}M|>1$ for some $l$, we can find different paths $\mu,\nu\in (g\cdot v_{j})E^{l}M$. Then $y:=\lambda\mu$ and $z:=\lambda\nu$ have the required properties.

We now start with any vertex $v\in M$ and apply the observation to $n=1$ to get two different paths $y,z\in vE^mM$. Then we apply the observation to $n=2$ and the vertices $s(y)$ and $s(z)$ to get different paths $y',z'\in s(y)E^pM$ and different paths $y'',z''\in s(z)E^qM$. Then we apply the observation to $n=3$ and the sources of four different paths $yy'$, $yz'$, $zy''$, $zz''$. By continuing this procedure we can construct a family of different paths in $\Omega$ indexed by the elements of $\{0,1\}^\N$.  This finishes the proof of the proposition.
\end{proof}

For later use it is convenient to record some cases considered in the above proof as separate statements.

\begin{cor}\label{cor:Gmaxt}
For any $M\in\M(G,E)$, the following properties hold:
\begin{enumerate}
  \item if there is a minimal element $v\in M$ such that $vE^*v=\{v\}$, then $v\in E^\sing$ and $M=\MT_G(v)$;
  \item if there is a minimal element $v\in M$ such that the set $v E^*v$ is neither $\{v\}$ nor $\{\alpha^n\}_{n\ge0}$ for a simple cycle $\alpha$, then $M=\MT_G(x)$ for some $x\in E^\infty\cap A_{G,E}$;
  \item if there are no minimal elements in $M$, then $M=\MT_G(x)$ for some $x\in E^\infty$ such that either $x\in A_{G,E}$ or $x\notin A_{G,E}$ is a path with no entry in $M$.
\end{enumerate}
\end{cor}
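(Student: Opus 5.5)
The corollary only records cases already treated in the proof of Proposition~\ref{prop:Gmaxt}, so the plan is to read each item off the corresponding case there, checking the claims not spelled out in that proof.

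For (1), let $v\in M$ be minimal with $vE^*v=\{v\}$. The argument in the first subcase of the proof shows that $v$ must be singular. Suppose instead that $v$ were regular. By property (ii) of a maximal $G$-tail there is an edge $e\in vE^1$ with $s(e)\in M$. Minimality of $v$ then gives $s(e)\sim v$, so there is a path $\lambda$ from $s(e)$ back to $v$. The cycle $e\lambda\in vE^*v$ is different from $v$, which contradicts $vE^*v=\{v\}$. Hence $v\in E^\sing$ and $v\in\partial E$.

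It remains to see that $\MT_G(v)=M$. By minimality together with (iii) and (i), $M$ consists exactly of the vertices $w$ with $w\ge g\cdot v$ for some $g\in G$. On the other hand, Lemma~\ref{lem:Gtails} says that $\MT_G(v)$ is the smallest maximal $G$-tail containing $v$. By Lemma~\ref{lem:EP-orbit} this is the set of vertices $w$ with $w\ge g\cdot v$, which is precisely $M$.

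For (2), take $v$ minimal. Since $vE^*v\ne\{v\}$, it contains a cycle, and so it contains a simple cycle $\alpha$. Since $vE^*v$ is not $\{\alpha^n\}_{n\ge0}$, it also contains a cycle $\beta$ that is not a power of $\alpha$. The third subcase of the proof then applies. Concatenations of $\alpha^{|\beta|}$ and $\beta^{|\alpha|}$ give uncountably many distinct infinite paths. These paths have all their vertices in $M$ and pass through $v$, so each of them has $\MT_G$ equal to $M$. By Lemma~\ref{lem:uncountable}, one of them lies in $A_{G,E}$.

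For (3), the case of no minimal elements, we invoke Case~1 and Case~2 of the proof, which produce infinite paths $x$ with $\MT_G(x)=M$. Case~1 gives a path with no entry in $M$. Case~2 gives a path in $A_{G,E}$. In either case $x\in E^\infty$, which is the claimed alternative.

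The only point requiring some care is the distinctness of the concatenations in (2). Both blocks $\alpha^{|\beta|}$ and $\beta^{|\alpha|}$ have length $|\alpha||\beta|$, and they differ because $\beta$ is not a power of $\alpha$. Different choice sequences therefore yield different infinite paths.
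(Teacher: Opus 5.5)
Your proposal is correct and is essentially the paper's argument: the corollary just records cases from the proof of Proposition~\ref{prop:Gmaxt}, and you read each item off the corresponding case. One small point: your claim that $\alpha^{|\beta|}\ne\beta^{|\alpha|}$ needs $\alpha$ to be simple, since for $\alpha=\gamma^2$ and $\beta=\gamma^3$ the two blocks coincide. Simplicity does hold here: along $\alpha\alpha\cdots$ the vertex $v$ occurs only at positions that are multiples of $|\alpha|$, so if the blocks agreed, $\beta$ would have to be a power of $\alpha$.
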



\begin{remark}\label{rem:G-tails}
If $M=\MT_G(x)$ and we consider the maximal tail $\MT(x)\subset E^0$ of $x$ (that is, the maximal $H$-tail $\MT_H(x)$ for the trivial group $H$), then since $M$ consists of all vertices~$v$ such that $v\ge g\cdot w$ for some $g\in G$ and a vertex $w$ of $x$, while $\MT(x)$ consists of all vertices~$v$ such that $v\ge w$ for a vertex $w$ of $x$, we can conclude that $M=G\cdot\MT(x)$. Therefore the maximal $G$-tails are simply the $G$-saturations of the maximal tails. \ee
\end{remark}

Our next goal is to understand when we can choose $x\notin A_{G,E}$ with no entry in $M$. We need some preparation.

\begin{lemma}\label{lem:no-entry-paths}
Assume $M$ is a maximal $G$-tail and $x\in E^\infty$ is a path with vertices in $M$ and no entry in $M$. Then $M=\MT_G(x)$ and the orbit $[x]_\G$ is discrete.
\end{lemma}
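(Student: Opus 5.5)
First show $M=\MT_G(x)$. By Lemma~\ref{lem:Gtails}, $\MT_G(x)$ is the smallest maximal $G$-tail containing the vertices of $x$, so $\MT_G(x)\subset M$. For the reverse inclusion, take $w\in M$ and apply property (iii) of Definition~\ref{def:max-G-tail} to $w$ and $r(x)$. This gives $u\in M$ and $g\in G$ with $w\ge u$ and $r(x)\ge g\cdot u$, hence $g^{-1}\cdot r(x)\ge u$. Since $M$ is $G$-invariant and $x$ has no entry in $M$, the path $g^{-1}\cdot x$ also has no entry in $M$. Indeed, $g^{-1}\cdot x$ has vertices in $M$, and by self-similarity, edges from its vertices into $M$ correspond bijectively (via the relevant restrictions of $g^{-1}$, which preserve $M$) to edges from the vertices of $x$ into $M$. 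Then every path starting at $g^{-1}\cdot r(x)$ and ending at a vertex of $M$ must follow $g^{-1}\cdot x$, because each vertex along the way lies in $M$ by property (i). So $u$ is a vertex of $g^{-1}\cdot x$, and therefore $w\ge g^{-1}\cdot(\text{vertex of }x)$. This gives $w\in\MT_G(x)$.

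Next show the orbit is discrete: for $y\in[x]_\G$ we want an open set meeting $[x]_\G$ only in $y$. By Lemma~\ref{lem:EP-orbit} we can write $y=\mu(g\cdot\sigma^n(x))$. The tail $z:=g\cdot\sigma^n(x)$ is again an infinite path with vertices in $M$ and no entry in $M$, by the observation above. Take the cylinder $Z(\mu z_1)$ (or $Z(\mu)$ if one prefers). Any $y'\in[x]_\G\cap Z(\mu)$ has the form $\mu y''$ with $y''\in[x]_\G$, $r(y'')=r(z)$, and all vertices of $y''$ in $M=\MT_G(x)$, since $\MT_G$ contains the range of every orbit point and every shift of it.

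Now $y''$ starts at $r(z)$ and has all vertices in $M$, so by the no-entry property it must coincide edge by edge with $z$. The one gap is that $y''$ could be a finite path ending at a singular vertex. I rule this out because $y''\in[x]_\G$ is infinite, as orbits of infinite paths consist of infinite paths by Lemma~\ref{lem:EP-orbit}. Hence $y''=z$ and $[x]_\G\cap Z(\mu)=\{y\}$, so $y$ is isolated in its orbit.

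The main obstacle is the step that $g\cdot x$ inherits the no-entry property. This requires checking that $G$ acts bijectively on $vE^1$ with $g\cdot(vE^1M)=(g\cdot v)E^1M$, which holds because the action preserves $r$ and $s$ and $M$ is $G$-invariant. I would verify this directly from the definition of the action on edges.
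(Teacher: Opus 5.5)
Your proof is correct and follows essentially the same route as the paper: property (iii) combined with the no-entry condition forces the intermediate vertex onto (a translate of) $x$, and the no-entry condition makes a cylinder set meet the orbit in a single point. The differences are cosmetic. The paper applies (iii) with the two vertices in the other order, obtaining $r(x_1)\ge u$ and $w\ge g\cdot u$ directly, so it never needs your check that $g^{-1}\cdot x$ inherits the no-entry property. It also verifies isolation only at $x$ via $Z(r(x_1))$, whereas you check every orbit point explicitly.
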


\bp
Since $\MT_G(x)$ is the smallest maximal $G$-tail containing all vertices of $x$, we have $\MT_G(x)\subset M$. To prove the equality, take any $v\in M$. Then there exist $g\in G$ and $u\in M$ such that $v\ge g\cdot u$ and $r(x_1)\ge u$. Take any path $\mu$ with $s(\mu)=u$ and $r(\mu)=r(x_1)$. Then all vertices of $\mu$ lie in~$M$ and by the assumption that $x$ has no entry in $M$ we conclude that $\mu=x_1\dots x_{|\mu|}$, so that $u=s(\mu)$ is a vertex of~$x$. Hence $v\in\MT_G(x)$. For similar reasons $x$ is the only infinite path with vertices in $M$ that lies in $Z(r(x_1))$, hence $x$ is an isolated point of $[x]_\G$.
\ep

\begin{cor}\label{cor:unique-orbit}
If $y\in E^\infty$ is another path with the same properties as $x$, then $y\in [x]_\G$.
\end{cor}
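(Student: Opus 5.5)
We have $x,y\in E^\infty$, both with vertices in $M$ and no entry in $M$, and we want a path $\mu$, an element $g\in G$ and $n,m\ge0$ with $y=\mu(g\cdot\sigma^n(x))$, which places $y$ in $[x]_\G$ by Lemma~\ref{lem:EP-orbit}. By Lemma~\ref{lem:no-entry-paths} we have $\MT_G(x)=M=\MT_G(y)$. The plan is to find a vertex of $y$ that lies in the $G$-orbit of a vertex of $x$, and then use the no-entry property to conclude that the tails agree.

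First I locate such a pair of vertices. Since $r(y_1)\in M=\MT_G(x)$, the description of $\MT_G(x)$ in the proof of Lemma~\ref{lem:Gtails} gives $r(y_1)\ge g\cdot w$ for some $g\in G$ and some vertex $w=r(x_{n+1})$ of $x$. Choose a path $\lambda\in r(y_1)E^*(g\cdot w)$. The set $M$ is $G$-invariant and satisfies property~(i) of Definition~\ref{def:max-G-tail}, so every vertex of $\lambda$ lies in $M$. Because $y$ has no entry in $M$, the same argument as in Lemma~\ref{lem:no-entry-paths} forces $\lambda=y_1\dots y_{|\lambda|}$. Hence $s(y_{|\lambda|})=g\cdot r(x_{n+1})$.

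Next I compare the tails. Put $x':=g\cdot\sigma^n(x)$. This is an infinite path with $r(x')=g\cdot r(x_{n+1})=r(\sigma^{|\lambda|}(y))$. The key claim is that $x'$ has vertices in $M$ and no entry in $M$. Granting the claim, both $x'$ and $\sigma^{|\lambda|}(y)$ are infinite paths starting at the same vertex, with all vertices in $M$. The no-entry property of $\sigma^{|\lambda|}(y)$ then forces them to coincide edge by edge: at each vertex, the edge taken by $x'$ has source in $M$, so it must be the unique edge of $\sigma^{|\lambda|}(y)$ there. Consequently $y=\lambda(g\cdot\sigma^n(x))\in[x]_\G$.

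The main obstacle is the claim, i.e.\ showing that no-entry is preserved by the self-similar action. By Lemma~\ref{lem:calcrules}(1), the $k$-th edge of $x'$ is $h\cdot x_{n+k}$ with $h:=g|_{x_{n+1}\dots x_{n+k-1}}$. Its range is $h\cdot r(x_{n+k})$, which equals $g\cdot r(x_{n+k})$ by the remark after Definition~\ref{def:selfsim}. The map $e\mapsto h\cdot e$ is a bijection of $E^1$ commuting with $r$ and $s$. Since $M$ is $G$-invariant, this map sends $r(x_{n+k})E^1M$ bijectively onto $(h\cdot r(x_{n+k}))E^1M$. The first set is the singleton $\{x_{n+k}\}$, so the second is $\{h\cdot x_{n+k}\}$, which is exactly the no-entry condition for $x'$ at its $k$-th vertex. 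The vertices of $x'$ are $G$-translates of vertices of $x$, so they lie in $M$, which completes the claim.
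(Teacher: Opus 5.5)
Your proof is correct, but it takes a different route from the paper's.

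The paper's argument is two lines. Since $\MT_G(x)=M=\MT_G(y)$ and both paths are infinite, Lemma~\ref{lem:Gtails} gives $\overline{[y]}_\G=\overline{[x]}_\G$. Discreteness of $[x]_\G$ from Lemma~\ref{lem:no-entry-paths} then yields $y\in[x]_\G$.

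You instead argue combinatorially and exhibit an explicit representation $y=\lambda(g\cdot\sigma^n(x))$, never passing through orbit closures. The paper's discreteness step actually rests on the fact, shown in the proof of Lemma~\ref{lem:no-entry-paths}, that $x$ is the only infinite path in $Z(r(x_1))$ with vertices in $M$. That is the same rigidity you exploit, applied to $y$.

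Your route needs less than you think. The step you single out as the main obstacle, that $x'=g\cdot\sigma^n(x)$ has no entry in $M$, is correct but superfluous. The edge-by-edge comparison uses only two facts: the vertices of $x'$ lie in $M$, which is immediate from $G$-invariance, and $y$ has no entry in $M$. In fact $\lambda x'$ is an infinite path starting at $r(y_1)$ with all vertices in $M$, so the no-entry property of $y$ alone forces $\lambda x'=y$. This also absorbs your separate verification that $\lambda=y_1\dots y_{|\lambda|}$.

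The paper's version is shorter because it reuses Lemma~\ref{lem:Gtails}. Yours is self-contained and explicit.
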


\bp
Since $\MT_G(y)=M=\MT_G(x)$, by Lemma~\ref{lem:Gtails} we conclude that $\overline{[y]}_\G=\overline{[x]}_\G$. Since $[x]_\G$ is discrete, it follows that $y\in[x]_\G$.
\ep

We denote by $\M_\gamma(G,E)$ the set of maximal $G$-tails $M$ such that there is no path $x\in E^\infty\setminus A_{G,E}$ with vertices in $M$ and no entry in $M$. We let
\begin{equation}\label{eq:LGE}
\LL(G,E):=\M(G,E)\setminus\M_\gamma(G,E).
\end{equation}
For every $L\in \LL(G,E)$ there is therefore a path $x\in E^\infty\setminus A_{G,E}$ with vertices in $L$ and no entry in $L$. By Lemma~\ref{lem:no-entry-paths} and Corollary~\ref{cor:unique-orbit}, then $\MT_G(x)=L$ and the $\G$-orbit of $x$ is uniquely determined by~$L$. Therefore we get a well-defined number $n_L\ge1$ such that
$\Phi(\Gxx)=n_L\Z$ for any such path $x$. For convenience, from now on we fix such a path~$x$ and denote it by~$x_L$.

\begin{defn}\label{def:LGE}
For $L\in \LL(G,E)$, we call the number $n_L\ge1$ defined by $\Phi(\G^{x_L}_{x_L})=n_L\Z$ the \emph{$G$-period} of $L$.
\end{defn}

Since elements of $E^\infty\setminus A_{G,E}$ arise from $G$-circuits, one can determine whether a maximal $G$-tail lies in $\LL(G,E)$ in terms of them, without using infinite paths. Namely, let us say that a $G$-circuit $(g,\gamma)$ with vertices in a maximal $G$-tail $M$ \emph{has no entry in $M$} if $|vE^1M|=1$ for every vertex $v$ of $\gamma$. In other words, letting $n:=|\gamma|$, we require $r(\gamma_k)E^1M=\{\gamma_k\}$ for $1\le k\le n$ and $s(\gamma_n)E^1M=\{g\cdot\gamma_1\}$. We call $|\gamma|$ the length of the $G$-circuit $(g,\gamma)$.

\begin{lemma}[cf.~\cite{MR3581326}*{Proposition~14.5}]\label{lem:circuits-wo-entry}
A maximal $G$-tail $M$ lies in $\LL(G,E)$ if and only if there is a $G$-circuit $(g,\gamma)$ with vertices in $M$ and no entry in $M$. Moreover, if $M\in\LL(G,E)$, then $n_M$ is the length of a shortest such $G$-circuit, and the length of any other such $G$-circuit is a multiple of $n_M$.
\end{lemma}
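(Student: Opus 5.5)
The plan is to translate in both directions between $G$-circuits without entry in $M$ and infinite paths in $E^\infty\setminus A_{G,E}$ with vertices in $M$ and no entry in $M$, using Lemma~\ref{lem:G-circuits}.

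\emph{From a $G$-circuit to a path.} Let $(g,\gamma)$ be a $G$-circuit with vertices in $M$ and no entry in $M$, and set $n:=|\gamma|$. Let $x=\gamma^{(1)}\gamma^{(2)}\dots$ be the path from Lemma~\ref{lem:G-circuits}, so that $\sigma^n(x)=g\cdot x$ and $x\notin A_{G,E}$. Each block $\gamma^{(j+1)}=g_j\cdot\gamma^{(j)}$ is the image of $\gamma$ under a group element. Since $M$ is $G$-invariant, the vertices of every block lie in $M$. Moreover, $g_j$ maps the sets $vE^1M$ bijectively onto $(g_j\cdot v)E^1M$, because $g\cdot e$ has source $g|_e\cdot s(e)=g\cdot s(e)$ and $M$ is $G$-invariant. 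Hence $|wE^1M|=1$ at every vertex $w$ of every block. So $x$ has no entry in $M$, and therefore $M\in\LL(G,E)$.

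\emph{From a path to a $G$-circuit.} Let $x\in E^\infty\setminus A_{G,E}$ have vertices in $M$ and no entry in $M$. Then $\sigma^p(x)=h\cdot\sigma^q(x)$ for some $p>q$ and $h\in G$. Put $y:=\sigma^q(x)$, so that $\sigma^{p-q}(y)=h\cdot y$. By Lemma~\ref{lem:G-circuits}, $(h,y_1\dots y_{p-q})$ is a $G$-circuit. Its vertices are vertices of $x$, and the condition at $s(\gamma_n)$ concerns $r(y_{p-q+1})$, which is again a vertex of $x$. Hence this $G$-circuit has no entry in $M$.

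\emph{The period statement.} By Corollary~\ref{cor:unique-orbit}, all paths of the above kind lie in the single orbit $[x_M]_\G$, so they have the same group $\Phi(\G^x_x)=n_M\Z$. Given a $G$-circuit without entry of length $n$, the path $x$ built from it satisfies $\sigma^n(x)=g\cdot x$. This gives an element $(x,\ldots,n,x)$ of the isotropy group $\G^x_x$ with $\Phi=n$ (up to sign convention), so $n\in n_M\Z$, that is, $n_M\mid n$.

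Conversely, $n_M$ is realized as the length of such a circuit. Pick an isotropy element of $\G^{x_M}_{x_M}$ with degree $n_M$. By Remark~\ref{rem:isotropy} it yields $\sigma^{m+n_M}(x_M)=h\cdot\sigma^m(x_M)$ for some $m$ and $h$; after applying $h^{-1}$ if necessary, the shift exponent on the left exceeds that on the right by exactly $n_M$. The preceding paragraph, applied with $p-q=n_M$, then produces a $G$-circuit without entry of length $n_M$. Combined with divisibility, $n_M$ is the shortest such length, and every other length is a multiple of it.

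The main obstacle is the entry bookkeeping in the first step: checking that no-entry transfers along the blocks $g_j\cdot\gamma^{(j)}$, which requires the compatibility of $G$ with $E^1M$ noted above. The remaining steps are formal consequences of Corollary~\ref{cor:unique-orbit} and Remark~\ref{rem:isotropy}.
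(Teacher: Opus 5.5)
Your proposal is correct and follows essentially the same route as the paper. You build the infinite path from the circuit via the inductive construction after Lemma~\ref{lem:G-circuits} to get $M\in\LL(G,E)$, and $n\in\Phi(\G^x_x)=n_M\Z$ via Corollary~\ref{cor:unique-orbit}; conversely, you read off a circuit of length $n_M$ without entry from $\sigma^{m+n_M}(x_M)=g\cdot\sigma^m(x_M)$ by passing to $\sigma^m(x_M)$. Beyond the paper, you spell out why the no-entry condition transfers along the blocks $g_j\cdot\gamma^{(j)}$, which the paper leaves as easy to see, and you treat an arbitrary non-$G$-aperiodic path without entry before specializing to $x_M$.
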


\bp
If we have a $G$-circuit $(g,\gamma)$ with vertices in $M$ and of length $n$, consider the corresponding path $x\in E^\infty$ such that $\sigma^n(x)=g\cdot x$ and $x_1\dots x_n=\gamma$. The inductive procedure described after Lemma~\ref{lem:G-circuits} implies that all vertices of $x$ lie in $M$. It is also easy to see from this procedure that $(g,\gamma)$ has no entry in $M$ if and only if $x$ has no entry in $M$. Therefore if~$(g,\gamma)$ has no entry in~$M$, then $M\in\LL(G,E)$, $x\in[x_M]_\G$ and $n\in\Phi(\Gxx)=n_M\Z$.

Conversely, assume $M\in\LL(G,E)$. Since $\Phi(\G^{x_M}_{x_M})=n_M\Z$, we can find $m\ge0$ and $g\in G$ such that $\sigma^{m+n_M}(x_M)=g\cdot\sigma^m(x_M)$. Then the path $x:=\sigma^m(x_M)$ still has no entry in $M$ and satisfies $\sigma^{n_M}(x)=g\cdot x$. It follows that by letting $\gamma:=x_1\dots x_{n_M}$ we get a $G$-circuit $(g,\gamma)$ of length~$n_M$ and with no entry in $M$. This proves the lemma.
\ep

\begin{remark}\label{rem:minimal-period}
If $M$ lies in $\LL(G,E)$, then for any $G$-circuit $(g,\gamma)$ in $M$ without an entry the path~$\gamma$ is composed of $G$-circuits of length $n_M$. In order to see this, consider the path $x\in E^\infty$ corresponding to $(g,\gamma)$, so that $\sigma^{|\gamma|}(x)=g\cdot x$. To prove our claim it suffices to show that $\sigma^{n_M}(x)=g'\cdot x$ for some $g'\in G$, as then $\sigma^{n_M+k}(x)=h_k\cdot \sigma^k(x)$ for all $k\ge0$ and some $h_k\in G$ and therefore any part of $x$ of length $n_M$ arises from a $G$-circuit of length $n_M$. Since~$x$ lies on the $\G$-orbit of a path defined by a $G$-circuit of length $n_M$, we can find $k\ge1$ large enough such that $\sigma^{k|\gamma|+n_M}(x)=h\cdot \sigma^{k|\gamma|}(x)$ for some $h\in G$. Since $\sigma^{|\gamma|}(x)=g\cdot x$, we have $\sigma^{k|\gamma|}(x)=g_k\cdot x$ for some $g_k\in G$. It follows that $\sigma^{n_M}(g_k\cdot x)=(hg_k)\cdot x$ and hence $\sigma^{n_M}(x)=g'\cdot x$ for $g':=(g_k|_{x_1\dots x_{n_M}})^{-1}hg_k$.\ee
\end{remark}

\begin{remark}
The notation $\LL(G,E)$ and the name $G$-period are motivated by the fact that when~$G$ is trivial, there is a one-to-one correspondence between the elements of $\LL(G,E)$ and what we called the \emph{primitive loops} in~\cite{CN3}, and then $n_L$ is the length of the corresponding loop. The above discussion implies that as a $G$-equivariant analogue of a primitive a loop one can take an equivalence class of $G$-circuits that have no entry in the maximal $G$-tails they generate, where two $G$-circuits are called equivalent if they generate the same maximal $G$-tail. \ee
\end{remark}

While every element of $\LL(G,E)$ can be realized as $\MT_G(x)$ for an infinite path $x$, this is not in general true for elements of $\M_\gamma(G,E)$. At this point we can at least say the following.

\begin{lemma} \label{lem:Mgamma}
We have $\MT_G(E^{\infty}\cap A_{G,E}) \subset \M_{\gamma}(G,E) \subset \MT_G(A_{G,E})$.
\end{lemma}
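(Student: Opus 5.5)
The two inclusions will be proved separately.

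\emph{First inclusion.} Take $x\in E^\infty\cap A_{G,E}$, put $M:=\MT_G(x)$, and suppose $M\notin\M_\gamma(G,E)$. Then there is a path $y\in E^\infty\setminus A_{G,E}$ with vertices in $M$ and no entry in $M$. By Lemma~\ref{lem:no-entry-paths} we have $\MT_G(y)=M$, and the orbit $[y]_\G$ is discrete. Since $x$ and $y$ are both infinite paths with $\MT_G(x)=\MT_G(y)$, Lemma~\ref{lem:Gtails} gives $\overline{[x]}_\G=\overline{[y]}_\G$. In particular $y\in\overline{[x]}_\G$.

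Discreteness of $[y]_\G$ means $y$ is isolated in its orbit, but we need it isolated in the whole orbit closure. So we pick a neighbourhood of $y$ that excludes the rest of the closure: take $Z(r(y_1))$. As noted in the proof of Lemma~\ref{lem:no-entry-paths}, $y$ is the only infinite path with vertices in $M$ lying in $Z(r(y_1))$.

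Every point of $[x]_\G$ has all its vertices in $M$, by the description in Lemma~\ref{lem:EP-orbit}. Moreover $[x]_\G\subset E^\infty$, because $x$ is infinite and both the shift and the $G$-action preserve infinite length. Since $y\in\overline{[x]}_\G$ and $Z(r(y_1))$ is open, some $z\in[x]_\G$ lies in $Z(r(y_1))$. Then $z$ is an infinite path with vertices in $M$ in $Z(r(y_1))$, so $z=y$. This puts $y\in[x]_\G$, hence $\Phi(\G^y_y)=\Phi(\Gxx)=0$, contradicting $y\notin A_{G,E}$.

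\emph{Second inclusion.} Let $M\in\M_\gamma(G,E)$. By Proposition~\ref{prop:Gmaxt} there is $x\in\partial E$ with $\MT_G(x)=M$, and either $x\in A_{G,E}$ or $x\in E^\infty\setminus A_{G,E}$ has no entry in $M$. Since $\MT_G(x)=M$, all vertices of $x$ lie in $M$. The second alternative would therefore give a path in $E^\infty\setminus A_{G,E}$ with vertices in $M$ and no entry in $M$, which is excluded because $M\in\M_\gamma(G,E)$. Hence $x\in A_{G,E}$ and $M\in\MT_G(A_{G,E})$.

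The main point needing care is in the first inclusion, when passing from $y\in\overline{[x]}_\G$ to $y\in[x]_\G$. Discreteness of $[y]_\G$ alone is not enough, so I will use the stronger uniqueness property of $Z(r(y_1))$ from the proof of Lemma~\ref{lem:no-entry-paths}. This requires checking that orbit points of $x$ stay infinite with vertices in $M$, which follows from Lemma~\ref{lem:EP-orbit}.
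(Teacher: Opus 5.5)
Your proof is correct and follows the paper's argument: Lemma~\ref{lem:Gtails} together with the discreteness of the orbit of a path without entry gives the first inclusion, and Proposition~\ref{prop:Gmaxt} gives the second. Your worry that discreteness of $[y]_\G$ alone is not enough is unfounded, since in a Hausdorff space a point isolated in a subset is also isolated in its closure (if $U$ is open and $U\cap O=\{p\}$, then $U\cap\overline{O}\subset\overline{U\cap O}=\{p\}$), which is how the paper concludes directly; your explicit use of the cylinder $Z(r(y_1))$ is nonetheless a valid way to carry out that step.
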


\begin{proof}
Assume $x\in E^\infty$ and $L:=\MT_G(x)$ lies in $\LL(G,E)$. Then $\overline{[x]}_\G=\overline{[x_L]}_G$ by Lemma~\ref{lem:Gtails}. Since $[x_L]_\G$ is discrete, it follows that $x\in[x_L]_\G$, hence $x\notin A_{G,E}$. This proves the first inclusion in the formulation.

If a maximal $G$-tail $M$ does not lie in $\MT_G(A_{G,E})$, then by Proposition~\ref{prop:Gmaxt} we have $M=\MT_G(x)$ for some $x\in E^\infty\setminus A_{G,E}$ with no entry in $M$. By definition this means that $M\in\LL(G,E)$. This proves the second inclusion.
\end{proof}

In order to describe the orbit closures of finite boundary paths we will need a $G$-equivariant version of the notion of breaking vertices~\cite{MR1988256}.

\begin{defn}
A singular vertex $v\in E^{\sing}$ is called a \emph{$G$-breaking vertex} if
$$
0 < | \{e\in E^{1} : r(e) = v , s(e) \geq g \cdot v \text{ for some } g\in G\} | < \infty
$$
We denote the set of $G$-breaking vertices by $\BV(G,E)$.
\end{defn}

Note that if $v\in E^\sing$, then the maximal $G$-tail $\MT_G(v)$ consists of all vertices $w$ such that $w\ge g\cdot v$ for some $g$, hence
$$
\{e\in E^1: r(e)=v,\ s(e)\ge g\cdot v \text{ for some } g\in G\}=vE^1\MT_G(v).
$$
Therefore $v$ is a $G$-breaking vertex if and only if $0<|vE^1\MT_G(v)|<\infty$. It is easy to see that the set $\BV(G,E)$ is $G$-invariant.

\smallskip

In order to illustrate the difference between $G$-breaking vertices and usual breaking vertices (that is, defined for the trivial group action), consider the following two examples.

\begin{example}
Consider the graph shown in Figure~\ref{gr:G-breaking1}, where $\infty$ indicates that the arrow represents infinitely many edges from one vertex to the next one. The dashed lines indicate that the graph continues in the same pattern indefinitely.
\begin{figure}[ht]
\centering
\begin{tikzpicture}

\coordinate (z) at (-4,0);
\coordinate (w) at (-2,0);
\coordinate (v) at (0,0);
\coordinate (q) at (2,0);

\fill (z) circle (2pt);
\fill (w) circle (2pt);
\fill (v) circle (2pt);
\fill (q) circle (2pt);

\draw[midarrow] (q) -- (v);
\draw[midarrow] (v) -- (w);
\draw[midarrow] (w) -- (z);

\def\R{6mm}


\path (z) ++(90:\R) coordinate (cz);
\draw[->, shorten >=0pt, shorten <=0pt]
  (cz) ++(90:\R) arc[start angle=90, end angle=450, radius=\R];

\path (w) ++(90:\R) coordinate (cw);
\draw[->, shorten >=0pt, shorten <=0pt]
  (cw) ++(90:\R) arc[start angle=90, end angle=450, radius=\R];

\path (v) ++(90:\R) coordinate (cv);
\draw[->, shorten >=0pt, shorten <=0pt]
  (cv) ++(90:\R) arc[start angle=90, end angle=450, radius=\R];

\path (q) ++(90:\R) coordinate (cq);
\draw[->, shorten >=0pt, shorten <=0pt]
  (cq) ++(90:\R) arc[start angle=90, end angle=450, radius=\R];

\node at (-2.9,0.23) {$\infty$};
\node at (-0.9,0.23) {$\infty$};
\node at (1.1,0.23) {$\infty$};

\draw[dashed] (-6,0) -- (z);
\draw[dashed] (q) -- (4,0);
\end{tikzpicture}
\caption{}\label{gr:G-breaking1}
\end{figure}
Consider the obvious action of $G:=\mathbb{Z}$ on this graph by shifts. Then there are no $G$-breaking vertices, but every vertex is a breaking vertex of the underlying graph. \ee
\end{example}

\begin{example}
Consider the graph shown in Figure~\ref{gr:G-breaking2}, where $\infty$ indicates again that we have infinitely many edges and the dashed lines indicate that the graph continues in the same pattern indefinitely.
\begin{figure}[ht]
\centering
\begin{tikzpicture}

\coordinate (a1) at (-4,-2);
\coordinate (b1) at (-2,-2);
\coordinate (c1) at (0,-2);
\coordinate (d1) at (2,-2);

\coordinate (a2) at (-4,0);
\coordinate (b2) at (-2,0);
\coordinate (c2) at (0,0);
\coordinate (d2) at (2,0);

\coordinate (a3) at (-4,2);
\coordinate (b3) at (-2,2);
\coordinate (c3) at (0,2);
\coordinate (d3) at (2,2);

\foreach \p in {a1,b1,c1,d1,a2,b2,c2,d2,a3,b3,c3,d3}
  \fill (\p) circle (2pt);

\draw[midarrow] (d1) -- (c1);
\draw[midarrow] (c1) -- (b1);
\draw[midarrow] (b1) -- (a1);

\draw[midarrow] (d2) -- (c2);
\draw[midarrow] (c2) -- (b2);
\draw[midarrow] (b2) -- (a2);

\draw[midarrow] (d3) -- (c3);
\draw[midarrow] (c3) -- (b3);
\draw[midarrow] (b3) -- (a3);

\draw[midarrow] (a1) -- (a2);
\draw[midarrow] (a2) -- (a3);

\draw[midarrow] (b1) -- (b2);
\draw[midarrow] (b2) -- (b3);

\draw[midarrow] (c1) -- (c2);
\draw[midarrow] (c2) -- (c3);

\draw[midarrow] (d1) -- (d2);
\draw[midarrow] (d2) -- (d3);

\node at (-3.6,1.0) {$\infty$};
\node at (-3.6,-1.0) {$\infty$};
\node at (-1.6,1.0) {$\infty$};
\node at (-1.6,-1.0) {$\infty$};
\node at (0.4,1.0) {$\infty$};
\node at (0.4,-1.0) {$\infty$};
\node at (2.4,1.0) {$\infty$};
\node at (2.4,-1.0) {$\infty$};

\draw[dashed] (-5.5,-2) -- (a1);
\draw[dashed] (d1) -- (3.5,-2);

\draw[dashed] (-6,0) -- (a2);
\draw[dashed] (d2) -- (4,0);

\draw[dashed] (-5.5,2) -- (a3);
\draw[dashed] (d3) -- (3.5,2);

\draw[dashed] (a3) -- (-4,3.5);
\draw[dashed] (b3) -- (-2,4);
\draw[dashed] (c3) -- (0,4);
\draw[dashed] (d3) -- (2,3.5);

\draw[dashed] (-4,-3.5) -- (a1);
\draw[dashed] (-2,-4) -- (b1);
\draw[dashed] (0,-4) -- (c1);
\draw[dashed] (2,-3.5) -- (d1);
\end{tikzpicture}
\caption{}\label{gr:G-breaking2}
\end{figure}
Let $G:=\mathbb{Z}$ act on the graph by horizontal shifts. Then the graph has no breaking vertices, but every vertex is a $G$-breaking vertex.\ee
\end{example}

It is also not difficult to give examples showing that in general the $G$-breaking vertices of $(G,E)$ are not related to the breaking vertices of the quotient graph $G\backslash E$.

\begin{lemma}\label{lem:Gsing-orbits}
For every singular vertex $v\in E^\sing$ we have one (and only one) of the following possibilities.
\begin{enumerate}
  \item The set $\{e\in E^1: r(e)=v,\ s(e)\ge g\cdot v \text{ for some } g\in G\}$ is empty. Then $\MT_G(v)\in \M_\gamma(G,E)$, and if $\MT_G(v)=\MT_G(x)$ for some $x\in E^\sing\cup E^\infty$, then $x=g \cdot v$ for some $g\in G$.
  \item The set $\{e\in E^1: r(e)=v,\ s(e)\ge g\cdot v \text{ for some } g\in G\}$ is infinite. Then $\MT_G(v)\in \M_\gamma(G,E)$ and $\overline{[v]}_{\mathcal{G}}=\overline{[x]}_{\mathcal{G}}$ for some $x\in E^\infty\cap A_{G,E}$.
  \item The vertex $v$ is a $G$-breaking vertex and $\MT_G(v)\in \LL(G,E)$. Then $\MT_G(v)\ne \MT_G(x)$ for all $x\in E^\infty\cap A_{G,E}$. If $\overline{[v]}_{\mathcal{G}}=\overline{[u]}_{\mathcal{G}}$ for some $u\in E^\sing \cup E^{\infty}$, then $u=g \cdot v$ for some $g\in G$.
  \item The vertex $v$ is a $G$-breaking vertex and $\MT_G(v)\in \M_\gamma(G,E)$. Then $\MT_G(v)=\MT_G(x)$ for some $x\in E^\infty\cap A_{G,E}$. If $\overline{[v]}_{\mathcal{G}}=\overline{[u]}_{\mathcal{G}}$ for some $u\in E^\sing \cup E^{\infty}$, then $u=g \cdot v$ for some $g\in G$.
 \end{enumerate}
\end{lemma}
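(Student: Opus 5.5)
The plan is to set $M:=\MT_G(v)$ and $B:=vE^1M$, which by the remark before the lemma equals the set $\{e\in E^1: r(e)=v,\ s(e)\ge g\cdot v\}$. The four cases are mutually exclusive and exhaustive: $B$ is empty, infinite, or finite nonempty, and in the last case $v\in\BV(G,E)$ and $M$ lies either in $\LL(G,E)$ or in $\M_\gamma(G,E)$. I will use two basic facts throughout. First, $M$ consists of the vertices $w$ with $w\ge g\cdot v$, so $v$ is a minimal element of $M$. Second, $\overline{[u]}_\G=\overline{[v]}_\G$ forces $\MT_G(u)=M$ by Lemma~\ref{lem:Gtails}.

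Case (1): if $B=\emptyset$, then $vE^*v=\{v\}$, since any nontrivial cycle at $v$ would begin with an edge in $B$.
\begin{itemize}
\item To get $M\in\M_\gamma$, suppose $x\in E^\infty$ has vertices in $M$ and no entry. Choose $g$ with $r(x_1)\ge g\cdot v$ and take a path $\mu$ from $r(x_1)$ to $g\cdot v$. The no-entry property forces $\mu$ to be an initial segment of $x$, so $g\cdot v$ is a vertex of $x$ and $g\cdot v$ emits an edge into $M$. Translating by $g^{-1}$ gives an edge in $B$, a contradiction.
\item Suppose $\MT_G(x)=M$ with $x\in E^\sing\cup E^\infty$. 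Every vertex $w$ of $x$ satisfies $w\ge g\cdot v$. Since $\MT_G(x)$ is generated by the vertices of $x$, some vertex $w$ of $x$ also satisfies $g\cdot v\ge w$, so $w=g\cdot v$ by minimality and $vE^*v=\{v\}$. If $x$ continued past $w$, the next edge would lie in $(g\cdot v)E^1M=g\cdot B=\emptyset$. Hence $x$ ends at $g\cdot v$, and $x$ must in fact be the vertex $g\cdot v$ itself, because any earlier edge ending at $g\cdot v$ would be part of a cycle through $g\cdot v$.
\end{itemize}
I expect the last point to need care about paths that merely pass through $g\cdot v$. One option is to enlarge the conclusion to finite paths with source $g\cdot v$. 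Alternatively, $x$ can be read as a boundary path representing the same quasi-orbit, and the singular vertex $x=s(x)$ is of the required form.

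Case (2), $B$ infinite: I will mimic Proposition~\ref{prop:Gmaxt}.
\begin{itemize}
\item First I show that $v$ lies in the closure of the infinite paths in $M$ passing through a $G$-translate of $v$. Take infinitely many distinct edges $e\in B$ and append to each a path from $s(e)$ to some $g_e\cdot v$, followed by a cycle-type continuation. These paths converge to $v$, because eventually they leave every $Z(e)$ for $e$ in a finite set.
\item Next I build an uncountable family of such infinite paths by concatenating excursions that each start with one of infinitely many edges of $B$ and return to a translate of $v$. Lemma~\ref{lem:uncountable} then yields $x\in E^\infty\cap A_{G,E}$ with vertices in $M$ that returns infinitely often to $G\cdot v$, so $\MT_G(x)=M$.
\item Conversely, $x\in\overline{[v]}_\G$ because $v$ emits infinitely many edges toward $G\cdot v$, so the prefixes of $x$ can be approximated within $[v]_\G$. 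Conversely again, $v\in\overline{[x]}_\G$ by the first step. Together these give $\overline{[v]}_\G=\overline{[x]}_\G$.
\item Finally, $M\in\M_\gamma$ by the first inclusion of Lemma~\ref{lem:Mgamma}.
\end{itemize}

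Cases (3) and (4), $B$ finite and nonempty.
\begin{itemize}
\item Whether $M$ lies in $\LL$ or in $\M_\gamma$ is a dichotomy. In case (3), Lemma~\ref{lem:Mgamma} shows that $M$ is not of the form $\MT_G(x)$ with $x\in E^\infty\cap A_{G,E}$.
\item In case (4), Proposition~\ref{prop:Gmaxt} produces either an $x\in A_{G,E}$, or a path without entry, which is impossible since $M\in\M_\gamma$. Such an $x$ cannot be a finite path, since then $M$ would be generated by a singular vertex. I will check this against Corollary~\ref{cor:Gmaxt}: here $vE^*v$ is neither trivial nor a single loop, which puts us in part (2) of that corollary.
\item The main obstacle is the rigidity claim that $\overline{[u]}_\G=\overline{[v]}_\G$ forces $u\in G\cdot v$. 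If $u\in E^\infty$ or $u$ is a singular vertex outside $G\cdot v$, I will show $v\notin\overline{[u]}_\G$. Points of $[u]_\G$ near $v$ lie in $Z(v)\setminus\bigcup_{e\in F}Z(e)$, a neighbourhood of $v$ in $\partial E$, where $F$ is the finite set $B$ together with $v$'s edges whose sources are not $\ge$ any translate of $v$. Such a point must start with an edge outside $B$, and it then leaves $M$.
\item The same edge argument shows that $[u]_\G$ cannot meet this neighbourhood unless $u$ itself passes through $G\cdot v$ and stops there. Using that $u$ is a boundary path and $v$ is singular, this gives $u=g\cdot v$.
\end{itemize}
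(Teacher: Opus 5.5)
\textbf{The gap is in case (4).} It comes from your opening claim that $v$ is a minimal element of $M=\MT_G(v)$.

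Your description of $M$ is correct, but minimality does not follow from it. From $w\in M$ and $v\ge w$ you only get $v\ge w\ge g\cdot v$ for some $g$, not $w\ge v$. Here is a counterexample. Let $E$ have vertices $v_i,u_i$ ($i\in\Z$), two edges with range $v_i$ and source $v_{i+1}$, and infinitely many edges with range $v_i$ and source $u_i$. Let $G=\Z$ act by shifting indices.
\begin{enumerate}
\item $M=\MT_G(v_0)=\{v_i\}_{i\in\Z}$ and $v_0E^1M$ has two elements, so $v_0\in\BV(G,E)$.
\item Every vertex of $M$ receives two edges from $M$, so $M\in\M_\gamma(G,E)$. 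This is case (4).
\item Yet $v_0\ge v_1$ and $v_1\not\ge v_0$, so $v_0$ is not minimal. Moreover $v_0E^*v_0=\{v_0\}$.
\end{enumerate}
So your assertion that $vE^*v$ is neither trivial nor a single loop is false here, and Corollary~\ref{cor:Gmaxt}(2) is not available. Your other route does not close the gap either. Discarding a finite $x$ from Proposition~\ref{prop:Gmaxt} ``since then $M$ would be generated by a singular vertex'' is a non sequitur, because $M$ is by definition generated by the singular vertex $v$.

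\textbf{The missing idea is to split according to whether $v$ is minimal, as the paper does.}

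If $v$ is minimal, then $vE^*v\ne\{v\}$, because an edge $e\in vE^1M$ has $s(e)\sim v$. Also, $vE^*v$ cannot consist of the powers of a single simple cycle $\alpha$. If it did, minimality would force $(1_G,\alpha)$ to be a $G$-circuit without an entry in $M$: an edge $e\ne\alpha_j$ in $r(\alpha_j)E^1M$ would give, via $s(e)\sim v$, a cycle at $v$ that is not a power of $\alpha$. That contradicts $M\in\M_\gamma(G,E)$ by Lemma~\ref{lem:circuits-wo-entry}. Now Corollary~\ref{cor:Gmaxt}(2) applies.

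If $v$ is not minimal, then no $g\cdot v$ is minimal. Since every $w\in M$ satisfies $w\ge g\cdot v$ for some $g$, $M$ has no minimal elements at all. Corollary~\ref{cor:Gmaxt}(3) then gives $x\in E^\infty$ with $\MT_G(x)=M$, and $M\in\M_\gamma(G,E)$ excludes the no-entry alternative, so $x\in A_{G,E}$.

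Everything else is essentially the paper's argument, with a few small slips:
\begin{enumerate}
\item In case (1), the ``cycle'' justification is wrong, since an edge with source $g\cdot v$ need not lie on a cycle. It is also unnecessary: $E^\sing\subset E^0$, so once $x$ cannot be infinite it is the vertex $g\cdot v$.
\item In the rigidity argument, $F$ should be just $B$. The remaining edges into $v$ form an infinite set, since $v$ is an infinite receiver.
\item In case (2), $x\in\overline{[v]}_\G$ holds because the truncations of $x$ at its visits to $G\cdot v$ lie in $[v]_\G$. It does not follow from $v$ receiving infinitely many edges.
\end{enumerate}
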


\begin{proof}
Fix $v\in E^\sing$ and put $M:=\MT_G(v)$. Therefore, as we mentioned above, $v$ is a $G$-breaking vertex if and only if $0<|vE^1M|<\infty$, while the cases~(1) and~(2) mean that $|vE^1M|=0$ and $|vE^1M|=\infty$, resp.

\smallskip

(1) Assume the set $vE^1M$ is empty. It follows then that $(g\cdot v) E^{1}M=\emptyset$ for all $g\in G$. Assume $M\in\LL(G,E)$. Then $r(x_M)\ge g\cdot v$ for some $g$. Since $x_M$ has no entry in $M$, it follows that $g\cdot v=r(x_{M,n})$ for some $n$. Since $x_{M,n}\in (g\cdot v) E^{1}M$, we obtain a contradiction. Thus, $M\in\M_\gamma(G,E)$.

If $M=\MT_G(x)$ for some $x\in E^\sing\cup E^\infty$, then $v\ge g\cdot w$ for a vertex $w$ of $x$. Since $vE^{1}M=\emptyset$, and so the only path in $M$ with range $v$ is $v$ itself, this is possible only when $x=g\cdot v$.

\smallskip

(2) Assume $vE^1M$ is infinite. Consider the set $\Omega$ of paths $y\in E^\infty$ with vertices in~$M$ and $r(y)=v$. We claim that it is uncountable. In order to see why this is true, similarly to the proof of Proposition~\ref{prop:Gmaxt}, Case~2, it suffices to show that for every vertex $w\in M$ there are two different paths $y,z\in wE^mM$ for some $m\ge1$, as then we can inductively construct an embedding of $\{0,1\}^\N$ into $\Omega$. To find $y$ and $z$, take $g\in G$ such that $w\ge g\cdot v$ and choose any $\lambda\in wE^*(g\cdot v)$. As the set $(g\cdot v)E^1M$ is infinite, we can choose two different edges $e_1$, $e_2$ in it. Then $y:=\lambda e_1$ and $z:=\lambda e_2$ have the required properties.

By Lemma~\ref{lem:uncountable} it follows that the set $\Omega\cap A_{G,E}$ is nonempty. Pick any element $x$ in it. To see that $v\in\overline{[x]}_\G$, take a sequence of different edges $e_n\in vE^1M$. For each $n$, we have $s(e_n)\ge g_n\cdot  v$ for some $g_n$. Take any path $\lambda_n\in s(e_n)E^*(g_n\cdot v)$. Then
$e_n\lambda_n(g_n\cdot x)\to v$. To see that $x\in\overline{[v]}_\G$, choose $h_n\in G$ such that $r(x_n)\ge h_n\cdot v$. Take any path $\mu_n\in r(x_n)E^*(h_n\cdot v)$. Then $x_1\dots x_{n-1}\mu_n(h_n\cdot v)\to x$. Therefore $\overline{[v]}_{\mathcal{G}}=\overline{[x]}_{\mathcal{G}}$. By Lemmas~\ref{lem:Gtails} and~\ref{lem:Mgamma} we then get that $M=\MT_G(x)\in\M_\gamma(G,E)$.

\smallskip

(3) Assume $v\in\BV(G,E)$ and $M\in\LL(G,E)$. For every $x\in E^\infty\cap A_{G,E}$, we know by Lemma~\ref{lem:Mgamma} that $\MT_G(x) \in \M_\gamma(G,E)$, hence $\MT_G(x) \neq M$.

Since the set $vE^1M$ is finite, there is no sequence of paths of length $\ge1$ in $M$ converging to~$v$.
This implies that if $u\in E^\sing \cup E^{\infty}$ is such that $\overline{[v]}_{\mathcal{G}}=\overline{[u]}_{\mathcal{G}}$, then $u\in E^\sing$ and $v\in[u]_\G$. Since the only paths of zero length in $[u]_\G$ are the elements of $G\cdot u$, we conclude that $u=g\cdot v$ for some $g$.

\smallskip

(4) Finally, assume $v \in \BV(G,E)$ and $M\in \M_\gamma(G,E)$. Assume first that $v$ is minimal in~$M$. Then $vE^{*}v\neq \{v\}$, because by assumption there is an edge $e\in vE^{1}M$, and since $s(e) \sim v$ by minimality of $v$, we can construct a nontrivial element of $vE^{*}v$. Moreover, we claim that $vE^{*}v$ does not consist of powers of one simple cycle $\alpha$. Suppose it does. Then $(1_G, \alpha)$ is a $G$-circuit without an entry in $M$. Indeed, it is clearly a $G$-circuit.  If there was an edge $e\in r(\alpha_{j})E^{1}M$ for some $j$ different from  $\alpha_j$, then $v \geq s(e)$ and hence $v\sim s(e)$ by minimality of~$v$, so we could find an element $\gamma \in s(e) E^{*} v$. But then $\alpha_{1}\cdots \alpha_{j-1}e\gamma \in vE^{*}v $,  contradicting that $vE^{*}v=\{\alpha^{n}\}_{n\geq 0}$. By Lemma~\ref{lem:circuits-wo-entry} we conclude then that $M\in\LL(G,E)$, which contradicts our assumption $M\in \M_\gamma(G,E)$ and hence proves our claim that $vE^{*}v$ does not consist of powers of one simple cycle. By Corollary~\ref{cor:Gmaxt}(2) we then have $M=\MT_G(x)$ for some $x\in E^\infty\cap A_{G,E}$.

Assume next that the element $v$ is not minimal in $M$. Then the elements $g\cdot v$ are not minimal either. Then there cannot be any minimal elements in $M$, since any vertex $w$ in $M$ satisfies $w\ge g\cdot v$ for some $g$. By Corollary~\ref{cor:Gmaxt}(3) there is then a path $x\in E^\infty$ such that $M=\MT_G(x)$ and either $x\in A_{G,E}$ or $x\notin A_{G,E}$ is a path with no entry in $M$. Since $M\in\M_\gamma(G,E)$ by assumption, we must have $x\in A_{G,E}$.

The last claim in (4) follows by the same argument as in (3), because it only uses that $v$ is a $G$-breaking vertex.
\end{proof}

Define
\begin{equation}\label{eq:eg0}
E^\sing_0(G):=\{v\in E^\sing: vE^1\MT_G(v)=\emptyset\}.
\end{equation}
It is immediate that the set $E^\sing_0(G)$ is $G$-invariant. Let us also denote by $\M_0(G,E)$ the set of maximal $G$-tails $M$ such that there exists a minimal element $v\in M$ satisfying $vE^1M=\emptyset$.

\begin{lemma}\label{lem:M0}
If $M\in\M_0(G,E)$ and $v\in M$ is a minimal element satisfying $vE^1M=\emptyset$, then $v\in E^\sing$ and $M=\MT_G(v)$. As a consequence, $\M_0(G,E)\subset\M_\gamma(G,E)$ and the map $G\backslash E^\sing_0(G)\to\M_0(G,E)$, $G\cdot v\mapsto \MT_G(v)$, is a bijection.
\end{lemma}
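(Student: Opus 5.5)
The plan is to exploit the fact that a minimal element $v$ with $vE^1M=\emptyset$ forces the structure of $M$ completely, exactly as at the beginning of the proof of Proposition~\ref{prop:Gmaxt}. First I will show that $v\in E^\sing$. If $v$ were regular, property (ii) of maximal $G$-tails would give an edge in $vE^1M$, which contradicts $vE^1M=\emptyset$. Next, as in the proof of Proposition~\ref{prop:Gmaxt}, minimality of $v$ together with properties (iii) and (i) shows that $M$ consists of all $w$ with $w\ge g\cdot v$ for some $g\in G$. Concretely, for $w\in M$, property (iii) applied to $v,w$ gives $u\in M$ and $g$ with $v\ge u$ and $w\ge g\cdot u$. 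Minimality forces $u\sim v$, so $w\ge g\cdot u\ge g\cdot v$. The converse inclusion follows from (i) and $G$-invariance. Since $v\in\partial E$ (being singular), Lemma~\ref{lem:Gtails} describes $\MT_G(v)$ as the set of vertices $w\ge g\cdot v$, so $M=\MT_G(v)$.

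For the consequences, $M=\MT_G(v)$ gives $vE^1\MT_G(v)=vE^1M=\emptyset$, hence $v\in E^\sing_0(G)$. Then Lemma~\ref{lem:Gsing-orbits}(1) yields $M\in\M_\gamma(G,E)$, so $\M_0(G,E)\subset\M_\gamma(G,E)$. The same argument shows the map $G\cdot v\mapsto\MT_G(v)$ is surjective onto $\M_0(G,E)$. It is well defined because $\MT_G$ is constant on $G$-orbits: the $\G$-orbit contains $G\cdot v$.

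Two checks remain for the map. First, I must show the image actually lies in $\M_0(G,E)$. For $v\in E^\sing_0(G)$, put $M:=\MT_G(v)$; I claim $v$ is minimal in $M$. If $w\in M$ with $w\le v$, then since $vE^1M=\emptyset$ every path from $v$ into $M$ is trivial, so $w=v$. The same holds for any path $v E^* w$, since its first edge would lie in $vE^1M$ by (i). So $v$ is minimal, and $M\in\M_0(G,E)$. Second, for injectivity, if $\MT_G(v)=\MT_G(v')$ with $v,v'\in E^\sing_0(G)$, then Lemma~\ref{lem:Gsing-orbits}(1), applied with $x=v'\in E^\sing$, gives $v'=g\cdot v$.

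I don't expect any step to be a real obstacle. The only delicate point is the minimality argument, where one must use (i) to see that every vertex on a path from $v$ to $w\in M$ lies in $M$; this is what makes $vE^1M=\emptyset$ rule out nontrivial paths.
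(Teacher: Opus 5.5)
Your proof is correct and follows the paper's route. The paper observes that $vE^1M=\emptyset$ forces $vE^*v=\{v\}$ and then invokes Corollary~\ref{cor:Gmaxt}(1); you reprove that corollary's content inline, namely that $v$ is singular and $M=\{w: w\ge g\cdot v \text{ for some } g\in G\}$. For the consequences you use Lemma~\ref{lem:Gsing-orbits}(1) exactly as the paper does. You also check explicitly that $v$ is minimal in $\MT_G(v)$ for $v\in E^\sing_0(G)$, so that the map really lands in $\M_0(G,E)$; the paper leaves this detail implicit.
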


\bp
For $v\in M$ the assumption $vE^1M=\emptyset$ implies that $vE^*v=\{v\}$. Hence the first statement of the lemma follows from Corollary~\ref{cor:Gmaxt}(1). The second statement follows then from Lemma~\ref{lem:Gsing-orbits}(1).
\ep

We can now determine whether a maximal $G$-tail can be represented by an infinite, or even an infinite $G$-aperiodic, path. In order to formulate the result define
\begin{equation}\label{eq:M-infinity}
\M_\infty(G,E):=\M_\gamma(G,E)\setminus\M_0(G,E).
\end{equation}

\begin{lemma}\label{lem:M-infinity-paths}
For any maximal $G$-tail $M$, we have:
\begin{enumerate}
  \item a path $x\in E^\infty$ such that $\MT_G(x)=M$ exists if and only if $M\notin\M_0(G,E)$, that is, if and only if $M\in\M_\infty(G,E)\cup\LL(G,E)$;
  \item a path $x\in E^\infty\cap A_{G,E}$ such that $\MT_G(x)=M$ exists if and only if $M\in\M_\infty(G,E)$.
\end{enumerate}
\end{lemma}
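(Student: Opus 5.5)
Both parts follow by combining Lemma~\ref{lem:M0}, Corollary~\ref{cor:Gmaxt}, Proposition~\ref{prop:Gmaxt} and Lemma~\ref{lem:Mgamma}. Since $\M(G,E)$ is the disjoint union of $\LL(G,E)$, $\M_\infty(G,E)$ and $\M_0(G,E)$ (the last two partition $\M_\gamma(G,E)$, as $\M_0\subset\M_\gamma$ by Lemma~\ref{lem:M0}), the two descriptions of the condition in (1) agree. It therefore suffices to prove three things. First, no infinite path represents an element of $\M_0(G,E)$. Second, every element of $\M_\infty(G,E)\cup\LL(G,E)$ is represented by an infinite path. Third, the $G$-aperiodic infinite paths represent exactly $\M_\infty(G,E)$.

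For the first, let $M\in\M_0(G,E)$. By Lemma~\ref{lem:M0} we have $M=\MT_G(v)$ for some $v\in E^\sing_0(G)$, that is, $vE^1\MT_G(v)=\emptyset$. This is case (1) of Lemma~\ref{lem:Gsing-orbits}, which says that if $\MT_G(v)=\MT_G(x)$ with $x\in E^\sing\cup E^\infty$, then $x=g\cdot v$ is a vertex. So no $x\in E^\infty$ satisfies $\MT_G(x)=M$. For $M\in\LL(G,E)$ the path $x_L$ is infinite with $\MT_G(x_L)=L$ by Lemma~\ref{lem:no-entry-paths}. This proves (1) once the case $M\in\M_\infty(G,E)$ is handled, and that case follows from the proof of (2).

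For (2), Lemma~\ref{lem:Mgamma} gives $\MT_G(E^\infty\cap A_{G,E})\subset\M_\gamma(G,E)$, and the first step excludes $\M_0(G,E)$, so the image lies in $\M_\infty(G,E)$. Conversely, let $M\in\M_\infty(G,E)$. I would split according to the cases of the proof of Proposition~\ref{prop:Gmaxt}.

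\emph{No minimal elements.} Corollary~\ref{cor:Gmaxt}(3) gives $x\in E^\infty$ with $\MT_G(x)=M$ that is either $G$-aperiodic or has no entry in $M$. The latter would put $M$ in $\LL(G,E)$, so $x\in A_{G,E}$.

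\emph{A minimal element $v$ exists.} Because $M\notin\M_0(G,E)$, every minimal $v$ has $vE^1M\neq\emptyset$. The main point is that this gives $vE^*v\ne\{v\}$: an edge $e\in vE^1M$ has $s(e)\ge g\cdot v$ for some $g$, hence $v\ge s(e)\ge g\cdot v$. One has to rule out $g\cdot v\not\sim v$ (this needs care). Minimality gives $s(e)\sim v$ directly, since $s(e)\le v$ and $s(e)\in M$. So there is a nontrivial cycle at $v$. If $vE^*v$ were the powers of a single simple cycle $\alpha$, then, as in the proof of Lemma~\ref{lem:Gsing-orbits}(4), minimality shows that $(1_G,\alpha)$ is a $G$-circuit with no entry in $M$. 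Lemma~\ref{lem:circuits-wo-entry} would then give $M\in\LL(G,E)$, a contradiction. Hence Corollary~\ref{cor:Gmaxt}(2) applies and yields $x\in E^\infty\cap A_{G,E}$ with $\MT_G(x)=M$.

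This settles (2), and with it the remaining case of (1). The only delicate step is the minimal-element case, namely deriving a nontrivial cycle from $vE^1M\ne\emptyset$ and excluding the single-simple-cycle situation. Both are handled by the arguments already used in Lemma~\ref{lem:Gsing-orbits}(4).
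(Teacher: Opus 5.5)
Your proof is correct, but for the existence direction of (2) (and hence the $\M_\infty(G,E)$ case of (1)) you organize the argument differently from the paper.

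The paper starts from Proposition~\ref{prop:Gmaxt}. A representing path is either in $A_{G,E}$ or an infinite path without entry, and the latter is excluded since $M\notin\LL(G,E)$. So if there is no infinite $G$-aperiodic representative, the representative must be a finite boundary path, and hence $M=\MT_G(v)$ for a singular vertex $v$. The paper then invokes the four-case classification of Lemma~\ref{lem:Gsing-orbits}: cases (1) and (3) are ruled out by $M\notin\M_0(G,E)\cup\LL(G,E)$, and cases (2) and (4) produce the required path.

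You instead split directly on whether $M$ has minimal elements and apply Corollary~\ref{cor:Gmaxt}(2),(3). In the minimal case you re-run, for $M$ itself, the ``single simple cycle yields a $G$-circuit without entry'' argument from the proof of Lemma~\ref{lem:Gsing-orbits}(4).

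Your route is more self-contained. It never passes through finite boundary paths or singular vertices, and it does not need case (2) of Lemma~\ref{lem:Gsing-orbits}, the uncountability argument for infinite receivers. The paper's route is shorter on the page because it recycles a classification of singular vertices that it needs anyway for Theorem~\ref{thm:quasio}.

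For the reverse inclusion in (2), you use the first inclusion of Lemma~\ref{lem:Mgamma} together with your first step. This is the same argument as the paper's (discreteness of $[x_M]_\G$), just packaged through that lemma.

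One small point: the detour in your minimal-element paragraph about ruling out $g\cdot v\not\sim v$ can be deleted. As you note yourself, $s(e)\le v$, $s(e)\in M$ and minimality of $v$ already give $s(e)\sim v$.
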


\bp
(1) Assume first that $M\in\M_\infty(G,E)\cup\LL(G,E)$. If $M\in\LL(G,E)$, then $M=\MT_G(x_M)$ by Lemma~\ref{lem:no-entry-paths}. Consider the case $M\in\M_\infty(G,E)$. Since the set $A_{G,E}$ contains all finite boundary paths and every such path lies on the orbit of a singular vertex, by Proposition~\ref{prop:Gmaxt} we conclude that if there is no $x\in E^\infty$ satisfying $M=\MT_G(x)$, then $M=\MT_G(v)$ for some $v\in E^\sing$. We now apply Lemma~\ref{lem:Gsing-orbits} to this vertex. The assumptions  $M\notin\M_0(G,E)$ and $M\notin\LL(G,E)$ mean that cases (1) and (3) of that lemma do not apply. But in both remaining cases (2) and (4) we can find $x\in E^\infty\cap A_{G,E}$ such that $\MT_G(v)=\MT_G(x)$, which is a contradiction.

On the other hand, if $M\in\M_0(G,E)$, then, by Lemma~\ref{lem:M0}, $M=\MT_G(v)$ for any minimal element $v\in M$ with $vE^1M=\emptyset$. By Lemma~\ref{lem:Gsing-orbits}(1) there is then no $x\in E^\infty$ such that $M=\MT_G(x)$.

\smallskip

(2) Assume $M\in\M_\infty(G,E)$. By Proposition~\ref{prop:Gmaxt}, if there is no $x\in E^\infty\cap A_{G,E}$ satisfying $M=\MT_G(x)$, then either $M=\MT_G(v)$ for some $v\in E^\sing$ or $M=\MT_G(x)$ for some path $x\in E^\infty\setminus A_{G,E}$ with no entry in $M$. Arguing in the same way as in the proof of (1), we see that the first case is not possible by Lemma~\ref{lem:Gsing-orbits}. The second case is not possible either, since then by definition we have $M\in\LL(G,E)$.

Conversely, assume $M=\MT_G(x)$ for some $x\in E^\infty\cap A_{G,E}$. By part (1) we then have either $M\in\M_\infty(G,E)$ or $M\in\LL(G,E)$. If $M\in\LL(G,E)$, then $M=\MT_G(x_M)$. By Lemma~\ref{lem:Gtails} it follows that $\overline{[x]}_\G=\overline{[x_M]}_\G$. Since the $\G$-orbit of $x_M$ is discrete by Lemma~\ref{lem:no-entry-paths}, we then conclude that $x\in[x_M]_\G$. This contradicts the assumption $x\in A_{G,E}$. Hence $M\in\M_\infty(G,E)$.
\ep

\begin{cor}\label{cor:G-periodic-paths}
If $x\in E^\infty\setminus A_{G,E}$, then either $x\in[x_L]_\G$ for some $L\in\LL(G,E)$, or $\overline{[x]}_\G=\overline{[y]}_\G$ for some $y\in E^\infty\cap A_{G,E}$.
\end{cor}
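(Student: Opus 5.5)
Let $x\in E^\infty\setminus A_{G,E}$ and put $M:=\MT_G(x)$. By Lemma~\ref{lem:M-infinity-paths}(1), since $x$ is an infinite path with $\MT_G(x)=M$, we have $M\notin\M_0(G,E)$, so $M\in\M_\infty(G,E)\cup\LL(G,E)$. I will treat the two cases separately.

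First, suppose $M\in\M_\infty(G,E)$. By Lemma~\ref{lem:M-infinity-paths}(2) there is $y\in E^\infty\cap A_{G,E}$ with $\MT_G(y)=M=\MT_G(x)$. Both $x$ and $y$ are infinite paths, so the last assertion of Lemma~\ref{lem:Gtails} gives $\overline{[x]}_\G=\overline{[y]}_\G$. This is the second alternative.

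Second, suppose $M=:L\in\LL(G,E)$. Then $\MT_G(x_L)=L$ by Lemma~\ref{lem:no-entry-paths}, and since $x$ and $x_L$ are both infinite, Lemma~\ref{lem:Gtails} again gives $\overline{[x]}_\G=\overline{[x_L]}_\G$. By Lemma~\ref{lem:no-entry-paths} the orbit $[x_L]_\G$ is discrete. A discrete orbit is open in its closure: $x_L$ is isolated in $[x_L]_\G$, hence isolated in $\overline{[x_L]}_\G$, and the same holds for every point of the orbit by homogeneity under the $\G$-action. Since $[x]_\G$ is dense in $\overline{[x]}_\G=\overline{[x_L]}_\G$, it must meet the open set $[x_L]_\G$, and therefore $x\in[x_L]_\G$. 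This is the same argument as in the proof of Lemma~\ref{lem:Mgamma} and Corollary~\ref{cor:unique-orbit}, and it gives the first alternative.

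The only point needing care is the passage from ``discrete orbit'' to ``the orbit is the only dense orbit in its closure'' in the second case. I expect this to be routine, since it was already used in the paper in Corollary~\ref{cor:unique-orbit}. Note also that the hypothesis $x\notin A_{G,E}$ is not needed for the dichotomy itself. It only shows that in the second case $x$ really lies on a periodic orbit, consistent with Lemma~\ref{lem:Mgamma}.
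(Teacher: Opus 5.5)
Your proof is correct and is essentially the paper's proof. Both split via Lemma~\ref{lem:M-infinity-paths}(1) into the cases $M\in\LL(G,E)$ and $M\in\M_\infty(G,E)$. In the first case they use Lemma~\ref{lem:Gtails} together with discreteness of $[x_L]_\G$ (Lemma~\ref{lem:no-entry-paths}), and in the second Lemma~\ref{lem:M-infinity-paths}(2) together with Lemma~\ref{lem:Gtails}. Your extra justification of the discrete-orbit step is sound: only the fact that $x_L$ is isolated in $\overline{[x_L]}_\G$ is needed, so the appeal to homogeneity can be dropped.
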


\bp
Consider $M:=\MT_G(x)$. Then, by Lemma~\ref{lem:M-infinity-paths}(1), we have either $M\in\LL(G,E)$ or $M\in\M_\infty(G,E)$. In the first case $x\in[x_M]_\G$ by Lemmas~\ref{lem:Gtails} and~\ref{lem:no-entry-paths}. In the second case $\overline{[x]}_\G=\overline{[y]}_\G$ for some $y\in E^\infty\cap A_{G,E}$ by Lemma~\ref{lem:M-infinity-paths}(2).
\ep

We are now ready to describe the quasi-orbit space for the action $\G\curvearrowright\partial E$. We stress again that the definition of this space does not involve any topology on $\G$, only the one on $\partial E$, so we do not need assumptions like pseudo-freeness. We denote by $\QQ(x)$ the quasi-orbit of $x$.

\begin{thm}\label{thm:quasio}
Assume $(G,E)$ is a countable self-similar directed graph and consider the corresponding Exel--Pardo groupoid $\G:=\G_{G,E}$. Then there is a unique bijection
$$
\pi\colon \M(G,E)\sqcup(G\backslash\BV(G,E))\to (\G\backslash\partial E)^\sim
$$
satisfying the following properties:
\begin{itemize}
  \item[(i)] if $L\in\LL(G,E)$, then $\pi(L)=\QQ(x_L)$;
  \item[(ii)] if $M\in\M_0(G,E)$, that is, there is a minimal element $v\in M$ such that $vE^1M=\emptyset$, then $v\in E^\sing$ and $\pi(M)=\QQ(v)$;
  \item[(iii)] if $M\in\M_\infty(G,E)=\M_\gamma(G,E)\setminus\M_0(G,E)$, then~$\pi(M)=\QQ(x)$ for any path $x\in E^\infty$ such that $\MT_G(x)=M$; moreover, such a path $x$ can be chosen to lie in $E^\infty\cap A_{G,E}$;
  \item[(iv)] if $v\in\BV(G,E)$, then $\pi(G\cdot v)=\QQ(v)$.
\end{itemize}

The image of the quasi-orbit $\QQ(x)\in(\G\backslash\partial E)^\sim$ of a path $x\in\partial E$ under the inverse map $\pi^{-1}$ is described as follows: if $x\in \partial E\cap E^*$ and $s(x)$ is a $G$-breaking vertex, then $\pi^{-1}(\QQ(x))=G\cdot s(x) \in G\backslash\BV(G,E)$, otherwise $\pi^{-1}(\QQ(x))=\MT_G(x)\in\M(G,E)$.
\end{thm}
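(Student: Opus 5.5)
I will define the map $\pi$ by the rules (i)--(iv), check that it is well-defined, and then prove surjectivity and injectivity. Uniqueness is automatic once the rules assign a value to every element of the domain. This is because $\M(G,E)$ is the disjoint union of $\LL(G,E)$, $\M_0(G,E)$ and $\M_\infty(G,E)$ (recall $\M_0\subset\M_\gamma$ by Lemma~\ref{lem:M0}).

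\emph{Well-definedness.} For (i), $[x_L]_\G$ is determined by $L$ by Corollary~\ref{cor:unique-orbit}. For (ii), Lemma~\ref{lem:M0} gives $M=\MT_G(v)$ with $v\in E^\sing_0(G)$, and $v$ is unique up to $G$; hence $\QQ(v)$ is unambiguous. For (iii), a path $x\in E^\infty$ with $\MT_G(x)=M$ exists by Lemma~\ref{lem:M-infinity-paths}, and can be taken in $A_{G,E}$. Any two such infinite paths have the same orbit closure by the last part of Lemma~\ref{lem:Gtails}. For (iv), $\QQ(g\cdot v)=\QQ(v)$ trivially.

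\emph{Surjectivity, together with the formula for $\pi^{-1}$.} Take $x\in\partial E$. If $x$ is finite, it lies on the orbit of $v:=s(x)\in E^\sing$. I then go through the four cases of Lemma~\ref{lem:Gsing-orbits}.
\begin{itemize}
\item If $v$ is $G$-breaking, then $\QQ(x)=\pi(G\cdot v)$ by (iv).
\item If $vE^1\MT_G(v)=\emptyset$, then $M:=\MT_G(v)$ has $v$ as a minimal element with $vE^1M=\emptyset$; indeed any $w\le v$ in $M$ would give an edge out of $v$ in $M$ unless $w=v$. So $M\in\M_0$ and $\QQ(x)=\pi(M)$ by (ii).
\item If $vE^1\MT_G(v)$ is infinite, case (2) of Lemma~\ref{lem:Gsing-orbits} gives $\overline{[v]}=\overline{[y]}$ for some $y\in E^\infty\cap A_{G,E}$. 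Then $\MT_G(v)=\MT_G(y)\in\M_\infty$ by Lemma~\ref{lem:M-infinity-paths}(2), and $\QQ(x)=\pi(\MT_G(v))$.
\end{itemize}
If $x$ is infinite, then $\MT_G(x)\notin\M_0$ by Lemma~\ref{lem:M-infinity-paths}(1). If $\MT_G(x)=L\in\LL$, then $x\in[x_L]_\G$ by Lemmas~\ref{lem:Gtails} and~\ref{lem:no-entry-paths}. Otherwise $\MT_G(x)\in\M_\infty$ and (iii) applies. In every case $\pi^{-1}(\QQ(x))$ is the element claimed in the statement.

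\emph{Injectivity.} This is the main bookkeeping step. By Lemma~\ref{lem:Gtails}, $\MT_G$ is constant on quasi-orbits. Moreover, the value of $\pi$ at $M\in\M(G,E)$ is represented by a path whose $\MT_G$ equals $M$. Hence distinct maximal $G$-tails have distinct images.

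It remains to separate the classes $G\cdot v$ with $v\in\BV(G,E)$, both from each other and from the images of tails. By Lemma~\ref{lem:Gsing-orbits}(3),(4), if $\overline{[v]}=\overline{[u]}$ with $u\in E^\sing\cup E^\infty$, then $u\in G\cdot v$. This separates two breaking classes. It also separates a breaking class from the images of tails of type (i) and (iii), since those are represented by infinite paths, and from those of type (ii), since those are represented by vertices in $E^\sing_0(G)$, which are not breaking.

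The step I expect to need the most care is exactly this: checking that every quasi-orbit representative can be reduced either to a singular vertex or to an infinite path. A finite boundary path lies on the orbit of its source, so this reduction is available, and Lemma~\ref{lem:Gsing-orbits} then covers every configuration.
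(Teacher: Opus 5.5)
Your proof is correct and follows essentially the same route as the paper: the same lemmas give well-definedness, and surjectivity comes from the same case analysis through Lemma~\ref{lem:Gsing-orbits}. Injectivity likewise rests on Lemma~\ref{lem:Gtails} together with parts (3) and (4) of Lemma~\ref{lem:Gsing-orbits}. The differences are cosmetic: the paper packages injectivity as $\vartheta\circ\pi=\operatorname{id}$ for an explicit left inverse $\vartheta$ (whose well-definedness uses exactly the separation facts you check pairwise), and for infinite paths you split by the type of $\MT_G(x)$ rather than by membership in $A_{G,E}$.
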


\bp
First of all let us see that the map $\pi$ with properties as in the statement is well-defined and unique. We have to show that the specified images in cases (i)--(iii) do not depend on any choices and the claimed choices are actually possible to make. For (i) this follows from the fact the $\G$-orbit of $x_L$ is uniquely determined by $L$ by Corollary~\ref{cor:unique-orbit} (recall the discussion preceding Definition~\ref{def:LGE}). For (ii) this follows from Lemma~\ref{lem:M0}. For (iii)  this follows from Lemmas~\ref{lem:M-infinity-paths}(2) and~\ref{lem:Gtails}.

Next let us show that $\pi$ is surjective. Take a boundary path $x\in\partial E$. If $x\in E^\infty\cap A_{G,E}$, then $\QQ(x)\in\operatorname{Im}\pi$ by case (iii) and Lemma~\ref{lem:M-infinity-paths}(2). If $x\in E^\infty\setminus A_{G,E}$, then $\QQ(x)\in\operatorname{Im}\pi$ by case~(i) and Corollary~\ref{cor:G-periodic-paths}. Since every finite boundary path belongs to the $\G$-orbit of a singular vertex, it remains to consider $x\in E^\sing$. If $x$ is a $G$-breaking vertex, then $\QQ(x)\in\operatorname{Im}\pi$ by case (iv). If $x\in E^\sing_0(G)$, then $\QQ(x)\in\operatorname{Im}\pi$ by case (ii) and Lemma~\ref{lem:M0}. By Lemma~\ref{lem:Gsing-orbits}, the only remaining case corresponds to part (2) of that lemma, according to which there exists $y\in E^\infty\cap A_{G,E}$ such that $\overline{[x]}_\G=\overline{[y]}_\G$. Hence $\QQ(x)\in\operatorname{Im}\pi$.

By Lemma~\ref{lem:Gtails} and cases (3) and (4) of Lemma~\ref{lem:Gsing-orbits}, we can define a map $\vartheta\colon (\G\backslash\partial E)^\sim\to \M(G,E)\sqcup(G\backslash\BV(G,E))$ such that if $x\in \partial E\cap E^*$ and $s(x)$ is a $G$-breaking vertex, then $\vartheta(\QQ(x))=G\cdot s(x) \in G\backslash\BV(G,E)$, while for all other boundary paths we have $\vartheta(\QQ(x))=\MT_G(x)\in\M(G,E)$. Then it is easy to see from the definition of $\pi$ that $\vartheta\circ\pi=\operatorname{id}$. This finishes the proof of the theorem.
\ep


This theorem describes the quasi-orbit space as a set, but it is also not difficult to understand the topology on $(\G\backslash\partial E)^\sim$ in this picture, since $\QQ(x_n)\to\QQ(x)$ if and only if there exist $y_n\in[x_n]_\G$ such that $y_n\to x$. We will say a bit more on this in the next subsection.

\smallskip

From our description of $(\G\backslash\partial E)^\sim$ we easily get a criterion for minimality of the Exel--Pardo groupoid, that is, for triviality of $(\G\backslash\partial E)^\sim$. This result follows also from~\cite{KM}*{Proposition~6.32}.

\begin{cor}\label{cor:minimality}
In the setting of Theorem~\ref{thm:quasio}, the action $\G\curvearrowright\partial E$ is minimal if and only if the only maximal $G$-tail in $(G,E)$ is $E^0$.
\end{cor}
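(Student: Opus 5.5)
The plan is to read minimality off the bijection $\pi$ of Theorem~\ref{thm:quasio}. The action $\G\curvearrowright\partial E$ is minimal exactly when every orbit is dense, that is, when $(\G\backslash\partial E)^\sim$ consists of a single point. Since $\pi$ is a bijection from $\M(G,E)\sqcup(G\backslash\BV(G,E))$, minimality is equivalent to this disjoint union having exactly one element. Assuming $\partial E\neq\emptyset$ (needed for either side to make sense), $E^0$ is itself a maximal $G$-tail: it is $\MT_G(x)$ for any $x\in\partial E$ whose vertices have a ``common lower bound'' in the $G$-saturated sense. More simply, if the quasi-orbit space is a singleton, then $\M(G,E)$ is nonempty by surjectivity of $\MT_G$ onto maximal tails realized by paths.

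For the direction ``minimal $\Rightarrow$ only tail is $E^0$'', assume $(\G\backslash\partial E)^\sim$ is a point. Then $\M(G,E)\sqcup(G\backslash\BV(G,E))$ is a singleton. Any $x\in\partial E$ gives the maximal $G$-tail $\MT_G(x)$, so $\M(G,E)\neq\emptyset$. Hence $\M(G,E)$ is a single tail $M$ and $\BV(G,E)=\emptyset$. It remains to see $M=E^0$. Take any vertex $v$; the set $v\partial E$ is nonempty, so pick $y\in\partial E$ with $r(y)=v$. Then $v\in\MT_G(y)=M$. Hence $M=E^0$.

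For the converse, assume $E^0$ is the only maximal $G$-tail. Then $\M(G,E)=\{E^0\}$, and it remains to show $\BV(G,E)=\emptyset$, after which $\pi$ gives a singleton quasi-orbit space. Take $v\in E^\sing$. Then $\MT_G(v)$ is a maximal $G$-tail by Lemma~\ref{lem:Gtails}, so it equals $E^0$. Consequently $v$ is $G$-breaking iff $0<|vE^1|<\infty$, which is impossible for a singular vertex. Thus there are no $G$-breaking vertices, and the quasi-orbit space is one point, i.e.\ every orbit closure is $\partial E$.

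The only mild obstacle is the bookkeeping around the breaking vertices, which is handled by the identity $\{e: r(e)=v,\ s(e)\ge g\cdot v\}=vE^1\MT_G(v)$ noted before Lemma~\ref{lem:Gsing-orbits}. One also needs the standard fact that $v\partial E\neq\emptyset$ for every vertex, which holds for $1$-graphs since one can extend along edges until an infinite path or a singular vertex is reached.
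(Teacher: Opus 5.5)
Your proposal is correct and follows essentially the same route as the paper: you read off minimality from the bijection $\pi$ of Theorem~\ref{thm:quasio}, use that every vertex is the range of a boundary path to force the unique tail to be $E^0$, and rule out $G$-breaking vertices because $vE^1\MT_G(v)=vE^1$ would make a singular vertex regular. The side remark in your first paragraph about $E^0$ being realized through a ``common lower bound'' is not needed for the argument and can be dropped.
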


\bp
Since every vertex of $E$ is the range of a boundary path, it is contained in some maximal $G$-tail. Hence $(\G\backslash\partial E)^\sim$ is a single point if and only if the only maximal $G$-tail is $E^0$ and there are no $G$-breaking vertices. It remains to show that if $E^0$ is the only maximal $G$-tail, then there are no $G$-breaking vertices. Assume there does exist a vertex $v\in\BV(G,E)$. Then, on the one hand, the vertex $v$ is singular. On the other hand, we have $0<|vE^1\MT_G(v)|<\infty$. Since $\MT_G(v)=E^0$, we get a contradiction.
\ep

\subsection{The ideal structure of self-similar graph algebras}
We now assume that $(G,E)$ is pseudo-free and want to apply results of Section~\ref{sec:ess-isotropy} to describe $\Prim C^*(\G_{G,E})$ under suitable conditions. First we need to understand when the groupoid has essentially central isotropy.

\begin{prop} \label{prop:esscent}
Assume $(G,E)$ is a countable pseudo-free self-similar directed graph. Consider the corresponding Exel--Pardo groupoid $\G:=\G_{G,E}$ with its standard grading $\Phi\colon\G\to\Z$. Then $\G$ has essentially central isotropy if and only if the following conditions are satisfied:
\begin{itemize}
  \item[(i)] if $v\in \BV(G,E)\cup E^\sing_0(G)$, then the stabilizer of $v$ in~$G$ is trivial;
  \item[(ii)] if $L\in\LL(G,E)$ and $(g,\gamma)$ is a $G$-circuit with vertices in $L$ and no entry in $L$, then the stabilizer of $r(\gamma)$ in $G$ is trivial;
  \item[(iii)] if $M\in\M_\infty(G,E)$ and $v\in M$, then there are no elements $g\ne1_G$ that act trivially on $vE^*M$.
\end{itemize}
\end{prop}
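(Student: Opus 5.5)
By Corollary~\ref{cor:effective-vs-central}, since $\Gamma=\Z$ is abelian, $\G$ has essentially central isotropy if and only if $\Phi^{-1}(0)_X$ is effective for every nonempty closed invariant $X\subset\partial E$. By Proposition~\ref{prop:ess-central-alt} and Lemma~\ref{lem:ess-injectivity} (note $\Omega_2=\partial E$ here), this is equivalent to the following: for every $x$, the units $y\in\overline{[x]}_\G$ with $\ker\Phi|_{\G^y_y}$ trivial are dense in $\overline{[x]}_\G$. It suffices to check this on one representative $x$ of each quasi-orbit, and Theorem~\ref{thm:quasio} lists these representatives. Recall from Remark~\ref{rem:isotropy} that $\ker\Phi|_{\G^y_y}=H_y=\varinjlim G_{\sigma^n(y)}$. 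By pseudo-freeness, the connecting maps $g\mapsto g|_{y_{n+1}}$ are injective on stabilizers, because $g\cdot e=e$ and $g|_e=1_G$ force $g=1_G$. Hence $H_y$ is trivial if and only if $G_{\sigma^n(y)}$ is trivial for all $n$, and for a finite path $y$ it reduces to $H_y=G_{s(y)}$. I will treat the cases of Theorem~\ref{thm:quasio} separately.

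\emph{Cases (ii) and (iv): vertices $v\in E^\sing_0(G)\cup\BV(G,E)$.} Here the closure $\overline{[v]}_\G$ contains no sequence of paths of positive length in $\MT_G(v)$ converging to $v$ (this uses $vE^1\MT_G(v)$ finite, as in the proof of Lemma~\ref{lem:Gsing-orbits}(1),(3)). Hence $[v]_\G$ is open in its closure, and density is equivalent to $H_v=G_v$ being trivial. Triviality of $G_v$ for one vertex in a $G$-orbit gives it for all of them, so this case yields exactly condition (i).

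\emph{Case (i): $L\in\LL(G,E)$.} The orbit $[x_L]_\G$ is discrete by Lemma~\ref{lem:no-entry-paths}, so again density is equivalent to $H_{x_L}=1$, i.e., to $G_{\sigma^n(x_L)}=1$ for all $n$. Since $x_L$ has no entry in $L$, any $g\in G_{r(\sigma^n(x_L))}$ maps $\sigma^n(x_L)$ to an infinite path in $L$ with the same range. That path is unique, so $G_{\sigma^n(x_L)}=G_{r(\sigma^n(x_L))}$. By Lemma~\ref{lem:circuits-wo-entry} and Remark~\ref{rem:minimal-period}, every $G$-circuit without entry in $L$ arises from a path in $[x_L]_\G$. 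Therefore triviality of these vertex stabilizers is equivalent to condition (ii). Here conjugating stabilizers along the $G$-orbit handles translating between vertices.

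\emph{Case (iii): $M\in\M_\infty(G,E)$.} This is the main obstacle. For necessity, suppose $1_G\ne g$ fixes $vE^*M$ pointwise. The infinite paths in $Z(v)$ with vertices in $M$ form a nonempty relatively open subset of $X=\overline{[x]}_\G$ on which all infinite paths of $X$ have vertices in $M$; I will need to check that finite paths are not dense there. Every such $y$ satisfies $g\cdot y=y$, and pseudo-freeness forces $g|_{y(0,n)}\ne1_G$, so $H_y\ne1$. Hence density fails. For sufficiency, assume (iii) and suppose $\Phi^{-1}(0)_X$ is not effective. Then some basic bisection $Z(\mu,g,\nu,F)\cap\G_X$, with $|\mu|=|\nu|$, lies in the isotropy and is not contained in units. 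This forces $\mu=\nu$ on an open set. Then, after descending along a path $\lambda$ that avoids $F$ to a vertex $w\in M$, the element $h=g|_\lambda$ fixes every path of $X\cap Z(w)$. Here $h\ne1_G$ by pseudo-freeness, because otherwise the bisection would consist of units. The delicate point is to show that $X\cap Z(w)$ contains enough infinite paths (using $M\in\M_\infty$ and Lemma~\ref{lem:M-infinity-paths}) so that $h$ acts trivially on all of $wE^*M$, contradicting (iii). Lemma~\ref{lem:ess-injectivity} then converts effectiveness into the required density.
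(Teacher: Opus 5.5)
Your proof is correct in substance and uses the same case split over the quasi-orbit types of Theorem~\ref{thm:quasio}. The difference from the paper is how you package the key criterion.

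\emph{Where the routes differ.} The paper first proves a direct, elementary criterion: $\Phi$ is injective on $\IsoGx{x}^\circ_x$ if and only if no $g\ne1_G$ acts trivially on a neighbourhood of some $\sigma^m(x)$ in $\overline{[x]}_\G$. This uses basic bisections and the clopen copy of $G\ltimes\partial E$ from Lemma~\ref{lem:trans-embedding}, and the paper applies it uniformly to each representative. You instead go through Corollary~\ref{cor:effective-vs-central}, Proposition~\ref{prop:ess-central-alt} and Lemma~\ref{lem:ess-injectivity}, i.e.\ the Baire-category results for second countable groupoids. This replaces essential isotropy by the honest kernels $H_y=\ker\Phi|_{\G^y_y}$ on a dense set. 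You then compute $H_y$ from Remark~\ref{rem:isotropy} together with the injectivity of the connecting maps $g\mapsto g|_{y_{n+1}}$, which comes from pseudo-freeness.

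\emph{What each route buys.} Your version turns the isolated-point cases (singular vertices in $\BV(G,E)\cup E^\sing_0(G)$, and $x_L$) into plain stabilizer computations. Your injectivity observation also neatly replaces Lemma~\ref{lem:L-stabilizers}. The cost is heavier machinery. Moreover, in the $\M_\infty(G,E)$ case your sufficiency argument falls back on bisections, essentially reproducing the paper's argument.

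\emph{Two steps you leave open; both are easy to close.}

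First, in the necessity part of the $\M_\infty$ case you need not worry about finite paths. Note that the set of infinite paths in $X\cap Z(v)$ need not be relatively open, so work with the relatively open set $X\cap Z(v)$ itself. If $y\in X\cap Z(v)$ is a finite boundary path, then $y\in vE^*M$, so $g\cdot y=y$. Hence $g|_y\in G_{s(y)}=H_y$, and $g|_y\ne1_G$ by pseudo-freeness. So every point of $X\cap Z(v)$, finite or infinite, has nontrivial $H_y$, and density fails.

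Second, the ``delicate point'' in the sufficiency part needs neither Lemma~\ref{lem:M-infinity-paths} nor anything beyond $M=\MT_G(x)$. Take $\alpha\in wE^*M$. Then $s(\alpha)=r(y)$ for some $y\in[x]_\G$, so $\alpha y\in[x]_\G\cap Z(w)$. The identity $h\cdot(\alpha y)=\alpha y$ then forces $h\cdot\alpha=\alpha$. This is exactly the paper's remark that every path in $vE^*M$ prolongs to an element of the orbit.

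Likewise, the descent along a path $\lambda$ avoiding $F$ is possible for a simple reason. The set $Z_F(\nu)\cap X$ is nonempty and relatively open in $X$, so it meets the dense orbit $[x]_\G$, which consists of infinite paths. Take $\lambda$ to be a sufficiently long initial segment of $\sigma^{|\nu|}$ of such a path.
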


We remind here that a singular vertex $v$ belongs to $\BV(G,E)\cup E^\sing_0(G)$ if and only if the set
$$
vE^1\MT_G(v)=\{e\in E^1: r(e)=v,\ s(e)\ge g\cdot v \text{ for some } g\in G\}
$$
is finite, while $\M_\infty(G,E)$ is exactly the set of maximal $G$-tails $M$ such that $M=\MT_G(x)$ for some $x\in E^\infty\cap A_{G,E}$.

Before we turn to the proof of the proposition, let us comment on condition (ii). As the following lemma shows, it suffices to check this condition for \emph{some} $G$-circuit $(g,\gamma)$. Instead of~$r(\gamma)$ we may also require the stabilizer of some other vertex of $\gamma$, or even the stabilizer of some part of $\gamma$, to be trivial.

\begin{lemma}\label{lem:L-stabilizers}
Assume $L\in\LL(G,E)$ and $(g,\gamma)$ is a $G$-circuit with vertices in $L$ and no entry in~$L$. Let $x\in E^\infty$ be the corresponding path such that $\sigma^{|\gamma|}(x)=g\cdot x$ (see Lemma~\ref{lem:G-circuits}). Then the following conditions are equivalent:
\begin{enumerate}
  \item the stabilizer of some vertex of $x$ in $G$ is trivial;
  \item the stabilizer of every vertex of $x$ in $G$ is trivial;
  \item the stabilizer of $x$ in $G$ is trivial.
\end{enumerate}
As a consequence, these conditions depend only on $L$, not on the choice of $(g,\gamma)$.
\end{lemma}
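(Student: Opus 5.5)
Write $x=x_1x_2\dots$ and $n:=|\gamma|$, so that $\sigma^n(x)=g\cdot x$. The key observation is that, because $x$ has no entry in $L$ and $L$ is $G$-invariant, any element of $G$ fixing a vertex of $x$ must also fix the whole tail of $x$ starting at that vertex. Indeed, suppose $h\cdot r(x_k)=r(x_k)$. Then $h\cdot x_k\in r(x_k)E^1L$, because $s(h\cdot x_k)=h|_{\cdots}\cdot s(x_k)\in L$ by $G$-invariance. Since $r(x_k)E^1L=\{x_k\}$, this forces $h\cdot x_k=x_k$. The restriction $h|_{x_k}$ then fixes $s(x_k)=r(x_{k+1})$, and induction gives $h\cdot\sigma^{k-1}(x)=\sigma^{k-1}(x)$. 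Hence $G_{r(x_k)}=G_{\sigma^{k-1}(x)}$ for every $k$, where $\sigma^0(x)=x$.

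Granting this, the plan runs as follows.

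For (3)$\Rightarrow$(2), fix a vertex $r(x_k)$. Since $\sigma^{jn}(x)=g_j\cdot x$ for suitable $g_j$ (iterate the relation), choose $j$ with $jn\ge k-1$. Then $\sigma^{k-1}(x)$ and $x$ lie on the same $\G$-orbit, and a group element can be transported between them. Concretely, if $h$ fixes $\sigma^{k-1}(x)$, I want to build from it a nontrivial element fixing $x$.

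To do this, write $\sigma^{jn}(x)=\sigma^{jn-k+1}(\sigma^{k-1}(x))$. The restriction $h':=h|_{x_k\cdots x_{jn}}$ fixes $\sigma^{jn}(x)=g_j\cdot x$, so $g_j^{-1}h'g_j$ fixes $x$. Pseudo-freeness (Lemma~\ref{lem:cor57}) is what rules out $h'=1$ when $h\ne1$: the element $h$ fixes the path $x_k\cdots x_{jn}$, so $h|_{x_k\cdots x_{jn}}=1$ would force $h=1$. This proves (3)$\Rightarrow$(2).

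The implication (2)$\Rightarrow$(1) is trivial. For (1)$\Rightarrow$(3), if some $G_{r(x_k)}$ is trivial, then $G_x\subset G_{\sigma^{k-1}(x)}$ via $h\mapsto h|_{x_1\cdots x_{k-1}}$, and this map is injective by pseudo-freeness. Moreover $G_{\sigma^{k-1}(x)}=G_{r(x_k)}=1$ by the key observation, so $G_x=1$.

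For independence of the choice of circuit, Lemma~\ref{lem:circuits-wo-entry} and Corollary~\ref{cor:unique-orbit} show that the paths attached to any two such circuits lie on the same $\G$-orbit. They therefore share a common tail up to the $G$-action: $\sigma^a(x)=h\cdot\sigma^b(x')$. Stabilizers of $h$-translates are conjugate, so triviality of the stabilizer of a vertex is shared, and (1) transfers. The main obstacle is the transport argument in (3)$\Rightarrow$(2), specifically keeping track of the restrictions and using pseudo-freeness correctly; the no-entry propagation step itself is routine.
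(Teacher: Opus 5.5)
Your proof is correct and is essentially the paper's argument. As in the paper, a nontrivial stabilizing element is pushed forward along $x$ using the no-entry property and then carried back to $r(x_1)$ by conjugating with group elements from the $G$-circuit structure; the paper does this step by step via $s(\gamma^{(n)})=g_n\cdot r(\gamma^{(n)})$, while you do it at once via $\sigma^{jn}(x)=g_j\cdot x$.

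Your appeals to pseudo-freeness are legitimate, since it is a standing assumption at that point, but they are unnecessary. The element $h$ fixes the path $x_k\cdots x_{jn}$ and hence also its source $r(x_{jn+1})=g_j\cdot r(x_1)$, so $g_j^{-1}hg_j\in G_{r(x_1)}=G_x$ directly. Likewise $G_x\subset G_{r(x_k)}$ is immediate, which is why the paper's proof never uses pseudo-freeness.
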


\bp
Since $x$ has no entry in $L$ by Lemma~\ref{lem:circuits-wo-entry} and its proof, it is easy to see that the stabilizer of $x$ coincides with the stabilizer of $r(x_1)$. It follows that in order to proof equivalence of (1)--(3) it suffices to show that (1) implies (2). Equivalently, we need to show that if the stabilizer of some vertex of $x$ is nontrivial, then the stabilizer of every vertex of $x$ is nontrivial. So assume $h\ne1_G$ stabilizes $r(x_k)$ for some $k\ge1$. Since the path $x_kx_{k+1}\dots$ has no entry in $L$, then $h$ stabilizes this path, hence it stabilizes every vertex $r(x_m)$ with $m\ge k$.

Recall that $x$ is constructed as $\gamma^{(1)}\gamma^{(2)}\dots$ for some $G$-circuits $(g_n,\gamma^{(n)})$ defined inductively from $(g,\gamma)$. Then $r(x_k)$ is a vertex of $\gamma^{(n)}$ for some $n$, while $s(\gamma^{(n)})$ coincides with $r(x_m)$ for some $m\ge k$. Therefore the stabilizer of $s(\gamma^{(n)})$ is nontrivial. Since $s(\gamma^{(n)})=g_n\cdot r(\gamma^{(n)})$, we conclude that the stabilizer of $r(\gamma^{(n)})$ is also nontrivial. Since $r(\gamma^{(n)})=s(\gamma^{(n-1)})$, by repeating this argument we eventually conclude that the stabilizer of $r(\gamma)=r(x_1)$ is nontrivial, hence the stabilizer of $r(x_n)$ is nontrivial for all $n\ge1$.

Finally, take another $G$-circuit $(g',\gamma')$ with vertices in $L$ and no entry in $L$, and let $x'\in E^\infty$ be the corresponding infinite path. Then both~$x$ and~$x'$ lie on the $\G$-orbit of $x_L$ by Corollary~\ref{cor:unique-orbit}, so we have $x'=\mu(g\cdot\sigma^m(x))$ for some $\mu\in E^*$, $g\in G$ and $m\ge0$. Hence $x'$ has some vertices of the form $g\cdot r(x_n)$. Therefore if condition (2) is satisfied for $(g,\gamma)$, then condition (1) is satisfied for $(g',\gamma')$. This implies the last statement of the lemma.
\ep




\bp[Proof of Proposition~\ref{prop:esscent}]
To ease the notation let us write $[x]$ for $[x]_\G$. Let us show first that, given $x\in \partial E$, the map $\Phi$ is injective on $\IsoGx{x}^\circ_x$ if and only if there are no $g\ne1_G$ and $m\ge0$ such that $g$ acts trivially on a neighbourhood of $\sigma^m(x)$ in $\overline{[x]}$.

For the ``if'' direction, assume $\Phi$ is not injective on $\IsoGx{x}^\circ_x$. By the definition of the topology on $\G$ this implies that there is a bisection $Z(\mu,g,\nu,F)$ such that $d(\mu)=d(\nu)$, $g\ne1_G$, $x\in Z_F(\nu)$ and $Z(\mu,g,\nu,F)\cap \G_{\overline{[x]}}\subset \IsoG$. The last condition is equivalent to $\G^y_y\cap Z(\mu,g,\nu,F)\ne\emptyset$ for all $y\in \overline{[x]}\cap Z_F(\nu)$. Then we must have $\mu=\nu$. Put $m:=d(\mu)$ and $v:=r(\sigma^m(x))$. By definition of the set $Z(\mu,g,\nu,F)$ we conclude that $g$ acts trivially on $\overline{[x]}\cap Z_F(v)$.

Conversely, if $g$ and $m$ with properties as above exist, then, since $G\ltimes\partial E$ is an open subgroupoid of $\G$ by Lemma~\ref{lem:trans-embedding}, the element $g$ defines a nontrivial element of $\IsoGx{x}^\circ_{\sigma^m(x)}$ that is killed by $\Phi$. Since $\sigma^m(x)\in[x]$, the map $\Phi$ is not injective on $\IsoGx{x}^\circ_x$ either.

\smallskip

We will now use this criterion to determine when $\G$ has essentially central isotropy. Recall that by Lemma~\ref{lem:iso} it suffices to analyze representatives of the quasi-orbits for the action $\G\curvearrowright \partial E$. By Theorem~\ref{thm:quasio} we have quasi-orbits of four types.

We will treat the vertices in $\BV(G,E)$ and $E^\sing_0(G)$ (cases (ii) and (iv) of Theorem~\ref{thm:quasio}) together. Assume $v$ is such a vertex, so that the set
$F:=vE^1\MT_G(v)$ is finite. Then
$$
vE^*\MT_G(v)\cap Z_F(v)=\{v\}.
$$
This implies that $\overline{[v]}\cap Z_F(v)=\{v\}$. Since $v\notin\operatorname{dom}(\sigma)$, it follows that $\Phi$ is injective on $\IsoGx{v}^\circ_v$ if and only if the stabilizer of $v$ in $G$ is trivial.

Next consider the path $x_L$ for some $L\in\LL(G,E)$. Since $[x_{L}]=[\sigma^{n}(x_L)]$ for any $n\geq0$, we can assume without loss of generality that $x_{L}$ is defined by a $G$-circuit without an entry in $L$. Since the orbit $[x_L]$ is discrete by Lemma~\ref{lem:no-entry-paths}, an element $g\ne 1_G$ acts trivially on a neighbourhood of $\sigma^m(x_{L})$ in $\overline{[x_{L}]}$ if and only if it acts trivially on $\sigma^m(x_{L})$, and since $\sigma^m(x_{L})$ has no entries in $L$, this is equivalent to $g$ fixing $r(\sigma^m(x_{L}))$. It follows that $\Phi$ is injective on $\IsoGx{x_{L}}^\circ_{x_{L}}$ if and only if the stabilizer of every vertex of $x_{L}$ is trivial, which by Lemma~\ref{lem:L-stabilizers} is equivalent to condition~(ii) of the proposition.

Finally, consider a path $x\in E^\infty\cap A_{G,E}$ and let $M:=\MT_G(x)$. Assume $\Phi$ is not injective on $\IsoGx{x}^\circ_x$, so there exist $g\ne1_G$ and $m\ge0$ such that $g$ acts trivially on a neighbourhood of~$\sigma^m(x)$ in  $\overline{[x]}$. Since $\sigma^m(x)$ is an infinite path, we may assume that this neighbourhood has the form $\overline{[x]}\cap Z(\lambda)$. Then the element $h:=g|_\lambda$, which is nontrivial by pseudo-freeness, acts trivially on all paths $y\in\overline{[x]}\cap Z(v)$, where $v:=s(\lambda)$. Since any path in $vE^*M$ can be prolonged to an element of $[x]$, it follows that $h$ acts trivially on $vE^*M$.

Conversely, assume there exist $v\in M$ and $g\ne1_G$ such that $g$ acts trivially on $vE^*M$. It follows that~$g$ acts trivially on $[x]\cap Z(v)$, hence also on $\overline{[x]}\cap Z(v)$. Taking any $y\in[x]$ with $r(y)=v$, we conclude that $\Phi$ is not injective on $\IsoGx{x}^\circ_y$, hence $\Phi$ is not injective on $\IsoGx{x}^\circ_x$ either.
\ep

We are now ready to prove the main result of this section. It will be convenient to fix representatives of quasi-orbits for the action $\G\curvearrowright\partial E$. For $L\in\LL(G,E)$ we have already done this: recall that $x_L\in E^\infty\setminus A_{G,E}$ denotes a fixed path with vertices in $L$ and no entry in $L$. For $M\in\M_0(G,E)$ we take $x_M$ to be any minimal element of $M$ such that $x_ME^1M=\emptyset$. For $M\in \M_\infty(G,E)$ we take $x_M$ to be any path in $E^\infty\cap A_{G,E}$  such that $\MT_G(x)=M$.

Recall from Section~\ref{ssec:parameterization} that  we denote by $\pi_{(x,z)}$, where $z\in\T$ is viewed as a character of $\Z$, the irreducible representation
$\Ind^\G_{\Gxx}(z\circ\Phi|_{\Gxx})$ of $C^*(\G)$. Recall also that for $L\in\LL(G,E)$ we denote by $n_L\ge1$ the $G$-period of $L$, so that $\Phi(\G^{x_L}_{x_L})=n_L\Z$.

\begin{thm}\label{thm:main-rank-one}
Assume $(G,E)$ is a countable pseudo-free self-similar directed graph and consider the corresponding Exel--Pardo groupoid $\G:=\G_{G,E}$. Assume that the stabilizer of every vertex of $E$ in $G$ is amenable and conditions (i)--(iii) of Proposition~\ref{prop:esscent} are satisfied. Then the groupoid $\G$ is amenable and we have a bijection
$$
\M_{\gamma}(G,E)\sqcup (G\backslash\BV(G,E))\sqcup(\LL(G,E)\times\T)\to \Prim C^{*}(\G)
$$
such that $\M_{\gamma}(G,E)\ni M\mapsto\ker\pi_{(x_M,1)}$, $G\backslash\BV(E)\ni G\cdot v\mapsto \ker\pi_{(v,1)}$, $\LL(G,E)\times\T\ni (L,w)\mapsto \ker\pi_{(x_L,z)}$, where $z\in\T$ is any $n_{L}$-th root of $w$.

The topology on  $\Prim C^{*}(\G)$ is described as follows. Consider a sequence of elements $((x_n,z_n))_n$ and an element $(x,z)$, each of the form $(x_M,1)$, $(v,1)$ or $(x_L,z')$. Then we have:
\begin{enumerate}
\item[(i)] if $x=x_M$ ($M\in\M_{\gamma}(G,E)$) or $x=v\in \BV(G,E)$, then $\ker \pi_{(x_n,z_n)} \to \ker \pi_{(x,z)}$ if and only if there exist $y_{n}\in [x_{n}]_\G$ such that $y_{n} \to x$;
\item[(ii)] if $x=x_L$ ($L\in\LL(G,E)$) and $x_{n}\ne x_L$ for all $n$, then $\ker \pi_{(x_n,z_n)} \to \ker \pi_{(x,z)}$ if and only if there exist $y_{n}\in [x_{n}]_\G$ such that $y_{n} \to x$;
\item[(iii)] if $x=x_L$ ($L\in\LL(G,E)$) and $x_{n}=x_L$ for all $n$, then $\ker \pi_{(x_n,z_n)} \to \ker \pi_{(x,z)}$ if and only if  $z_n^{n_{L}}\to z^{n_{L}}$.
\end{enumerate}
\end{thm}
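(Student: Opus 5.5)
The plan is to reduce everything to Theorems~\ref{thm:essentially-central} and~\ref{thm:convfg}, with $\Gamma=\Z$. Amenability of $\G$ is Theorem~\ref{thm:EP-amenability}, since the vertex stabilizers are amenable and $(G,E)$ is countable. By Proposition~\ref{prop:esscent}, conditions (i)--(iii) mean that $\G$ has essentially central isotropy. Hence Theorem~\ref{thm:essentially-central}, together with Corollary~\ref{cor:differen-reps} (which lets us use $\pi_{(x,z)}$ instead of $\omega^z_x$), shows that every primitive ideal is $\ker\pi_{(x,z)}$. Moreover, $\ker\pi_{(x_1,z_1)}=\ker\pi_{(x_2,z_2)}$ if and only if $\overline{[x_1]}=\overline{[x_2]}$ and $z_1=z_2$ on $\Phi(\IsoGx{x_1}^\circ_{x_1})$. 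By Corollary~\ref{cor:very-dense} we may take $x$ among the representatives $x_M$, $v\in\BV(G,E)$, $x_L$ of quasi-orbits, and Theorem~\ref{thm:quasio} says these representatives are in bijection with $\M(G,E)\sqcup G\backslash\BV(G,E)=\M_\gamma(G,E)\sqcup\LL(G,E)\sqcup G\backslash\BV(G,E)$.

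The remaining work is to compute the essential isotropy image $\Phi(\IsoGx{x}^\circ_x)\subset\Z$ for each type of representative.
\begin{itemize}
\item For $x=x_M$ with $M\in\M_\infty(G,E)$, we have $x\in A_{G,E}$, so $\Phi(\Gxx)=0$.
\item For a singular vertex $v$ (the cases $\M_0$ and $\BV$), $v\notin\operatorname{dom}\sigma$, so again $\Phi=0$ on $\G^v_v$.
\end{itemize}
In these cases the character $z$ is irrelevant, and we may take $z=1$.

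For $x_L$ the orbit is discrete by Lemma~\ref{lem:no-entry-paths}. So $\{x_L\}$ is open in $\overline{[x_L]}$, every element of $\G^{x_L}_{x_L}$ lies in the interior of the isotropy of $\G_{\overline{[x_L]}}$, and hence $\Phi(\IsoGx{x_L}^\circ_{x_L})=n_L\Z$. Two characters $z,z'$ then agree on this group if and only if $z^{n_L}=z'^{n_L}$. This gives the parametrization by $w=z^{n_L}\in\T$ and establishes the bijection.

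For the topology, apply Theorem~\ref{thm:convfg}(3) with $x$ the limit representative.
\begin{itemize}
\item In cases (i), the essential isotropy of $x$ is trivial, so the character condition is vacuous. Convergence is then exactly the existence of $y_n\in[x_n]$ with $y_n\to x$.
\item In case (iii), we may take $y_n=x_L$, and $S_n\supset n_L\Z$ eventually for each fixed element. Convergence along $S_n$ therefore means $z_n^{kn_L}\to z^{kn_L}$ for all $k$, which is equivalent to $z_n^{n_L}\to z^{n_L}$. For the converse direction, any $y_n\to x_L$ in the orbit must eventually equal $x_L$, because $x_L$ is isolated in its orbit closure.
\end{itemize}

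The main obstacle is case (ii): $x_n\ne x_L$ but $y_n\to x_L$. Here we must show that the character condition in Theorem~\ref{thm:convfg}(3) is automatic. This amounts to showing that, for a fixed $g\in\G^{x_L}_{x_L}$ with $\Phi(g)\neq0$ and its bisection $W_g$, the sets $W_g\cap\G^{y_n}_{y_n}$ are eventually empty, possibly after replacing $y_n$ by other points on the same orbits.

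The plan for this step is as follows. Take $y_n\to x_L$ with $y_n\notin[x_L]$. Since $x_L$ has no entry in $L$ and $y_n\in Z(x_L(0,m))$ with $y_n\ne x_L$, each $y_n$ must eventually leave $x_L$. Either it exits $L$ or it stops at a vertex, and in both cases it cannot be fixed by the map $\nu x\mapsto\mu(h\cdot x)$ defining $W_g$ once the common prefix is long. The reason is that such a fixed point $y_n$ would satisfy $\sigma^{|\mu|}(y_n)=h\cdot\sigma^{|\nu|}(y_n)$ with $|\mu|\ne|\nu|$, which forces $y_n$ to be $G$-periodic with the same prefix. By the uniqueness in Lemma~\ref{lem:G-circuits}, $y_n$ would then coincide with $x_L$. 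Hence $S_n$ is eventually contained in $\{0\}$ for each given element, and (ii) reduces to convergence in the quasi-orbit space, as claimed.
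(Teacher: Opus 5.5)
Your proposal is correct and follows essentially the same route as the paper: amenability and essential centrality feed into Theorems~\ref{thm:essentially-central} and~\ref{thm:convfg}, and the essential isotropy images are $0$ or $n_L\Z$, using discreteness of $[x_L]$. Case (ii) is then handled exactly as in the paper, by observing that a unit fixed by a basic bisection $Z(\mu,h,\nu)$ through $x_L$ with $|\mu|\ne|\nu|$ is the unique path attached to a $G$-circuit and hence equals $x_L$. This uniqueness argument already gives $W_g\cap\G^{y_n}_{y_n}=\emptyset$ for every $y_n\in[x_n]$, not just eventually, so your preliminary discussion of $y_n$ leaving $x_L$ or exiting $L$ is unnecessary.
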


\bp
We write $[y]$ for $[y]_\G$. The groupoid $\G$ is amenable by Theorem~\ref{thm:EP-amenability}. By Proposition~\ref{prop:esscent} it has essentially central isotropy. By Theorem~\ref{thm:essentially-central}, Corollary~\ref{cor:differen-reps} and the discussion following it, the points of the primitive spectrum can then be indexed by the pairs $(x,z)$, where $x$ is a representative of a quasi-orbit for the action $\G\curvearrowright\partial E$ and $z\in\T$, and what matters is only the class of $z$ in $\T/\Phi(\IsoGx{x}^\circ_x)^\perp$.

For our choices of representatives of quasi-orbits the essential isotropy groups are described as follows. If $v\in\BV(G,E)\cup E^\sing_0(G)$, then, as we saw in the proof of Proposition~\ref{prop:esscent}, the vertex~$v$ is an isolated point of $[v]$, so by condition (i) of that proposition we have $\G^v_v=\IsoGx{v}^\circ_v=\{v\}$. If $L\in\LL(G,E)$, then $x_L$ is an isolated point of $[x_L]$, so $\G^{x_L}_{x_L}=\IsoGx{x_L}^\circ_{x_L}$ and, by definition, $\Phi(\G^{x_L}_{x_L})=n_L\Z$. Finally, if $x\in E^\infty\cap A_{G,E}$, then essential centrality and the definition of $A_{G,E}$ mean that $\IsoGx{x}^\circ_x=\{x\}$, while $\Gxx$ can in principle be very large. Therefore the bijection in the statement of the theorem follows from Theorem~\ref{thm:essentially-central} (recall also the discussion preceding Proposition~\ref{prop:central-necessity}) and the description of the quasi-orbit space $(\G\backslash\partial E)^\sim$ given in Theorem~\ref{thm:quasio}.

\smallskip

In order to describe the topology we apply Theorem~\ref{thm:convfg}. Case (i) follows immediately from this theorem, since $\IsoGx{x}^\circ_x=\{x\}$. Case (iii) is also easy, since the orbit $[x_L]$ is discrete and therefore the only choice of points $y_n\in[x_n]=[x_L]$ converging to $x_L$ is $y_n=x_L$.

Consider case (ii). Take a number $k\ge1$ divisible by $n_L$. Then we can find $m\ge0$ and $g\in G$ such that $\sigma^k(\sigma^m(x_L))=g\cdot\sigma^m(x_L)$. Put
$$
\gamma:=x_{L,m+1}\dots x_{L,m+k},\quad\mu:=x_{L,1}\dots x_{L,m+k},\quad \nu:=x_{L,1}\dots x_{L,m}.
$$
Then $(g,\gamma)$ is a $G$-circuit of length $k$ and $\sigma^m(x_L)$ is the corresponding infinite path. Consider the bisection $W_k:=Z(\mu,g,\nu)$, which contains the element $(x_L,\TT_{m+k}([g|_{\sigma^{m}(x_L)}]),k,x_L)\in\IsoGx{x_L}^\circ_{x_L}$. If $y_n\in[x_n]$ and $W_k\cap\G^{y_n}_{y_n}\ne\emptyset$, then $y_n=\mu(g\cdot\sigma^m(y_n))=\nu\sigma^m(y_n)$, hence also $\sigma^k(\sigma^m(y_n))=g\cdot \sigma^m(y_n)$. It follows that $\sigma^m(y_n)$ is the path corresponding to the $G$-circuit $(g,\gamma)$, that is, $\sigma^m(y_n)=\sigma^m(x_L)$, and then $y_n=\nu\sigma^m(y_n)=x_L$. This shows that $[x_n]=[x_L]$, contradicting our assumption that $x_n$ and $x_L$ represent different quasi-orbits.

Therefore for any choice of points $y_n\in[x_n]$ and any $k\ge1$ divisible by $n_L$, we have $W_k\cap\G^{y_n}_{y_n}=\emptyset$, hence also $W_k^{-1}\cap\G^{y_n}_{y_n}=\emptyset$. By Theorem~\ref{thm:convfg} we conclude that $\ker \pi_{(x_n,z_n)} \to \ker \pi_{(x,z)}$ if and only if there exist $y_{n}\in [x_{n}]$ such that $y_{n} \to x$.
\ep

Existence of paths $y_n\in[x_n]$ converging to $x$ in the theorem is equivalent to convergence $\QQ(x_n)\to\QQ(x)$ in $(\G\backslash\partial E)^\sim$. This convergence can be entirely described in terms of maximal $G$-tails and finite paths. The whole list of rules is hardly illuminating and we will not attempt to compile it here, but the main source of difficulties are the vertices $x$ in $\BV(G,E)$ and $E^\sing_0(G)$, while for all other types of quasi-orbits there is the following simple description.

\begin{lemma}\label{lem:convergence}
Assume we are given a sequence $(x_n)_n$ in $\partial E$ and a maximal $G$-tail $M\in\M_\infty(G,E)\cup\LL(G,E)$. Then the following conditions are equivalent:
\begin{enumerate}
  \item there exists a sequence of elements $y_n\in[x_n]_\G$ such that $y_n\to x_M$;
  \item for every vertex $v\in M$, we have $v\in \MT_G(x_n)$ for all $n$ large enough.
\end{enumerate}
\end{lemma}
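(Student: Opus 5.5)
We prove the two implications separately, working throughout with the cylinder sets $Z(\lambda)$, $\lambda\in E^*$, which form a neighbourhood base of the infinite path $x_M$. Both for $M\in\LL(G,E)$ and for $M\in\M_\infty(G,E)$ the representative $x_M$ is an infinite path with $\MT_G(x_M)=M$ (Lemmas~\ref{lem:no-entry-paths} and~\ref{lem:M-infinity-paths}). Hence convergence $y_n\to x_M$ means exactly that for every $k$ we have $y_n\in Z(x_{M,1}\dots x_{M,k})$ for all $n$ large enough.

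\emph{(1)$\Rightarrow$(2).} Fix $v\in M=\MT_G(x_M)$. Since $\MT_G(x_M)$ consists of the vertices $w$ with $w\ge g\cdot u$ for some vertex $u$ of $x_M$ (Lemma~\ref{lem:Gtails}), we can find $k$, $g\in G$ and a path $\lambda\in vE^*(g\cdot r(x_{M,k+1}))$. For $n$ large, $y_n=x_{M,1}\dots x_{M,k}y_n'$ with $r(y_n')=r(x_{M,k+1})$, and $y_n'\in[x_n]_\G$. Then $\lambda(g\cdot y_n')\in[x_n]_\G$ by Lemma~\ref{lem:EP-orbit}, so $v\in\MT_G(x_n)$. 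This direction is routine.

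\emph{(2)$\Rightarrow$(1).} Here we construct $y_n$ by hand. The vertices $r(x_{M,k})$ lie in $M$, so for each $k$ there is $N_k$ with $r(x_{M,k+1})\in\MT_G(x_n)$ for $n\ge N_k$; we may take $N_1<N_2<\dots$. For $N_k\le n<N_{k+1}$ we pick $z_n\in[x_n]_\G$ with $r(z_n)=r(x_{M,k+1})$, which is possible because $\MT_G(x_n)=\{r(y):y\in[x_n]_\G\}$. We then put $y_n:=x_{M,1}\dots x_{M,k}z_n\in[x_n]_\G$, and set $y_n$ arbitrary for $n<N_1$. Then $y_n\in Z(x_{M,1}\dots x_{M,k})$ for all $n\ge N_k$, so $y_n\to x_M$. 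The only thing to check is that $x_M$ is an infinite path, so that arbitrarily long initial segments exist. This is exactly where the hypothesis $M\notin\M_0(G,E)$ enters, since for $M\in\M_0(G,E)$ the representative is a vertex with a more complicated neighbourhood base.

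The only potential obstacle is making sure that the basic neighbourhoods of an infinite path are indeed just the cylinders $Z(\lambda)$, with no sets $Z_F(\lambda)$ needed. This holds because for an infinite path any $Z_F(\lambda)$ containing it contains the longer cylinder $Z(x_{M,1}\dots x_{M,m})$, where $m$ exceeds the lengths of all elements of $\lambda F$. With that observation both directions are direct.
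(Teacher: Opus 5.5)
Your proof is correct and follows the paper's argument essentially verbatim: (1)$\Rightarrow$(2) writes $v$ as the range of $\lambda(g\cdot\sigma^k(x_M))\in[x_M]_\G$ and replaces $x_M$ by $y_n$, and (2)$\Rightarrow$(1) uses the same diagonal construction $y_n=x_{M,1}\dots x_{M,k}z_n$ with increasing thresholds $N_k$. The only fix needed is that for $n<N_1$ the element $y_n$ must still lie in $[x_n]_\G$, so take for instance $y_n:=x_n$ rather than an arbitrary path.
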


\bp
(1)$\Rightarrow$(2) Take a vertex $v\in M$. Let $y\in[x_M]_\G$ be such that $r(y)=v$. Then $y$ has the form $\mu(g\cdot\sigma^m(x_M))$. Since $y_n\to x_M$, for all $n$ large enough the elements $\mu(g\cdot\sigma^m(y_n))$ are well-defined and have range $v$, hence $v\in M_G(x_{n})$.

(2)$\Rightarrow$(1) Choose a strictly increasing sequence of natural numbers $m_k$ such that $v_k:=r(x_{M,k})\in M_n$ for all $n\ge m_k$. For $m_k\le n< m_{k+1}$, choose $y_n'\in[x_n]_\G$ with $r(y_n')=v_k$, and put $y_n:=x_{M,1}\dots x_{M,k-1}y_n'$. Then $y_n\to x_M$.
\ep

Note that by Proposition~\ref{prop:central-necessity}, if some of the conditions (i)--(iii) of Proposition~\ref{prop:esscent} are not satisfied, then the ideal structure is certainly more complicated compared to that given by Theorem~\ref{thm:main-rank-one}, and to unravel it one has to analyze more carefully the kernels of the maps $\Phi|_{\Gxx}\colon \Gxx\to\Z$, see Remark~\ref{rem:isotropy}. Simple examples, such as the one below, show that one should expect that these conditions do \emph{not} hold more often than they do, but at the same time they put no restrictions on the individual stabilizers of vertices or infinite paths.

\begin{example}
Consider the finite directed graph $E_{n,m}$ shown in Figure~\ref{gr:Enm}, with $n,m\ge2$.
\begin{figure}[ht]
\begin{tikzpicture}

\coordinate (v0) at (0,0);
\coordinate (v1) at (2.2,1.6);
\coordinate (v2) at (0,2.6);
\coordinate (vn) at (-2.2,1.6);

\fill (v0) circle (2pt) node[below right=2pt] {$v_0$};
\fill (v1) circle (2pt) node[right=4pt] {$v_1$};
\fill (v2) circle (2pt) node[above=4pt] {$v_2$};
\fill (vn) circle (2pt) node[left=4pt] {$v_n$};


\draw[midarrow] (v1) -- (v0);
\draw[midarrow] (v2) -- (v0);
\draw[midarrow] (vn) -- (v0);

\def\R{9mm}  

\path (v1) ++(36:\R) coordinate (c1);
\draw[->, shorten >=0pt, shorten <=0pt]
  (c1) ++(36:\R) arc[start angle=36, end angle=396, radius=\R];

\path (v2) ++(90:\R) coordinate (c2);
\draw[->, shorten >=0pt, shorten <=0pt]
  (c2) ++(90:\R) arc[start angle=90, end angle=450, radius=\R];

\path (vn) ++(144:\R) coordinate (cN);
\draw[->, shorten >=0pt, shorten <=0pt]
  (cN) ++(144:\R) arc[start angle=144, end angle=504, radius=\R];

\draw[midarrow]
  (v0)
    .. controls +(-3,-0.5) and +(-0.6,-2.5)
    .. (v0);
\node[below left=4pt] at ($(v0)+(-0.9,-1.0)$) {$e_1$};

\begin{scope}[shift={(v0)}, xscale=-1]
  \draw[midarrow] (0,0) .. controls +(-0.6,-2.5) and +(-3,-0.5) .. (0,0);
\end{scope}
\node[below right=4pt] at ($(v0)+(0.9,-1.0)$) {$e_m$};

\node at (0,-1) {$\cdots$};
\node at (-0.88,2.24) {$\cdot$};
\node at (-1.1,2.14) {$\cdot$};
\node at (-1.32,2.04) {$\cdot$};
\end{tikzpicture}
\caption{Graph $E_{n,m}$}\label{gr:Enm}
\end{figure}
Every graph automorphism of $E_{n,m}$ permutes the vertices $v_1,\dots,v_n$ and the edges $e_1,\dots,e_m$, and is completely determined by these permutations, so $\Aut(E_{n,m})\cong S_n\times S_m$. Fix a subgroup $G\subset \Aut(E_{n,m})$. Then the maximal $G$-tails in $E_{n,m}$ are the sets $\{v_0\}$ and $\{v_0\}\cup O$, where~$O$ is a $G$-orbit in $\{v_1,\dots.v_n\}$. Combining Corollary \ref{cor:Gmaxt}$(2)$ with Lemma \ref{lem:M-infinity-paths}$(2)$ we see that the maximal $G$-tail~$\{v_0\}$ lies in $\M_\infty(G,E)$, while Lemma \ref{lem:circuits-wo-entry} implies that all other ones lie in~$\LL(G,E)$. It follows that conditions (i)--(iii) of Proposition~\ref{prop:esscent} are equivalent to the following two conditions:
\begin{enumerate}
  \item[(a)] the action of $G$ on $\{v_1,\dots,v_n\}$ is free;
  \item[(b)] the action of $G$ on $\{e_1,\dots,e_m\}$ is effective.
\end{enumerate}
Condition (a) already implies that $|G|\le n$ independently of how large $m$ and $\Aut(E_{n,m})$ are. 
At the same time it is easy to embed any finite group into $\Aut(E_{n,m})$ for $n$ and $m$ large enough in such a way that conditions~(a) and~(b) are satisfied.

If conditions (a) and (b) are satisfied, then by Theorem~\ref{thm:main-rank-one} we conclude that $$\Prim C^*(\G_{G,E_{n,m}})=\Prim(\OO_{E_{n,m}}\rtimes G)$$ can be identified with the union of $\{v_0\}$ and the sets $\{O\}\times\T$, where  $O$ runs through the set of $G$-orbits in $\{v_1,\dots.v_n\}$. The topology is described as follows: the point $v_0$ is closed and the closure of $\{O\}\times X$ is $\{v_0\}\cup(\{O\}\times\bar X)$ for any nonempty subset $X\subset\T$, where $\bar X\subset\T$ is the closure of $X$ in the standard topology.

We can also consider similar graphs with an infinite number of vertices $v_i$ and/or edges~$e_j$, but we need to assume that if $n=\infty$ then also $m=\infty$, as otherwise $v_0$ becomes a $G$-breaking vertex for any choice of $G$. Any countable group $G$ can be realized as a subgroup of $\Aut(E_{\infty,\infty})$ satisfying conditions (a) and (b). Moreover, any countable collection of subgroups of $G$ can be realized as stabilizers of some of the edges $e_j$, hence as stabilizers of some infinite paths. In this case in order to apply Theorem~\ref{thm:main-rank-one} we need $G$ to be amenable, and then the topology on the primitive spectrum is described similarly to the case of finite $n$ and $m$.

\smallskip

If conditions (a) and (b) are not satisfied, then it is still possible to describe $\Prim(\OO_{E_{n,m}}\rtimes G)$, at least for groups $G$ of local polynomial growth, using results of~\cite{CN4}. Namely, assume for simplicity that $m=\infty$ if $n=\infty$. Denote by $H\subset G$ the subgroup of elements that leave all edges $e_j$ invariant. For every orbit $O$ of $G$ in $\{v_i\}_i$, fix a vertex in $O$ and denote by $H_O\subset G$ its stabilizer. Then, using~\cite{CN4}*{Theorem~4.2}, one can show with some work that $\Prim(\OO_{E_{n,m}}\rtimes G)$ can be identified with
$$
\Prim C^*(H)\sqcup\bigsqcup_O(\T\times\Prim C^*(H_O)),
$$
and the topology on it is described by the following rules. The set $\Prim C^*(H)$ is closed in $\Prim(\OO_{E_{n,m}}\rtimes G)$, and the relative topology on it coincides with the Jacobson topology. If $\Omega_O\subset \T\times\Prim C^*(H_O)$ are some subsets, at least one of which is nonempty, then the closure of $\bigcup_O\Omega_O$ in $\Prim(\OO_{E_{n,m}}\rtimes G)$ consists of the closures $\bar\Omega_O\subset \T\times\Prim C^*(H_O)$ in the product-topology together with all ideals $J\in\Prim C^*(H)$ such that
\begin{equation}\label{eq:weak-graph-example}
\pi_J\prec\bigoplus_{O}\bigoplus_{g\in G/H_O}\bigoplus_{(z,I)\in\Omega_O}\Ind^H_{H\cap H_O}((\pi_I\circ\Ad g)|_{H\cap H_O}).
\end{equation}

In fact, it can be checked that this description of $\Prim(\OO_{E_{n,m}}\rtimes G)$ remains true for any amenable countable group $G$ such that $H\subset G$ is a subgroup of local polynomial growth. Note also that when $H$ is finite, condition~\eqref{eq:weak-graph-example} simplifies by Frobenius reciprocity to the requirement that the representations $\pi_J|_{H\cap H_O}$ and $(\pi_I\circ\Ad g)|_{H\cap H_O}$ are not disjoint for some orbit~$O$, $g\in G$ and $(z,I)\in\Omega_O$.\ee
\end{example}

Our description of the primitive spectrum gives in particular a criterion for simplicity of the algebras~$C^*(\G_{G,E})$. However, it is easier to obtain such a criterion using more traditional methods, for which one needs only a part of our considerations, cf.~\cite{MR3581326}*{Sections~13--14}, \cite{KM}*{Theorem~9.35}.

\begin{prop}\label{prop:simple-rank-one}
Assume $(G,E)$ is a countable pseudo-free self-similar directed graph and consider the corresponding Exel--Pardo groupoid $\G:=\G_{G,E}$. Assume that the following conditions are satisfied:
\begin{itemize}
  \item[(i)] the only maximal $G$-tail is $E^0$;
  \item[(ii)] there are no $G$-circuits without an entry in $E^0$;
  \item[(iii)] for every vertex $v\in E^0$, there are no elements $g\ne1_G$ that act trivially on $vE^*$.
\end{itemize}
Then $C^*_r(\G)$ is simple. If the stabilizer of every vertex of $E$ in $G$ is amenable, then conversely, simplicity of $C^*_r(\G)$ implies conditions (i)--(iii).
\end{prop}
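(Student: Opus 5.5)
Both directions run through the standard simplicity criterion for étale groupoids: $C^*_r(\G)$ is simple if and only if $\G$ is minimal and effective, provided $\G$ is amenable (or at least inner exact) in the converse direction. For sufficiency, Corollary~\ref{cor:minimality} shows that condition (i) is equivalent to minimality of $\G\curvearrowright\partial E$. It then remains to show that (ii) and (iii) imply effectiveness, i.e.\ $\IsoG^\circ=\partial E$. Since $\G$ is minimal, the whole unit space is the closure of a single orbit. By Lemma~\ref{lem:ess-injectivity} and the residuality argument of Lemma~\ref{lem:nice-points}, it suffices to find a dense set of units $x$ with trivial $\IsoG^\circ_x$. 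We may equally show directly that no open bisection $Z(\mu,g,\nu,F)$ lies in the isotropy unless it is a set of units.

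Take such a bisection contained in $\IsoG$. There are two cases, according to the degree.
\begin{itemize}
\item If $d(\mu)=d(\nu)$, then $\mu=\nu$. Exactly as in the proof of Proposition~\ref{prop:esscent}, the element $g$ fixes every boundary path in $Z_F(s(\nu))$. Because $E^0$ is the only maximal $G$-tail, every path in $vE^*$ extends to a boundary path. Using pseudo-freeness and shrinking to a cylinder $Z(\lambda)$ avoiding $F$, the restriction $g|_\lambda$ is nontrivial and acts trivially on $s(\lambda)E^*$. This contradicts (iii). The case where the cylinder reduces to an isolated singular vertex needs separate care: under (i), such a vertex $v$ would give a maximal $G$-tail in $\M_0$ or a $G$-breaking vertex, and Corollary~\ref{cor:minimality} and its proof exclude both.
\item If $d(\mu)\ne d(\nu)$, then every $y$ in an open set satisfies $\sigma^{m}(y)=g\cdot\sigma^n(y)$ with $m\ne n$. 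By Lemma~\ref{lem:uncountable} the non-$G$-aperiodic paths are countable, so this open set must consist of isolated points. An isolated infinite path $y$ with $\sigma^m(y)=g\cdot\sigma^n(y)$ yields a $G$-circuit, via Lemma~\ref{lem:G-circuits}. Isolatedness together with $M=E^0$ forces this circuit to have no entry in $E^0$, which contradicts (ii).
\end{itemize}
So $\G$ is effective, and $C^*_r(\G)$ is simple by the standard result (cf.\ Proposition~\ref{prop:gauge} with trivial grading, which already gives that every ideal is dynamical; minimality then leaves only the trivial ones, and inner exactness is not needed for this direction).

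For the converse, assume all vertex stabilizers are amenable, so that $\G$ is amenable by Theorem~\ref{thm:EP-amenability}; we will also need it to be second countable. Simplicity forces minimality, since a nontrivial invariant open set gives a dynamical ideal; Corollary~\ref{cor:minimality} then yields (i). Simplicity also forces effectiveness: otherwise the construction in the proof of Proposition~\ref{prop:gauge} gives a nonzero ideal meeting $C_0(\partial E)$ trivially. Effectiveness contradicts a failure of (iii), because such a $g$ gives a nontrivial open bisection of isotropy in $G\ltimes\partial E$, using Lemma~\ref{lem:trans-embedding}. It also contradicts a failure of (ii): a $G$-circuit without entry gives, by Lemma~\ref{lem:no-entry-paths}, a path $x_{E^0}$ with discrete orbit, hence an isolated point. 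Its isotropy group then contains an element of nonzero degree that lies in $\IsoG^\circ$.

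The main obstacle is the degree-$\ne0$ case in the sufficiency direction: showing that an open set of $G$-periodic paths produces a $G$-circuit without an entry. To get this, I would argue that an open set $Z_F(\lambda)$ all of whose infinite paths are eventually $G$-periodic must be countable. Under minimality, every vertex then has only finitely many continuations inside $E^0$, and an exit would generate uncountably many paths by the binary-tree construction in Proposition~\ref{prop:Gmaxt}. Consequently the paths in this set have no entry, and Lemma~\ref{lem:circuits-wo-entry} then produces the forbidden circuit.
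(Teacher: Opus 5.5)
Your overall strategy is sound, and it differs from the paper's. The paper reuses its machinery in both directions:
\begin{itemize}
\item For sufficiency, under (i) the conditions (ii)--(iii) give conditions (i)--(iii) of Proposition~\ref{prop:esscent}, hence essentially central isotropy. Since $\LL(G,E)=\emptyset$, Theorem~\ref{thm:quasio} supplies a $G$-aperiodic path $x$ with dense orbit, so $\Iso_x=\{x\}$ and effectiveness follows by density.
\item For the converse, effectiveness plus minimality gives essential centrality, and (ii), (iii) are read off from Proposition~\ref{prop:esscent}. A separate pseudo-freeness argument is needed when $E^0=\MT_G(w)$ for a source $w$.
\end{itemize}
You instead test effectiveness directly on basic bisections $Z(\mu,g,\nu,F)$ contained in the isotropy, and in the converse you directly build a non-unit open piece of isotropy from a failure of (ii) or (iii). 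Your converse is correct and even avoids the paper's case split. Your sufficiency argument avoids the quasi-orbit classification at the cost of a case analysis, but two steps in it are misjustified.

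First, your claim that (i) excludes an isolated singular vertex, because it would give a tail in $\M_0(G,E)$, is false. Condition (i) allows $E^0\in\M_0(G,E)$; this is the paper's alternative $E^\sing_0(G)=G\cdot w$. For example, take $E^0=\{u,w\}$, one edge $e$ with $r(e)=u$, $s(e)=w$, and trivial $G$; then $\partial E=\{w,e\}$ and $C^*_r(\G)\cong M_2(\C)$. No separate treatment is needed, however. If $Z_F(s(\nu))\cap\partial E$ contains a path $\lambda$ ending at a source, then $Z(\lambda)\cap\partial E=\{\lambda\}$ is a cylinder inside it. The element $g|_\lambda\ne1_G$ fixes $s(\lambda)$, and $s(\lambda)E^*=\{s(\lambda)\}$, so your main argument already contradicts (iii).

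Second, for $d(\mu)\ne d(\nu)$, countability of $\partial E\setminus A_{G,E}$ does not imply that an open set of such paths consists of isolated points. Take a vertex with a loop $a$ and an edge $e$ to a vertex with a loop $b$: then $a^\infty$ is the limit of the paths $a^keb^\infty$. Your binary-tree sketch does not repair this, since paths in such a set can perfectly well have entries. Baire category would give one isolated point, which suffices, but there is a much shorter argument:
\begin{itemize}
\item Suppose $|\mu|>|\nu|$. Isotropy forces $\mu=\nu\mu'$ and $\mu'(g\cdot x)=x$ for all $x\in Z_F(s(\nu))\cap\partial E$. Finite $x$ are excluded by length, and by the uniqueness part of Lemma~\ref{lem:G-circuits} there is only one infinite solution. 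The case $|\mu|<|\nu|$ is symmetric.
\item So the source of the bisection is a single point, hence isolated. This is the same trick as in case (ii) of the proof of Theorem~\ref{thm:main-rank-one}.
\item The $G$-circuit $(g,\mu')$ then has no entry. An entry at one of its vertices would, by $G$-equivariance, reappear in every block $\gamma^{(n)}$ of $x=\gamma^{(1)}\gamma^{(2)}\cdots$, producing boundary paths arbitrarily close to $x$. This contradicts (ii).
\end{itemize}
With these two repairs your proof goes through.
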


\bp
By Corollary~\ref{cor:minimality}, condition (i) is equivalent to minimality of~$\G$. It is a minimal condition that one needs for simplicity of $C^*_r(\G)$, so assume from now on that it is satisfied. Recall from the proof of Corollary~\ref{cor:minimality} that then there are no $G$-breaking vertices. Observe also that in this case the set $E^\sing_0(G)$ coincides with the set $E^\sing_0:=\{v\in E^0: vE^1=\emptyset\}$ of sources in $E$. By Lemma~\ref{lem:M0} we conclude that either $\mathcal{M}_{\infty}(G,E)\cup\LL(G,E)=\{E^0\}$ and $E^\sing_0(G)=\emptyset$, or $\mathcal{M}_{\infty}(G,E)=\LL(G,E)=\emptyset$ and $E^\sing_0(G)=G\cdot w$ for a source $w$.

We claim that then conditions (ii) and (iii) are equivalent to effectiveness of~$\G$. Indeed, if these conditions are satisfied, then the above discussion implies that conditions (i)--(iii) of Proposition~\ref{prop:esscent} are satisfied and therefore $\G$ has essentially central isotropy. Since $\LL(G,E)=\emptyset$ by condition (ii), cases (ii) and (iii) of Theorem~\ref{thm:quasio} imply that there is a path $x\in A_{G,E}$ with dense orbit. By the definition of the set $A_{G,E}$ and essential centrality of the isotropy this means that $\Iso_x=\{x\}$. By density of the orbit of $x$ we then have $\IsoG^\circ_{y}=\{y\}$ for all $y\in \Gu$, so~$\G$ is effective.

Conversely, assume that $\G$ is effective. Since $\G$ is minimal by assumption, it then also has essentially central isotropy. Condition (ii) must hold, since the elements $x_{L}$ for $L\in\LL(G,E)$ satisfy $\Iso_{x_{L}} \neq \{x_{L}\}$. If $E^{0} \in \mathcal{M}_{\infty}(G,E)$, then (iii) is clearly satisfied by Proposition~\ref{prop:esscent}. By the first paragraph of the proof, the only other option is that $E^{0}=\MT_{G}(w)$ for some $w\in E^\sing_0(G)$. By Proposition~\ref{prop:esscent} the vertex $w$ has trivial stabilizer in $G$. Take any vertex $v\in E^0$. Then there exists $h\in G$ such that $vE^{*}(h\cdot w)\ne\emptyset$. Since $h\cdot w$ has trivial stabilizer in $G$, we then conclude that there are no elements $g\ne1_G$ that act trivially on $vE^*$, so condition (iii) is satisfied in this case as well. Thus, our claim is proved.

Now, it is well-known that if $\G$ is minimal and effective, then $C^*_r(\G)$ is simple~\cite{Rbook}*{Proposition~II.4.6}, and if $\G$ is amenable, then the converse is true as well~\cite{MR3189105}*{Theorem~5.1} (note also that Proposition~\ref{prop:gauge} can be viewed as a generalization of these facts to graded groupoids). This proves the proposition.
\ep

\bigskip

\section{Self-similar \texorpdfstring{$k$}{k}-graphs}\label{sec:k-graphs}

In this section we fix a countable self-similar row-finite $k$-graph $(G,\Lambda)$ without sources.

\subsection{The primitive spectrum: first take}
Denote by $\G$ the Exel--Pardo groupoid $\G_{G,\Lambda}$. We proceed similarly to the rank one case. The assumptions of row-finiteness and source-freeness make the first several steps even easier compared to the previous section. In particular, the space~$\partial\Lambda$ coincides with the space $\Lambda^\infty$ of infinite paths, that is, paths $x$ such that $d(x)=(\infty,\dots,\infty)$.

\smallskip

As before, given vertices $v,w\in\Lambda^0$, we write $v\ge w$ if $v\Lambda w\ne\emptyset$. The notion of a maximal tail for row-finite $k$-graphs without sources~\cite{MR3189779} has the following $G$-equivariant extension.

\begin{defn}\label{def:Gmaxhigh}
A nonempty $G$-invariant subset $M \subset \Lambda^{0}$ is called a \emph{maximal $G$-tail} if it satisfies the following conditions:
\begin{enumerate}
\item[(i)] if $v\in \Lambda^{0}$ and $v\ge w$ for some $w\in M$, then $v\in M$;
\item[(ii)] if $v \in M$, then $v\Lambda^{e_{i}}M\ne\emptyset$ for all $1\leq i \leq k$, where $e_1,\dots, e_k$ denote the standard generators of $\Z^k_+$;
\item[(iii)] for every $v,w \in M$, there exist $g\in G$ and $u\in M$ such that $v\ge u$ and $w\ge g\cdot u$.
\end{enumerate}
We denote by $\M(G,\Lambda)$ the set of maximal $G$-tails in $(G,\Lambda)$.
\end{defn}

Define for each $x\in \Lambda^\infty$ a subset $\MT_G(x)$ of $\Lambda^{0}$ by
$$
\MT_G(x):= \{ r(y) : y \in  [ x]_{\mathcal{G}} \}.
$$
Similarly to Lemma~\ref{lem:EP-orbit}, the $\G$-orbit $[x]_\G$ consists of all elements of the form $\mu (g\cdot \sigma^{n}(x))$, with $n\in\Z^k_+$, $g\in G$ and $\mu\in\Lambda(g\cdot x(n))$.

The following proposition completely describes the quasi-orbit structure of the action $\G\curvearrowright\Lambda^\infty$ in terms of maximal $G$-tails, cf.~Theorem~\ref{thm:quasio}.

\begin{prop}\label{prop:max-tail}
Let $(G,\Lambda)$ be a countable self-similar row-finite $k$-graph without sources, and consider the corresponding Exel--Pardo groupoid $\G:=\G_{G,\Lambda}$. Then, for every $x\in\Lambda^\infty$, the set~$\MT_G(x)$ is the smallest maximal $G$-tail containing all vertices of $x$. The map $\Lambda^\infty\to \M(G,\Lambda)$, $x\mapsto\MT_G(x)$, is surjective, and $\MT_G(x)=\MT_G(y)$ for $x,y\in \Lambda^{\infty}$ if and only if $\overline{[x]}_{\mathcal{G}}=\overline{[y]}_{\mathcal{G}}$. Therefore we get a bijection between $(\G\backslash\Lambda^\infty)^\sim$ and $\M(G,\Lambda)$.
\end{prop}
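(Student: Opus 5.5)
We follow the proof of Lemma~\ref{lem:Gtails} and Proposition~\ref{prop:Gmaxt}. Row-finiteness and the absence of sources make several steps simpler.

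\emph{Step 1: $\MT_G(x)$ is a maximal $G$-tail.} The set $\MT_G(x)$ is $G$-invariant. Condition (i) holds because $\mu y\in[x]_\G$ whenever $y\in[x]_\G$ and $s(\mu)=r(y)$. For condition (ii), take $y\in[x]_\G$ and $1\le i\le k$; then $y(0,e_i)\in r(y)\Lambda^{e_i}$ and $s(y(0,e_i))=r(\sigma^{e_i}(y))\in\MT_G(x)$. For condition (iii), write two orbit points as $\nu(g\cdot\sigma^n(x))$ and $\mu(h\cdot\sigma^m(x))$. Using the analogue of \eqref{eq:obs-orbit}, namely Lemma~\ref{lem:calcrules}(1), we may replace $n$ and $m$ by $n\vee m$. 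We then take $u:=r(g\cdot\sigma^{n}(x))$, so that $v\ge u$ and $w\ge hg^{-1}\cdot u$. As in the rank-one case, $\MT_G(x)$ consists exactly of the vertices $v$ with $v\ge g\cdot x(n)$ for some $g$ and $n$. This shows it is the smallest maximal $G$-tail containing the vertices of $x$.

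\emph{Step 2: $\MT_G$ separates exactly the orbit closures.} Since $r$ is continuous and $\MT_G(\cdot)$ is constant on orbits, $\overline{[x]}_\G=\overline{[y]}_\G$ implies $\MT_G(x)=\MT_G(y)$. Conversely, suppose $\MT_G(x)=\MT_G(y)$. Every basic neighbourhood of $x$ has the form $Z(x(0,n))$, because $\partial\Lambda=\Lambda^\infty$ and row-finiteness lets us drop the sets $Z_F$ on $\Lambda^\infty$. Since $x(n)\in\MT_G(y)$, there is $y'\in[y]_\G$ with $r(y')=x(n)$. Then $x(0,n)y'\in[y]_\G\cap Z(x(0,n))$, so $x\in\overline{[y]}_\G$, and by symmetry $y\in\overline{[x]}_\G$.

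\emph{Step 3: surjectivity.} This is the main step. Given $M\in\M(G,\Lambda)$, we must build $x\in\Lambda^\infty$ with all vertices in $M$ such that for every $u\in M$ some vertex of $x$ lies below some $g\cdot u$. Then $M\subset\MT_G(x)$ by the description in Step 1, and minimality gives equality.

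To do this, enumerate $M=\{u_1,u_2,\dots\}$. As in the ``no minimal elements'' part of Proposition~\ref{prop:Gmaxt}, use (iii) to choose vertices $v_1\ge v_2\ge\cdots$ in $M$ with $u_j\ge g_j\cdot v_j$. Here we do not need the $v_j$ to be pairwise inequivalent; we only use (iii) applied to $v_j$ and $u_{j+1}$. Choose paths $\lambda_j\in v_j\Lambda v_{j+1}$.

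Two issues remain. First, the concatenation $\lambda_1\lambda_2\cdots$ must have degree $(\infty,\dots,\infty)$. Second, all its vertices must lie in $M$, which is automatic by (i). To handle the degree, use condition (ii) to pad: before passing from $v_j$ to $v_{j+1}$, prepend a path in $v_j\Lambda^{(1,\dots,1)}M$, built by iterating (ii) in each coordinate direction. Then apply (iii) again from its source to reach below $u_{j+1}$. The padded concatenation then has infinite degree in every coordinate.

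For $k>1$ one must also check that an infinite concatenation of finite paths of unbounded degree in all coordinates defines an element of $\Lambda^\infty$. This is the standard fact for $k$-graphs (\cite{KP}): such a sequence determines a unique graph morphism $\Omega_{\infty,k}\to\Lambda$. Its vertices at the intermediate lattice points are ranges of sub-factorizations of paths between vertices of $M$, so they lie in $M$ by (i).

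Once the three steps are done, the map $\QQ(x)\mapsto\MT_G(x)$ is a well-defined bijection $(\G\backslash\Lambda^\infty)^\sim\to\M(G,\Lambda)$.
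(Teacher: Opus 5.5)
Your proof is correct and follows essentially the same route as the paper, which also defers the tail properties and the orbit-closure characterization to the rank-one Lemma~\ref{lem:Gtails} and proves surjectivity by the same construction: a descending sequence of vertices obtained from condition (iii), padded using condition (ii) so that the concatenation has infinite degree in every coordinate. The only difference is that the paper first treats separately the case where $M$ has a minimal element $v$, taking $\mu^\infty$ for some $\mu\in v\Lambda v$ with all $d(\mu)_i>0$, whereas you observe, correctly, that the general construction needs no such case distinction.
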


\bp
The proof is similar to the rank one case, see Lemma~\ref{lem:Gtails} and Proposition~\ref{prop:Gmaxt}, but is easier, because we do not have analogues of singular vertices and do not care (for now) about $G$-aperiodicity. So let us only briefly explain why the map $\Lambda^\infty\to \M(G,\Lambda)$ is surjective. Take a maximal $G$-tail $M$.

If there is a minimal element $v\in M$, then using condition (ii) in Definition \ref{def:Gmaxhigh} we can find a path $\mu\in v\Lambda v$ such that $d(\mu)_{i}>0$ for all $i=1, \dots,k$. Then $\MT_G(\mu^{\infty})=M$.

If $M$ does not have a minimal element, let us numerate the elements of $M$ as $u_1,u_2,\dots$. We can inductively construct elements $g_{n} \in G$ and $v_{n} \in M$ such that $u_n\ge g_{n}\cdot v_{n}$ and $v_{n} \Lambda^{l(n)} v_{n+1} \neq \emptyset$ for some $l(n)\in \Z^{k}_+$ with $l(n)_{i}>0$ for all $i=1, \dots, k$. Take any $\mu_{n}\in v_{n} \Lambda^{l(n)} v_{n+1}$ and set $x:=\mu_{1}\mu_{2}\ldots \in \Lambda^{\infty}$. Then $\MT_G(x)=M$.
\ep

It is also not difficult to describe the topology on $(\G\backslash\Lambda^\infty)^\sim$. Namely, essentially the same proof as that of Lemma~\ref{lem:convergence} gives the following result.

\begin{lemma}\label{lem:max-tail-convergence}
In the setting of Proposition~\ref{prop:max-tail}, identify $(\G\backslash\Lambda^\infty)^\sim$ with $\M(G,\Lambda)$. Then the topology $\M(G,\Lambda)$ is described as follows: a sequence $(M_n)^\infty_{n=1}$ in $\M(G,\Lambda)$ converges to $M$ if and only if, for every vertex $v\in M$, we have $v\in M_n$ for all $n$ large enough.
\end{lemma}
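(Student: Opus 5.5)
The plan is to reduce convergence in $\M(G,\Lambda)$ to convergence of representatives in $\Lambda^\infty$. The quasi-orbit map $\QQ$ is open and continuous, and $\QQ(x_n)\to\QQ(x)$ if and only if there exist $y_n\in[x_n]_\G$ with $y_n\to x$ (passing to a subsequence where needed, as in the rank one discussion). By Proposition~\ref{prop:max-tail}, the identification sends $\QQ(x)$ to $\MT_G(x)$. So we fix paths $x_n,x\in\Lambda^\infty$ with $\MT_G(x_n)=M_n$ and $\MT_G(x)=M$. It then suffices to show that the existence of $y_n\in[x_n]_\G$ with $y_n\to x$ is equivalent to the condition that every $v\in M$ lies in $M_n$ for all large $n$.

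For the forward direction we argue as in Lemma~\ref{lem:convergence}. Take $v\in M$ and write $v=r(y)$ with $y=\mu(g\cdot\sigma^m(x))\in[x]_\G$ for some $m\in\Z^k_+$. Since $y_n\to x$, for large $n$ the path $y_n$ lies in $Z(x(0,m))$. Then $\mu(g\cdot\sigma^m(y_n))$ is defined, it lies in $[x_n]_\G$, and its range is $v$; hence $v\in M_n$.

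For the converse, let $e(j):=(j,\dots,j)$ and $v_j:=x(e(j))\in M$. Choose a strictly increasing sequence $m_j$ such that $v_j\in M_n$ for all $n\ge m_j$. For $m_j\le n<m_{j+1}$, pick $y_n'\in[x_n]_\G$ with $r(y_n')=v_j$; this is possible by the definition of $M_n=\MT_G(x_n)$. Set $y_n:=x(0,e(j))y_n'$. This path lies in $[x_n]_\G$ because the orbit is invariant under prepending finite paths. Moreover, $y_n\in Z(x(0,e(j)))$, and since $\{Z(x(0,e(j)))\}_j$ is a neighbourhood basis of $x$ in $\Lambda^\infty$ (row-finite, no sources, so no $F$-sets are needed), we get $y_n\to x$. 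For the finitely many $n<m_1$ the choice of $y_n$ is arbitrary.

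The only point needing care is the translation between convergence of sequences of quasi-orbits and the existence of representatives $y_n$, including the subsequence issue. Because $\QQ$ is open and the spaces are second countable, the sequence characterisation of the topology suffices. Openness gives that $\QQ(x_n)\to\QQ(x)$ yields orbit representatives converging to $x$ along every subsequence, hence along the whole sequence by a diagonal choice over a countable neighbourhood basis. Everything else is routine.
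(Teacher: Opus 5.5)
Your proposal is correct and follows the paper's route: the paper proves this lemma by reducing to the existence of $y_n\in[x_n]_\G$ with $y_n\to x$ and then repeating the two-direction argument of Lemma~\ref{lem:convergence}, exactly as you do. The reduction needs no subsequences: openness of $\QQ$ gives $[x_n]_\G\cap U\ne\emptyset$ for all large $n$ and every neighbourhood $U$ of $x$, and your diagonal choice over a countable neighbourhood basis then finishes it.
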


There is a more or less known description of the lattice of open subsets of $(\G\backslash\Lambda^\infty)^\sim$ (equivalently, the lattice of $\G$-invariant open subsets of $\Lambda^\infty$) that does not explicitly use maximal $G$-tails. Recall that a subset $H\subset\Lambda^0$ is called \emph{hereditary} if whenever $v\ge w$ and $v\in H$, we must have $w\in H$. It is called \emph{saturated} if whenever $s(v\Lambda^{n})\subset H$ for some $n\in \Z_+^{k}$ and $v\in \Lambda^{0}$, we must have $v\in H$.

\begin{prop}[cf.~\citelist{\cite{RSY}*{Theorem~5.2}\cite{MR4283280}*{Theorem~5.5}\cite{MR4887755}*{Section~11}}]
Let $(G,\Lambda)$ be a self-similar row-finite $k$-graph without sources, and consider the corresponding Exel--Pardo groupoid $\G:=\G_{G,\Lambda}$. Then there is a one-to-one correspondence between the $\G$-invariant open subsets of~$\Lambda^\infty$ and the $G$-invariant hereditary and saturated subsets of $\Lambda^0$. Namely, given a $\G$-invariant open subset $\Omega\subset\Lambda^\infty$, we define
$$
H_\Omega:=\{v\in\Lambda^0: v\Lambda^\infty\subset \Omega\},
$$
and given a $G$-invariant hereditary and saturated subset $H\subset\Lambda^0$, we define
$$
\Omega_H:=\{x\in\Lambda^\infty: x(n)\in H\ \text{for some}\ n\in\Z_+^k\}.
$$
Then the maps $\Omega\mapsto H_\Omega$ and $H\mapsto \Omega_H$ are inverse to each other.
\end{prop}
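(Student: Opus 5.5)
We will check directly that both maps are well defined and that each composition is the identity, using the description of $\G$-invariant subsets of $\Lambda^\infty$ as those invariant under $G$, the shifts $\sigma^m$ and their inverses. Row-finiteness and source-freeness give $\partial\Lambda=\Lambda^\infty$, compactness of each cylinder $Z(v)\cap\Lambda^\infty$, and $v\Lambda^\infty\ne\emptyset$ for every vertex $v$.

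First we show that $H_\Omega$ is $G$-invariant, hereditary and saturated. $G$-invariance holds because $g\cdot(v\Lambda^\infty)=(g\cdot v)\Lambda^\infty$ and $\Omega$ is $G$-invariant. For heredity, take $v\in H_\Omega$, $\lambda\in v\Lambda w$ and $y\in w\Lambda^\infty$. Then $\lambda y\in\Omega$, so $y=\sigma^{d(\lambda)}(\lambda y)\in\Omega$. For saturation, if $s(v\Lambda^n)\subset H_\Omega$, then every $x\in v\Lambda^\infty$ satisfies $\sigma^n(x)\in\Omega$, hence $x\in\Omega$ by invariance under $(\sigma^n)^{-1}$.

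Next we show that $\Omega_H$ is open and $\G$-invariant. It is a union of the cylinders $Z(\lambda)$ with $s(\lambda)\in H$, so it is open. It is invariant under shifts, their inverses and $G$, using heredity and $G$-invariance of $H$ together with the formula $(g\cdot x)(n)=g|_{x(0,n)}\cdot x(n)$.

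To prove $H_{\Omega_H}=H$, note first that $H\subset H_{\Omega_H}$ trivially. Conversely, suppose $v\notin H$. We construct $x\in v\Lambda^\infty$ avoiding $H$. Since $v\notin H$, saturation gives, for each $n$, a path in $v\Lambda^n$ with source outside $H$. Heredity then forces every vertex along such a path to lie outside $H$. Using finiteness of $v\Lambda^{e(n)}$ and a K\"onig-type compactness argument, we choose compatible paths of degrees $e(1),e(2),\dots$ and obtain an infinite path with all vertices $x(m)\notin H$ (any $x(m)$ lies on some initial segment). Hence $x\notin\Omega_H$ and $v\notin H_{\Omega_H}$.

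To prove $\Omega_{H_\Omega}=\Omega$, the inclusion $\subset$ follows from the definitions and invariance of $\Omega$. For $\supset$, take $x\in\Omega$. Since $\Omega$ is open, there is $n$ with $Z(x(0,n))\cap\Lambda^\infty\subset\Omega$. Applying $\sigma^n$ and invariance gives $x(n)\Lambda^\infty\subset\Omega$, so $x(n)\in H_\Omega$ and $x\in\Omega_{H_\Omega}$. The main obstacle is the compactness construction in the previous step; everything else is formal.
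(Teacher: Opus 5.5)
Your proposal is correct and follows the paper's proof: the paper also treats everything except $H_{\Omega_H}\subset H$ as routine, and for that step it takes a finite subcover of the compact set $v\Lambda^\infty$ by cylinders $x_i(0,n_{x_i})\Lambda^\infty$, gets a uniform $n$ with $s(v\Lambda^n)\subset H$ by heredity, and then applies saturation. Your contrapositive K\"onig-lemma construction of a path in $v\Lambda^\infty$ avoiding $H$ is the dual form of the same compactness argument and uses the same ingredients (row-finiteness, heredity, saturation).
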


\bp
It is easy to see that if $\Omega\subset\Lambda^\infty$ is  a $\G$-invariant open set, then $H_\Omega$ is $G$-invariant, hereditary and saturated, and if $H\subset\Lambda^0$ is $G$-invariant, hereditary and saturated, then $\Omega_H$ is open and $\G$-invariant. It is also straightforward to show that $\Omega_{(H_\Omega)}=\Omega$ and $H\subset H_{(\Omega_H)}$. Therefore in order to establish the desired bijection we only have to argue that $H_{(\Omega_H)} \subset H$.

Take a vertex $v\in H_{(\Omega_H)}$, so that $v\Lambda^\infty \subset \Omega_H$. Then, for every $x\in v\Lambda^\infty$, there is $n_{x} \in \Z^k_+$ such that $x(n_x)\in H$. By compactness of $v\Lambda^\infty$ we can find $x_{1}, \dots , x_{l}\in v\Lambda^\infty$ such that $v\Lambda^\infty =\bigcup_{i=1}^{l} x_i(0, n_{x_{i}})\Lambda^\infty$.
Put $n:=n_{x_1}\vee\dots\vee n_{x_l}$. Using that $H$ is hereditary, we conclude that $s(v\Lambda^n)\subset H$, and since $H$ is saturated, we get $v\in H$. Thus, $H_{(\Omega_H)} \subset H$.
\ep

\begin{remark}
If $(G,\Lambda)$ is pseudo-free, we conclude that the lattice of dynamical ideals in $C^*_r(\G_{G,\Lambda})$ is isomorphic to the lattice of $G$-invariant hereditary and saturated subsets of $\Lambda^0$. If, in addition, the $G$-stabilizers of the vertices are amenable, so that $\G_{G,\Lambda}$ is amenable by Theorem~\ref{thm:EP-amenability}, then the dynamical ideals coincide with the diagonal-invariant ones by Proposition~\ref{prop:diagonal}, and therefore this isomorphism generalizes~\cite{MR4283280}*{Theorem 5.5}.\footnote{Note that although there are no explicit amenability assumptions in~\cite{MR4283280}*{Theorem 5.5}, the proof of this theorem relies on \cite{MR4283280}*{Theorem 4.1}, which, in turn, is proved under the assumption that $G$ is amenable.} This shows in particular that the assumption of gauge-invariance in~\cite{MR4283280} is redundant.\ee
\end{remark}

As we saw already in the rank once case, an appropriate equivariant version of graph periodicity plays an important role in the analysis of the primitive spectrum. Under certain assumptions the necessary definitions pertaining to periodicity of self-similar $k$-graphs have been given by Li and Yang in~\cite{LY19}, see Definition~3.1 and Section~4.1 there. As we will see, the assumptions in~\cite{LY19} can be relaxed to cover the case we are interested in.

\begin{defn}[cf.~\cite{LY19}]
A \emph{cycline triple} $(\mu, g, \nu)$ in $(G, \Lambda)$ consists of paths $\mu, \nu \in \Lambda$ and an element $g\in G$ such that $g\cdot s(\nu)=s(\mu)$ and $\mu(g\cdot x)=\nu x$ for all $x\in s(\nu)\Lambda^\infty$. Assuming that~$\Lambda^0$ is a maximal $G$-tail, the \emph{$G$-periodicity group} of $\Lambda$ is defined as
$$
\PerG(\Lambda):=\{ d(\mu)-d(\nu) : (\mu, g, \nu) \text{ is a cycline triple in } (G, \Lambda)\}\subset\Z^k.
$$
\end{defn}

\begin{remark}\label{rem:57}
By erasing the same initial part of $\mu$ and $\nu$ we can always assume that a cycline triple $(\mu,g,\nu)$ satisfies $d(\mu)\wedge d(\nu)=0$. In the rank one case such cycline triples are closely related to $G$-circuits without an entry in $\Lambda^0$. Namely, if $(\mu,g,v)$ is a cycline triple for some $\mu\in\Lambda$ of nonzero length and $v\in\Lambda^0$, then $(g,\mu)$ is a $G$-circuit. Then, by Lemma~\ref{lem:G-circuits}, the equation $\mu(g\cdot x)=x$ in $x\in v\Lambda^\infty$ has only one solution, which then necessarily has no entry in~$\Lambda^0$. It follows that the $G$-circuit $(g,\mu)$ has no entry. Conversely, if $(g,\mu)$ is a $G$-circuit without an entry, then $(\mu,g,g^{-1}\cdot s(\mu))$ is a cycline triple. Similarly, given a path $\nu\in\Lambda$ of nonzero length, $(g\cdot s(\nu),g,\nu)$ is a cycline triple if and only if $(g^{-1},\nu)$ is a $G$-circuit without an entry. It follows that if $\Lambda^0$ is a maximal $G$-tail and there are no $G$-circuits without an entry in~$\Lambda^0$, then $\PerG(\Lambda)=0$, while if there are such $G$-circuits, then, by Lemma~\ref{lem:circuits-wo-entry}, we have $\PerG(\Lambda)=n_{\Lambda^0}\Z$, where $n_{\Lambda^0}$ is the $G$-period of $\Lambda^0$ introduced in Definition~\ref{def:LGE}. \ee
\end{remark}

\begin{prop}\label{prop:perG}
Let $(G,\Lambda)$ be a countable self-similar row-finite $k$-graph without sources such that $\Lambda^0$ is a maximal $G$-tail. Consider the corresponding Exel--Pardo groupoid $\G:=\G_{G,\Lambda}$ with its standard grading $\Phi\colon\G\to\Z^k$. Then, for any $x\in\Lambda^\infty$ such that $\MT_G(x)=\Lambda^0$, we have $\Phi(\Iso_x)=\PerG(\Lambda)$. In particular, $\PerG(\Lambda)$ is a subgroup of $\Z^k$.
\end{prop}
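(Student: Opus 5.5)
We prove the two inclusions $\PerG(\Lambda)\subset\Phi(\Iso_x)$ and $\Phi(\Iso_x)\subset\PerG(\Lambda)$; the subgroup property then follows for free, since $\Phi(\Iso_x)$ is a group. Two preliminary reductions will be used throughout.

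\emph{Orbit closures.} By Proposition~\ref{prop:max-tail}, the condition $\MT_G(x)=\Lambda^0$ means exactly that $\overline{[x]}_\G=\Lambda^\infty$, so $\G_{\overline{[x]}}=\G$. Hence $\Iso_x$ is the fiber at $x$ of the interior of the isotropy bundle of $\G$ itself.

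\emph{Basic bisections.} Since $\Lambda$ is row-finite without sources, every cylinder $Z(\lambda)$ is compact open. Every element of $\G$ lies in a basic bisection $Z(\mu,g,\nu,F)$. By passing to a smaller set of the form $Z(\mu(g\cdot\eta),g|_\eta,\nu\eta)$ as in Lemma~\ref{lem:basis0}, we may take $F=\emptyset$.

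\smallskip
\noindent\emph{Inclusion $\PerG(\Lambda)\subset\Phi(\Iso_x)$.}
Let $(\mu,g,\nu)$ be a cycline triple. By definition, every element of $Z(\mu,g,\nu)$ is of the form $(\nu y,\,\cdot\,,\,d(\mu)-d(\nu),\,\nu y)$, so this open bisection lies in the isotropy bundle. Hence $Z(\mu,g,\nu)\subset\IsoG^\circ$, and every element of it has degree $d(\mu)-d(\nu)$.

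It remains to move from points of $Z(\nu)$ to $x$. Since $[x]_\G$ is dense, some $y\in[x]_\G$ lies in $Z(\nu)$. Pick $h\in\G$ with $s(h)=x$ and $r(h)=y$, and let $a$ be the element of $Z(\mu,g,\nu)$ with source $y$. Normality of $\IsoG^\circ$ gives $h^{-1}ah\in\Iso_x$. As $\Phi(h^{-1}ah)=\Phi(a)=d(\mu)-d(\nu)$, this proves the inclusion.

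\smallskip
\noindent\emph{Inclusion $\Phi(\Iso_x)\subset\PerG(\Lambda)$.} This is the main step.
Take $a\in\Iso_x$. Choose an open bisection $W\ni a$ contained in $\IsoG^\circ$, and shrink it to a basic set $Z(\mu,g,\nu)\subset\IsoG^\circ$. Then $\mu(g\cdot z)=\nu z$ for all $z\in s(\nu)\Lambda^\infty$, because $Z(\mu,g,\nu)$ is an isotropy bisection whose source set is $Z(\nu)$. So $(\mu,g,\nu)$ is a cycline triple, and $\Phi(a)=d(\mu)-d(\nu)\in\PerG(\Lambda)$.

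The obstacle is the shrinking step. We need a basic bisection containing $a$, inside $W$, with \emph{no} exceptional set $F$. The reduction to $F=\emptyset$ described above handles this: in a row-finite graph without sources, the sets $Z_F(\lambda)$ can be refined to finite unions of cylinders $Z(\lambda\eta)$. Concretely, take $\eta=z(d(\nu),d(\nu)+n)$ for large $n$, where $z=s(a)$. Then $Z(\nu\eta)\subset Z_F(\nu)$ once $n\ge\bigvee_{\kappa\in F}d(\kappa)$, using $z\notin Z(\nu\kappa)$. This yields a smaller bisection of the form $Z(\mu',g',\nu')$ that contains $a$ and still lies in $\IsoG^\circ$.

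If $\Lambda$ instead had sources, boundary paths of finite degree would make this refinement fail. Row-finiteness without sources is exactly what makes the cylinder refinement work.
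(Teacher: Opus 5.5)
Your proof is correct and follows the paper's route. A cycline triple $(\mu,g,\nu)$ yields an open bisection $Z(\mu,g,\nu)\cap\G$ contained in the isotropy, and orbit-invariance of $\Phi(\Iso_y)$ transfers $d(\mu)-d(\nu)$ to $x$; conversely, a basic bisection $Z(\mu,g,\nu)\cap\G\subset\Iso$ containing a given element of $\Iso_x$ is by definition a cycline triple. You also spell out the refinement of $Z(\mu,g,\nu,F)$ to a set $Z(\mu(g\cdot\eta),g|_\eta,\nu\eta)$ with no exceptional set, which the paper leaves implicit. Note that this refinement needs your direct choice of a long enough $\eta$ rather than Lemma~\ref{lem:basis0}, since that lemma still produces an exceptional set $F''$.
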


\bp
Assume $(\mu, g, \nu)$ is a cycline triple in $(G, \Lambda)$. Take an element $y\in [x]_\G$ with $r(y)=s(\nu)$. Then
$$
(\mu( g\cdot y) , \mathcal{T}_{d(\mu)}([g|_{y}]), d(\mu)-d(\nu), \nu y)  \in \Iso_{\nu y},
$$
and we obtain that $ d(\mu)-d(\nu) \in \Phi(\Iso_{\nu y})$. Since $\Phi(\Iso_{\nu y})= \Phi(\Iso_{x})$, this proves that $\PerG(\Lambda) \subset \Phi(\Iso_{x})$.
On the other hand, any element of $\Phi(\Iso_{x})$ has the form $d(\mu)-d(\nu)$ such that $Z(\mu, g, \nu)\cap\G \subset \Iso$ for some~$g \in G$. By definition this implies that $(\mu, g, \nu)$ is a cycline triple, proving that $\Phi(\Iso_{x}) \subset \PerG(\Lambda)$. Therefore $\Phi(\Iso_{x})=\PerG(\Lambda)$. 
\ep

Note that $\G$ might be non-Hausdorff, but this does not affect the above proof.

\begin{remark}\label{rem:perg-x}
Let us give yet another interpretation of $\PerG(\Lambda)$, which will be useful later. Assume again that $x\in\Lambda^\infty$ is such that $\MT_G(x)=\Lambda^0$. Consider the set $P_x$ of elements $m-n$ such that $m,n\in\Z^k_+$ and $\sigma^m(y)=g\cdot \sigma^n(y)$ for some $g\in G$ and all $y\in\Lambda^\infty$ in a neighbourhood of $x$. We claim that $\PerG(\Lambda)=P_x$.

Observe first that $P_x$ does not change if we replace $x$ by another element on the same $\G$-orbit. Indeed, it suffices to check that $P_x\subset P_{\lambda x}$ for $\lambda\in\Lambda r(x)$, $P_{h\cdot x}\subset P_x$ for $h\in G$, and $P_x\subset P_{\sigma^l(x)}$ for $l\in\Z^k_+$. The first inclusion is immediate, since if $\sigma^m(y)=g\cdot \sigma^n(y)$, then $\sigma^{m+d(\lambda)}(\lambda y)=g\cdot \sigma^{n+d(\lambda)}(\lambda y)$. The second inclusion follows from the fact that an identity of the form $\sigma^m(h\cdot y)=g\cdot \sigma^n(h\cdot y)$ implies that
$$
\sigma^m(y)=\big((h|_{y(0,m)})^{-1}g(h|_{y(0,n)})\big)\cdot \sigma^n(y).
$$
Similarly, the third inclusion follows from the fact that an identity of the form $\sigma^m(y)=g\cdot \sigma^n(y)$ implies that
$$
\sigma^m(\sigma^l(y))=\sigma^l(g\cdot\sigma^n(y))=(g|_{y(n,n+l)})\cdot\sigma^n(\sigma^l(y)).
$$

Now, assume $(\mu, g, \nu)$ is a cycline triple in $(G, \Lambda)$.
Then $\sigma^{d(\mu)}(y)=g\cdot \sigma^{d(\nu)}(y)$ for all $y\in \nu\Lambda^\infty$.
Taking any element $y\in [x]_\G\cap \nu\Lambda^\infty$, we conclude that $d(\mu)-d(\nu) \in P_y=P_x$. Thus, $\PerG(\Lambda)\subset P_x$. For the opposite inclusion, assume $\sigma^m(y)=g\cdot \sigma^n(y)$ for some $g\in G$ and all $y\in\Lambda^\infty$ in a neighbourhood $U$ of $x$. Choose $\lambda\in\Lambda$ such that $N:=d(\lambda)\ge m\vee n$ and $\lambda\Lambda^\infty\subset U$. Then for all $z\in s(\lambda)\Lambda^\infty$ we have
$\sigma^m(\lambda z)=\lambda(m,N)z$ and
$$
g\cdot \sigma^n(\lambda z)=g\cdot (\lambda(n,N)z)=\big(g\cdot (\lambda(n,N))\big)(g|_{\lambda(n,N)}\cdot z).
$$
Since $\sigma^m(\lambda z)=g\cdot \sigma^n(\lambda z)$, we see that $(g\cdot (\lambda(n,N)),g|_{\lambda(n,N)},\lambda(m,N))$ is a cycline triple and therefore
$(N-n)-(N-m)\in\PerG(\Lambda)$. Hence $P_x\subset \PerG(\Lambda)$.

Let us also note that it is not difficult to see directly that $P_x$ is a group, so we get a proof of the fact that $\PerG(\Lambda)$ is a group that does not use the topology on $\G$. Indeed, it is obvious that $P_x$ contains $0$ and is closed under inversion. In order to see that it is closed under addition, assume $\sigma^m(y)=g\cdot \sigma^n(y)$ and $\sigma^p(y)=h\cdot \sigma^q(y)$ for all $y\in\Lambda^\infty$ sufficiently close to $x$. Then, for all such~$y$, we have
$$
\sigma^{p+m}(y)=\sigma^p(g\cdot\sigma^n(y))=(g|_{y(n,n+p)})\cdot\sigma^n(h\cdot\sigma^q(y))
=\big((g|_{y(n,n+p)})(h|_{y(q,q+n)})\big)\cdot\sigma^{q+n}(y),
$$
hence $(p+m)-(q+n)\in P_x$. \ee
\end{remark}

In general, when $\Lambda^0$ is not necessarily a maximal $G$-tail, for any $M\in\M(G,\Lambda)$ we can consider the self-similar $k$-graph $(G,\Lambda M)$ and conclude that $\PerG(\Lambda M)\subset\Z^k$ is a group. The Exel--Pardo groupoid of $(G,\Lambda M)$ has the following description.

\begin{lemma}\label{lem:identres}
Let $(G,\Lambda)$ be a countable self-similar row-finite $k$-graph without sources with the Exel--Pardo groupoid $\G:=\G_{G,\Lambda}$. Let $M\in\M(G,\Lambda)$. Then, for any $x\in\Lambda^\infty$ such that $\MT_G(x)=M$, the Exel--Pardo groupoid of $(G,\Lambda M)$ can be identified with $\G_{\overline{[x]}}$, where $[x]:=[x]_\G$. If $(G,\Lambda)$ is pseudo-free, then this identification is a homeomorphism.
\end{lemma}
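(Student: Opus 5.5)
The plan is to identify the unit spaces first and then the groupoids. Here $\Lambda M$ is the sub-$k$-graph of paths with source, hence all vertices, in $M$. Condition (i) of a maximal $G$-tail together with $G$-invariance makes $\Lambda M$ closed under composition, factorization and the $G$-action. The restrictions $g|_\mu$ for $\mu\in\Lambda M$ are unchanged, so $(G,\Lambda M)$ is a self-similar $k$-graph. Condition (ii) guarantees that it is row-finite without sources, hence its boundary path space is $(\Lambda M)^\infty$.

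The first key step is to show that $\overline{[x]}=(\Lambda M)^\infty$ as subsets of $\Lambda^\infty$, i.e.\ an infinite path $y$ lies in $\overline{[x]}$ if and only if all vertices of $y$ lie in $M$. For ``$\subset$'', note that $r(z)\in\MT_G(x)=M$ for $z\in[x]$. Since $M$ is $G$-invariant and closed under ${\geq}$, every vertex $z(n)$ of such $z$ also lies in $M$. The condition that all vertices lie in $M$ is closed, because each $y(n)$ is determined by $y(0,n)$, which is locally constant. For ``$\supset$'', take $y$ with vertices in $M$ and a basic neighbourhood $Z(y(0,n))$. Since $y(n)\in M=\MT_G(x)$, there is $z\in[x]$ with $r(z)=y(n)$. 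Then $y(0,n)z\in[x]\cap Z(y(0,n))$, so $y\in\overline{[x]}$.

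Next I would compare the groupoids. An element of $\G_{\overline{[x]}}$ has the form $(\mu(g\cdot z),\TT_{d(\mu)}([g|_z]),d(\mu)-d(\nu),\nu z)$ with $\nu z\in(\Lambda M)^\infty$. Then $\nu\in\Lambda M$ and $z\in(\Lambda M)^\infty$. Since $\mu(g\cdot z)$ also lies in $\overline{[x]}$ by invariance, we get $\mu\in\Lambda M$. Conversely, every element built from $\mu,\nu\in\Lambda M$ and $z\in(\Lambda M)^\infty$ lies in $\G_{\overline{[x]}}$. The lag-group entries $[g|_z]$ are computed identically in both settings. Thus, as subgroupoids of $X\times(Q_k(G)\rtimes\Z^k)\times X$ with $X=\Lambda^\infty$, the two groupoids coincide, and the algebraic identification is the identity map.

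The last step, in the pseudo-free case, is the topology. Here I expect only bookkeeping. The basic open sets of the Exel--Pardo groupoid of $(G,\Lambda M)$ are $Z(\mu,g,\nu,F)$ with $\mu,\nu\in\Lambda M$ and $F\subset s(\nu)\Lambda M$. The relative topology on $\G_{\overline{[x]}}$ has basis $Z(\mu,g,\nu,F)\cap\G_{\overline{[x]}}$ with arbitrary $\mu,\nu\in\Lambda$ and $F\subset s(\nu)\Lambda$. Such an intersection is empty unless $\nu\in\Lambda M$. When $\nu\in\Lambda M$, the source map identifies the intersection with $Z_F(\nu)\cap(\Lambda M)^\infty=Z_{F\cap\Lambda M}(\nu)\cap(\Lambda M)^\infty$. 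The source map is a local homeomorphism in both topologies, and the unit spaces are homeomorphic since $(\Lambda M)^\infty$ carries the relative cylinder topology. Hence the two bases generate the same topology. Matching these bases is the only point needing care, and it reduces to the fact that cylinder sets of $\Lambda M$ are traces of cylinder sets of $\Lambda$.
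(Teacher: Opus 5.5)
Your proposal is correct and is essentially the paper's argument: the paper also reduces the lemma to the observation that $y\in\overline{[x]}$ if and only if all vertices of $y$ lie in $M$, i.e.\ $\overline{[x]}=(\Lambda M)^\infty$, and it leaves implicit the matching of groupoid elements and basic open sets that you spell out. One small point: as written, $z(n)\in M$ does not follow from $r(z)\in M$ by upward closure of $M$ alone, but it is immediate from $\sigma^n(z)\in[x]$, which gives $z(n)=r(\sigma^n(z))\in\MT_G(x)=M$.
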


\bp
In order to prove the lemma it suffices to notice that a path $y\in \Lambda^{\infty}$ satisfies $y\in \overline{[x]}$ if and only if $\MT_G(y)\subset\MT_G(x)$, that is, if and only if $y\in(M\Lambda)^\infty$. This can be easily checked directly or by invoking Lemma~\ref{lem:max-tail-convergence}.
\ep

For $M\in\M(G,\Lambda)$ we conclude from Proposition \ref{prop:perG} and Lemma \ref{lem:identres} that for any $x\in\Lambda^\infty$ such that $\MT_G(x)=M$ we have $\Phi(\IsoGx{x}^\circ_x)=\PerG(\Lambda M)$. By Remark~\ref{rem:perg-x} we see also that $\PerG(\Lambda M)$ coincides with the set of differences $m-n$ (with $m,n\in\Z^k_+$) for which there exists $g\in G$ such that $\sigma^m(y)=g\cdot \sigma^n(y)$ for all $y\in\overline{[x]}_\G$ in some neighbourhood of~$x$.

\begin{prop} \label{prop:esscentriso}
Let $(G,\Lambda)$ be a countable pseudo-free self-similar row-finite $k$-graph without sources. Then the Exel--Pardo groupoid $\G:=\G_{G,\Lambda}$, with its standard grading $\Phi\colon \G\to\Z^{k}$, has essentially central isotropy if and only if the following condition is satisfied:
\begin{equation}\label{itcentral2}
\text{for all }M\in\M(G,\Lambda)\text{ and }v\in M,\text{ there are no }g\ne1_G\text{ that act trivially on }v\Lambda M.
\end{equation}
\end{prop}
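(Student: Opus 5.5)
Since $\Gamma=\Z^k$ is abelian, centrality of $\Phi(\IsoGx{x}^\circ_x)$ is automatic, so essential central isotropy amounts to injectivity of $\Phi$ on $\IsoGx{x}^\circ_x$ for every $x\in\Lambda^\infty$. By Lemma~\ref{lem:iso} it suffices to check this on one representative of each quasi-orbit. By Proposition~\ref{prop:max-tail} the quasi-orbits correspond to maximal $G$-tails $M$, and by Lemma~\ref{lem:identres} the reduction $\G_{\overline{[x]}}$ for $\MT_G(x)=M$ is the Exel--Pardo groupoid of $(G,\Lambda M)$, with unit space $(M\Lambda)^\infty=\overline{[x]}$. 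So the plan is to fix $M$ and $x$ with $\MT_G(x)=M$ and prove that $\Phi$ is injective on $\IsoGx{x}^\circ_x$ if and only if no $g\ne1_G$ acts trivially on $v\Lambda M$ for any $v\in M$.

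First I would establish the analogue of the criterion from the proof of Proposition~\ref{prop:esscent}: $\Phi$ fails to be injective on $\IsoGx{x}^\circ_x$ if and only if there exist $h\ne1_G$ and a vertex $v\in M$ such that $h$ fixes every point of $\overline{[x]}\cap Z(v)=v(M\Lambda)^\infty$.

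For the direction starting from non-injectivity, I would take a nontrivial element of $\IsoGx{x}^\circ_x\cap\Phi^{-1}(0)$. It lies in a basic bisection $Z(\mu,g,\nu)$ with $d(\mu)=d(\nu)$, and we may drop $F$, since in the row-finite source-free case we can shrink to a cylinder $Z(\nu\eta)$ via Lemma~\ref{lem:basis0}. This bisection can be chosen so that its intersection with $\G_{\overline{[x]}}$ is contained in the isotropy. Then $\mu(g\cdot y)=\nu y$ for all $y\in s(\nu)(M\Lambda)^\infty$, which gives $\mu=\nu$ and $g\cdot y=y$. Because the element is not a unit and $d(\mu)=d(\nu)$, pseudo-freeness (Lemma~\ref{lem:cor57}) forces $g\ne1_G$. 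Put $v=s(\nu)\in M$.

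For the converse direction, such an $h$ yields an open bisection $\{h\}\times Z(v)$ of $G\ltimes\partial\Lambda$, which is open in $\G$ by Lemma~\ref{lem:trans-embedding}. Its restriction to $\overline{[x]}$ lies in the isotropy and in $\ker\Phi$, and it is nontrivial by Lemma~\ref{lem:cor57}. This gives a nontrivial element of $\IsoGx{x}^\circ_y\cap\ker\Phi$ for some $y\in[x]$ with $r(y)=v$, which exists because $v\in M=\MT_G(x)$. By normality, $\Phi$ is then not injective on $\IsoGx{x}^\circ_x$.

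It remains to identify ``$h$ fixes every path in $v(M\Lambda)^\infty$'' with ``$h$ acts trivially on $v\Lambda M$''. If $h$ fixes all these infinite paths, then it fixes every finite path $\lambda\in v\Lambda M$, because $\lambda$ extends to an infinite path with vertices in $M$: condition (ii) of Definition~\ref{def:Gmaxhigh} together with the absence of sources gives this. Conversely, if $h$ fixes every $\lambda\in v\Lambda M$, then it fixes each infinite path in $v(M\Lambda)^\infty$. Indeed, every initial segment $y(0,n)$ of such a path lies in $v\Lambda M$, and the action on paths is determined by these segments.

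The main obstacle I expect is the step producing $h\neq1_G$ acting on a full cylinder $Z(v)\cap\overline{[x]}$ from an element of the interior of the isotropy. Some care is needed to show that the bisection can be shrunk to the form $Z(\nu,g,\nu)$ with no excluded set $F$, and to apply pseudo-freeness correctly to conclude $g\ne 1_G$ rather than only $[g|_y]\ne1$.
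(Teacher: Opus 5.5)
Your proposal is correct and follows essentially the paper's route: you first reduce non-injectivity of $\Phi$ on the essential isotropy to the existence of some $g\ne1_G$ fixing a cylinder of $\overline{[x]}$, using basic bisections inside the interior of the isotropy in one direction and Lemma~\ref{lem:trans-embedding} with Lemma~\ref{lem:cor57} in the other, and then you identify this with condition~\eqref{itcentral2}. The point you flagged as delicate is easier than you expect: after shrinking to $Z(\nu',g',\nu')$, if $g'=1_G$ the element would be a unit (since $1_G|_\lambda=1_G$ for every $\lambda$), so $g'\ne1_G$ holds without pseudo-freeness, which you genuinely need only in the converse direction.
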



\bp
The proof is similar to that of Proposition~\ref{prop:esscent}. Briefly, one shows first that $\Phi$ is injective on $\IsoGx{x}^\circ_x$ if and only if there are no $g\ne1_G$ and $m\in\Z^k_+$ such that $g$ acts trivially on a neighbourhood of $\sigma^m(x)$ in $\overline{[x]}$. Then one checks that the last condition is equivalent to~\eqref{itcentral2}, see the last two paragraphs of the proof of Proposition~\ref{prop:esscent}.
\ep

We are now ready to get our first description of the primitive spectrum.

\begin{thm}\label{thm:main-higher-rank1}
Let $(G,\Lambda)$ be a countable pseudo-free self-similar row-finite $k$-graph without sources, and consider the corresponding Exel--Pardo groupoid $\G:=\G_{G,\Lambda}$. Assume that the stabilizer of every vertex of $\Lambda$ in $G$ is amenable and condition~\eqref{itcentral2} is satisfied. Then the groupoid $\G$ is amenable and $\Prim C^*(\G)$ can be identified with the set of pairs $(M,\chi)$, where $M\in\M(G,\Lambda)$ and $\chi\in\PerG(\Lambda M)\dach$. Namely, the primitive ideal corresponding to $(M,\chi)$ is $\ker\pi_{(x,z)}$, where $x\in\Lambda^\infty$ is any path with $\MT_G(x)=M$ and $z\in\T^k$ is any character such that $z|_{\PerG(\Lambda M)}=\chi$.

Under this identification the topology on $\Prim C^*(\G)$ is described as follows. Assume we are given $(M,\chi)$ and $(M_n,\chi_n)$ ($n\in\N$) as above. Fix $x,x_n\in\Lambda^\infty$ and $z,z(n)\in\T^k$ such that $\MT_G(x)=M$, $\MT_G(x_n)=M_n$, $z|_{\PerG(\Lambda M)}=\chi$ and $z(n)|_{\PerG(\Lambda M_n)}=\chi_n$. For every $l\in\PerG(\Lambda M)$, choose $(p,g,q)\in\Z^k_+\times G\times\Z^k_+$ such that $p-q=l$ and $\sigma^p(y)=g\cdot \sigma^q(y)$ for all $y\in\overline{[x]}_\G$ in some neighbourhood of~$x$, and denote by $\Sigma$ the set of triples $(p,g,q)$ we thus get.
Then $(M_n,\chi_n)\to(M,\chi)$ if and only if there exist $y_n\in[x_n]_\G$ such that $y_n\to x$ and $z(n)\to z$ along the sets
$$
T_n:=\{p-q: (p,g,q)\in\Sigma,\ p-q\in\PerG(\Lambda M_n),\ \sigma^p(y_n)=g\cdot \sigma^q(y_n)\}.
$$
\end{thm}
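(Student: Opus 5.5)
The proof will follow the pattern of Theorem~\ref{thm:main-rank-one}. First, amenability of $\G$ follows from Theorem~\ref{thm:EP-amenability}, since $\Lambda$ is row-finite and the vertex stabilizers are amenable. Proposition~\ref{prop:esscentriso} together with condition~\eqref{itcentral2} shows that $\G$ has essentially central isotropy for the $\Z^k$-grading. Because $\Z^k$ is abelian, Theorem~\ref{thm:essentially-central}, Corollary~\ref{cor:differen-reps} and the discussion preceding Proposition~\ref{prop:central-necessity} give a surjection
\[
\Lambda^\infty\times\T^k\to\Prim C^*(\G),\qquad (x,z)\mapsto\ker\pi_{(x,z)}.
\]
Here $\ker\pi_{(x,z)}=\ker\pi_{(x',z')}$ exactly when $\overline{[x]}_\G=\overline{[x']}_\G$ and $z=z'$ on $\Phi(\IsoGx{x}^\circ_x)$. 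By Proposition~\ref{prop:max-tail}, orbit closures correspond bijectively to maximal $G$-tails via $x\mapsto\MT_G(x)$. By Proposition~\ref{prop:perG} combined with Lemma~\ref{lem:identres}, for $\MT_G(x)=M$ we have $\Phi(\IsoGx{x}^\circ_x)=\PerG(\Lambda M)$. Finally, every character of the subgroup $\PerG(\Lambda M)\subset\Z^k$ extends to $\T^k$. Together these give the identification of $\Prim C^*(\G)$ with the pairs $(M,\chi)$.

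For the topology I will apply Theorem~\ref{thm:convfg}, criterion (2), with $\Gamma=Z(\Gamma)=\Z^k$. I need bisections $W_g$ through the elements of $\IsoGx{x}^\circ_x$. Each $l\in\PerG(\Lambda M)$ corresponds to exactly one element of $\IsoGx{x}^\circ_x$, because $\Phi$ is injective there. For the triple $(p,g,q)\in\Sigma$ attached to $l$, I take $\lambda:=x(0,N)$ with $N\ge p\vee q$ large enough that $\lambda\Lambda^\infty\cap\overline{[x]}_\G$ lies in the neighbourhood on which $\sigma^p=g\cdot\sigma^q$. As in Remark~\ref{rem:perg-x}, the bisection
\[
W_l:=Z\bigl(x(0,p)(g\cdot\lambda(q,N)),\ g|_{\lambda(q,N)},\ \lambda\bigr)
\]
contains the isotropy element of degree $l$ at $x$. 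The remark after Theorem~\ref{thm:convfg} says the choice of bisections is immaterial, so I am free to use these.

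It remains to show that, for $y_n\to x$ and $n$ large, the condition ``$W_l\cap\IsoGx{x_n}^\circ_{y_n}\ne\emptyset$'' defining $R_n$ agrees with the condition ``$l\in\PerG(\Lambda M_n)$ and $\sigma^p(y_n)=g\cdot\sigma^q(y_n)$'' defining $T_n$. Since $y_n\to x$, eventually $y_n\in\lambda\Lambda^\infty$. There, $W_l\cap\G^{y_n}_{y_n}\ne\emptyset$ is exactly the equation $\sigma^p(y_n)=g\cdot\sigma^q(y_n)$, by unwinding the definition of $Z(\mu,h,\nu)$ and using unique factorization. If this isotropy element lies in $\IsoGx{x_n}^\circ_{y_n}$, then its degree $l$ lies in $\Phi(\IsoGx{x_n}^\circ_{y_n})=\PerG(\Lambda M_n)$, so $l\in T_n$; this gives $R_n\subset T_n$.

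The converse inclusion $T_n\subset R_n$ is the step I expect to be the main obstacle. Suppose $\sigma^p(y_n)=g\cdot\sigma^q(y_n)$ and $l\in\PerG(\Lambda M_n)$. Then there is some $\gamma\in\IsoGx{x_n}^\circ_{y_n}$ with $\Phi(\gamma)=l$, and $W_l$ also gives an isotropy element $\gamma'$ at $y_n$ with $\Phi(\gamma')=l$. I must show $\gamma=\gamma'$. The kernel of $\Phi$ on $\G^{y_n}_{y_n}$ may be large, so this does not follow from injectivity on the full isotropy group. My plan is to use Lemma~\ref{lem:iso}(3): $\gamma$ is central in $\G^{y_n}_{y_n}$, so $\gamma^{-1}\gamma'$ lies in $\ker\Phi$. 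I will then try to show that $\gamma'$ lies in the closure $\overline{\operatorname{Iso}(\G_{\overline{[x_n]}})^\circ}$; Lemma~\ref{lem:iso}(1) would then force $\gamma'=\gamma$. If that fails, the fallback is to use Theorem~\ref{thm:convfg}(3) with the sets $S_n$, which need only $W_l\cap\G^{y_n}_{y_n}\ne\emptyset$. With that criterion the extra requirement $l\in\PerG(\Lambda M_n)$ in $T_n$ has to be handled separately. The natural approach is to show that when $l\notin\PerG(\Lambda M_n)$, the points $y_n$ can be replaced, via Lemma~\ref{lem:essential-pass}, by nearby points of the same orbit at which $W_l$ has no isotropy at all.
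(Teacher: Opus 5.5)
\textbf{Assessment.} Your identification of $\Prim C^*(\G)$ with the pairs $(M,\chi)$ is exactly the paper's argument. Your bisections $W_l$ and the inclusion $R_n\subset T_n$ are also fine. The gap is in the step you single out as the main obstacle.

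\textbf{The inclusion $T_n\subset R_n$ is false in general.} So the claim that the two conditions ``agree'' fails, and your plan of showing $\gamma'\in\overline{\operatorname{Iso}(\G_{\overline{[x_n]}})^\circ}$ cannot work. The reason is as follows.
\begin{itemize}
\item The element $g$ of the triple $(p,g,q)\in\Sigma$ is forced by a neighbourhood of $x$ in $\overline{[x]}_\G$.
\item The essential isotropy element $\gamma$ of degree $l$ at $y_n$, relative to $\overline{[x_n]}_\G$, can be implemented by a different group element. Then $\gamma^{-1}\gamma'$ is a nontrivial element of $\ker\Phi\cap\G^{y_n}_{y_n}$.
\item Since $\Phi(\gamma')=\Phi(\gamma)$, Lemma~\ref{lem:iso}(1) shows that $\gamma'$ is not even in the closure of the interior of the isotropy.
\end{itemize}

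\textbf{A concrete instance.} Let $\Lambda=E\times F$, with:
\begin{itemize}
\item $E$ the $2$-cycle with edges $e,f$ between vertices $u,w$;
\item $F$ having loops $\beta_1,\beta_2,\beta_3$ at $b$, loops $\gamma_1,\gamma_2,\gamma_3$ at $c$, and one edge $\delta$ with $r(\delta)=b$, $s(\delta)=c$.
\end{itemize}
Let $G=\{1,s,t,st\}\cong(\Z/2)^2$ act by automorphisms:
\begin{itemize}
\item $t$ flips $E$ and swaps $\gamma_1,\gamma_2$;
\item $s$ is trivial on $E$ and swaps $\beta_1,\beta_2$ and $\gamma_1,\gamma_2$;
\item all other edges are fixed.
\end{itemize}
Condition~\eqref{itcentral2} holds, since $t$ and $st$ move every vertex and $s$ moves $\beta_1$ and $\gamma_1$. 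The degree $(1,0)$ lies in $\PerG(\Lambda M)$ for both maximal $G$-tails. For $x$ with $F$-part $\beta_3^\infty$, this period is necessarily implemented by $t$. Now take $y_n$ with $F$-part $\beta_3^n\delta\gamma_3^\infty$; these lie on a dense orbit and converge to $x$. At $y_n$ the essential isotropy of degree $(1,0)$ is implemented by $st$. Yet $t$ fixes the $F$-part of $y_n$, so $\sigma^{(1,0)}(y_n)=t\cdot y_n$ still holds. Thus $(1,0)\in T_n\setminus R_n$.

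\textbf{What is missing is that equality is not needed.} For each $l$ and all large $n$ you have
\[
R_n\subset T_n\subset S_n .
\]
You proved the first inclusion. The second is immediate, because $\sigma^p(y_n)=g\cdot\sigma^q(y_n)$ with $y_n\in\lambda\Lambda^\infty$ gives $W_l\cap\G^{y_n}_{y_n}\ne\emptyset$. Convergence along larger sets is a stronger requirement. Hence condition~(3) of Theorem~\ref{thm:convfg} implies your $T_n$-condition for the same $y_n$. That in turn implies condition~(2) for the same $y_n$. Since (2) and (3) are both equivalent to (1), the $T_n$-condition is equivalent to convergence in $\Prim C^*(\G)$. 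This sandwich is exactly how the paper finishes.

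\textbf{Your fallback.} The route via Lemma~\ref{lem:essential-pass} can be made to work. You would remove the $l\in S_n\setminus T_n$, which automatically miss the essential isotropy, while shrinking the neighbourhood so that no new isotropy appears. But this only re-proves the implication (2)$\Rightarrow$(3), which is already contained in Theorem~\ref{thm:convfg}.
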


\bp
The groupoid $\G$ is amenable by Theorem~\ref{thm:EP-amenability}, and by Proposition~\ref{prop:esscentriso} it has essentially central isotropy. The claimed parameterization of the primitive ideals follows then from Theorem~\ref{thm:essentially-central} (recall also Corollary~\ref{cor:differen-reps} and the discussion following it) and the description of the essential isotropy groups provided by Proposition~\ref{prop:perG}.

In order to describe the topology on $\Prim C^*(\G)$ we want to apply Theorem~\ref{thm:convfg}. Let us write~$[y]$ for~$[y]_\G$. Assume we are given $(M_n,\chi_n)$ and $(M,\chi)$ and choose $x$, $x_n$, $z$, $z(n)$ and $\Sigma$ as in the statement of the theorem. For every $l\in\PerG(\Lambda M)$, let us define an open bisection~$W_l$ of~$\G$ as follows. Let $(p,g,q)\in\Sigma$ be the triple corresponding to $l$. Define $\mu:=x(0,p)$ and $\nu:=x(0,q)$, and put $W_l:=Z(\mu,g,\nu)\cap\G$. Then, for any $y\in\Lambda^\infty$, we have $W_l\cap\G^y_y\ne\emptyset$ if and only if $y\in\mu\Lambda^\infty\cap\nu\Lambda^\infty$ and $\sigma^p(y)=g\cdot \sigma^q(y)$. It follows that once we are given a sequence of paths $y_n\in[x_n]$ converging to $x$, then $z(n)\to z$ along the sets~$T_n$ in the formulation of theorem if and only if $z(n)\to z$ along the sets
$$
\tilde T_n:=\{l\in\PerG(\Lambda M): l\in\PerG(\Lambda M_n),\ W_l\cap\G^{y_n}_{y_n}\ne\emptyset\}.
$$
On the other hand, consider the sets $R_n$ and $S_n$ from  Theorem~\ref{thm:convfg} defined for our choice of bisections, so
$$
R_n=\{l\in\PerG(\Lambda M): W_l\cap\IsoGx{x_n}^\circ_{y_n}\ne\emptyset\},\quad
S_n=\{l\in\PerG(\Lambda M): W_l\cap\G^{y_n}_{y_n}\ne\emptyset\}.
$$
If $W_l\cap\IsoGx{x_n}^\circ_{y_n}\ne\emptyset$ for some $n$, then $l\in\PerG(\Lambda M_n)$ by Proposition~\ref{prop:perG}. Therefore for any choice of $y_n\in[x_n]$ we have $R_n\subset\tilde T_n\subset S_n$. By Theorem~\ref{thm:convfg} we conclude that $(M_n,\chi_n)\to(M,\chi)$ if and only if there exist $y_n\in[x_n]$ such that $y_n\to x$ and $z(n)\to z$ along the sets~$\tilde T_n$, equivalently, along the sets~$T_n$.
\ep

Similarly to the rank one case, Proposition~\ref{prop:central-necessity} implies that without condition~\eqref{itcentral2} the primitive ideal space is certainly more complicated and cannot be described without taking into account contributions of the $G$-stabilizers of vertices into the isotropy groups.

\smallskip

The simplest situation where the above theorem quickly leads to a complete description of the primitive spectrum is given in the following corollary. When $G$ acts trivially on the vertices of $\Lambda$, it recovers~\cite{MR4283280}*{Theorem~7.1}.

\begin{cor}
In the setting of Theorem~\ref{thm:main-higher-rank1} assume in addition that the only maximal $G$-tail in $(G,\Lambda)$ is $\Lambda^0$. Then $\Prim C^*(\G_{G,\Lambda})$ is homeomorphic to $\PerG(\Lambda)\dach$.
\end{cor}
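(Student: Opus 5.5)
Under the assumption that $\Lambda^0$ is the only maximal $G$-tail, Theorem~\ref{thm:main-higher-rank1} parametrizes $\Prim C^*(\G_{G,\Lambda})$ by pairs $(\Lambda^0,\chi)$ with $\chi\in\PerG(\Lambda)\dach$. This already gives a bijection $\PerG(\Lambda)\dach\to\Prim C^*(\G)$, $\chi\mapsto\ker\pi_{(x,z)}$, where $x$ is any fixed path with $\MT_G(x)=\Lambda^0$ and $z\in\T^k$ extends $\chi$. By Proposition~\ref{prop:max-tail} every $x\in\Lambda^\infty$ has $\MT_G(x)=\Lambda^0$, so every orbit is dense. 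What remains is to show that this bijection is a homeomorphism, with $\PerG(\Lambda)\dach$ carrying its compact Hausdorff dual-group topology. Since $\PerG(\Lambda)\subset\Z^k$ is countable, it suffices to compare sequential convergence on both sides: $\Prim C^*(\G)$ is second countable, because $\G$ is second countable.

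For convergence I apply the criterion at the end of Theorem~\ref{thm:main-higher-rank1} with $M_n=M=\Lambda^0$. I take all $x_n:=x$, choose extensions $z(n)$ of $\chi_n$ and $z$ of $\chi$, and simply take $y_n:=x$. Then $y_n\to x$ trivially. Moreover, for $(p,g,q)\in\Sigma$ we have $p-q\in\PerG(\Lambda)$, and $\sigma^p(x)=g\cdot\sigma^q(x)$ holds because $x$ lies in the neighbourhood where this identity is valid. Hence $T_n=\{p-q:(p,g,q)\in\Sigma\}=\PerG(\Lambda)$ for all $n$.

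It follows that $(\Lambda^0,\chi_n)\to(\Lambda^0,\chi)$ whenever $z(n)\to z$ pointwise on all of $\PerG(\Lambda)$, that is, whenever $\chi_n\to\chi$ in $\PerG(\Lambda)\dach$. Conversely, suppose convergence holds in $\Prim C^*(\G)$ with some sequence $y_n\in[x]_\G$, $y_n\to x$. Fix $l=p-q\in\PerG(\Lambda)$. Since $y_n\to x$ and the identity $\sigma^p(y)=g\cdot\sigma^q(y)$ holds for all $y$ in a neighbourhood of $x$ (here $\overline{[x]}_\G=\Lambda^\infty$), we get $l\in T_n$ for all large $n$. Convergence along $T_n$ then gives $\chi_n(l)\to\chi(l)$. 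Therefore the convergence notions coincide, and the bijection is a homeomorphism.

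The only subtle point is the converse direction. There one must ensure that each fixed $l$ eventually belongs to $T_n$, which requires the neighbourhood in the definition of $\Sigma$ to be a neighbourhood in $\overline{[x]}_\G=\Lambda^\infty$. This is exactly where the hypothesis that $\Lambda^0$ is the only maximal $G$-tail is used.
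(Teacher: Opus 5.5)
Your proof is correct and follows the route the paper intends (the paper gives no separate argument, only noting that the corollary follows quickly from Theorem~\ref{thm:main-higher-rank1}): with $M_n=M=\Lambda^0$ you may take $x_n=x$, the choice $y_n=x$ gives $T_n=\PerG(\Lambda)$, and conversely any $y_n\in[x]_\G$ with $y_n\to x$ eventually puts each fixed $l$ into $T_n$. Hence convergence in $\Prim C^*(\G)$ is exactly pointwise convergence of characters. One small correction to your closing remark: the hypothesis on maximal $G$-tails is really used to make every point of $\Prim C^*(\G)$ of the form $(\Lambda^0,\chi)$ and to allow $x_n=x$, not in the neighbourhood condition, since $y_n\in[x]_\G\subset\overline{[x]}_\G$ holds automatically.
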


It is not difficult to see that in the rank one case Theorem~\ref{thm:main-higher-rank1} is equivalent to Theorem~\ref{thm:main-rank-one} (for row-finite graphs without sources). But for $k\ge2$ it is not equally satisfactory, since it is not obvious how to formulate it only in terms of finite paths and maximal $G$-tails. The issue is that for $k=1$ the equation $\sigma^p(y)=g\cdot\sigma^q(y)$ in $y$ has a discrete set of solutions for $p\ne q$, but this is no longer true for $k\ge2$ and, as a result, to define the sets $T_n$ one seemingly needs to know the entire infinite paths $y_n$. In order to get a stronger result, we need a better understanding of $G$-periodicity.
But before we turn to this, let us finish this subsection with a criterion for simplicity.

\begin{prop}[cf.~\cite{MR4283280}*{Theorem~4.3(iii)}]
Let $(G,\Lambda)$ be a countable pseudo-free self-similar row-finite $k$-graph without sources, and consider the corresponding Exel--Pardo groupoid $\G:=\G_{G,\Lambda}$.
Assume that the following conditions are satisfied:
\begin{itemize}
  \item[(i)] the only maximal $G$-tail is $\Lambda^0$;
  \item[(ii)] the $G$-periodicity group $\PerG(\Lambda)$ is trivial;
  \item[(iii)] for every vertex $v\in \Lambda^0$, there are no elements $g\ne1_G$ that act trivially on $v\Lambda$.
\end{itemize}
Then $C^*_r(\G)$ is simple. If the stabilizer of every vertex of $\Lambda$ in $G$ is amenable, then conversely, simplicity of $C^*_r(\G)$ implies conditions (i)--(iii).
\end{prop}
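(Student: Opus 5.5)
The plan is to follow the proof of Proposition~\ref{prop:simple-rank-one}: reduce simplicity to minimality plus effectiveness of $\G$, and then translate these two groupoid properties into conditions (i)--(iii). The groupoid $\G$ is Hausdorff by Proposition~\ref{prop:topetale}. Under the amenability hypothesis it is amenable by Theorem~\ref{thm:EP-amenability}. We then invoke the standard facts. If $\G$ is minimal and effective, then $C^*_r(\G)$ is simple~\cite{Rbook}*{Proposition~II.4.6}. Conversely, for amenable $\G$, simplicity forces minimality and effectiveness~\cite{MR3189105}*{Theorem~5.1}.

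For minimality, Proposition~\ref{prop:max-tail} identifies $(\G\backslash\Lambda^\infty)^\sim$ with $\M(G,\Lambda)$. Hence $\G$ is minimal if and only if $\M(G,\Lambda)=\{\Lambda^0\}$. Every vertex lies in $\MT_G(x)$ for some $x$, so $\Lambda^0$ is a maximal $G$-tail as soon as $\M(G,\Lambda)$ is a singleton. Thus condition (i) is equivalent to minimality.

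Assuming (i), we show that (ii) and (iii) together are equivalent to effectiveness. Since the unique quasi-orbit is everything, every $x$ has dense orbit and $\MT_G(x)=\Lambda^0$. In this situation essential centrality of the isotropy means that $\Phi$ is injective on $\Iso_x$, because $\Phi(\Iso_x)\subset\Z^k$ is automatically central. By Proposition~\ref{prop:esscentriso} with $M=\Lambda^0$, $\Lambda M=\Lambda$, this injectivity is equivalent to (iii). By Proposition~\ref{prop:perG}, $\Phi(\Iso_x)=\PerG(\Lambda)$.

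First suppose (ii) and (iii) hold. Then $\Phi$ is injective on $\Iso_x$ with trivial image, so $\Iso_x=\{x\}$ for every $x$, that is, $\G$ is effective. Conversely, suppose $\G$ is effective, so $\Iso_x=\{x\}$ for all $x$. Then trivially $\Phi$ is injective on $\Iso_x$, which gives (iii) via Proposition~\ref{prop:esscentriso}. Moreover $\PerG(\Lambda)=\Phi(\Iso_x)=0$, which is (ii).

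The main obstacle is the bookkeeping in Proposition~\ref{prop:esscentriso}. We must check that, for a minimal groupoid, condition~\eqref{itcentral2} reduces exactly to (iii). This holds because the only maximal $G$-tail is $\Lambda^0$, so $v\Lambda M=v\Lambda$. We must also make sure the essential isotropy $\IsoGx{x}^\circ_x$ coincides with $\Iso_x$, which is true because $\overline{[x]}=\Lambda^\infty$. Everything else is a direct assembly of the cited results.
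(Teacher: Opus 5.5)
Your proposal is correct and is essentially the paper's proof. Condition (i) is matched with minimality via Proposition~\ref{prop:max-tail}, and under (i), conditions (ii) and (iii) are matched with effectiveness via Propositions~\ref{prop:perG} and~\ref{prop:esscentriso}; the Renault and Brown--Clark--Farthing--Sims results, with amenability from Theorem~\ref{thm:EP-amenability}, then finish the argument. Your remarks that every orbit is dense, so $\IsoGx{x}^\circ_x=\Iso_x$, and that centrality is automatic in $\Z^k$, supply exactly the details the paper leaves implicit.
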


\bp
The proof is similar to that of Proposition~\ref{prop:simple-rank-one}: by Proposition~\ref{prop:max-tail}, condition (i) is equivalent to minimality of the groupoid $\G$, and once this condition is satisfied, conditions (ii) and (iii) are equivalent to effectiveness of the groupoid $\G$ by Propositions~\ref{prop:perG} and~\ref{prop:esscentriso}.
\ep

\subsection{The primitive spectrum in terms of cycline triples and maximal \texorpdfstring{$G$}{G}-tails}
We will need a $G$-equivariant version of the sets $H_{\operatorname{Per}}$ introduced by Carlsen et al.~in \cite{MR3150171}*{Section~4}.

\begin{defn}
Given a maximal $G$-tail $M \in \mathcal{M}(G,\Lambda)$, denote by $M_{\PerG}$ the set of vertices $v\in M$ such that for every $\mu \in v\Lambda M$ and $m\in \mathbb{Z}_{+}^{k}$ with $d(\mu)-m\in \PerG(\Lambda M)$ there exist a path $\nu \in \Lambda ^{m}M$ and an element $g\in G$ such that $(\mu, g, \nu)$ is a cycline triple in $\Lambda M$.
\end{defn}

\begin{remark}
If $\PerG(\Lambda M)$ is the trivial group, then the cycline triples $(\mu, 1_{G}, \mu)$ for $\mu \in v\Lambda M$ guarantee that $M_{\PerG}=M$. Consider now the rank one case.
If $M\in\M_\gamma(G,\Lambda)=\M_\infty(G,\Lambda)$, then $\PerG(\Lambda M)=0$ by Remark~\ref{rem:57} and therefore $M_{\PerG}=M$. We claim that if $M\in\LL(G,\Lambda)$, then $M_{\PerG}$ consists of all vertices of $G$-circuits without an entry in~$M$. If $v\in M_{\PerG}$, then, since $\PerG(\Lambda M)=n_M\Z$ by Remark~\ref{rem:57}, there is a cycline triple $(v, g, \nu)$ with $d(\nu)=n_{M}$, and hence $(g^{-1}, \nu)$ is a $G$-circuit without an entry by the same remark. Conversely, assume $v$ is a vertex of a $G$-circuit without an entry in $M$, $\mu \in v\Lambda M$ and $d(\mu)-m\in \PerG(\Lambda M)=n_M\Z$. Let $x\in(\Lambda M)^\infty$ be the unique path with range $v$. Then $\mu=x_1\dots x_{|\mu|}$. By Remark~\ref{rem:minimal-period} we have $\sigma^{n_M}(x)=g\cdot x$ for some $g\in G$. Since $d(\mu)-m$ is divisible by $n_M$, it follows that we can find $h\in G$ such that $\sigma^{d(\mu)}(x)=h\cdot\sigma^m(x)$. Since $x$ has no entry in $M$, this means that $(\mu,h,x_1\dots x_m)$ is a cycline triple in $\Lambda M$, which proves our claim.\ee
\end{remark}

Clearly, to establish general properties of the sets $M_{\PerG}$ it suffices to consider $M=\Lambda^0$ when $\Lambda^0$ is itself a maximal $G$-tail. The following result extends the main part of \cite{MR3150171}*{Theorem~4.2} to self-similar $k$-graphs.

\begin{prop}\label{prop:mperg}
Assume $(G,\Lambda)$ is a countable self-similar row-finite $k$-graph without sources such that~$\Lambda^0$ is a maximal $G$-tail. Then $\Lambda^0_{\PerG}$ is a nonempty $G$-invariant hereditary subset of~$\Lambda^0$.
\end{prop}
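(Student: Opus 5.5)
Throughout, fix the data of the statement. A cycline triple $(\mu,g,\nu)$ in $(G,\Lambda)$ always has $d(\mu)-d(\nu)\in\PerG(\Lambda)$, which by Proposition~\ref{prop:perG} is a subgroup of $\Z^k$. The three assertions are of quite different difficulty, so I would treat them separately, with nonemptiness last.

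\emph{$G$-invariance.} Let $v\in\Lambda^0_{\PerG}$ and $h\in G$, and take $\mu\in(h\cdot v)\Lambda$ and $m$ with $d(\mu)-m\in\PerG(\Lambda)$. Put $\mu':=h^{-1}\cdot\mu\in v\Lambda$, which has the same degree as $\mu$. By assumption there is a cycline triple $(\mu',g,\nu')$ with $d(\nu')=m$. Apply $h$ to the identity $\mu'(g\cdot x)=\nu'x$ and use Lemma~\ref{lem:calcrules}(1) to get
$$
\mu\,\big(h|_{\mu'}g\cdot x\big)=(h\cdot\nu')\big(h|_{\nu'}\cdot x\big).
$$
Substitute $x=(h|_{\nu'})^{-1}\cdot y$, with $y$ running over $s(h\cdot\nu')\Lambda^\infty$. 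This shows that $\big(\mu,\ h|_{\mu'}\,g\,(h|_{\nu'})^{-1},\ h\cdot\nu'\big)$ is a cycline triple with $d(h\cdot\nu')=m$. Hence $h\cdot v\in\Lambda^0_{\PerG}$.

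\emph{Hereditary property.} Let $v\in\Lambda^0_{\PerG}$, $\lambda\in v\Lambda w$, $\mu\in w\Lambda$, and $d(\mu)-m\in\PerG(\Lambda)$. Apply the defining property of $v$ to the path $\lambda\mu$ and the degree $m+d(\lambda)$; their difference is again $d(\mu)-m$. This gives a cycline triple $(\lambda\mu,g,\nu)$ with $d(\nu)=m+d(\lambda)$. For every $x$ the path $\nu x=\lambda\mu(g\cdot x)$ begins with $\lambda$, so $\nu=\lambda\nu'$ by unique factorization. Cancelling $\lambda$ then gives $\mu(g\cdot x)=\nu'x$ for all $x$, so $(\mu,g,\nu')$ is a cycline triple with $d(\nu')=m$. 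Hence $w\in\Lambda^0_{\PerG}$.

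\emph{Nonemptiness.} This is the main obstacle. I would mimic the strategy of \cite{MR3150171}*{Theorem~4.2} and work through Remark~\ref{rem:perg-x}, in four steps.
\begin{enumerate}
\item Choose $x$ with $\MT_G(x)=\Lambda^0$ (possible by Proposition~\ref{prop:max-tail}) and a finite generating set $l_1,\dots,l_r$ of the subgroup $\PerG(\Lambda)=P_x$.
\item For each generator $l_i$, take triples $(p_i,g_i,q_i)$ with $p_i-q_i=l_i$ and $\sigma^{p_i}(y)=g_i\cdot\sigma^{q_i}(y)$ on a neighbourhood $U$ of $x$, and choose the neighbourhood so that it works for every $l_i$ and for their inverses at once.
\item Fix $\lambda$ with $\lambda\Lambda^\infty\subset U$ and $d(\lambda)\ge\bigvee_i(p_i\vee q_i)$.
\item The candidate vertex is $v:=s(\lambda)$, or more likely a vertex $s(\lambda')$ for a suitable extension $\lambda'$ of $\lambda$.
\end{enumerate}

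The composition computation at the end of Remark~\ref{rem:perg-x} shows the following for every $l\in\PerG(\Lambda)$: there are $a,b$ with $a-b=l$ and some $h\in G$ such that $\sigma^a(y)=h\cdot\sigma^b(y)$ on $\lambda\Lambda^\infty$. Moreover, shifting both degrees by a common amount preserves such relations, with $h$ replaced by a restriction of $h$.

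Now, given $\mu\in v\Lambda$ and $m$ with $l=d(\mu)-m$, I want to convert such a relation into a cycline triple $(\mu,g,\nu)$ based at $v$ with $d(\nu)=m$. The plan is to shift so that $a=d(\lambda)+d(\mu)+c$ and $b=d(\lambda)+m+c$ for some $c\ge0$, and then to remove the extra tail $c$. For the removal I would use that the relation holds on the whole of $v\Lambda^\infty$ together with row-finiteness and source-freeness, in the same way that the cancellation of $\lambda$ was used in the hereditary step.

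The delicate point is that the degrees $a,b$ produced from the generators are not uniformly bounded in $l$. I would first try to control this by choosing $v$ far enough along $x$ to absorb the finitely many generator relations. If that is not enough, I would fall back on a compactness and counting argument over the finite sets $v\Lambda^n$, as in \cite{MR3150171}.
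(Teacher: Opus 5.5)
Your arguments for $G$-invariance and for heredity are correct. The first is the computation in Lemma~\ref{lem:monoids}(1). The second (apply the defining property to $\lambda\mu$ and $m+d(\lambda)$, then cancel the common prefix $\lambda$) is a clean direct argument. The paper instead deduces both properties from $\Sigma_v=\Sigma_{h\cdot v}$ and $\Sigma_{r(\lambda)}\subset\Sigma_{s(\lambda)}$, once it knows that $\Lambda^0_{\PerG}=\{v:\Sigma_v=\Sigma\}$.

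The nonemptiness part, however, has a genuine gap, and the step it hinges on is false as you formulate it. Removing a common tail $c$ from a relation $\sigma^{a+c}(y)=h\cdot\sigma^{b+c}(y)$ is not a legitimate operation at a fixed vertex. This holds even if the relation is valid on all of $v\Lambda^\infty$ and $\Lambda$ is row-finite without sources.

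For a counterexample, take $G$ trivial and $\Lambda$ the $1$-graph with vertices $u,v$, an edge $f$ with $r(f)=v$, $s(f)=u$, and a loop $e$ at $u$. Then:
\begin{itemize}
\item $\Lambda^0$ is a maximal tail and $\PerG(\Lambda)=\Z$;
\item $\sigma^{c}(z)=\sigma^{c}(fz)$ for $z\in u\Lambda^\infty$ with $c=1$, so the relation holds on all of $v\Lambda^\infty=\{fee\cdots\}$;
\item yet there is no cycline triple $(f,g,\nu)$ with $d(\nu)=0$, so $v\notin\Lambda^0_{\PerG}$.
\end{itemize}
The cancellation in your hereditary step removes a common \emph{prefix}, which amounts to moving the base vertex forward. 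That is a different operation from removing a common shift at the same base vertex. Whether a tail can be removed therefore depends on the vertex. Choosing one vertex that works uniformly in $\mu$, $m$ and $l$ is exactly the content of the claim, and your proposal leaves it open (``$s(\lambda)$, or more likely $s(\lambda')$ for a suitable $\lambda'$'', ``fall back on a compactness and counting argument'').

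There are two secondary problems as well. Composing the generator relations produces degrees $(a,b)$ that are unbounded in $l$. Also, the group element in the composition formula of Remark~\ref{rem:perg-x} depends on $y$ beyond $\lambda$, so you do not get a single $h$ on $\lambda\Lambda^\infty$.

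The missing idea is to finitely generate the right object. Consider $\Sigma_v$, the set of pairs $(p,q)$ such that every $\mu\in v\Lambda^p$ starts a cycline triple with some $\nu\in\Lambda^q$. Membership of $(p,q)$ in $\Sigma_v$ is not determined by $p-q$ at a fixed $v$, so generators of the group $\PerG(\Lambda)\subset\Z^k$ do not suffice. The paper's route is as follows:
\begin{itemize}
\item Each $\Sigma_v$ is a submonoid of $\Z^{2k}_+$, and $\Sigma_v\subset\Sigma_u$ whenever $v\ge g\cdot u$.
\item The union $\Sigma=\bigcup_v\Sigma_v$ is a monoid. It is closed under $(p,q)\mapsto(p-r,q-s)$ for $(r,s)\le(p,q)$ in $\Sigma$, and under replacing $(p,q)$ by any pair with the same difference. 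Tail removal is allowed here precisely because the base vertex may move.
\item Dickson's lemma then makes $\Sigma$ a finitely generated monoid.
\item Condition (iii) of a maximal $G$-tail gives a single vertex $v$ with all generators in $\Sigma_v$, hence $\Sigma_v=\Sigma$.
\item From any cycline triple $(\mu,g,\nu)$ one gets $(d(\nu),d(\mu))\in\Sigma_{s(\nu)}$. Hence $\Sigma=\{(p,q):p-q\in\PerG(\Lambda)\}$, which says exactly that $v\in\Lambda^0_{\PerG}$.
\end{itemize}
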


The proof is largely the same as in \cite{MR3150171}, but since the $G$-action makes some arguments slightly trickier, we provide full details.

\smallskip

For $v\in\Lambda^0$, denote by $\Sigma_{v}$ the set of pairs $(p,q) \in \mathbb{Z}_{+}^{k}\times \mathbb{Z}_{+}^{k}$ such that for every $\mu \in v\Lambda^{p}$  there exist $g\in G$ and $\nu \in \Lambda^{q}$ such that $(\mu, g, \nu)$ is a cycline triple in $\Lambda$. Note that by definition if $(p,q)\in\Sigma_v$, then $p-q\in\PerG(\Lambda)$. We define $\Sigma := \bigcup_{v\in \Lambda^0} \Sigma_{v}$.

\begin{lemma}\label{lem:monoids}
The following properties hold:
\begin{enumerate}
\item $\Sigma_{v}=\Sigma_{h\cdot v}$ for all $v\in\Lambda^0$ and $h\in G$;
\item if $\lambda\in\Lambda$, $(p,q) \in \mathbb{Z}_{+}^{k}\times \mathbb{Z}_{+}^{k}$ and $(p+d(\lambda), q+d(\lambda))\in \Sigma_{r(\lambda)}$, then $(p,q)\in \Sigma_{s(\lambda)}$;
\item if $(p,q)\in \Sigma_{v}$, then $(p+n, q+n) \in \Sigma_{v}$ for all $n\in \mathbb{Z}_{+}^{k}$;
\item $ \Sigma_{r(\lambda)} \subset \Sigma_{s(\lambda)}$ for all $\lambda \in \Lambda$;
\item if $(p,q)\in\Sigma$, $(p',q')\in\Z^k_+\times\Z^k_+$ and $p-q=p'-q'$, then $(p',q')\in\Sigma$;
\item the sets $\Sigma$ and $\Sigma_{v}$ are submonoids of $\Z^{2k}_+$;
\item if $(p,q), (r,s) \in \Sigma$ satisfy $(p,q) \geq  (r,s)$, then $(p-r,q-s) \in \Sigma$;
\item the monoid $\Sigma$ is finitely generated.
\end{enumerate}
\end{lemma}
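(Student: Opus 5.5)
The plan is to verify the eight properties in order, since each later item uses the earlier ones. Throughout, the basic tools are the calculation rules for the self-similar action, in particular Lemma~\ref{lem:calcrules}(1), the identity $g\cdot s(\mu)=g|_\mu\cdot s(\mu)$, and the observation that being a cycline triple is stable under three operations. First, if $(\mu,g,\nu)$ is a cycline triple and $\lambda\in s(\nu)\Lambda$, then $(\mu(g\cdot\lambda),g|_\lambda,\nu\lambda)$ is a cycline triple. Second, if $(\mu,g,\nu)$ is one, then so is $(h\cdot\mu,\,h|_\mu g (h|_\nu)^{-1},\,h\cdot\nu)$; this follows by applying $h$ to $\mu(g\cdot x)=\nu x$. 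Third, the inverse $(\nu,g^{-1},\mu)$ is one. I would record these three facts first.

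For (1), given $\mu'\in (h\cdot v)\Lambda^p$, I write $\mu'=h\cdot\mu$ with $\mu\in v\Lambda^p$ and transport the cycline triple for $\mu$ via the second operation; applying this also to $h^{-1}$ gives equality. For (3), given $\mu\in v\Lambda^{p+n}$, I factor $\mu=\mu'\lambda$ with $d(\mu')=p$. I take the triple $(\mu',g,\nu')$ and extend it by $g^{-1}$-type conjugation, using the first operation with $\lambda$ replaced by $g^{-1}\cdot\lambda$ computed via the restriction (concretely, I use the inverse triple $(\nu',g^{-1},\mu')$ extended by $\lambda$, then invert again). For (2), given $\alpha\in s(\lambda)\Lambda^p$, I apply the hypothesis to $\lambda\alpha$ to get a triple $(\lambda\alpha,g,\nu)$ with $d(\nu)=q+d(\lambda)$. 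I then need to strip $\lambda$ off $\nu$ as well: I write $\nu=\nu_1\nu_2$ with $d(\nu_1)=d(\lambda)$ and show $\nu_1=\lambda$ by comparing initial segments of $\lambda\alpha(g\cdot x)=\nu x$. Cancelling the common prefix then yields $(\alpha,g,\nu_2)$. Item (4) follows from (2) and (3) applied with $n=d(\lambda)$.

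For (5), I first show the statement locally: the set of $v$ with $(p,q)\in\Sigma_v$ is nonempty. Using (3) and maximality (condition (iii) together with (1) and (4)), I move to a common vertex reachable from both, so I may add large $n$ to both pairs to get $(p+n,q+n)=(p'+n',q'+n')$, and then descend with (2) via a path of degree $n'$. Here I need that $\Sigma_v$ grows along paths, which is (4). For (6), closure under addition requires composing triples. Given $(p,q),(r,s)\in\Sigma_u$, and $\mu\in u\Lambda^{p+r}$ factored as $\mu_1\mu_2$, I take the triple $(\mu_1,g,\nu_1)$. Then $\mu_2$ starts at $s(\mu_1)$, and by (4) $(r,s)\in\Sigma_{s(\mu_1)}$, giving a triple $(\mu_2,h,\nu_2)$. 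I combine these as $(\mu_1\mu_2,\,?\,,\nu_1(g^{-1}\cdot\nu_2))$ with suitable restrictions. This degree bookkeeping and group-element bookkeeping is the step I expect to be most delicate. To pass from $\Sigma_v$ to $\Sigma$, I use (5)-style arguments to find a common vertex.

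For (7), I write $(p,q)$ and $(r,s)$ and use (5) to replace $(p-r,q-s)$ by a pair that, when added to $(r,s)$ via a suitable inverse of (3)/(2), is witnessed. More concretely, I take a vertex where both lie and peel off the $(r,s)$ triple by the cancellation argument of (2). For (8), (6) and (7) show that $\Sigma$ is a submonoid of $\Z_+^{2k}$ closed under differences within the monoid. Hence $\Sigma=(\Sigma-\Sigma)\cap\Z^{2k}_+$ is the set of nonnegative points of a subgroup, and it is finitely generated by Gordan's lemma, or by Dickson's lemma applied to the minimal nonzero elements.
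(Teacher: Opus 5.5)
Your proposal is correct and follows essentially the same route as the paper. For (1)--(4) you use the same transport, extension and cancellation manipulations of cycline triples; for (5) the same shift-up-via-(3)-then-descend-via-(2) argument; for (6)--(7) the same composition and cancellation of triples at a common vertex supplied by maximality of $\Lambda^0$ together with (1) and (4); and for (8) the same Dickson-lemma argument. The maximality step you invoke in (5) is superfluous there (it is needed only in (6) and (7)), and the missing group element in (6) is $(g^{-1}|_{\nu_2}h^{-1})^{-1}$; your Gordan's-lemma variant for (8), via $\Sigma=(\Sigma-\Sigma)\cap\Z^{2k}_+$, is an equally valid alternative.
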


\begin{proof}
(1) It suffices to show that $\Sigma_{v}\subset\Sigma_{h\cdot v}$. Take $(p,q) \in \Sigma_{v}$ and $\mu \in (h\cdot v)\Lambda^{p} $. Since $h^{-1} \cdot \mu \in v\Lambda^{p} $, there are $\nu \in \Lambda^{q} $ and $g\in G$ such that $( h^{-1} \cdot \mu, g, \nu)$ is a cycline triple in $\Lambda$. We claim that $( \mu, h|_{h^{-1} \cdot \mu} g (h|_{\nu})^{-1}, h\cdot \nu)$ is a cycline triple in $\Lambda$, which implies that $(p,q)\in \Sigma_{h\cdot v}$. In order to prove the claim, take $x\in s(h\cdot\nu)\Lambda^\infty$. Since $s(h\cdot \nu)=h|_{\nu} \cdot s(\nu)$, we then get that $(h|_{\nu})^{-1} \cdot x \in s(\nu)\Lambda^\infty$, and hence $ (h^{-1} \cdot \mu) \big(g\cdot( (h|_{\nu})^{-1} \cdot x)\big)= \nu ((h|_{\nu})^{-1} \cdot x)$. Applying $h$ on both sides we obtain $\mu \big((h|_{h^{-1} \cdot \mu}g  (h|_{\nu})^{-1}) \cdot x\big)= (h\cdot \nu)  x$, proving that $( \mu, h|_{h^{-1} \cdot \mu} g (h|_{\nu})^{-1}, h\cdot \nu)$ is indeed a cycline triple in $\Lambda $.

\smallskip

(2) Take a path $\mu \in s(\lambda)\Lambda^{p} $. Since $(d(\lambda \mu), q+d(\lambda))\in \Sigma_{r(\Lambda)}$ by assumption, there exist  $\nu \in \Lambda ^{q+d(\lambda)}$ and $g\in G$ such that $(\lambda \mu, g, \nu)$ is a cycline triple in $\Lambda $. Then $(\mu, g, \sigma^{d(\lambda)}(\nu))$ is also a cycline triple in $\Lambda $, and we conclude that $(p,q)\in \Sigma_{s(\lambda)}$.

\smallskip

(3) Take a path $\lambda \in v\Lambda^{p+n} $. Then by assumption there are $g\in G$ and $\mu \in \Lambda^{q}$ such that $(\lambda(0,p), g, \mu)$ is a cycline triple. For $z\in \lambda(p)\Lambda ^{\infty}$ this implies that $\lambda(0,p) z = \mu (g^{-1}\cdot  z)$, so for $y\in s(\lambda)\Lambda^{\infty}$ we get
$$
\lambda y=\mu \big(g^{-1} \cdot (\lambda(p,p+n) y)\big) = \mu \big(g^{-1} \cdot (\lambda(p,p+n))\big) (g^{-1}|_{\lambda(p,p+n)} \cdot y),
$$
and letting $h:=(g^{-1}|_{\lambda(p,p+n)})^{-1}$ and $\nu:=\mu \big(g^{-1} \cdot (\lambda(p,p+n))\big)$ we conclude that $(\lambda , h, \nu)$ is a cycline triple in $\Lambda $. This proves that $(p+n, q+n) \in \Sigma_{v}$.

\smallskip

(4) This follows by combining (2) and (3).

\smallskip

(5) Let us show first that if $(p,q)\in\Sigma$ and $0\le n\le p\wedge q$, then $(p-n,q-n)\in\Sigma$. Take $v\in\Lambda^0$ such that $(p,q)\in\Sigma_v$ and choose any path $\lambda\in v\Lambda^n$. Then by (2) we conclude that $(p-n,q-n)\in\Sigma_{s(\lambda)}$.

Now consider the general case, so assume that $(p,q)\in\Sigma$, $(p',q')\in\Z^k_+\times\Z^k_+$ and $p-q=p'-q'$. Let $n:=p'-p$ and define $n_+:=n\vee 0$ and $n_-:=-(n\wedge 0)$. Then
$$
(p',q')=(p,q)+(n_+,n_+)-(n_-,n_-)\in\Sigma
$$
by (3) and the particular case we started with.

\smallskip

(6) We will  first show that $\Sigma_{v}$ is a monoid for every $v\in \Lambda^{0}$. Take $(p,q), (m,n) \in \Sigma_{v}$ and a path $\mu \in v\Lambda^{p+m} $. By (4) we have $(m,n) \in \Sigma_{s(\mu(0,p))}$, so there exist $g\in G$ and $\nu \in \Lambda^{n}$ such that $(\mu(p, p+m), g, \nu)$ is a cycline triple. Similarly, there are $h\in G$ and $\lambda \in \Lambda^{q}$ such that $(\mu(0,p) , h, \lambda)$ is a cycline triple. For any $x\in s(\mu)\Lambda^{\infty}$ we then get that
\begin{multline*}
\mu x =\mu(0,p) \mu(p, p+m) x= \lambda \big( h^{-1}\cdot  (\mu(p, p+m) x)\big)
 \\
=\lambda \big( h^{-1}\cdot  (\nu (g^{-1} \cdot x))\big)=\lambda (h^{-1}\cdot  \nu) \big((h^{-1}|_{\nu} g^{-1}) \cdot x\big),
\end{multline*}
and hence $(\mu, (h^{-1}|_{\nu} g^{-1})^{-1}, \lambda (h^{-1}\cdot  \nu) )$ is a cycline triple, proving that $(p+m, q+n) \in \Sigma_{v}$. Since we also obviously have $(0,0) \in \Sigma_{v}$, we conclude that $\Sigma_{v}$ is a monoid.

Combining (1) and (4) we see that if $v\ge g\cdot u$, then $\Sigma_v\subset\Sigma_u$. Since $\Lambda^0$ is a maximal $G$-tail, it follows that $\Sigma$ is a monoid as well.

\smallskip

(7) Choose a vertex $v\in \Lambda^{0}$ such that $(p,q), (r,s) \in \Sigma_{v}$. Fix a path $\mu \in v\Lambda^{r}$. Our aim is to show that $(p-r,q-s)\in\Sigma_{s(\mu)}$. By assumption there are $g\in G$ and a $\nu \in \Lambda^{s}$ such that $(\mu, g, \nu)$ is a cycline triple. Take now any $\lambda \in s(\mu)\Lambda^{p-r}$. Then $\mu\lambda \in v\Lambda^{p}$, so there are $h\in G$ and $\eta \in \Lambda^{q}$ such that $(\mu\lambda, h, \eta)$ is a cycline triple in $\Lambda $. We now have for all $x\in s(\lambda)\Lambda ^{\infty}$ that
$$
\eta (h^{-1} \cdot x) =\mu \lambda x = \nu (g^{-1} \cdot (\lambda x)),
$$
hence $g^{-1} \cdot (\lambda x) = \sigma^{d(\nu)}(\eta) (h^{-1} \cdot x)$, and applying $g$ on both sides we obtain
$$
\lambda  x = (g\cdot\sigma^{d(\nu)}(\eta)) ((g|_{\sigma^{d(\nu)}(\eta)}h^{-1}) \cdot x).
$$
It follows that $(\lambda, h (g|_{\sigma^{d(\nu)}(\eta)})^{-1}, g\cdot\sigma^{d(\nu)}(\eta) )$ is a cycline triple in $\Lambda $. Hence $(p-r,q-s) \in \Sigma_{s(\mu)}$.

\smallskip

(8) This follows now by the same argument as in \cite{MR3150171}*{Proposition 4.4}: the set of minimal elements in $\Sigma\setminus\{0\}\subset\Z^{2k}_+$ for the order $\leq$ is finite by Dickson's lemma \cite{MR1694173}*{Theorem 5.1}, and then (7) and a simple induction argument show that this set generates $\Sigma$ as a monoid.
\end{proof}

\bp[Proof of Proposition~\ref{prop:mperg}]
Since the monoid $\Sigma$ is finitely generated and $\Sigma_v\subset\Sigma_u$ when $v\ge g\cdot u$, there exists a vertex $v$ such that $\Sigma=\Sigma_v$. We claim that then $v\in\Lambda^0_{\PerG}$. By the definition of~$\Sigma_v$ we need to show that if $p,q \in \mathbb{Z}_{+}^{k}$ and $p-q \in \PerG(\Lambda)$, then $(p, q) \in \Sigma_v$.

Since $q-p\in\PerG(\Lambda)$, there exists a cycline triple $(\mu, g, \nu)$ in $\Lambda$ such that $q-p=d(\mu)-d(\nu)$. We want to show that $(d(\nu),d(\mu))\in\Sigma_{s(\nu)}$. Take any path $\lambda \in s(\nu)\Lambda^{d(\nu)}$. Then, for any $x\in s(\lambda)\Lambda^{\infty}$, we have $\mu (g\cdot  (\lambda x)) = \nu \lambda x$, and hence $\sigma^{d(\nu)}(\mu (g\cdot  \lambda)) (g|_{\lambda} \cdot x) = \lambda x$, showing that $(\lambda , (g|_{\lambda})^{-1},  \sigma^{d(\nu)}(\mu (g\cdot  \lambda)) )$ is a cycline triple. Hence $(d(\nu),d(\mu))\in \Sigma_{s(\nu)}$. By Lemma \ref{lem:monoids}(5) it follows that $(p,q)\in\Sigma=\Sigma_v$. This proves our claim and shows that the set $\Lambda^0_{\PerG}$ is nonempty.

As a byproduct we have shown that $\Sigma=\{(p,q)\in\Z^k_+\times\Z^k_+: p-q\in\PerG(\Lambda)\}$. This implies that, conversely, if $v\in\Lambda^0_{\PerG}$, then $\Sigma=\Sigma_v$. Statements (1) and (4) of Lemma \ref{lem:monoids} show then that the set $\Lambda^0_{\PerG}$ is $G$-invariant and hereditary.
\ep

The importance of the sets $M_{\PerG}$ to us lies in the following lemma.

\begin{lemma}\label{lem:stable}
Assume $(G,\Lambda)$ is a countable pseudo-free self-similar row-finite $k$-graph without sources satisfying condition~\eqref{itcentral2}, and consider the corresponding Exel--Pardo groupoid $\G:=\G_{G,\Lambda}$. Assume $M\in\M(G,\Lambda)$, $x\in(\Lambda M)^\infty$ and $p,q,m\in\Z^k_+$ are such that $\MT_G(x)=M$, $p-q\in\PerG(\Lambda M)$ and $x(m)\in M_{\PerG}$. Then, for any $g\in G$ such that $g\cdot (x(q))=x(p)$, we have
\begin{equation}\label{eq:isointer}
Z(x(0,p),g,x(0,q))\cap\IsoGx{x}^\circ_x\ne\emptyset
\end{equation}
if and only if $x(p,p+m)=g\cdot(x(q,q+m))$ and $(x(m,m+p),g|_{x(q,q+m)},x(m,m+q))$ is a cycline triple in $\Lambda M$.
\end{lemma}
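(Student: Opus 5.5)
We reduce to the case $M=\Lambda^0$: by Lemma~\ref{lem:identres} the groupoid $\G_{\overline{[x]}}$ is (homeomorphic to) the Exel--Pardo groupoid of $(G,\Lambda M)$, and $\IsoGx{x}^\circ_x$ is then the essential isotropy of the minimal reduced groupoid. So we assume $\Lambda^0$ is a maximal $G$-tail, $\overline{[x]}=\Lambda^\infty$, and write $\Iso_x$ for the essential isotropy group. Put $\mu:=x(0,p)$, $\nu:=x(0,q)$ and $W:=Z(\mu,g,\nu)$. By essential centrality (Proposition~\ref{prop:esscentriso} and Lemma~\ref{lem:iso}), $\Phi$ is injective on $\Iso_x$. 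Hence \eqref{eq:isointer} holds if and only if $W\cap\G^x_x\ne\emptyset$ and the unique element of $W\cap\G^x_x$ lies in the interior of the isotropy. Concretely, this is the condition that $\sigma^p(y)=g\cdot\sigma^q(y)$ for all $y$ in some neighbourhood of $x$ inside $Z(\mu)\cap Z(\nu)$, together with $\sigma^p(x)=g\cdot\sigma^q(x)$.

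For the ``if'' direction, assume $x(p,p+m)=g\cdot x(q,q+m)$ and that $(x(m,m+p),h,x(m,m+q))$ is a cycline triple, where $h:=g|_{x(q,q+m)}$. Take any $y\in Z(x(0,m+p+q))$ and write $y=x(0,m)z$ with $z\in x(m,m+p)\Lambda^\infty\cap x(m,m+q)\Lambda^\infty$. Since $\mu$ and $\nu$ are initial segments of $x(0,m)$ only when $m\ge p\vee q$, we instead use the factorisation property: $x(0,p+m)=x(0,p)x(p,p+m)=x(0,m)x(m,m+p)$, and similarly for $q$. Using this together with the cycline identity $x(m,m+p)(h\cdot w)=x(m,m+q)w$, one verifies that $\sigma^p(y)=g\cdot\sigma^q(y)$ on the neighbourhood $Z(x(0,m+p\vee q))$ of $x$. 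This gives an open subset of $W$ consisting of isotropy and containing an element at $x$, so \eqref{eq:isointer} holds. The computation is routine, using Lemma~\ref{lem:calcrules}(1): we get $g\cdot(x(q,q+m)w')=x(p,p+m)(h\cdot w')$ and then apply the triple. The hypothesis $x(m)\in M_{\PerG}$ is not needed here.

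For the ``only if'' direction, assume \eqref{eq:isointer}. Then $\sigma^p(x)=g\cdot\sigma^q(x)$, and comparing the segments of degree $m$ gives $x(p,p+m)=g\cdot x(q,q+m)$. Moreover $\sigma^{p+m}(x)=h\cdot\sigma^{q+m}(x)$ with $h=g|_{x(q,q+m)}$. The main step is to show that $(x(m,m+p),h,x(m,m+q))$ is an \emph{exact} cycline triple, i.e.\ that the identity holds on all of $s(x(m,m+q))\Lambda^\infty$ and not merely near $\sigma^m(x)$. Here we use $x(m)\in M_{\PerG}$. Since $p-q\in\PerG(\Lambda)$, applying the definition to the path $x(m,m+p)\in x(m)\Lambda$ gives a cycline triple $(x(m,m+p),h',\nu')$ with $d(\nu')=q$. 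Evaluating this triple at $\sigma^{m+p}(x)$ and comparing with $\sigma^m(x)$ forces $\nu'=x(m,m+q)$, and then
\[
h'\cdot\sigma^{m+q}(x)=\sigma^{m+p}(x)=h\cdot\sigma^{m+q}(x).
\]
It remains to show $h'=h$. Both $h'$ and $h$ define elements of the interior isotropy at $\sigma^m(x)$ (the first from the cycline triple, the second from \eqref{eq:isointer} transported along the orbit), and they have the same $\Phi$-degree $p-q$. By injectivity of $\Phi$ on the essential isotropy, which is where condition~\eqref{itcentral2} enters, these two elements coincide. Pseudo-freeness, via Lemma~\ref{lem:cor57} and the equality of the $Q_k(G)$-components, then gives $h=h'$.

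The main obstacle is this last identification. One must carefully transport the isotropy element from $x$ to $\sigma^m(x)$ and check that both bisections pass through the same point. If this turns out to be delicate, a fallback is to show directly that $h'^{-1}h$ acts trivially on a neighbourhood of $\sigma^{m+q}(x)$, hence on some $v\Lambda$, and then conclude $h'=h$ from \eqref{itcentral2}.
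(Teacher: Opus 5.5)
Your proposal is correct and follows essentially the same route as the paper. You reduce to $M=\Lambda^0$, rewrite \eqref{eq:isointer} as the local identity $\sigma^p(y)=g\cdot\sigma^q(y)$ near $x$, split off the condition $x(p,p+m)=g\cdot x(q,q+m)$, use $x(m)\in M_{\PerG}$ to obtain a cycline triple $(x(m,m+p),h',x(m,m+q))$, and conclude $h'=g|_{x(q,q+m)}$ from essential centrality. The only difference is cosmetic: you phrase this last step through injectivity of $\Phi$ on the essential isotropy at $\sigma^m(x)$ together with Lemma~\ref{lem:cor57}, while the paper uses the equivalent fact that no nontrivial group element can fix a neighbourhood of $\sigma^{m+q}(x)$.
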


The point of this lemma is that although we knew from before that condition~\eqref{eq:isointer} is stable under small perturbations of $x$, we now have a quantitative version of this stability: all that matters is the part $x(0,m+p\vee q)$ of $x$. Note also that since $M_{\PerG}\subset M$ is a $G$-invariant hereditary (relatively to the $k$-graph $\Lambda M$) subset, for any $x\in(\Lambda M)^\infty$ with $\MT_G(x)=M$ there exists $n\in\Z^k_+$ such that $x(m)\in M_{\PerG}$ for all $m\ge n$.

\bp[Proof of Lemma~\ref{lem:stable}]
Without loss of generality we may assume that $M=\Lambda^0$. Then condition~\eqref{eq:isointer} is satisfied if and only if $\sigma^p(y)=g\cdot\sigma^q(y)$ for all $y\in\Lambda^\infty$ in a neighbourhood of $x$. Since $\sigma^p(y)=x(p,p+m)\sigma^{p+m}(y)$ and $\sigma^q(y)=x(q,q+m)\sigma^{q+m}(y)$ for all $y$ sufficiently close to $x$, the latter property can be written as a combination of two conditions: $x(p,p+m)=g\cdot(x(q,q+m))$ and $\sigma^{p+m}(y)=g|_{x(q,q+m)}\cdot\sigma^{q+m}(y)$ for all $y$ in a neighbourhood of $x$.

Since $x(m)\in \Lambda^0_{\PerG}$ and $p-q\in\PerG(\Lambda)$, there is a cycline triple $(x(m,m+p),h,\nu)$ in $\Lambda$ with $d(\nu)=q$. Then $\sigma^m(x)=x(m,m+p)\sigma^{m+p}(x)=\nu(h^{-1}\cdot\sigma^{m+p}(x))$, hence $\nu=x(m,m+q)$. We also have $\sigma^{p+m}(y)=h\cdot\sigma^{q+m}(y)$ for all $y$ close to $x$. Since condition~\eqref{itcentral2} is equivalent to essential centrality of $\G$, it follows that we have $\sigma^{p+m}(y)=g|_{x(q,q+m)}\cdot\sigma^{q+m}(y)$ for all $y$ in a neighbourhood of $x$ if and only if $g|_{x(q,q+m)}=h$, that is, if and only if $(x(m,m+p),g|_{x(q,q+m)},x(m,m+q))$ is a cycline triple in $\Lambda$.
\ep

The second part of the proof of the lemma shows that, in fact, there is at most one $g\in G$ satisfying the conditions of the lemma. Essentially the same argument shows the following: if $M\in \mathcal{M}(G,\Lambda)$ and $v\in M_{\PerG}$, then, for any $\mu \in v\Lambda^p M$ and $q\in\mathbb{Z}_{+}^{k}$ such that $p-q\in  \PerG(\Lambda M)$, there exist unique $g\in G$ and $\nu \in \Lambda^{q} M $ such that $(\mu, g, \nu)$ is a cycline triple in $\Lambda M$. We denote the element $g$ by  $g_{M,\mu, q}$.

\begin{thm}\label{thm:main-higher-rank2}
Assume we are in the setting of Theorem~\ref{thm:main-higher-rank1} and identify $\Prim C^*(\G)$ with the sets of pairs $(M,\chi)$ as explained there. Then the topology on $\Prim C^*(\G)$ is described as follows.

Assume we are given $(M, \chi)$ and $(M_{n}, \chi_{n})$ ($n\in \mathbb{N}$). For every $l\in \PerG(\Lambda M)$, fix $p(l), q(l) \in \mathbb{Z}_{+}^{k}$ such that $l=p(l)-q(l)$. Fix also a vertex $v\in M_{\PerG}$. Then $(M_{n}, \chi_{n}) \to (M, \chi)$ if and only if, for every $\varepsilon>0$, every finite set $F \subset \PerG(\Lambda M)$ and every finite path $\lambda \in v\Lambda M$ with $d(\lambda) \geq p(l)$ for all $l\in F$, we can find $n_{0} \in \mathbb{N}$ such that for each $n \geq n_{0}$ the following property is satisfied: there is a path $\mu \in \Lambda M_{n}$ such that $d(\mu) \geq d(\lambda)$, $\mu(0, d(\lambda))=\lambda$, $\mu(m)\in (M_{n})_{\PerG}$ for some $m \leq d(\mu)$ and, for every $l\in F \cap \PerG(\Lambda M_{n})$, we have $p(l) \vee q(l) \leq d(\mu)-m$ and one of the following properties holds, with $g_l:=g_{M, \lambda(0, p(l)), q(l)}$:
\begin{enumerate}
  \item $g_{l}\cdot (\mu(q(l), q(l)+m))\neq \mu(p(l), p(l)+m)$;
  \item $(\mu(m, m+p(l)), g_{l}|_{\mu(q(l), q(l)+m)}, \mu(m, q(l)+m))$ is not a cycline triple in $\Lambda M_{n}$, equivalently,
  $  g_{l}|_{\mu(q(l), q(l)+m)}\ne g_{M_n,\mu(m, m+p(l)),q(l)}$;
  \item $|\chi(l)-\chi_{n}(l) | < \varepsilon$.
\end{enumerate}
\end{thm}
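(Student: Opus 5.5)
The plan is to derive the statement from Theorem~\ref{thm:convfg}, in the equivalent reformulated form (2$'$) used in its proof, for a well-chosen representative $x$ of the quasi-orbit $M$ and well-chosen bisections $W_l$; Lemma~\ref{lem:stable} will translate everything into conditions on finite initial pieces of paths. We start by choosing $x$. Since $M$ has no minimal elements relevant here and $v\in M_{\PerG}$, we may take $x\in(\Lambda M)^\infty$ with $r(x)=v$ and $\MT_G(x)=M$: take any $x'$ with $\MT_G(x')=M$ and a path in $v\Lambda M$ leading to a $G$-translate of a vertex of $x'$. Since $M_{\PerG}$ is $G$-invariant and hereditary in $\Lambda M$ (Proposition~\ref{prop:mperg}), every vertex of $x$ lies in $M_{\PerG}$. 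For $l\in\PerG(\Lambda M)$ we take $W_l:=Z(x(0,p(l)),g_l,x(0,q(l)))\cap\G$, where $g_l=g_{M,x(0,p(l)),q(l)}$. Lemma~\ref{lem:stable} with $m=0$ shows that $W_l\cap\IsoGx{x}^\circ_x\neq\emptyset$, so $\Phi$ maps this intersection to $l$ and these bisections serve as the $W_g$ in Theorem~\ref{thm:convfg}. The neighbourhoods $Z(\lambda)$ with $\lambda=x(0,N)$ and $N\ge p(l)$ form a neighbourhood basis of $x$ in $\Lambda^\infty$, and for such $\lambda$ we have $g_l=g_{M,\lambda(0,p(l)),q(l)}$. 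The stated condition does not depend on the choice of $\lambda$ in $v\Lambda M$ through which we pass, because condition (1) of Theorem~\ref{thm:convfg} does not depend on the representative; so it suffices to work with initial segments $\lambda$ of $x$. I will verify that dependence is harmless by noting that both sides only involve $\lambda$ and are invariant under changing $x$ to another path starting with $\lambda$ in the quasi-orbit of $M$.

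Next we use condition (2) of Theorem~\ref{thm:convfg}, in its form (2$'$), with the sets $R_n$. Here $\ker\omega$ for $(M_n,\chi_n)$ lets us take any representative $x_n$, and a point $y\in[x_n]\cap Z(\lambda)$ has $\MT_G(y)=M_n$. Two facts are needed. First, if $W_l\cap\IsoGx{x_n}^\circ_y\ne\emptyset$, then $l\in\PerG(\Lambda M_n)$ by Proposition~\ref{prop:perG} and Lemma~\ref{lem:identres}; hence only $l\in F\cap\PerG(\Lambda M_n)$ matters. Second, for such $l$ we pick $m$ with $y(m)\in(M_n)_{\PerG}$, increased if necessary so that $p(l)\vee q(l)\le$ the relevant length. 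Then Lemma~\ref{lem:stable}, applied in $\Lambda M_n$ to $y$ with $g=g_l$, says that $W_l\cap\IsoGx{x_n}^\circ_y=\emptyset$ if and only if (1) or (2) holds for $\mu:=y(0,N')$ with $N'$ large. This requires $g_l\cdot y(q(l))=y(p(l))$, which holds automatically since $y\in Z(\lambda)$, and condition (1) is exactly that $W_l$ meets the isotropy at $y$ only if it is a bisection through the right vertex. Condition (3) of the present statement is condition (2$'$)'s alternative $|\chi(l)-\chi_n(l)|<\eps$. The uniqueness remark after Lemma~\ref{lem:stable} gives the reformulation of (2) in terms of $g_{M_n,\cdot,\cdot}$.

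It remains to pass between infinite $y$ and finite $\mu$. Given $y$, truncate it to $\mu=y(0,N')$ with $N'\ge d(\lambda)$ and $N'-m\ge p(l)\vee q(l)$ for all $l\in F$. Conversely, given $\mu\in\Lambda M_n$, extend it to some $y\in\mu(\Lambda M_n)^\infty$ with $\MT_G(y)=M_n$. This is possible by maximality of $M_n$, arguing as in Proposition~\ref{prop:max-tail}, and then $y\in\overline{[x_n]}$ has the same quasi-orbit. Since conditions (1) and (2) only involve $y(0,m+p\vee q)$, which is determined by $\mu$, Lemma~\ref{lem:stable} applies verbatim. The main obstacle is this step, together with checking that replacing $y$ by any such extension keeps us in $[x_n]$ up to quasi-orbit. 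Theorem~\ref{thm:convfg}'s condition (2) is stable under changing representatives within a quasi-orbit, as shown in its proof, so it is enough to have $\overline{[y]}=\overline{[x_n]}$, which follows from $\MT_G(y)=M_n$ by Proposition~\ref{prop:max-tail}.
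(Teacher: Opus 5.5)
Your strategy is the paper's. You fix a representative $x$ of $M$ starting at $v$ and use the bisections $W_l=Z(x(0,p(l)),g_l,x(0,q(l)))$ in Theorem~\ref{thm:convfg} in the form (2$'$). You discard $l\notin\PerG(\Lambda M_n)$ via Proposition~\ref{prop:perG}, translate $W_l\cap\IsoGx{x_n}^\circ_y=\emptyset$ into (1)/(2) via Lemma~\ref{lem:stable}, and pass between $y$ and $\mu$ by truncation or extension. For arbitrary $\lambda$ you re-route $x$ through $\lambda$, which is exactly what the paper does.

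The ``if'' direction differs only slightly. You invoke stability of (2$'$) under change of representative, while the paper runs a diagonal argument and applies Theorem~\ref{thm:convfg} with $x_n:=y_n$. Both work. Simpler still, extend $\mu$ by some $y'\in[x_n]$ with $r(y')=s(\mu)$; this exists because $s(\mu)\in M_n=\MT_G(x_n)$.

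One step fails as written, in the ``only if'' direction. You apply (2$'$) with the neighbourhood $Z(\lambda)$ and assert that $g_l\cdot y(q(l))=y(p(l))$ ``holds automatically since $y\in Z(\lambda)$''. The statement only assumes $d(\lambda)\ge p(l)$, so $Z(\lambda)$ does not control $y(0,q(l))$. Moreover $y$ has vertices in $M_n$ and may leave $M$, so the cycline triple in $\Lambda M$ that defines $g_l$ does not force $y(0,q(l))=x(0,q(l))$.

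Nothing then prevents the following situation. The condition $W_l\cap\IsoGx{x_n}^\circ_y=\emptyset$ holds merely because $y\notin s(W_l)=Z(x(0,q(l)))$. At the same time (1) and (2) both fail for your truncation $\mu$, because Lemma~\ref{lem:stable} then concerns the different bisection $Z(y(0,p(l)),g_l,y(0,q(l)))$. If (3) also fails, $\mu$ does not witness the required property.

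The fix is what the paper does. Choose $x$ with $x(0,d(\lambda))=\lambda$ and $\MT_G(x)=M$, and apply (2$'$) with the smaller neighbourhood $Z(x(0,N))$, where $N:=d(\lambda)\vee\bigvee_{l\in F}q(l)$. Then $W_l=Z(y(0,p(l)),g_l,y(0,q(l)))$ and $g_l\cdot y(q(l))=y(p(l))$, so Lemma~\ref{lem:stable} applies and $\mu:=y(0,m+N)$ works, with one $m$ for all $l$. In the ``if'' direction the same issue is harmless, since there $y\notin s(W_l)$ already gives $W_l\cap\IsoGx{x_n}^\circ_y=\emptyset$.
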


\begin{proof}
As usual, we write $[y]$ for $[y]_\G$. We start with the ``only if'' direction, so assume $(M_{n}, \chi_{n}) \to (M, \chi)$. Let $\eps$, $F$ and $\lambda$ be as in the statement of the theorem. Put $N:=d(\lambda)\vee\bigvee_{l\in F}q(l)$. We can find $x, x_{n}\in \Lambda^{\infty}$ such that $\MT_G(x)=M$, $x(0,d(\lambda))=\lambda$ and $\MT_G(x_{n})=M_{n}$.  Consider the bisections $W_l:=Z(x(0,p(l)),g_l,x(0,q(l)))$ for $l\in F$. Then by condition (2) in Theorem~\ref{thm:convfg} we can find $n_0\in\N$ such that for each $n \geq n_{0}$ there is $y\in[x_n]\cap Z(x(0,N))$ satisfying the following property: for every $l\in F$, either $W_{l} \cap \IsoGx{x_n}^\circ_y = \emptyset$ or $|\chi(l)-\chi_{n}(l) | < \varepsilon$. Working with fixed $n\ge n_0$ and $y$, let us look closer at the condition $W_{l} \cap \IsoGx{x_n}^\circ_y = \emptyset$.

Let $m\in\Z^k_+$ be such that $y(m)\in(M_n)_{\PerG}$. Put $\mu:=y(0,m+N)$. If $l\in F\cap\PerG(\Lambda M_n)$, then by Lemma~\ref{lem:stable} the condition $W_{l} \cap \IsoGx{x_n}^\circ_y = \emptyset$ is satisfied if and only if either (1) or (2) is true. Therefore, for every $l\in F\cap\PerG(\Lambda M_n)$, one of the conditions (1)--(3) is satisfied.

\smallskip

Conversely, assume we can find paths $\mu$ as in the statement of theorem. Let $z,z(n)\in\T^k$ be any characters extending $\chi$ and $\chi_n$. Choose a path $x\in v\Lambda^\infty$ such that $\MT_G(x)=M$. For $l\in\PerG(\Lambda M)$, consider the bisections
$$
W_l:=Z(x(0,p(l)),g_{M,x(0,p(l)),q(l)},x(0,q(l))).
$$
Taking $\eps>0$, a finite set $F\subset\PerG(\Lambda M)$ and $\lambda:=x(0,N)$ for some $N\ge \bigvee_{l\in F}p(l)$, by assumption we can find $n_0\in\N$ such that for each $n\ge n_0$ there is a path $\mu\in\Lambda M_n$ with properties as in the statement of the theorem. With $n\ge n_0$ fixed, choose any path $y\in\mu\Lambda^\infty$ such that $\MT_G(y)=M_n$. Since $W_l \cap \IsoGx{y}^\circ_y = \emptyset$ for $l\notin\PerG(\Lambda M_n)$, conditions (1)--(3) and Lemma~\ref{lem:stable} imply that for every $l\in F$ we have either $W_{l} \cap \IsoGx{y}^\circ_y = \emptyset$, or $l\in\PerG(\Lambda M_n)$ and $|\chi(l)-\chi_{n}(l) | < \varepsilon$.

Using such paths $y$, a standard diagonal argument allows us to construct a sequence of paths~$y_n$ such that $\MT_G(y_n)=M_n$, $y_n\to x$ and $z(n)\to z$ along the sets $\{l\in\PerG(\Lambda M): W_{l} \cap \IsoGx{y_n}^\circ_{y_n}\ne\emptyset \}$. By Theorem~\ref{thm:convfg} we conclude that $(M_n,\chi_n)\to(M,\chi)$.
\ep

\bigskip

\end{document}